\newcommand{\tikzAngleOfLine}{\tikz@AngleOfLine}
  \def\tikz@AngleOfLine(#1)(#2)#3{
  \pgfmathanglebetweenpoints{
    \pgfpointanchor{#1}{center}}{
    \pgfpointanchor{#2}{center}}
  \pgfmathsetmacro{#3}{\pgfmathresult}
  }
\newcommand{\jjmax}{{\max\{j,j'\}}}
\newcommand{\jjmin}{{\min\{j,j'\}}}
\newcommand{\strongo}{{\mathbb{O}}}
\newcommand{\strongop}{\mathbb{O}_{\geq}}
\newcommand{\Cinfb}{C^\infty_b}
\newcommand{\CinfbRd}{C^\infty_b(\Rdst)}
\newcommand{\sym}{\eta}
\newcommand{\sign}{\varsigma}
\title[Multi-scale GB parametrix for the Dirichlet problem] {A
  multi-scale Gaussian beam parametrix for the wave equation: the
  Dirichlet boundary value problem.}
\author{Michele Berra}
\address{Dipartimento di Scienze Matematiche, Politecnico di Torino (DISMA), corso Duca degli
Abruzzi 24, 10129 Torino, Italy}
\email{michele.berra@polito.it}
\author{Maarten V. de Hoop}
\address{Department of Computational and Applied Mathematics\\
Rice University, 6100 Main MS-134 Houston, TX 77005, United States of America}
\email{mdehoop@rice.edu}
\author{Jos\'e Luis Romero}
\address{Acoustics Research Institute\\ Austrian Academy of Sciences\\ Wohllebengasse 12-14, Vienna,
1040, Austria}
\email{jlromero@kfs.oeaw.ac.at}
\thanks{
M.B. gratefully acknowledges support from the \textit{Progetto Talenti} funded by \textit{Fondazione
CRT} and from MIUR (Italy) under the PRIN project
\textit{Variet\`a reali e complesse: geometria, topologia e analisi armonica}.
M.V.d.H. gratefully acknowledges support from the Simons
  Foundation under the MATH $+$ X program, the National Science
  Foundation under grant DMS-1559587, and the corporate members of the
  Geo-Mathematical Group at Rice University. J.L.R. gratefully
  acknowledges support from the Austrian Science Fund (FWF): P 29462 - N35, and from a Marie Curie fellowship, within the 7th. European
  Community Framework program, under grant PIIF-GA-2012-327063.
}
\keywords{Gaussian beam, wave equation, wave-atom, parametrix, boundary-value problem}
\subjclass[2010]{35L05, 35L20, 35S05, 42C15}
\begin{document}
\selectlanguage{english}

\begin{abstract}
We present a construction of a multi-scale Gaussian beam parametrix
for the Dirichlet boundary value problem associated with the wave
equation, and study its convergence rate to the true solution in the highly oscillatory regime. The construction elaborates on the
wave-atom parametrix of Bao, Qian, Ying, and Zhang and extends to a
multi-scale setting the technique of Gaussian beam propagation from a
boundary of Katchalov, Kurylev and Lassas.
\end{abstract}

\maketitle

\section{Introduction}

\subsection{The parametrix}
Gaussian beams are high-frequency asymptotic solutions for hyperbolic
partial differential equations, in particular, for the homogeneous
wave equation,
\begin{align*}
u_{tt}(t,x) - \speed(x)^2 \Delta_x u(t,x) = 0.
\end{align*}
Gaussian beams (GB) follow the propagation of singularities, that is,
the bicharacteristics associated with the principal symbol of the wave
operator, which are the flows generated by the Hamiltonians $H(x,p) = \pm \speed(x) \abs{p}$. 
Gaussian beams are
initiated via an Ansatz. They distinguish themselves from standard
geometrical optics solutions in that they capture the asymptotic
behavior in caustics without precautions, see Figure
\ref{fig:caustic}.

GB parametrices for the initial value problem (IVP) for the
wave equation are based on representations of the initial data as
superimposition of certain Gaussian-like wavepackets:
\begin{align}
\label{eq_pack1}
u(0,x) = \sum_\gamma a_\gamma \varphi_\gamma(x),
\qquad
u_t(0,x) = \sum_\gamma b_\gamma \varphi_\gamma(x).
\end{align}
Each wavepacket $\varphi_\gamma$ is then used to generate two GB:
$\varphi_\gamma(x) \approx \Phi_\gamma^\pm(0,x)$, where the choice of
sign $\pm$ corresponds to the two polarized modes of the
wave-equation. Specifically, we construct a frame of such wavepackets that initialize multi-scale Gaussian
beams, and the resulting parametrix has the form
\begin{align}
\label{eq_intro_IVP}
\tilde u(t,x) = \sum_\gamma \alpha^+_\gamma \Phi^+_\gamma(t,x)
+ \sum_\gamma \alpha^-_\gamma \Phi^-_\gamma(t,x),
\end{align}
where the sequences $\alpha^+$, $\alpha^-$ are defined in terms of $a$ and
$b$. The precise form of the packet decomposition in \eqref{eq_pack1}
determines the effectiveness of the parametrix. A detailed study of
the approximation error of such parametrices when the initial data is
a finite sum of Gaussian packets is provided in \cite{MR3008843}. Gaussian-beam parametrices and summation of Gaussian 
beams are naturally connected to Fourier integral operators with complex phase.

Several other parametrices for the wave equation are also based on wavepacket expansions.
Indeed, Smith \cite{MR1724210,MR1644105} introduced the use of a frame of wavepackets
with parabolic scaling (curvelets) in the construction of a parametrix, which,
for smooth wave speeds, can be identified as a Fourier integral operator.
This representation is also underlying the analysis of wave propagators of Cand\`{e}s and Demanet \cite{MR2165380}. 
Further related constructions based on localized wavepackets can be found in the work of Tataru \cite{MR2208883}, 
Koch and Tataru \cite{MR2094851}, Geba and Tataru \cite{MR2353138}, and De Hoop, Uhlmann, Vasy and Wendt 
\cite{MR3061463}.

In \cite{Lex2, Lex3} a GB parametrix was introduced where the beams
are initialized following the wave-atom tiling of phase space
\cite{MR2362408, deyi08}. Thus, the frequency profile of the initial
Gaussian packets is adapted to the cover depicted in Figure
\ref{F_PhSpT}. (See also \cite{Walden}.) The merit of using wave atoms
is that they are both \emph{isotropic} - as required in order to apply
the GB method - and \emph{parabolic} - in the sense that their
frequency center $\xi$ and the diameter of their essential frequency
support $\ell$ satisfy $\ell^2 \approx \abs{\xi}$. The resulting
parametrix has order $1/2$, performing similarly to the ones based on
curvelets with second-order corrections \cite{MR1644105, MR1724210, MR2165380, dHHSU}.

In this paper, we introduce a GB parametrix for the Dirichlet boundary
value problem (BVP) associated with the wave equation and analyze its
approximation properties. For simplicity, we assume that the boundary
is flat and treat the model case of the half space $\Rdplus = \{x \in
\Rdst: x_1>0\}$,
\begin{eqnarray}
\label{eq_intro_dir}
\left\{
\begin{aligned}
& u_{tt} (t,x) - \speed(x)^2 \Delta_x u (t,x) = 0,
                       &\qquad t\in[0,T], x \in \Rdplus,
\\
&u(0, x)= u_t(0,x) = 0, &\qquad x \in \Rdplus,
\\
&u(t,0,y) = h(t,y), &\qquad t\in[0,T], y \in \Rst^{d-1},
\end{aligned}
\right.
\end{eqnarray}
with $c$ being smooth and bounded below by a positive constant, and
$h$ being prescribed. In the applications to reverse-time continuation
from the boundary, as it appears in imaging, for example, $h$
represents boundary data on an acquisition manifold $\{0\} \times
\Rst^{d-1}$ with time interval $[0,T]$.

We consider a wave-atom like expansion of the boundary value,
\begin{align}
\label{eq_intro_match}
h(t,y) = \sum_\gamma h_\gamma \varphi_\gamma(t,y),
\end{align}
and construct adequate Gaussian beams $\Phi^\pm_\gamma$, so that they match the wavepackets along the boundary:
\begin{align}
\label{eq_intro_match2}
\Phi_\gamma^\pm(t,0,y) \approx \varphi_\gamma(t,y).
\end{align}
As parametrix solution for the Dirichlet problem we then propose:
\begin{align}
\label{eq_intro_par}
\tilde u(t,x) = \sum_\gamma h_\gamma \Phi_\gamma^\pm(t,x).
\end{align}
Based on the effectiveness of the parametrix for the IVP, the
expectation is that $u$ be an approximate solution for the homogeneous
wave equation. The beams $\Phi_\gamma$ have to be designed with the
additional requirement that at initial time the parametrix and its
time derivative be approximately null: $\tilde u(0,x)$, $\tilde
u_t(0,x) \approx 0$. With this provision, the energy estimates
\cite{MR0350177,Triggiani} imply that the parametrix solution is close
to the true one.

A key application of the Gaussian beam method is imaging in reflection seismology \cite{MR2661696, Hill1416} - 
see also \cite{MR2944376} for an analysis of imaging and its connection with solving boundary value problems.
There is extensive work done on computations with Gaussian beams and wavepackets \cite{MR2558781, MR2684020, MR2885562, 
Lex1}. We expect these to be instrumental to the implementation of the parametrix 
that we introduce, thus facilitating accurate computations in the presence of caustics. 

We now elaborate on the details of the program for the construction and analysis of the parametrix outlined above.

\emph{(i) Description of boundary restriction of beams}. At an initial
time, Gaussian beams have a prescribed Gaussian profile on $x$. The GB
theory provides estimates for the evolution of this profile for subsequent
times $t$. In contrast, the approximation in \eqref{eq_intro_match2}
requires describing the restriction of a GB to the boundary
$\{x_1=0\}$ \emph{treating the remaining variables
  $(t,x_2,\ldots,x_d)$ jointly as a spatial variable}. Such an
analysis is the first step of our construction: We consider a general
Gaussian beam and approximately describe its restriction to the
acquisition manifold as a Gaussian wavepacket in all remaining
variables including time. This elaborates on a technique of Katchalov,
Kurylev and Lassas \cite{Kat}, who considered the boundary restriction
of a Gaussian beam that intersects the boundary through a normal ray.

\emph{(ii) Packet-beam matching}. Given a general (isotropic) Gaussian
wavepacket, $\varphi_\gamma$, we use the analysis from (i) to
construct an adequate beam satisfying \eqref{eq_intro_match2}. This
defines a map $\SIC$ that assigns to every phase-space parameter
$\gamma$ indexing the packets in the expansion of the boundary value
$h$ \eqref{eq_intro_match} a set of initial conditions $\SIC_\gamma$
for the ordinary differential equations (ODE) that define a Gaussian
beam.

\emph{(iii) Back-propagation}. The packet-beam matching (ii) is
carried out as follows: given a wavepacket $\varphi_\gamma(t,y)$ with
spatial center $(t_\gamma,y_\gamma)$, we construct the beam
$\Phi_\gamma(t,x)$ so that its spatial center intersects the boundary
precisely at time $t=t_\gamma$. The profile of the beam is specified
at time $t = t_\gamma$ and back-propagated to time $t=0$ by means of
the defining ODEs. Additionally, we specify the mode of $\Phi_\gamma$
- that determines in which direction bicharacteristics are traveled -
so that the beam moves \emph{into} the half-space as time evolves. As
a consequence the beam $\Phi_\gamma$ is mostly concentrated outside
the right half-space at $t=0$, and the parametrix approximately
vanishes at initial time, as required in order to apply energy
estimates.

\emph{(iv) Distortion of the phase-space tiling}. Wavepacket
expansions such as \eqref{eq_pack1} and \eqref{eq_intro_match} follow
certain tilings in phase space. For wave-atom expansions, the
frequency variable is partitioned as shown in Figure \ref{F_PhSpT} and
the space variable is resolved following the dual scaling. The IVP
parametrix relies on this pattern: The technical results in
\cite{Lex3} show that for subsequent times the beams
$\Phi^\pm(t,\cdot)$ in \eqref{eq_intro_IVP} are still adapted to a
similar phase-space tiling, and thus enjoy similar spanning
properties. While the expansion of the boundary value in
\eqref{eq_intro_match} fits the framework of wave atoms, the
phase-space tiling governing the profile of the beams in the proposed
parametrix \eqref{eq_intro_par} is impacted by the packet-beam
matching procedure (iii). The analysis of the approximation error of
the parametrix involves a careful quantification of this effect.

\subsection{Assumptions and results}

We assume that the wave speed $\speed$ is smooth, bounded below by a
positive constant and has globally bounded derivatives of every
order. (The smoothness assumptions could be relaxed at the cost of a
more technical presentation.) The essential condition for the
effectiveness of the parametrix that we introduce is that the rays of
the associated Hamiltonian that take off from the boundary do not
return to the boundary in the time interval in question, so that the
back-propagation step (iii) succeeds (see Section~\ref{sec_ass_source}
for a precise quantitative formulation.)

Besides the standard compatibility condition $h(0,\cdot)=0$, we also
assume that the wavefront set of the boundary value $h$ does not
contain grazing rays. While this assumption is not necessary for the
Dirichlet problem to be well-posed, our parametrix is ultimately based
on oscillatory integrals and the theory of elliptic boundary value
problems, and these techniques do require that the bicharacteristic be
nowhere tangent to the boundary \cite{nirenberg73}. We enforce these
assumptions by examining the wavepacket expansion of $h$
\eqref{eq_intro_match} and by discarding (or down-weighting) those
coefficients that correspond to the undesired wavefront components. In
order to describe this operation in intrinsic terms (i.e.,
independently of the particular wavepacket expansion that the
parametrix uses) we introduce a pseudodifferential cut-off $\psym$
that eliminates grazing rays and consider a modified Dirichlet problem
with boundary condition $u(t,0,y) = \cuth(t,y) := \psym(t,y,D_t,D_y) h
(t,y)$. Denoting by $u$ the solution of the modified problem, we show
that our parametrix solution $\tilde u$ satisfies:
\begin{align*}
\norm{\tilde{u} - u}_{C^0([0,T],H^1(\Rdplus))
                          \cap C^1([0,T],L^2(\Rdplus))}
\leq \CTime  \norm{h}_{H^{1/2}(\Rdst)}.
\end{align*}
In particular, in the highly oscillatory regime, $\hat{h}(\xi)=0$ for
$\abs{\xi} \leq \xi_{\rm min}$, the error can be estimated in terms of
the scale content of the initial data, giving a bound $\xi_{\rm
  min}^{-1/2 } \cdot \norm{h}_{H^{1}(\Rdst)}$.

We also note that the Dirichlet problem is related to a boundary
source problem. Let $u^r$ be the solution to \eqref{eq_intro_dir} and
$u^l$ the solution to the analogous problem on the left-half space
$\mathbb{R}^d_{-}= \{x \in \Rdst: x_1<0\}$. Let
\begin{align}
\label{eq_intro_ubs}
u(t,x) := u^r(t,x) H(x_1)+u^l(t,x) H(-x_1),
\end{align}
where $H$ denotes the Heaviside function. Then \eqref{eq_intro_ubs} is
a microlocal solution to a boundary-source problem:
\begin{align}
\label{eq_intro_bs}
u_{tt} (t,x) - \speed(x)^2 \Delta_x u (t,x)
                   = (B h)(t,x_*) \delta_{x_1}(x),
\end{align}
where $B$ is an adequate boundary operator \cite{MR2944376,
  CStolk2004111}. Hence, our parametrix provides also a microlocal
solution to \eqref{eq_intro_bs}.

\subsection{Related work}

The construction of Gaussian beams dates back to the 1960's, that is,
the work by Babi\v{c} and Buldyrev \cite{MR0426630}~\footnote{The book of Babi\v{c} and Buldyrev was translated; it contains work that Babich and his colleagues published in the Proceedings of the Steklov
Institute in 1968.}. Later, Gaussian beams
were used in the analysis of regularity and propagation of
singularities in partial differential and pseudodifferential equations
by H\"{o}rmander \cite{MR0344944} and Ralston \cite{Ralston}. Without any
attempt to give a comprehensive list of references, we refer to the
foundational work of Popov \cite {MR660272, popov} and Katchalov and
Popov \cite{KP}, and the applications to seismic wave propagation by
{\v{C}}erven{\`y}, Popov and P{\v{s}}en{\v{c}}{\'\i}k
\cite{vcerveny1982computation}. Furthermore, we mention connections
with complex rays in the work of Keller and Streifer \cite{KEST}, and
Deschamps \cite{DESH} in the early 1970s, and the work of Weston \cite{MR1107969}, who studied the
wave splitting in a flat boundary, which is part of the parametrix construction for boundary value problems.

Our study of the Dirichlet problem builds fundamentally on the work of
Katchalov, Kurylev and Lassas \cite{Kat}, who describe the boundary
restriction of a single normally incident Gaussian beam. We extend
this analysis to a collection of multi-scale Gaussian beams with
varying incidence angles.

Wave parametrices based on Gaussian wavepacket expansions go back to
C\'ordoba and Fefferman \cite{cofe78}, and related techniques can be found, for example,
in the work of Smith \cite{MR1724210,MR1644105}, Cand\`{e}s and Demanet \cite{MR2165380}, 
Tataru \cite{MR2208883}, Koch and Tataru \cite{MR2094851}, and Geba and Tataru \cite{MR2353138}.
In the context of Gaussian
beams, Liu, Runborg and Tanushev studied convergence rates of
parametrices for initial data consisting of a finite sum of Gaussian
wavepackets \cite{MR3008843}. Our construction elaborates particularly
on the work of Bao, Qian, Ying, and Zhang \cite{Lex1,Lex3} who treat
the decomposition of general (multi-scale) initial data. Indeed, much
of the technical work in this article is devoted to show that the
packet-beam matching procedure described above yields a family of
beams that approximately resemble at an initial time the wavepackets
used in \cite{Lex3} as a starting point for the IVP. This task leads
us to introduce the notion of \emph{well-spread family of Gaussian
  beams}, that abstracts the properties that make a multi-scale GB
parametrix effective. As a by-product we revisit the main result of
\cite{Lex3} and give a variant of the parametrix where the initial
data is expanded into \emph{exact} Gaussian wavepackets rather than
frequency truncated ones. Our proofs resort to the notion of
\emph{wave molecules} and provide an alternative to some of the
computations in \cite{Lex3}. We also mention a link of our analysis
with the work of Laptev and Sigal \cite{MR1767504} who constructed a
parametrix for the time-dependent Schr\"{o}dinger equation.

The introduction of pseudodifferential cut-offs to remove grazing rays
is a standard practice \cite{MR2944376}. In our case, we also need to
describe how this operation is reflected on the wavepacket
expansion. To this end, we show that zero-order pseudodifferential
operators are almost diagonalized by packets with wave-atom geometry,
paralleling related results for curvelets \cite{dHHSU}. While the
no-grazing ray assumption is standard in the literature, we note that
the existence and uniqueness theorems for initial-boundary value
problems do not involve these transversality conditions, and, indeed,
Melrose \cite{melroseAiry} introduced a class of operators to treat
glancing points, and used them in a parametrix construction
\cite{melroseDiffractive}; see also \cite{melrose1987boundary}.

In relation to Gaussian beam expansions, we also mention the related notion 
of frozen Gaussian beams \cite{MR2903799, MR2865800}, where the Gaussian packets 
that approximate the solution to the wave equation may themselves not be asymptotic solutions.

\subsection{Organization}

In Section \ref{sec_wp} we introduce multi-scale Gaussian beams and a
frame of wave-atom like Gaussian wavepackets. We also discuss how to
parametrize GB by their initial conditions, introduce the relevant
notation, and collect some facts about the defining ODEs. The
construction of the frame is similar to others in the literature and
the particulars are briefly discussed in Appendix
\ref{sec_frame_proofs}.

The notion of well-spread family of Gaussian beams is introduced in
Section \ref{sec:well_spread}. We show that such families enjoy
suitable uniformity properties, satisfy Bessel bounds and are
approximate solutions to the homogeneous wave equation. We revisit the
parametrix for IVP, presenting a variant of the main result of
\cite{Lex3}. Some of the corresponding technical work is needed later
in more generality and is therefore presented in the appendices.

In Section \ref{sec_dir} we introduce the Dirichlet BVP and the
corresponding assumptions. We also discuss how the assumptions are
reflected by the frame expansion of the boundary value. (This relies
on results developed in the appendices.)

In Section \ref{sec_spation-temporal} we analyze a family of beams at
times where their spatial centers intersect the acquisition
manifold. We identify a suitable Gaussian profile for the
corresponding restrictions, both near the boundary intersection time,
and away from it. The analysis of Section \ref{sec_spation-temporal}
is then used as a guide in Section \ref{sec_match} to introduce the
beam-packet matching procedure. The outcome of this process is
analyzed using tools from Section \ref{sec:well_spread}. For clarity,
the most technical parts of this analysis are postponed to Section
\ref{sec_pr_th_match_ws}. The performance of the parametrix is finally
analyzed in Section \ref{sec_main}.

Appendix \ref{AppendixA} collects various estimates for the wave
equation and Gaussian beams. Appendix \ref{AppendixB} presents the
notion of wave molecule and develops several technical results that
are needed throughout the paper. The notion of wave molecule is a
minor generalization of the one of wave atom \cite{demphd, MR2362408,deyi08}
and the results we
derive are in the spirit of the ones
developed for curvelets in \cite[Appendix A]{dHHSU}.  Appendix
\ref{AppendixC} presents a result of independent interest on the
almost-diagonalization of pseudodifferential operators by a frame of
wave molecules. In this paper, that result is used to analyze how the
assumptions on the boundary value are reflected by its frame
expansion. Appendix \ref{sec_num} provides details on some of the figures.

We now introduce basic notation. More notation is introduced
throughout the paper; a reference table can be found in Appendix
\ref{sec_table_not}.

\subsection{Notation}
\label{sec:not}
We write $x=(x_1, x_*) \in \Rst \times \Rst^{d-1}$,
$\abs{x} = \abs{x}_2$ denotes the Euclidean norm,
$\Rdplus =
(0,+\infty) \times \Rst^{d-1}$, and $\Rst^d_T= [-T,T] \times \R^{d-1}$. We use the notation
$B_r(x)$ for the Euclidean ball of center $x$ and radius $r$. $\Re(z)$ and $\Im(z)$ denote
respectively the real and imaginary part of $z \in \bC$. This notation extends to vectors and
matrices componentwise. Generic constants are denoted by $C, C', C_0$ and their meaning may change
from line to line. Specific constants are given more descriptive notation.

For two non negative functions $f,g$, $f \lesssim g$
means that there exist a constant $C>0$ such that $f(x) \leq C g(x)$, for all $x$. We write $f
\asymp g$ if $f \lesssim g$ and $g \lesssim f$.

Given a domain $\Omega \subseteq \Rdst$, we let $\Cinfb(\Omega)$ be the class of $C^\infty(\Omega)$
functions $f$ such that for every multi-index $\alpha$, $\partial_x^\alpha f \in L^\infty(\Omega)$.

The identity matrix is denoted as $I_d \in \Rst^{d\times d}$. For a matrix
$A \in \bC^{d\times d}$, $A \gtrsim I_d$ means that there exists a constant $\consalone>0$ such that
$A - \consalone \cdot I_d$ is a positive matrix (i.e. Hermitian and with non-negative spectrum).
For a constant $\consalone \geq 0$, we
sometimes write $A \geq \consalone$ instead of $A \geq \consalone I_d$.

The Fourier transform is normalized as:
$\hat{f}(\xi) = \int_{\Rdst} f(x) e^{-2 \pi i x \xi} dx$.
We also let $D_x = \frac{1}{2\pi i} \partial_x$; when it is clear from the context we further denote
$D=D_x$. For a symbol $\psym: \Rdst \times \Rdst \to \bC$, $\psym(x,D)$ denotes its
Kohn-Nirenberg quantization. (Most of our statements generalize immediately to other quantizations.)

The phase-space metric is the function $d: \Rdst \times \left(\Rdst \setminus \{0\}\right) \to
[0,+\infty)$,
\begin{align*}
d((x,\xi),(x',\xi')) =
|\xi||\xi'||x-x'|^{2} + \left|\xi- \xi'\right|^{2}
\qquad (x,\xi), (x',\xi') \in \Rst^{2d}.
\end{align*}

The Hamiltonians are defined as $H^{+}(x,p) = \speed(x) \abs{p}$, $H^{-}(x,p) = -\speed(x)
\abs{p}$ and $H$ denotes generically either $H^+$ or $H^-$. Sometimes we denote time derivatives
with a dot, e.g. $\dot{x}(t) = \partial_t x(t)$.

Throughout the article, $\speed$ denotes a fixed function $\speed \in \Cinfb$
(called velocity) that is assumed to be bounded below away from $0$; i.e.,
\begin{equation}
\label{eq_speed_bounds}
\cspeed := \inf_{x\in\Rdst} \speed(x) >0,
\end{equation}
and $\partial^\alpha_x \speed \in L^\infty(\Rdst)$ for every multi-index $\alpha$.

\section{Gaussian wavepackets and Gaussian beams}
\label{sec_wp}

\subsection{Construction of a frame}
\label{sec_frame}
We construct a frame of wave-atom-like Gaussian packets
with Gaussians as basic waveforms. We start by introducing a frequency cover. For $j \geq 1$ we
let $\sett{\pointjk: k=1, \ldots, n_j} \subseteq \Rdst$
be a set of points such that:
\begin{itemize}
\item The family $\sett{B_{2^j}(\pointjk), k=1,\ldots, n_j}$ is disjoint and each member is
contained in the corona $\corona_j =
B_{4^{j+1}}(0) \setminus B_{4^{j}}(0)$.
\item $\corona_j \subseteq \bigcup_{k=1}^{n_j} B_{2^{j+1}}(\pointjk)$.
\end{itemize}
Hence $\{B_{2^{j+1}}(\pointjk): k=1,\ldots, n_j, j \geq 1\}$
is a cover of $\{\xi \in \Rdst: \abs{\xi} \geq 4\}$; see Figure \ref{F_PhSpT}.

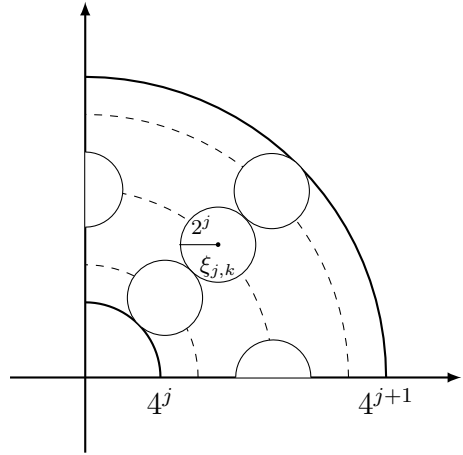
\begin{figure}[ht!]
\begin{center}
\begin{tikzpicture}[>=latex]

\draw[thick,->] (-1,0) -- (5,0) ;
\draw[thick,->] (0,-1) -- (0,5) ;

 \draw [dashed,domain=0:90] plot ({3.5*cos(\x)}, {3.5*sin(\x)});
  \draw [dashed,domain=0:90] plot ({1.5*cos(\x)}, {1.5*sin(\x)});
    \draw [dashed,domain=0:90] plot ({2.5*cos(\x)}, {2.5*sin(\x)});
  \draw [thick,domain=0:90] plot ({cos(\x)}, {sin(\x)});
 \draw [thick,domain=0:90] plot ({4*cos(\x)}, {4*sin(\x)});
\node  at (1, 0) [below] {$4^{j}$};
\node  at (4, 0) [below] {$4^{j+1}$};

\draw  [fill=white] (1.767,1.767) circle (0.5);

\draw  [fill=white] (1.06,1.06)circle (0.5);
\draw  [fill=white] (2.48,2.48)circle (0.5);

\node at (1.767,1.767) [below] {\scriptsize$\pointjk$};
\node at (1.767,1.767){\huge.};
\node at (1.55,1.767) [above] {\tiny $2^{j}$};
\draw  (1.25,1.767) -- (1.767,1.767);

\draw [domain=0:180, fill = white] plot ({(2.5)+(0.5)*cos(\x)}, {(0.5)*sin(\x)});
\draw [domain=-90:90, fill = white] plot ({(0.5)*cos(\x)}, {2.5+(0.5)*sin(\x)});
\end{tikzpicture}
\end{center}
\caption{Frequency-space tiling for the frame.}
\label{F_PhSpT}
\end{figure}

Note that $\abs{\pointjk} \asymp 4^j$. In addition, comparing the volumes of $\corona_j$
to those of the unions of the balls $B_{2^j}(\pointjk)$ and $B_{2^{j+1}}(\pointjk)$, it follows that
\begin{align}
\label{eq_nj}
n_j \asymp 2^{jd}, \qquad j \geq 1.
\end{align}
For convenience, we also introduce the rescaled vector
\begin{align}
\label{eq_pjkt}
\pointjkt = 2\pi\frac{\pointjk}{4^j}.
\end{align}
Hence, $\pointjkt$ is approximately normalized:
$|\pointjkt| \asymp 1$.

We let $\func(x)$ be the Gaussian function
\begin{equation}\label{eq_norm_Gauss}
\func(x) = \normd e^{-\pi\abs{x}^2}, \qquad x\in\R^d,
\end{equation}
and define modulated and scaled waveforms adapted to the frequency cover
\[
\funcjk(x)= \normdj e^{2\pi i \pointjk x}\func(2^j x), \quad j\geq 1, k=0,\dots, n_j,
\]
so that $\widehat\funcjk$ is essentially concentrated on $B_{2^j}(\pointjk)$.
We let $\Lambda \subseteq \Rdst$ be a (full rank) lattice and define
\begin{equation}
\label{eq:def_gamma}
\Gamma = \sett{(j,k,\lambda):j\geq 1, k=0,\ldots,n_j, \lambda \in \Lambda},
\end{equation}
and
\[
\func_\gamma(x)=
\funcjkl(x)= \funcjk(x - 2^{-j}\lambda), \qquad \gamma=(j,k,\lambda) \in \Gamma.
\]
Explicitly,
\begin{align}
\label{eq_def_funcs}
\funcjkl(x) = 2^{j\frac{d}{2}} e^{2 \pi i \pointjk (x-2^{-j}\lambda)}
\func(2^jx-\lambda),
\qquad (j,k,\lambda) \in \Gamma.
\end{align}
See Figure \ref{fig:packet} for a plot. For an index $\gamma \in \Gamma$, we often refer implicitly
to the notation $\gamma=(j,k,\lambda)$.
\begin{figure}
 \centering
  \begin{minipage}[c]{0.45\textwidth}
    \includegraphics[width=\textwidth]{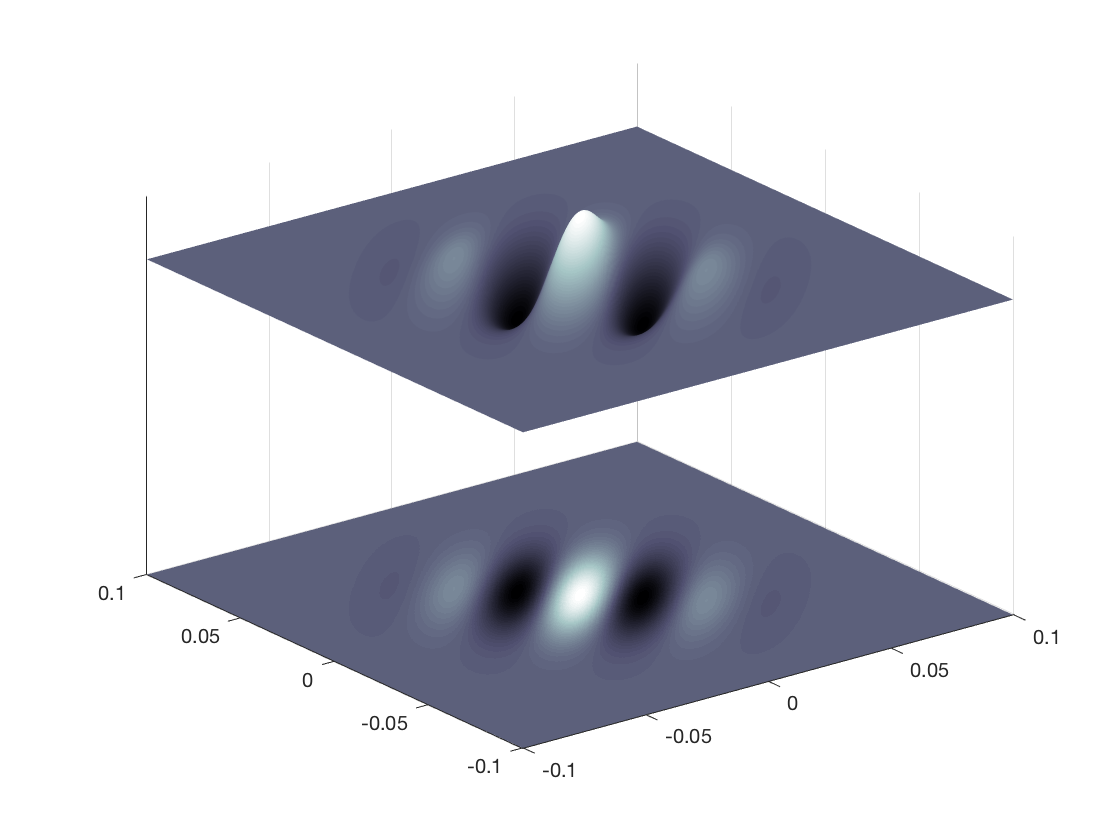}
  \end{minipage}
  \hfill
  \begin{minipage}[c]{0.45\textwidth}
    \includegraphics[width=\textwidth]{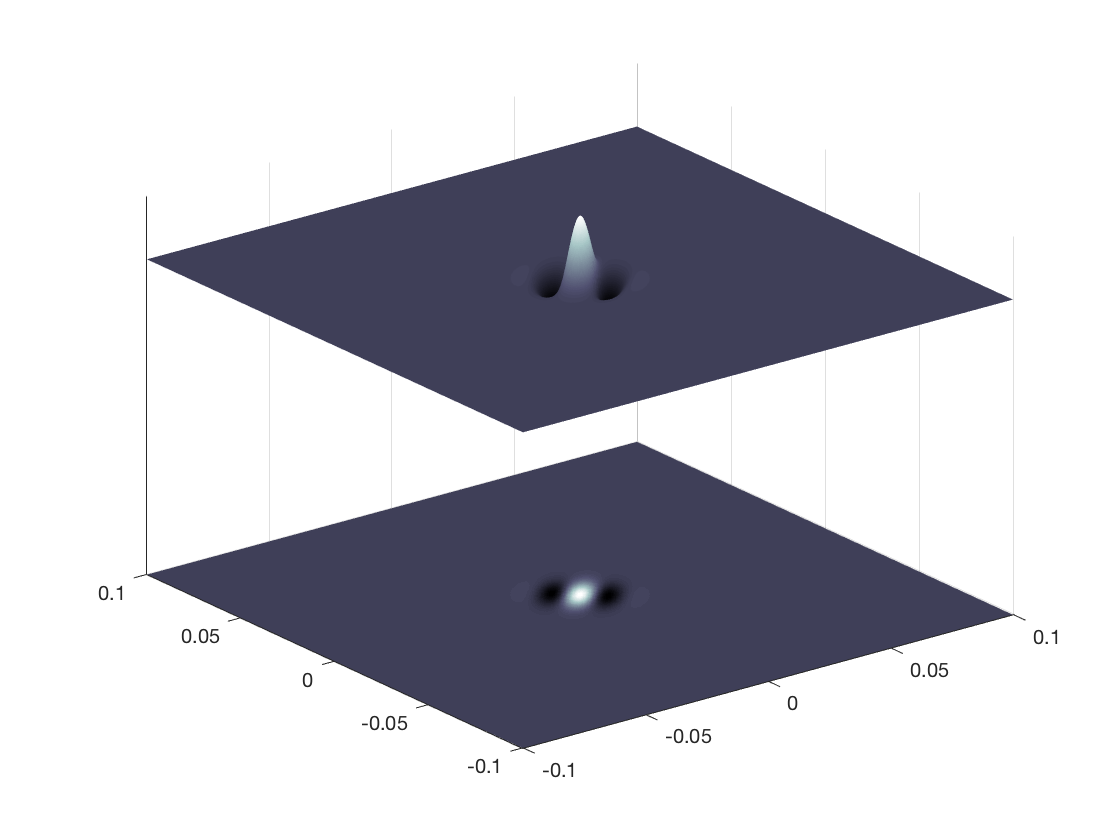}
  \end{minipage}
 \caption{On the left, the space profile and contour plot of a frame element with
 $j=2$. On the right, the same plot with a Gaussian window contracted by a factor
 of $2\pi\ln(16)$, which is better adapted for certain numerical examples. See Appendix
\ref{sec_num}.}
 \label{fig:packet}
\end{figure}

Although we are mainly interested in high frequency expansions:
$\sum_{\gamma \in \Gamma} f_\gamma \varphi_\gamma$, in order to expand an arbitrary function we
need to provide wavepackets adapted to the zeroth-scale. To keep the notation concise,
we let $\funczz := \varphi$, augment the index set $\Gamma$ by
\begin{align}
\label{eq:def_gammafull}
\Gammafull := \Gamma \cup \{(0,0,\lambda): \lambda \in \Lambda\},
\end{align}
and define zeroth-scale wavepackets as:
$\varphi_{0,0,\lambda} := \func(x-\lambda)$.
The complete set of wavepackets can be written as:
\begin{align*}
\frameset = \sett{\varphi_{\gamma}: \gamma \in \Gammafull}.
\end{align*}
We now show that the system thus constructed is indeed rich enough to represent any function.
\begin{theorem}
\label{T_FrProp}
For an adequate lattice $\Lambda \subseteq \Rdst$, the system
$\frameset$ is a frame for the inhomogeneous Sobolev spaces
$\Hs(\Rdst)$, with $-1 \leq s \leq 1$. More precisely, the frame operator
\begin{equation}
\label{eq_FROpt}
\frameop f = \sum_{\gamma \in \Gammafull} \ip{f}{\varphi_\gamma}  \varphi_\gamma
\end{equation}
is invertible on $H^s(\Rdst)$ for $-1\leq s\leq 1$. In addition, we have the norm equivalence
\begin{equation}
\label{eq_FrProp}
\norms{f}^2 \asymp \sum_{\gamma \in \Gammafull}4^{2js}\abs{\ip{f}{\varphi_\gamma}}^2.
\end{equation}
\end{theorem}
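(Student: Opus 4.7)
The plan is to follow the standard route for wave-atom type frames: establish the norm equivalence \eqref{eq_FrProp} directly for $s=0$ via a scale-by-scale Gabor analysis combined with Littlewood-Paley almost-orthogonality, then lift to general $s \in [-1,1]$ using the frequency localization of the packets, and finally deduce invertibility of $\frameop$ from the norm equivalence together with the self-adjointness of $\frameop$.

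First, I would verify the upper (Bessel) bound. At each scale $j \geq 1$, the family $\{\funcjkl\}_{k,\lambda}$ is, modulo the translation of the frequency center, a system of Gabor-type translates and modulations of the $L^2$-normalized dilate $2^{jd/2}\func(2^j \cdot)$, with time-frequency shifts along the lattice $2^{-j}\Lambda \times \{\pointjk\}$. Since $\func$ is a Gaussian, for any full rank $\Lambda$ of sufficient density the Gabor system $\{e^{2\pi i \pointjk x}\func(x-\mu): k, \mu \in \Lambda\}$ is a Bessel sequence (this is the window-dense regime for a Gaussian window), and the Bessel bound is inherited uniformly in $j$ by dilation invariance. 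Summing the Bessel estimates over $j$ against the characteristic function of $\corona_j$ on the Fourier side would give the total Bessel bound, but the $\funcjkl$ are not band-limited; so to handle the Gaussian tails I would apply a standard almost-orthogonality argument, controlling $\abs{\ip{\widehat{f}\chi_{\corona_{j'}}}{\widehat{\funcjk}}}$ by the rapid decay of $\widehat\funcjk$ away from $B_{2^j}(\pointjk)$ together with $\abs{\pointjk-\pointjkp{j'}{k'}}\asymp 4^{\jjmax}$ when $\abs{j-j'}$ is large. This yields a Schur-type estimate across scales and proves $\sum_\gamma \abs{\ip{f}{\func_\gamma}}^2 \lesssim \norm{f}_2^2$; the same argument with weights $4^{2js}$ on both sides gives the upper bound in \eqref{eq_FrProp} for all $s \in [-1,1]$.

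For the lower bound, I would construct an auxiliary partition of unity $\{\chi_j\}_{j \geq 0}$ on frequency space adapted to the coronas $\corona_j$, and for each fixed $j$ show that the Gabor system $\{\funcjkl\}_{k,\lambda}$ is a frame for its closed linear span, with frame constants uniform in $j$; this again uses the density of $\Lambda$ and the fact that the union $\bigcup_k B_{2^{j+1}}(\pointjk)$ covers $\corona_j$. Combined with the orthogonal-like Littlewood–Paley bound $\sum_j\norm{\chi_j(D)f}_2^2 \asymp \norm{f}_2^2$ (here truly requiring a smooth partition of unity, so that the cover in Figure \ref{F_PhSpT} is promoted to a genuine Littlewood–Paley decomposition), this yields the lower bound for $s=0$. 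To extend to $s \in [-1,1]$, I would exploit the fact that $\funcjkl$ has Fourier transform essentially supported in $B_{2^j}(\pointjk)$ where $\abs{\pointjk}\asymp 4^j$, so that $(1+\abs{\xi}^2)^{s/2}\widehat\funcjk \approx 4^{js}\widehat\funcjk$ up to lower-order terms; this gives $\norm{\funcjkl}_{H^s}\asymp 4^{js}$ and, more carefully, turns the $L^2$ norm equivalence into \eqref{eq_FrProp} after replacing $f$ by $(1-\Delta)^{s/2}f$ and tracking the $4^{2js}$ weight. Finally, since $\frameop$ is self-adjoint and positive, the two-sided inequality \eqref{eq_FrProp} implies $\frameop$ is a bounded isomorphism on $H^s$ by standard frame theory.

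The main obstacles will be two technical points. First, obtaining \emph{uniform} Gabor frame bounds in $j$ with a single lattice $2^{-j}\Lambda$ and modulation nodes $\{\pointjk\}$ whose spacing is only comparable to $2^j$ (not equal), which requires the flexibility of Gaussian Gabor frame theory under perturbations of the modulation grid. Second, making the cross-scale almost-orthogonality estimate quantitative enough to sum, given that the Gaussian tails of $\widehat\funcjk$ leak outside $\corona_j$ — this is the step that most closely parallels the wave-molecule machinery promised in Appendix \ref{AppendixB}, and in practice I would phrase it as a Schur-test estimate on the Gram-like kernel $\ip{\funcjk}{\funcjpkp{j'}{k'}{\lambda'}}$, exploiting the phase-space metric $d$ of Section \ref{sec:not} to control decay.
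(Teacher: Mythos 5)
Your upper (Bessel) bound is sound and runs parallel to what the paper does via wave molecules and a Schur test on the Gram kernel. The gap is in your lower bound. Knowing that, for each fixed $j$, the system $\{\funcjkl\}_{k,\lambda}$ is a frame \emph{for its closed linear span} $V_j$ with uniform constants only gives $\sum_{k,\lambda}\abs{\ip{f}{\funcjkl}}^2 \gtrsim \norm{P_{V_j}f}_2^2$, where $P_{V_j}$ is the orthogonal projection onto $V_j$. To feed this into the Littlewood--Paley identity $\sum_j \norm{\chi_j(D)f}_2^2 \asymp \norm{f}_2^2$ you would need $\norm{P_{V_j}f}_2 \gtrsim \norm{\chi_j(D)f}_2$ up to summable errors, i.e.\ that the frequency band $\corona_j$ is (almost) contained in $V_j$ and that the projection does not lose mass. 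That containment is not automatic for non-band-limited Gaussian packets and is in fact the crux of the theorem; as written, your argument assumes the conclusion at this step. The mechanism that actually closes it --- and the one the paper uses --- is a Daubechies-type criterion: by Poisson summation the frame operator splits on the Fourier side as $\frameop = \abs{\Lambda}^{-1}\bigl(S_0 + \sum_{\mu \in \DL\setminus\{0\}} S_\mu\bigr)$, where $S_0$ is the Fourier multiplier with symbol $\overf(\xi)=\sum_{j,k}2^{jd}\abs{\widehat{\funcjk}(\xi)}^2$. The covering condition forces $\overf \asymp 1$ (this is where $\corona_j \subseteq \bigcup_k B_{2^{j+1}}(\pointjk)$ enters quantitatively), so $S_0$ is invertible on $\Hs$, and the off-diagonal terms obey $\norm{S_\mu}_{\Hs\to\Hs}\lesssim e^{-c\abs{\mu}^2}$ by the Gaussian decay, so for $\Lambda=n^{-1}\mathbb{Z}^d$ with $n$ large a Neumann series gives invertibility. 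Your ``frame for the span'' claim needs to be replaced by this pointwise lower bound on the diagonal multiplier.

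A second, related issue is the order of your logic at the end: you derive the norm equivalence first and then claim invertibility of $\frameop$ on $\Hs$ ``by standard frame theory.'' That deduction is standard only for $s=0$, where $\frameop$ is self-adjoint and positive on the ambient Hilbert space. For $s\neq 0$ the operator in \eqref{eq_FROpt} is built from $L^2$ inner products and is not self-adjoint with respect to the $H^s$ inner product, so the two-sided coefficient estimate does not formally yield bounded invertibility on $H^s$; one needs either the perturbation argument above (which gives invertibility on every $\Hs$ directly) or a duality argument pairing $H^s$ with $H^{-s}$. The paper runs the implication in the opposite direction: invertibility is proved first via the Neumann series, and the norm equivalence \eqref{eq_FrProp} then follows by combining $\norms{f}\lesssim\norms{\frameop f}$ with the Bessel bounds applied to both the analysis and synthesis sides.
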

We remark that in \eqref{eq_FROpt} and \eqref{eq_FrProp} the symbol $\ip{f}{\varphi_\gamma}$
denotes the standard
$L^2$ inner product. The theorem is proved using a variant of Daubechies's criterion for wavelets.
Details are provided in Appendix \ref{sec_frame_proofs}. The construction provides a concrete
criterion to choose the lattice
$\Lambda$ and
$A \norms{f} \leq \norms{\frameop f} \leq B \norms{f}$ can be satisfied
with $B/A$ reasonably small (see Figure \ref{fig:control}). Hence, the numerical inversion of
$\frameop$ is
well-conditioned.

From now on we fix a lattice $\Lambda$ such that the conclusion of Theorem \ref{T_FrProp} holds.

As a consequence of Theorem \ref{T_FrProp},
every $f \in H^s(\Rdst)$ can be represented by an $H^s$-convergent series
\begin{equation}
\label{eq_repres}
f = \sum_{\gamma \in \Gammafull} f_\gamma \varphi_\gamma,
\qquad \mbox{with } f_\gamma := \ip{f}{\frameop^{-1}\varphi_\gamma},
\end{equation}
and
\begin{align*}
\norm{f}^2_{H^s} \asymp \norm{\frameop^{-1} f}^2_{H^s} \asymp
\sum_{\gamma \in \Gammafull} 4^{2js}\abs {f_\gamma}^2.
\end{align*}
\subsection{Operating on the frame expansion}
\label{sec_cutoff}
We will be mostly interested in the higher scales $j \geq 1$. We can truncate the representation in
\eqref{eq_repres},
\begin{equation}
\label{eq_repres_trunc}
\tilde f = \sum_{\gamma \in \Gamma} f_\gamma \varphi_\gamma,
\end{equation}
and it is easy to see that the error can be bounded as
\begin{align}
\label{eq_trunc_error}
\norm{f-\tilde f}_{H^{1}} \lesssim \norm{f}_{H^{-1}}.
\end{align}
Hence, in the highly oscillatory regime, we only need to consider expansions of the form
\eqref{eq_repres_trunc}.

More generally, we use pseudodifferential cut-offs to operate microlocally on a function $f$ and we
wish to approximately implement those operations by acting directly on the expansion
in
\eqref{eq_repres_trunc}. We recall that a symbol $\psym:\Rdst\times\Rdst \to \bC$ belongs to the
H\"ormander class $S^{0}_{1,0}(\Rdst\times\Rdst)$
if
\begin{align*}
\abs{\partial_x^\beta \partial_\xi^\alpha \psym(x,\xi)}
\leq C_{\alpha,\beta} (1+\abs{\xi})^{-\abs{\alpha}},
\end{align*}
for all multi-indices $\alpha,\beta$. The next lemma will be an important technical tool, and is
proved in more generality in Appendix \ref{AppendixC}.
\begin{theorem}
\label{th_frame_cutoff}
Let $\psym \in S^{0}_{1,0}(\Rdst\times\Rdst)$. Then, for $s \in [1/2,1]$
and $f \in H^s(\Rdst)$,
\begin{align}
\norm{\psym(x,D) f - \sum_{\gamma\in\Gamma} \psym(2^{-j}\lambda,\pointjk) f_\gamma
\varphi_\gamma}_{H^s}
\lesssim \norm{f}_{H^{s-1/2}}.
\end{align}
(Here, $\psym(x,D)$ is the Kohn-Nirenberg quantization of $\psym$,
and $f_\gamma := \ip{f}{\frameop^{-1}\varphi_\gamma}$ are the high-scale frame coefficients
of $f$.)
\end{theorem}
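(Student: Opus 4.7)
The plan is to reduce the statement to a frame-theoretic dispersion estimate in the spirit of the pseudodifferential almost-diagonalization lemmas for curvelets (as in \cite{dHHSU}), with the role of the tight localization played by the wave-molecule framework of Appendix \ref{AppendixB}. Using the frame expansion \eqref{eq_repres}, convergent in $H^s$ for $|s|\leq 1$, and the $L^2$-boundedness of $\psym(x,D)$ (which extends to $H^s$ for $\psym\in S^0_{1,0}$), I rewrite
\begin{align*}
\psym(x,D)f - \sum_{\gamma\in\Gamma}\psym(2^{-j}\lambda,\pointjk)f_\gamma\varphi_\gamma
= \underbrace{\sum_{\gamma\in\Gammafull\setminus\Gamma} f_\gamma \psym(x,D)\varphi_\gamma}_{=: E_0}
+ \sum_{\gamma\in\Gamma} f_\gamma\,\phi_\gamma,
\end{align*}
where $\phi_\gamma := \bigl(\psym(x,D) - \psym(2^{-j}\lambda,\pointjk)\bigr)\varphi_\gamma$. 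The low-scale term $E_0$ is handled trivially, since only finitely many scales are involved and $\psym(x,D)$ is bounded on $H^s$; the whole point is to control $\sum_{\gamma\in\Gamma} f_\gamma \phi_\gamma$ in $H^s$ by $\|f\|_{H^{s-1/2}}^2 \asymp \sum_\gamma 4^{2j(s-1/2)}|f_\gamma|^2$.

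The central step is to show that for every $\gamma=(j,k,\lambda)\in\Gamma$, the function $\phi_\gamma$ is a \emph{wave molecule} at scale $j$, centered at $(2^{-j}\lambda,\pointjk)$, with a gain factor of $4^{-j/2}$ in its amplitude; i.e., $\phi_\gamma = 4^{-j/2}\, m_\gamma$ with $m_\gamma$ satisfying the molecule estimates of Appendix \ref{AppendixB} uniformly in $\gamma$. To see this, write
\begin{align*}
\phi_\gamma(x) = \int_{\Rdst} e^{2\pi i x\xi}\bigl[\psym(x,\xi) - \psym(2^{-j}\lambda,\pointjk)\bigr]\widehat{\varphi}_\gamma(\xi)\, d\xi,
\end{align*}
and Taylor-expand the bracketed symbol around $(2^{-j}\lambda,\pointjk)$. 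On the essential frequency support of $\widehat{\varphi}_\gamma$ one has $|\xi-\pointjk|\lesssim 2^j$ and $|\xi|\asymp 4^j$, so each $\xi$-derivative of $\psym$ contributes $(1+|\xi|)^{-1}\asymp 4^{-j}$ while each factor $(\xi-\pointjk)$ contributes at most $2^j$; similarly, on the essential spatial support of $\varphi_\gamma$ one has $|x-2^{-j}\lambda|\lesssim 2^{-j}$, and $\partial_x^\beta\psym$ is uniformly bounded. Each mixed term therefore yields a gain of $2^{-j}=4^{-j/2}$, while the Gaussian-like localization and smoothness of $\varphi_\gamma$ are preserved up to acceptable losses controlled by derivatives of $\psym$; these are precisely the molecule bounds. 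The remainder of the Taylor expansion is handled by integration by parts in $\xi$ against the phase $e^{2\pi i(x-2^{-j}\lambda)\xi}$, converting $\xi$-derivatives of $\psym$ into additional spatial decay of $\phi_\gamma$ around $2^{-j}\lambda$ at the natural molecular scale $2^{-j}$.

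Once $\phi_\gamma = 4^{-j/2} m_\gamma$ is established, the Bessel-type inequality for wave molecules from Appendix \ref{AppendixB}, which reads $\bigl\|\sum_\gamma c_\gamma m_\gamma\bigr\|_{H^s}^2 \lesssim \sum_\gamma 4^{2js}|c_\gamma|^2$ for $|s|\leq 1$, applied with $c_\gamma = 4^{-j/2} f_\gamma$, gives
\begin{align*}
\Bigl\|\sum_{\gamma\in\Gamma} f_\gamma \phi_\gamma\Bigr\|_{H^s}^2
\lesssim \sum_{\gamma\in\Gamma} 4^{2js}\cdot 4^{-j}|f_\gamma|^2
= \sum_{\gamma\in\Gamma} 4^{2j(s-1/2)}|f_\gamma|^2
\lesssim \|f\|_{H^{s-1/2}}^2,
\end{align*}
using \eqref{eq_FrProp}. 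Combined with the trivial bound on $E_0$ this yields the theorem.

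The main obstacle is the verification that $m_\gamma := 4^{j/2}\phi_\gamma$ meets the wave molecule hypotheses of Appendix \ref{AppendixB} uniformly in $\gamma\in\Gamma$: one must control all spatial and frequency derivatives of $\phi_\gamma$ at the correct scales $2^{-j}$ and $2^j$, not merely pointwise size, and ensure that the Gaussian-like decay is inherited through the Taylor remainder, whose many terms must be organized so that the $S^0_{1,0}$ hypothesis is used uniformly. The restriction $s\geq 1/2$ enters through the strength of the Bessel bound that the wave molecule framework provides; at $s=1/2$ the gain $4^{-j/2}$ exactly compensates the loss from molecule overlap, and at $s=1$ one gains the full regularity margin allowed by the frame norm equivalence.
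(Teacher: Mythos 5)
Your proposal follows essentially the same route as the paper: the central claim that $\bigl(\psym(x,D)-\psym(2^{-j}\lambda,\pointjk)\bigr)\varphi_\gamma$ equals $2^{-j}$ times a wave molecule is exactly Theorem \ref{th_diag}, proved there by the same first-order Taylor expansion of the symbol at $(2^{-j}\lambda,\pointjk)$, and the conclusion then follows from the molecule Bessel bounds of Lemma \ref{lemma_bes_mol} together with the coarse-scale truncation bound \eqref{eq_trunc_error}, just as you describe. The one ingredient your sketch leaves implicit is the smooth frequency cut-off $\eta_\gamma(D)$ localizing to $B_{\varepsilon 4^j}(\pointjk)$ (Lemma \ref{lemma_loc}), which is what turns your heuristic ``on the essential frequency support $|\xi|\asymp 4^j$'' into the rigorous statement that the Taylor-remainder symbols lie in $S^{0}_{1,0}$ and $S^{-1}_{1,0}$ uniformly in $\gamma$.
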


\subsection{Gaussian Beams}\label{sec:GB}
We summarize the construction of Gaussian beams, following the treatment of Katchalov, Kurylev and
Lassas \cite{Kat}.
Consider the wave equation,
\begin{equation}\begin{split}
   \p_{t}^{2} u(t,x) - \speed^{2}(x) \Delta_x u(t,x) &{}= 0
\label{Eq4}
\end{split}\end{equation}
with $\speed \in C^{\infty}(\R^{d})$ (strictly) positive with bounded derivatives
of all orders. We  seek formal asymptotic solutions (in a
``moving'' frame of reference) of the form
\begin{equation}\label{GB_Def}
   \Phi(t,x) =  A(t,x) \me^{\mi \omega \theta(t,x)},
\end{equation}
where the phase function $\theta$ and amplitude function $A$ are
smooth complex-valued functions of $(t,x)$, and $\omega$ is the frequency
parameter. Substituting this asymptotic trial solution into the wave
equation, and extracting the leading order terms in $\omega$, one finds
the eikonal and transport equations,
\begin{align}
   (\p_{t} \theta)^{2}
         - c |\nabla_{x} \theta|^{2} &{}= 0,
\label{EIK}\\
   2\p_{t} A \, \p_{t} \theta - 2\speed^{2} \nabla_{x} A \cdot
         \nabla_{x} \theta + A \, (\p_{t}^2 \theta
              - \speed^{2} \Tr(\Delta_{x} \theta)) &{}= 0.
\label{TRA}
\end{align}
We factorize the eikonal equation into
\begin{equation}\label{eq:ham}
   \p_{t} \theta^{\pm} + H^{\pm}(x,\nabla_x \theta^{\pm}) = 0,
\end{equation}
where $\theta^{\pm}$ corresponds to positive and negative frequencies respectively, and
\begin{equation}
\label{eq:hams}
H^{\pm}(x,p) = \pm \speed(x) |p|,
\end{equation}
are the signed Hamiltonians. The propagation of singularities of solutions to the wave equation is
described by the bicharacteristics, $x^{\pm}(t)$, $p^{\pm}(t)$, satisfying the Hamilton system
\begin{equation}\label{eq_ode1}
   \dot{x}^\pm(t) = \phantom{-}\partial_p H^\pm,\quad
   \dot{p}^\pm(t) = -\partial_x H^\pm,
\end{equation}
supplemented with initial conditions, $x^\pm(0) = x_0$, $p^\pm(0) = p_0$. For the sake of
simplicity, we drop the superscript $\pm$ when we do not need to differentiate the two solutions.
In particular, $H$ denotes either $H^+$ or $H^-$.

A Gaussian beam is a solution of the type \eqref{GB_Def}, the phase function of which is assumed
to satisfy
the conditions,
\begin{align}
   \Im \theta(t,x(t)) &{}= 0,
\label{Pf2}\\
   \Im \theta(t,x) &{} \geq F_0(t) \abs{x - x(t)}^2,
\label{Pf1}
\end{align}
where $F_0(t)$ is a continuous positive function. To construct such a
solution, one expands the phase function to second order,
\begin{equation}\label{PHEXP}
   \theta(t,x) = \theta_{0}(t) + p(t)\cdot(x - x(t))
               + \tfrac{1}{2} (x - x(t))^T M(t) (x - x(t)),
\end{equation}
and the amplitude function to zero order,
\begin{equation}
   A(t,x) = A(t,x(t)) = A(t).
\end{equation}
The phase function along the characteristic $\theta_{0}(t) =
\theta(t,x(t))$ satisfies
\[
   \dot{\theta_0}(t) = (\partial_t \theta)(t,x(t))
           + p(t) \cdot (\p_{p} H)(t,x(t))) = 0
\]
in view of the homogeneity of $H$; hence, $\theta_0(t)$ can be taken
to be zero.

The matrix  $M$ satisfies the Riccati
equation,
\begin{equation}\label{eq_ode2}
   \dot{M}(t) + D(t) + B(t) M(t) + M(t) B(t)^{t} + M(t) X(t) M(t) = 0,
\end{equation}
where $B(t), X(t), D(t)$ are $d \times d$ matrices with elements given by the
second-order derivatives of the Hamiltonian,
\[
   D_{ij}(t) = \p_{x_{i}}\p_{x_{j}}H, \quad
   B_{ij}(t) = \p_{x_{i}}\p_{p_{j}}H, \quad
   X_{ij}(t) = \p_{p_{i}}\p_{p_{j}}H,
\]
evaluated along the bicharacteristic
$(x,p) = (x(t), p(t))$. The Riccati equation is supplemented with the
initial condition $M(0) = M_0$. The symplectic structure of the
Hamilton system implies that $M(t)$ is symmetric and has a
positive definite imaginary part provided that it initially does
(see Lemma \ref{lemma_unif_flow_2} and \cite[Lemma 2.56]{Kat}).

The amplitude function $A$ satisfies the
transport equation
\begin{equation}\label{eq_ode3}
   \dot{A}(t) + \frac{A(t)}{2H}
   \left(\mstrut{0.45cm}\right.
   c^{2} \Tr(M(t)) - \p_{p} H \cdot \p_{x} H
                    - (\p_{p} H)^{T} M(t) \, \p_{p} H
                              \left.\mstrut{0.45cm}\right) = 0,
\end{equation}
where $H$ and its derivatives are evaluated along the
bicharacteristic $(x,p) = (x(t), p(t))$. This equation is
supplemented with the initial condition $A(0) =
A_0$. We find solutions, $M(t)=M^\pm(t)$ and
$A(t)=A^\pm(t)$; the latter can
written as
\begin{equation}\label{AMPL}
   A(t) = \frac{A_{0}}{|\det Y(t)|^{1/2}}\
      \exp\left( \int_{0}^{t} \Tr[X(x(s),p(s)] \, ds \right)
\end{equation}
with
\begin{equation*}
   \dot{Y}(t) - B^t Y(t) - X M Y(t) = 0
   \quad \text{and}\quad Y_{0} = I,
\end{equation*}
see \cite[Lemma 6.3]{Lex1} for details.

\begin{figure}[!tbp]
  \centering
  \begin{minipage}[l]{0.45\textwidth}
    \includegraphics[width=\textwidth]{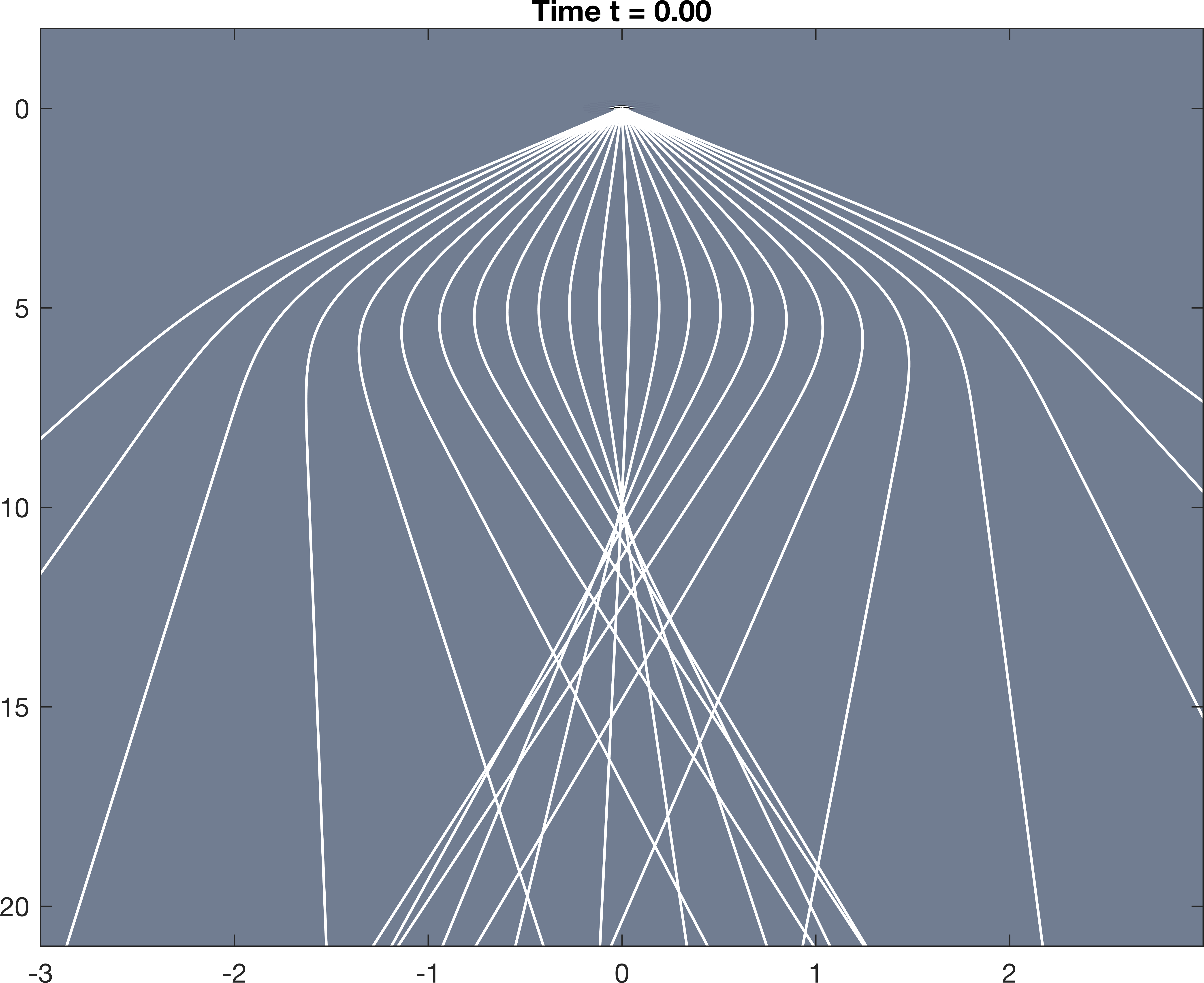}
  \end{minipage}
    \begin{minipage}[l]{0.45\textwidth}
    \includegraphics[width=\textwidth]{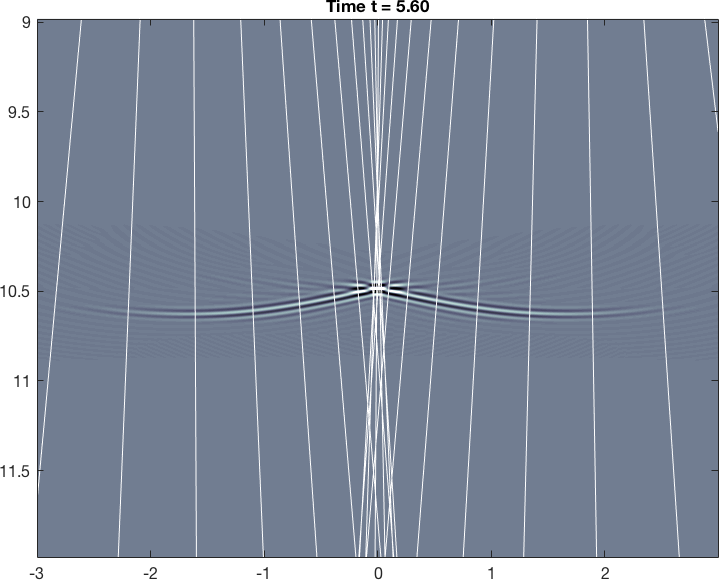}
  \end{minipage}
\caption{The projected characteristics
corresponding to the velocity $\speed(x_1,x_2) = 2 - 0.4*\exp{(-(x_1^2 +(x_2-5)^2)/3)}$,
and a detail of the evolution of a front of Gaussian packets. Initially localized on the
boundary, the front goes through a caustic at time $t\approx 5.60$.
See Appendix \ref{sec_num}.}
\label{fig:caustic}
\end{figure}

\begin{figure}
  \centering
  \begin{minipage}[l]{0.45\textwidth}
    \includegraphics[resolution=400, width=\textwidth]{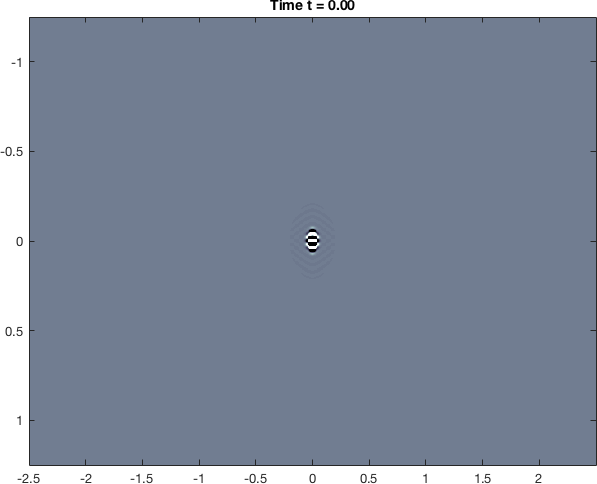}
  \end{minipage}
    \begin{minipage}[l]{0.45\textwidth}
    \includegraphics[resolution=400, width=\textwidth]{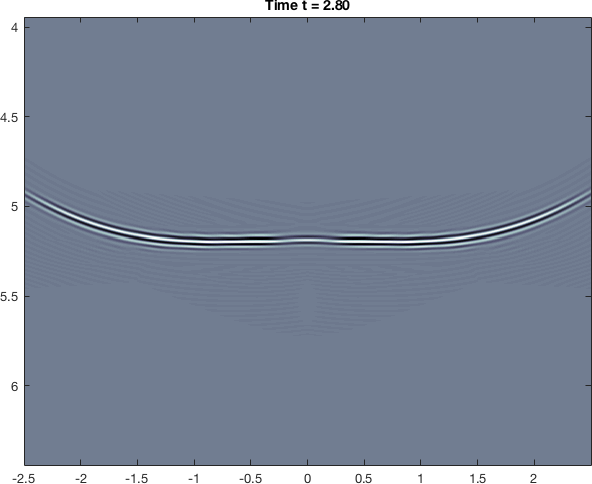}
  \end{minipage}
    \begin{minipage}[l]{0.45\textwidth}
    \includegraphics[resolution=400, width=\textwidth]{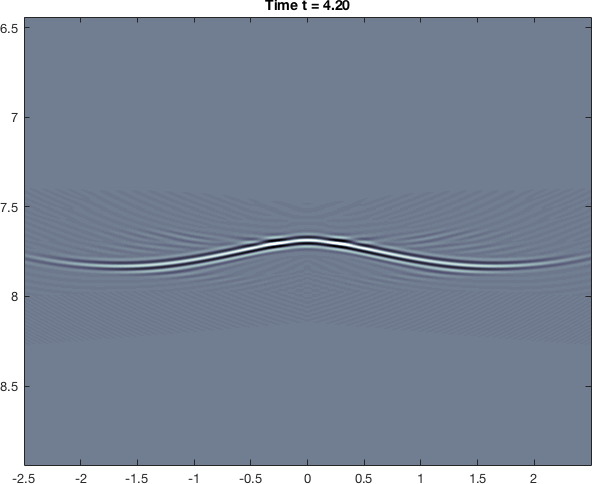}
  \end{minipage}
  \begin{minipage}[l]{0.45\textwidth}
    \includegraphics[resolution=400, width=\textwidth]{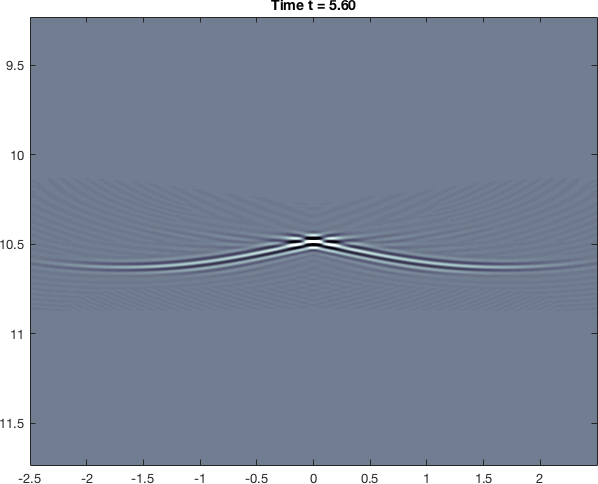}
  \end{minipage}
  \begin{minipage}[l]{0.45\textwidth}
    \includegraphics[resolution=400, width=\textwidth]{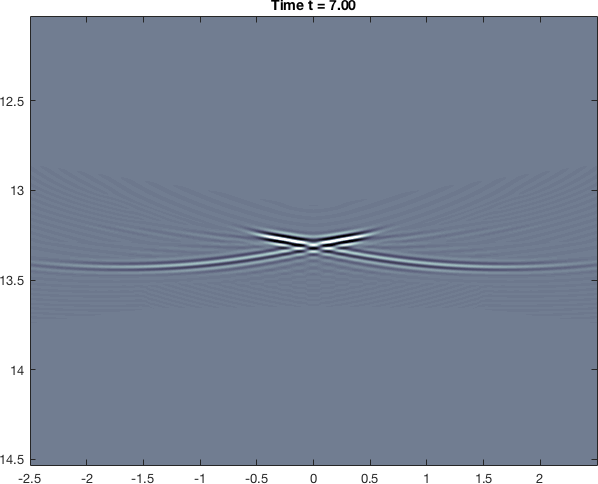}
  \end{minipage}
  \begin{minipage}[l]{0.45\textwidth}
    \includegraphics[resolution=400, width=\textwidth]{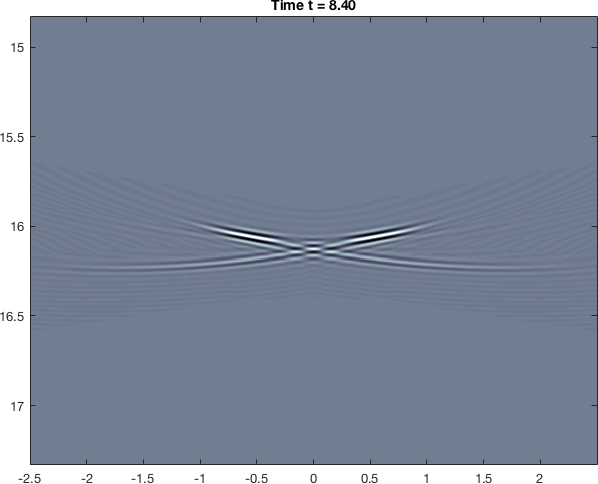}
  \end{minipage}
\caption{Detailed evolution of the wavefront corresponding to Figure~\ref{fig:caustic}.}
\label{fig:caustic2}
\end{figure}

In what follows, we discuss families of Gaussian beams, with the goal of describing superimpositions
and time evolution. See Figures \ref{fig:caustic} and \ref{fig:caustic2} for plots of a front of
Gaussian beams going through a caustic, and Appendix \ref{sec_num} for details on the example.

\subsection{Sets of initial conditions for Gaussian beams}
\label{sec_sets}
We consider families of Gaussian beams associated with sets of parameters described as follows. We
let $\Gamma_0$ be a subset of $\Gamma$ and $\SIC$ be a map
\begin{equation}\label{eq_set_param}
\SIC_{\gamma}=(\omega_\gamma, a_\gamma, \xi_\gamma, \newA_{\gamma},\newM_\gamma) \in
\Rst_+ \times \Rdst \times (\Rdst \setminus \sett{0})\times (\Rst \setminus \{0\}) \times \bC^{d
\times d},
\qquad \gamma \in \Gamma_0,
\end{equation}
such that $\newM_\gamma$ is symmetric and $\Im \newM_\gamma >0$
for all $\gamma \in \Gamma_0$. We associate two functions
$\Phi^+_{\gamma},\Phi^-_{\gamma}:\Rst \times \Rdst \to \bC$ that we now describe.
To simplify the notation we drop the superscript $+$,$-$. Let $x_\gamma (t)$, $p_\gamma (t)$,
$M_\gamma (t)$, $A_\gamma
(t)$ be the solutions to the set of ODEs defined in \eqref{eq_ode1}, \eqref{eq_ode2} and
\eqref{eq_ode3}, supplemented
with initial conditions:
\begin{align}
\label{eq_ic1}
&x|_{t=0} = a_\gamma,
\\
\label{eq_ic2}
&p|_{t=0}= 2 \pi \frac{\xi_\gamma}{\omega_\gamma},
\\
\label{eq_ic3}
&M|_{t=0}= 2 \pi \newM_\gamma,
\\
\label{eq_ic4}
&A|_{t=0}= \newA_\gamma {\omega_\gamma}^{\frac{d}{4}}.
\end{align}
We now define the beams by
\begin{align}\label{eq:GB_elem}
\Phi_{\gamma}(t,x) = A_\gamma(t) e^{i \omega_{\gamma} \theta_\gamma(t,x)},
\end{align}
with
\begin{align}\label{eq:GB_phase}
\theta_\gamma (t,x) = p_\gamma(t)\cdot (x-x_\gamma(t))
+ \tfrac{1}{2} (x-x_\gamma(t))\cdot M_\gamma(t) (x-x_\gamma(t)).
\end{align}
The ODEs in \eqref{eq_ode1}, \eqref{eq_ode2} and \eqref{eq_ode3} have globally defined unique
solutions
for the initial conditions given by \eqref{eq_ic1}-\eqref{eq_ic4}. Indeed, the system of ODEs in
\eqref{eq_ode1} is the flow
associated with the Hamiltonian $H$ and, due to the homogeneity of $H(x,p)$ in $p$, it is
solvable as long as the
initial condition $p|_{t=0}$ is non-zero. That is why we require that $\xi_\gamma \not= 0$.
Once the Hamiltonian flow $(x,p)$ is defined,
\eqref{eq_ode2} has a globally defined unique solution because
the initial datum is symmetric and has a positive imaginary part
\cite[Lemma 2.56]{Kat}. Finally, \eqref{eq_ode3} has also a unique global solution,
since it is a linear ODEs with continuous coefficients.

\begin{rem}
When we need to emphasize the dependence on the choice of sign for $H$ we write: $\Phi^\pm_\gamma$,
$x_\gamma^\pm(t),
p_\gamma^\pm(t), M_\gamma^\pm(t), A_\gamma^\pm(t)$. We stress that these functions depend not only
on the index
$\gamma$, but also on the underlying map from \eqref{eq_set_param}, that describes how to associate
with $\gamma$
initial conditions for the ODEs defining the beam. When we need to stress this dependence we use
further superscripts.
\end{rem}

\begin{rem}
\label{rem:upsilon}
By abuse of language, we often refer to a set of GB parameters
$\Upsilon = \sett{\SIC_\gamma: \gamma \in \Gamma_0}$, although it is not the set $\Upsilon$, but the
underlying map $\SIC$ that matters. Hence, $\Upsilon$ should be considered as an indexed set, that is
formally equivalent to the map $\SIC$.
\end{rem}

\subsection{Standard initial conditions}\label{sec:stset}
We now describe the canonical set of GB parameters, defined so that the corresponding Gaussian beams
at time $t=0$ coincide with the higher-scale part of the frame $\frameset$. We define \emph{the
standard set of Gaussian beam parameters} as
the set $\Upsilon^{st}=\sett{\alphast_\gamma: \gamma \in \Gamma}$ given by
\begin{align}
\label{eq_standard}
\alphast_\gamma = (4^j,2^{-j}\lambda, \pointjk, 2^{\frac{d}{4}}, \ii I_d),
\qquad \gamma = (j,k,\lambda) \in \Gamma.
\end{align}
The corresponding beams are denoted
$\{\Phistpm_{\gamma}: \gamma \in \Gamma\}$.
\begin{obs}\label{ob_beam_frame}
For the standard set of parameters $\Upsilonst$:
\[
\Phistpm_{\gamma}(0,x) = \funcjkl(x), \qquad \gamma = (j,k,\lambda) \in \Gamma.
\]
\end{obs}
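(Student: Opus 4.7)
The plan is to carry out a direct verification, substituting the standard initial conditions \eqref{eq_standard} into the formulas \eqref{eq:GB_elem} and \eqref{eq:GB_phase} at $t=0$ and checking that the result matches the explicit expression \eqref{eq_def_funcs} for $\funcjkl$. Since no ODE needs to be integrated (we only evaluate at time zero), the argument reduces to bookkeeping of normalization constants and scaling factors.

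First I would read off, from \eqref{eq_ic1}--\eqref{eq_ic4} applied to $\alphast_\gamma = (4^j, 2^{-j}\lambda, \pointjk, 2^{d/4}, \ii I_d)$, the initial values
\begin{align*}
x_\gamma(0) &= 2^{-j}\lambda, \quad p_\gamma(0) = 2\pi \pointjk/4^j, \quad M_\gamma(0) = 2\pi \ii I_d, \quad A_\gamma(0) = 2^{d/4} \cdot 4^{jd/4},
\end{align*}
and insert them into \eqref{eq:GB_phase}, obtaining
\begin{align*}
\theta_\gamma(0,x) = \tfrac{2\pi \pointjk}{4^j}\cdot(x - 2^{-j}\lambda) + \pi \ii \, |x - 2^{-j}\lambda|^2.
\end{align*}

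Next I would multiply by $\omega_\gamma = 4^j$ and use the elementary identity $4^j |x - 2^{-j}\lambda|^2 = |2^j x - \lambda|^2$ to rewrite
\begin{align*}
\omega_\gamma \theta_\gamma(0,x) = 2\pi \pointjk \cdot (x - 2^{-j}\lambda) + \pi \ii \, |2^j x - \lambda|^2,
\end{align*}
so that, after exponentiating and multiplying by $A_\gamma(0) = 2^{d/4}\cdot 2^{jd/2}$, one gets
\begin{align*}
\Phistpm_\gamma(0,x) = 2^{jd/2}\,e^{2\pi \ii \pointjk(x - 2^{-j}\lambda)}\cdot 2^{d/4} e^{-\pi|2^j x - \lambda|^2}.
\end{align*}
Finally I would observe that the second factor is precisely $\func(2^j x - \lambda)$ by \eqref{eq_norm_Gauss}, so the right-hand side coincides with $\funcjkl(x)$ as given by \eqref{eq_def_funcs}. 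Note that the computation is insensitive to the choice of sign in $H^\pm$, since the Hamiltonian enters only through the ODE dynamics for $t>0$ and not through the initial data; this justifies the uniform conclusion for both $\Phi^{st,+}_\gamma$ and $\Phi^{st,-}_\gamma$.

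There is really no obstacle here: the result is a tautological consistency check ensuring that the slightly unusual scaling built into \eqref{eq_ic2}--\eqref{eq_ic4} (the factors of $2\pi$, the rescaled momentum $p_0 = 2\pi \xi_\gamma/\omega_\gamma$, and the amplitude normalization by $\omega_\gamma^{d/4}$) is exactly what is needed so that the resulting beam's initial profile reproduces the frame element $\funcjkl$. The only point requiring a moment of attention is matching the factors of $2\pi$ in the definitions of $p_\gamma(0)$ and $M_\gamma(0)$ against the $2\pi \ii$ in the exponent of $\func$; this is the arithmetic sanity check that the standard parameters were \emph{defined} to satisfy.
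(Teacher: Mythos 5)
Your verification is correct and is exactly the paper's argument: the paper proves the observation in one line by substituting \eqref{eq_standard} into \eqref{eq_ic1}--\eqref{eq_ic4} and \eqref{eq:GB_elem}--\eqref{eq:GB_phase}, and you have simply carried out that substitution explicitly, including the correct bookkeeping of the $2\pi$ factors and the identity $4^j|x-2^{-j}\lambda|^2=|2^jx-\lambda|^2$. Your remark that the choice of mode $\pm$ is irrelevant at $t=0$ is also accurate.
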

This follows by substituting \eqref{eq_standard} into  \eqref{eq_ic1}-\eqref{eq_ic4} and
\eqref{eq:GB_elem}-\eqref{eq:GB_phase}.

\subsection{Properties of the defining ODEs}
We now show that certain uniformity properties of a family of Gaussian beams parameters imply
corresponding uniformity properties for the ODEs defining the beams.
\begin{lemma}
\label{lemma_unif_flow_1}
Let $\{\SIC_{\gamma}: \gamma \in \Gamma_0\}$ be a set of GB parameters. Assume that there exist
$0 < \cons \leq \Cons$
such that $\cons \abs{\xi_\gamma} \leq \omega_{\gamma} \leq \Cons \abs{\xi_\gamma}$. Let $T>0$.
Then the following estimates hold for
$\gamma, \gamma' \in \Gamma_0$ and $t \in [-T,T]$:
\begin{align}
\label{eq:hypFLW_3}
&|a_{\gamma}-a_{\gamma'}|^2 \leq
|x_\gamma(t)-x_{\gamma'}(t)|^2 + C_T,
\:
|x_\gamma(t)-x_{\gamma'}(t)|^2 \leq |a_{\gamma}-a_{\gamma'}|^2 + C_T,
\\[.71em]
\label{eq:hypFLW_3c}
&|p_\gamma(t)| \asymp |p_\gamma(0)| \asymp 1,
\\[.71em]
\label{eq:hypFLW_3b}
&|\dot{x}_\gamma(t)|, |\dot{p}_\gamma(t)| \lesssim 1,
\\[.71em]
&d\Big(
\big(x_\gamma(t),\omega_{\gamma}p_\gamma(t)\big),\big(x_{\gamma'}(t),\omega_{\gamma'}p_{\gamma'}
(t)\big)\Big)
\asymp
d\Big( \big(a_{\gamma},\xi_\gamma\big),\big(a_{\gamma'},\xi_{\gamma'}\big)\Big),\label{eq:hypFLW_2}
\end{align}
where the constant $C_T$ and the implied constants depend on $T$, $\cons$ and $\Cons$ but
not on the particular pair of parameters $\gamma,\gamma'$.
\end{lemma}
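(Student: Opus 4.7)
The plan is to establish the four conclusions in the stated order, each essentially a consequence of the Hamilton system together with the homogeneity of degree one in $p$ of $H^\pm(x,p) = \pm c(x)|p|$, the boundedness of $c$ and its derivatives, and the hypothesis $\omega_\gamma \asymp |\xi_\gamma|$.

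I would begin with \eqref{eq:hypFLW_3c}. The Hamiltonian is constant along the flow, so $c(x_\gamma(t))|p_\gamma(t)| = c(a_\gamma)|p_\gamma(0)|$ for all $t$. Since $c$ is bounded above and below by \eqref{eq_speed_bounds}, $|p_\gamma(t)| \asymp |p_\gamma(0)| = 2\pi|\xi_\gamma|/\omega_\gamma$, which is $\asymp 1$ by the standing hypothesis on $\omega_\gamma$. From this, \eqref{eq:hypFLW_3b} is immediate: $|\dot x_\gamma| = |\partial_p H| = c(x_\gamma) \lesssim 1$ and $|\dot p_\gamma| = |\partial_x H| = |\nabla c(x_\gamma)||p_\gamma| \lesssim 1$ since $\nabla c$ is bounded. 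Then \eqref{eq:hypFLW_3} follows by integrating $|\dot x_\gamma| \lesssim 1$ on $[0,t]$ to get $|x_\gamma(t)-a_\gamma| \leq CT$, and applying the triangle inequality to
\[
(x_\gamma(t)-x_{\gamma'}(t)) - (a_\gamma - a_{\gamma'}) = (x_\gamma(t)-a_\gamma) - (x_{\gamma'}(t)-a_{\gamma'}).
\]

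The phase-space comparison \eqref{eq:hypFLW_2} is the heart of the lemma. The right move is to change variables to $q_\gamma(t) := \omega_\gamma p_\gamma(t)$, so $q_\gamma(0) = 2\pi\xi_\gamma$. Since $H^\pm$ is degree-one homogeneous in $p$, the pair $(x_\gamma(t),q_\gamma(t))$ satisfies the Hamilton system $\dot x = c(x) q/|q|$, $\dot q = -\nabla c(x)|q|$, with no explicit $\omega_\gamma$-dependence; hence the trajectory depends only on $(a_\gamma,\xi_\gamma)$. Conservation of $c(x)|q|$ together with the hypothesis gives $|q_\gamma(t)| \asymp |\xi_\gamma|$, so the ``coefficient'' $|q_\gamma(t)||q_{\gamma'}(t)|$ in the left-hand side is comparable to $|\xi_\gamma||\xi_{\gamma'}|$. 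What remains is to compare the two quantities
\[
|x_\gamma(t) - x_{\gamma'}(t)|^2 \;\text{vs.}\; |a_\gamma - a_{\gamma'}|^2
\qquad\text{and}\qquad
|q_\gamma(t)-q_{\gamma'}(t)|^2 \;\text{vs.}\; |\xi_\gamma-\xi_{\gamma'}|^2
\]
\emph{jointly} in the phase-space metric.

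The key step will be to exploit homogeneity to write $\xi_\gamma = |\xi_\gamma|\hat\xi_\gamma$ with $\hat\xi_\gamma \in S^{d-1}$ and factor the flow: by rescaling $q$, the trajectory can be expressed as $x_\gamma(t) = X(t,a_\gamma,\hat\xi_\gamma)$ and $q_\gamma(t) = |\xi_\gamma|\,Q(t,a_\gamma,\hat\xi_\gamma)$ for smooth maps $X,Q$ defined on $\mathbb{R}^d \times S^{d-1}$; these have bounded derivatives on $[-T,T]$ by the variational (Jacobi) equation combined with Gronwall, since the linearized system in the $(x,\hat q)$ coordinates has bounded coefficients on compacts in $x$. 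This yields a two-sided bi-Lipschitz estimate of the form $|x_\gamma(t)-x_{\gamma'}(t)| \asymp |a_\gamma - a_{\gamma'}| + |\hat\xi_\gamma - \hat\xi_{\gamma'}|$ (and the time-reversed flow gives the opposite direction). Substituting this, together with $|q_\gamma(t)-q_{\gamma'}(t)| \asymp |\xi_\gamma - \xi_{\gamma'}| + (|\xi_\gamma|+|\xi_{\gamma'}|)(|a_\gamma-a_{\gamma'}|+|\hat\xi_\gamma-\hat\xi_{\gamma'}|)$ (obtained similarly), into the definition of $d$, and using $|\xi_\gamma|^2|\hat\xi_\gamma - \hat\xi_{\gamma'}|^2 \lesssim |\xi_\gamma - \xi_{\gamma'}|^2 + ||\xi_\gamma|-|\xi_{\gamma'}||^2$, the desired equivalence falls out after collecting terms.

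The main obstacle is precisely this last step: the phase-space metric mixes the spatial and frequency scales through the factor $|\xi||\xi'|$, so one must carefully track how Lipschitz errors in the $(a,\hat\xi)$-flow are weighted by $|\xi_\gamma|$ after the change of variables, and show that the ``cross terms'' arising from expanding $|x_\gamma(t)-x_{\gamma'}(t)|^2$ and $|q_\gamma(t)-q_{\gamma'}(t)|^2$ are absorbed into the two principal pieces of $d$ without a loss. The homogeneity of $H^\pm$ in $p$ is what makes the bookkeeping close, since it decouples the spatial flow from the radial scale $|\xi_\gamma|$.
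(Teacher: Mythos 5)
Your treatment of \eqref{eq:hypFLW_3c}, \eqref{eq:hypFLW_3b} and \eqref{eq:hypFLW_3} is correct and is essentially the argument the paper uses (or delegates to \cite{Lex3}): conservation of $H$ along the flow plus the two-sided bound on $\speed$ gives $|p_\gamma(t)|\asymp|p_\gamma(0)|\asymp 1$, the Hamilton equations then bound $\dot x_\gamma,\dot p_\gamma$, and integrating gives the additive comparison of centers. The reduction of \eqref{eq:hypFLW_2} to the unit-frequency flow via homogeneity (setting $q_\gamma=\omega_\gamma p_\gamma$ and factoring out $|\xi_\gamma|$) is also a sound starting point, and your identity-level observation that $|\xi-\xi'|^2=\big||\xi|-|\xi'|\big|^2+|\xi||\xi'||\hat\xi-\hat\xi'|^2$ is exactly the right tool for the scale bookkeeping.

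The gap is in the central claim for \eqref{eq:hypFLW_2}: the asserted two-sided estimate $|x_\gamma(t)-x_{\gamma'}(t)|\asymp|a_\gamma-a_{\gamma'}|+|\hat\xi_\gamma-\hat\xi_{\gamma'}|$ is false, and so is the analogous two-sided estimate for $|q_\gamma(t)-q_{\gamma'}(t)|$. Smooth dependence on initial data (variational equation plus Gronwall) only yields the upper bounds $\lesssim$; the lower bounds fail because the \emph{projection} of the phase-space flow onto the $x$ (or $q$) component is not injective. Already for $\speed\equiv 1$ one has $x_\gamma(t)=a_\gamma+t\hat\xi_\gamma$, so two rays with distinct $(a,\hat\xi)$ can satisfy $x_\gamma(t)=x_{\gamma'}(t)$ at some $t\in[-T,T]$ (this is precisely the caustic phenomenon the whole construction is designed to accommodate, cf. Figure \ref{fig:caustic}); dually, $q_\gamma(t)\equiv q_\gamma(0)$ there, so $|q_\gamma(t)-q_{\gamma'}(t)|$ cannot dominate $(|\xi_\gamma|+|\xi_{\gamma'}|)|a_\gamma-a_{\gamma'}|$. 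What is true is only the \emph{joint} bi-Lipschitz property of the phase-space flow map, and the correct route is: prove the one-sided inequality $d\big((x_\gamma(t),q_\gamma(t)),(x_{\gamma'}(t),q_{\gamma'}(t))\big)\lesssim d\big((a_\gamma,\xi_\gamma),(a_{\gamma'},\xi_{\gamma'})\big)$ uniformly for $t\in[-T,T]$ (e.g.\ by a Gronwall argument applied directly to the weighted quantity $|q_\gamma||q_{\gamma'}||x_\gamma-x_{\gamma'}|^2+|q_\gamma-q_{\gamma'}|^2$, with a separate elementary treatment of the case $|\xi_\gamma|\not\asymp|\xi_{\gamma'}|$, where both sides are comparable to $\max\{|\xi_\gamma|,|\xi_{\gamma'}|\}^2$ up to the spatial term), and then obtain the reverse inequality by applying the same one-sided estimate to the time-reversed flow started at time $t$ --- not by reversing each component separately. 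As written, your "main obstacle" paragraph correctly identifies the cross-term/weighting difficulty but rests on intermediate estimates whose lower-bound halves do not hold, so the argument for \eqref{eq:hypFLW_2} does not close. (For reference, the paper does not prove this estimate in-house either: it invokes \cite[Lemma 3.2]{Lex3}.)
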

\begin{proof}
The bounds \eqref{eq:hypFLW_3} and \eqref{eq:hypFLW_3c}
follow from the assumptions on the
velocity and Gronwall's
lemma; see the proofs of~\cite[Lemmas 3.1 and 3.3]{Lex3}.
Using the equations for the Hamiltonian flow, the assumptions on $\speed$, and
\eqref{eq:hypFLW_3} we get
\begin{align*}
&\abs{\dot{x}_\gamma(t)} = \abs{c(x(t))} \leq \Cspeed,
\\
&\abs{\dot{p}_\gamma(t)} = \abs{\nabla{c}(x(t))} \abs{p(t)} \lesssim \Cspeed,
\end{align*}
where $\Cspeed$ is a constant that depends only on the velocity $\speed$. This gives
\eqref{eq:hypFLW_3b}.
Finally, the estimate in \eqref{eq:hypFLW_2} is proved in
\cite[Lemma 3.2]{Lex3}.
\end{proof}
\begin{rem}
\label{rem_pt}
In Lemma \ref{lemma_unif_flow_1}, the conclusion $\abs{p_\gamma(t)} \asymp 1$ holds because
the initial condition associated with $\gamma$ in \eqref{eq_ic2} ensures that $\abs{p_\gamma(0)}
\asymp 1$, with the assumption that
$\abs{\xi_\gamma}\asymp \omega_{\gamma}$. In general,
if $(x,p)$ is the flow associated with $H^+$ or $H^-$ with arbitrary initial conditions, it follows
from our assumptions in the velocity that $\abs{p(t)} \asymp \abs{p(0)}$ with constants that
are uniform on any bounded interval of time.
\end{rem}

For more particular initial conditions, the following further properties hold.
\begin{lemma}
\label{lemma_unif_flow_2}
Let $\{\SIC_{\gamma}: \gamma \in \Gamma_0\}$ be a set of GB parameters. Assume that there exist
constants $0<\cons \leq \Cons$
such that for all $\gamma \in \Gamma_0$: $ \norm{\newM_\gamma} \leq \Cons$,
$\Im{\newM_\gamma} \geq \cons \cdot I_d$, and $\cons \abs{\xi_\gamma} \leq \omega_{\gamma} \leq
\Cons
\abs{\xi_\gamma}$. Let $T \geq 0$. Then there exist constants $\cons',\Cons'>0$ - that only depend
on $T$, $\cons$ and $\Cons$ -
such that the following
estimates hold for $t \in [-T,T]$:
\begin{align}
&\norm{M_\gamma(t)} \leq \Cons',
\label{eq:hypFLW_0}
\\
\label{eq:hypFLW_00}
&\Im M_\gamma(t) \geq \cons' \cdot I_d,
\\
&\cons' \abs{\newA_{\gamma}\omega_{\gamma}^{\frac{d}{4}}} \leq \abs{A_\gamma(t)} \leq \Cons'
\abs{\newA_{\gamma}\omega_{\gamma}^{\frac{d}{4}}}
\label{eq:hypFLW_1}.
\end{align}
\end{lemma}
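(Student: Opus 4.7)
The plan is to exploit the symplectic structure hidden behind the Riccati equation \eqref{eq_ode2}, reducing the nonlinear Riccati problem to a linear one through the standard substitution mentioned just after \eqref{AMPL}. Concretely, I would introduce $d\times d$ matrix-valued unknowns $Y_\gamma(t), Z_\gamma(t)$ solving the coupled linear system
\begin{align*}
\dot Y = B^T Y + X Z,\qquad \dot Z = -D Y - B Z,
\end{align*}
with initial data $Y_\gamma(0)=I_d$ and $Z_\gamma(0) = 2\pi\newM_\gamma$, and note that while $Y_\gamma$ is invertible, $M_\gamma(t) = Z_\gamma(t) Y_\gamma(t)^{-1}$ solves \eqref{eq_ode2}.

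The first step is then a Gronwall bound on this linear system, giving $\|Y_\gamma(t)\|, \|Z_\gamma(t)\|\le \Cons_1$ uniformly on $[-T,T]$. For this I invoke Lemma \ref{lemma_unif_flow_1} and Remark \ref{rem_pt} to ensure $|p_\gamma(t)|\asymp 1$, so that the Hessian matrices $B, X, D$ of the $1$-homogeneous Hamiltonian $H^{\pm}$ along the bicharacteristic are uniformly bounded by a constant depending only on $\|\speed\|_{\Cinfb}$, $\cons$ and $\Cons$; the initial bound on $\|\newM_\gamma\|$ handles $\|Z(0)\|$.

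The second, decisive, step is to extract the symplectic conservation law. Since $B, X, D$ are real with $X=X^T$, $D=D^T$, a direct differentiation shows $\tfrac{d}{dt}(Y^{*}Z - Z^{*}Y) = 0$, so
\begin{align*}
Y_\gamma(t)^{*} Z_\gamma(t) - Z_\gamma(t)^{*} Y_\gamma(t) = 2\pi(\newM_\gamma - \newM_\gamma^{*}) = 4\pi i \,\Im \newM_\gamma,
\end{align*}
which, whenever $Y_\gamma(t)$ is invertible, rearranges to the key identity
\begin{align*}
\Im M_\gamma(t) = 2\pi \bigl(Y_\gamma(t)^{*}\bigr)^{-1} \Im\newM_\gamma \; Y_\gamma(t)^{-1}.
\end{align*}
The heart of the proof is a bootstrap. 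On the maximal subinterval $I_* \ni 0$ of $[-T,T]$ where $Y_\gamma$ is invertible, the identity combined with $\Im\newM_\gamma \geq \cons I_d$ gives $\|Y_\gamma^{-1}\|^{2} \leq \|\Im M_\gamma\|/(2\pi\cons) \leq \|M_\gamma\|/(2\pi\cons) \leq \Cons_1\|Y_\gamma^{-1}\|/(2\pi\cons)$, hence $\|Y_\gamma^{-1}\|\leq \Cons_1/(2\pi\cons)$ on $I_*$. Continuity of $Y_\gamma$ then prevents $Y_\gamma$ from becoming singular at an endpoint, forcing $I_* = [-T,T]$. From here \eqref{eq:hypFLW_0} is immediate, and \eqref{eq:hypFLW_00} follows by sandwiching: $\Im M_\gamma \geq 2\pi\cons (Y_\gamma Y_\gamma^{*})^{-1} \geq (2\pi\cons/\Cons_1^{2}) I_d$.

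For the amplitude estimate \eqref{eq:hypFLW_1} I would plug the bounds on $\|Y_\gamma\|$, $\|Y_\gamma^{-1}\|$, and $\|M_\gamma\|$ into \eqref{AMPL}: the two-sided bounds on $\|Y_\gamma\|$ sandwich $|\det Y_\gamma(t)|$ between two positive constants, and $\Tr[X M_\gamma]$ is uniformly bounded along the bicharacteristic, so the exponential factor is bounded away from $0$ and $\infty$; thus $|A_\gamma(t)|\asymp |\newA_\gamma|\,\omega_\gamma^{d/4}$ uniformly. The principal obstacle is the bootstrap in the third step: the identity for $\Im M_\gamma$ only holds conditionally on $Y_\gamma$ being invertible, and the global invertibility must be deduced a posteriori by closing the loop on $\|Y_\gamma^{-1}\|$ and propagating by continuity --- without this, the natural Gronwall bounds give no control of $\|M_\gamma(t)\|$ at all.
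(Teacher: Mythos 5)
Your proof is correct, and it is in substance the same argument the paper relies on: the paper's own proof delegates \eqref{eq:hypFLW_00} and the global control of the Riccati solution to \cite[Lemma 2.56]{Kat} and \cite{Lex3}, and those references proceed exactly through the linearization $M_\gamma = Z_\gamma Y_\gamma^{-1}$ and the symplectic invariant $Y_\gamma^*Z_\gamma - Z_\gamma^*Y_\gamma$ that you make explicit. What your write-up adds is worth noting: the paper's literal claim that \eqref{eq:hypFLW_0} ``follows by Gronwall's lemma'' cannot be taken at face value, since Gronwall applied directly to the quadratic Riccati equation \eqref{eq_ode2} only yields a differential inequality that may blow up in finite time; the uniform bound on $\norm{M_\gamma(t)}$ genuinely requires passing to the linear system, bounding $Y_\gamma, Z_\gamma$ by Gronwall there, and then closing the loop on $\norm{Y_\gamma^{-1}}$ via the conservation law --- which is precisely your bootstrap, and is the actual content of the cited \cite[Lemma 2.56]{Kat}. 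Two small points you should make explicit: the identity $\Im M_\gamma(t) = 2\pi (Y_\gamma^*)^{-1}\,\Im\newM_\gamma\, Y_\gamma^{-1}$ and the inequality $\norm{\Im M_\gamma}\le\norm{M_\gamma}$ both use that $M_\gamma(t)$ is (complex) symmetric, so that $M_\gamma - M_\gamma^* = 2i\,\Im M_\gamma$; this is standard and follows from the companion invariant $Y_\gamma^T Z_\gamma - Z_\gamma^T Y_\gamma \equiv 0$ together with $\newM_\gamma^T=\newM_\gamma$, but it is a hypothesis of your key identity rather than a triviality. With that supplied, the argument is complete, including the amplitude bound \eqref{eq:hypFLW_1} via the two-sided control of $\abs{\det Y_\gamma(t)}$ in \eqref{AMPL}.
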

\begin{proof}
Consider the matrix-valued ODE in \eqref{eq_ode2}. The derivatives of the Hamiltonian $H(x,p)$ are
bounded
on any set where $\abs{p}$ is bounded above and below. Since $\abs{p_\gamma(t)}$ is bounded above
and below on $[-T,T]$ by Lemma \ref{lemma_unif_flow_1}, it follows that the coefficients in
\eqref{eq_ode2} are bounded.
In addition, the norm of the initial condition $M_\gamma(0)=\newM_{\gamma}$ is bounded by assumption
- cf.
\eqref{eq_ic3}. Therefore,
\eqref{eq:hypFLW_0} follows by Gronwall's lemma. The estimate in \eqref{eq:hypFLW_1} now follows
from \cite[Lemma
3.1]{Lex3} (which requires $\norm{M(t)}$ to be bounded). Finally, \eqref{eq:hypFLW_00}
is proved in \cite[Lemma 2.56]{Kat}. The statement there is non-quantitative, but the argument gives
the desired
conclusion. See also \cite[Lemma 3.4]{Lex3}.
\end{proof}

\section{Well-spread families of Gaussian Beam parameters}
\label{sec:well_spread}
\subsection{Definitions}
We develop criteria under which a family of Gaussian beam parameters behaves qualitatively like the
standard one, given by
\begin{align*}
\alphast_\gamma = (\sigmast_\gamma, \aast_\gamma, \xist_\gamma, \Aast_{\gamma} , \Mst_\gamma)
= (4^j,2^{-j}\lambda, \pointjk, 2^{\frac{d}{4}}, \ii I_d), \qquad \gamma=(j,k,\lambda).
\end{align*}
Our main goal is to show that when an adequate family of parameters is used as initial values, then
a linear combination of the corresponding Gaussian beams satisfies a suitable Bessel bound and
provides an approximate solution to the wave equation.

\begin{definition}\label{def:wsp}
A \emph{well-spread set of Gaussian beam parameters} is an indexed set
\[\Upsilon \equiv \sett{\SIC_\gamma: \gamma \in \Gamma_0} \subseteq
\Rst_+ \times \Rdst \times (\Rdst \setminus \sett{0})\times (\Rst \setminus \{0\}) \times \bC^{d
\times d},
\qquad \gamma \in \Gamma_0
\]
with $\Gamma_0 \subseteq \Gamma$, such that
\begin{itemize}
\item[(i)] $|\aast_{\gamma}-\aast_{\gamma'}| \lesssim
|a_{\gamma}-a_{\gamma'}| +1,
\qquad \gamma, \gamma' \in \Gamma_0$.
\item[(ii)] $d((a_{\gamma},\xi_{\gamma}),(a_{\gamma'},\xi_{\gamma'})) \gtrsim
d((\aast_{\gamma},\xist_{\gamma}),(\aast_{\gamma'},\xist_{\gamma'})),
\qquad \gamma, \gamma' \in \Gamma_0$.
\item[(iii)]
$\newM_{\gamma} \in \bC^{d\times d}$ is symmetric,
$\norm{\newM_{\gamma}} \lesssim 1$ and $\Im(\newM_{\gamma}) \gtrsim I_d$,
$\qquad \gamma \in \Gamma_0$.
\item[(iv)] $\omega_{\gamma} \asymp 4^j$ and $\abs{\xi_{\gamma}} \asymp \omega_{\gamma}$,
$\qquad \gamma \in \Gamma_0$.
\item[(v)] $|\newA_{\gamma}| \asymp 1$, $\qquad \gamma \in \Gamma_0$.
\end{itemize}
\end{definition}
In the last definition, the symbols $\lesssim$, $\gtrsim$ and $\asymp$ should be interpreted as
asserting the
existence of suitable constants that are uniform within the family $\Upsilon$.

\begin{rem}
For short, we say that $\{\Phi^+_\gamma: \gamma \in \Gamma_0\}$
and $\{\Phi^-_\gamma: \gamma \in \Gamma_0\}$ are \emph{well-spread families of Gaussian beams},
implying the existence
of a corresponding well-spread family of Gaussian beam parameters $\Upsilon \equiv
\sett{\SIC_\gamma: \gamma \in
\Gamma_0}$ that defines the beams.

Similarly, when a certain family of GB parameters $\Upsilon$ is discussed, we may denote the
corresponding beams by
just $\Phi^\pm_\gamma$, without remarking their dependence on the map $\SIC$.
\end{rem}

Before proving the main estimates, we define an adequate notion of vanishing order along a family of
Gaussian beams.
\begin{definition}\label{def_order_wsp}
Given a well-spread set of GB parameters $\Upsilon \equiv \sett{\SIC_\gamma: \gamma \in \Gamma_0}$,
an interval $I
\subseteq \Rst$, and $m \in \Nst_0$,
a family of functions $F \equiv \sett{F_\gamma: \gamma \in \Gamma_0}$ is said to be
$F = \strongo^m(I,\Upsilon)$ if
\begin{itemize}
\item $F_\gamma(t,x)= \sum_{\abs{\eta}={m}} G_{\gamma,\eta}(t,x) (x-x_{\gamma}(t))^\eta$,
for some functions $G_{\gamma,\eta}(t,\cdot) \in \CinfbRd$, for all $t \in I$.
\item $\sup_{\gamma \in \Gamma_0, t \in I}
\norm{\partial^{k} G_{\gamma,\eta}(t,\cdot)}_{L^\infty(\Rdst)} < +\infty$, for all
multi-indices $k$ and $\eta$, with $\abs{\eta}=m$.
\end{itemize}
\end{definition}
Thus, $F = \strongo^m(I,\Upsilon)$ means that each
$F_\gamma(t,\cdot) = O(\abs{x-x_{\gamma}(t)}^m,\Rdst)$
and the corresponding bounds are uniform for $t \in I$ and $\gamma \in \Gamma_0$.

We note that the definition of $\strongo^m(I,\Upsilon)$ involves a vanishing condition at
$x=x_{\gamma}(t)$ and
also a growth condition for $\abs{x-x_\gamma(t)} \gg 1$. As a consequence,
$F=\strongo^{m+1}(I,\Upsilon)$ does not imply
$F=\strongo^m(I,\Upsilon)$.
As a remedy, we introduce the following notion.
\begin{definition}\label{def_order_wsp_geq}
Given a well-spread set of GB parameters $\Upsilon \equiv \sett{\SIC_\gamma: \gamma \in \Gamma_0}$
and an interval $I \subseteq \Rst$, a family of functions $F \equiv \sett{F_\gamma: \gamma \in
\Gamma_0}$ is said to be
$F = \strongop^m(I,\Upsilon)$ if there exists a finite family
$F^1 = \strongo^{m_1}(I,\Upsilon), \ldots, F^n = \strongo^{m_n}(I,\Upsilon)$, with $m_1, \ldots, m_n
\geq m$,
such that $F_\gamma(t,x) = F^1_\gamma(t,x)+\ldots+F^n_\gamma(t,x)$.
\end{definition}
Note that $F=\strongop^{m+1}(I,\Upsilon)$ implies that $F=\strongop^{m}(I,\Upsilon)$.

\subsection{Bessel bounds and vanishing orders}
We prove a Bessel bound for the summation of Gaussian beams with factors vanishing at the spatial
center of the beams.
This extends the result obtained in~\cite[Sec. 3]{Lex3}
from $L^2(\R^d)$ to Sobolev spaces, and to more general sets of initial conditions.
\begin{theorem}
\label{theo_bessel_bounds}
Let $\Upsilon=\sett{\SIC_\gamma: \gamma \in \Gamma_0}$ be a well-spread set of Gaussian beam
parameters and let $F =
\strongop^{m}(I,\Upsilon)$, with $I \subseteq \Rst$ a bounded interval and $s \in [0,1]$. Then
\begin{align*}
\sup_{t\in I}
\Big\|\sum_{\gamma \in \Gamma_0}2^{jm}b_{\gamma}\Phi^\pm_{\gamma}(t,\cdot)F_{\gamma}(t,\cdot)
\Big\|^2_{H^s} \lesssim C_I
\sum_{\gamma \in \Gamma_0}4^{2sj}|b_{\gamma}|^2,
\end{align*}
with $b_\gamma \in \bC$ such that the sum on the right-hand side is finite. (Here, the constant
$C_I$ depends on the interval $I$ and the family $F$.)
\end{theorem}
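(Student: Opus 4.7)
The plan is to view, for each fixed $t \in I$, the family $\{\Psi_\gamma^t := 2^{jm}\Phi^\pm_\gamma(t,\cdot)F_\gamma(t,\cdot): \gamma \in \Gamma_0\}$ as a system of wave molecules adapted to the standard phase-space tiling underlying the frame $\frameset$, and then to invoke the Bessel bound for wave molecules developed in Appendix~\ref{AppendixB}.

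First I would reduce to the case $F = \strongo^{m}(I,\Upsilon)$. If $F = F^{(1)} + \cdots + F^{(n)}$ with $F^{(\ell)} = \strongo^{m_\ell}(I,\Upsilon)$ and $m_\ell \geq m$, writing
\[
2^{jm}b_\gamma\Phi_\gamma F_\gamma = \sum_{\ell=1}^n 2^{jm_\ell}\bigl(2^{j(m-m_\ell)}b_\gamma\bigr)\Phi_\gamma F^{(\ell)}_\gamma
\]
and applying a Bessel bound for each $F^{(\ell)}$ with coefficients $2^{j(m-m_\ell)}b_\gamma$ produces a contribution bounded by $4^{j(m-m_\ell)}|b_\gamma|^2 \leq |b_\gamma|^2$ per term (since $m_\ell \geq m$ and $j \geq 0$); summing the finitely many pieces gives the desired estimate. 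Hence we may assume $F = \strongo^{m}$.

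The heart of the argument is to establish, uniformly in $t \in I$, that the normalized products $\Psi_\gamma^t$ fit the wave molecule framework of Appendix~\ref{AppendixB}, with spatial center $x_\gamma(t)$, frequency center $\omega_\gamma p_\gamma(t)$, and scale $2^j$. Writing $F_\gamma(t,x) = \sum_{|\eta|=m}G_{\gamma,\eta}(t,x)(x-x_\gamma(t))^\eta$, the polynomial factor $(x-x_\gamma(t))^\eta$ multiplied against the Gaussian envelope $|A_\gamma(t)|\,e^{-c\omega_\gamma|x-x_\gamma(t)|^2}$ (with $c>0$ and $|A_\gamma(t)|\asymp 2^{jd/2}$ uniformly, by Lemma~\ref{lemma_unif_flow_2} and property~(v) of Definition~\ref{def:wsp}) yields an $L^2$ contribution of order $\omega_\gamma^{-m/2}=2^{-jm}$, exactly balancing the prefactor $2^{jm}$. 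Derivative estimates on $\Psi_\gamma^t$ in $x$ follow by differentiating the three factors $G_{\gamma,\eta}$, $(x-x_\gamma(t))^\eta$ and $e^{i\omega_\gamma\theta_\gamma(t,x)}$; each derivative produces only bounded coefficients together with powers of $\omega_\gamma \asymp 4^j$ consistent with the wave molecule scaling. Properties (i)--(iv) of Definition~\ref{def:wsp} combined with Lemma~\ref{lemma_unif_flow_1} and \eqref{eq:hypFLW_2} then guarantee that $\gamma \mapsto (x_\gamma(t), \omega_\gamma p_\gamma(t))$ is, uniformly in $t \in I$, comparable to the standard phase-space tiling $\gamma \mapsto (\aast_\gamma, \xist_\gamma)$ of the frame $\frameset$.

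Once the system $\{\Psi_\gamma^t\}$ is identified as a wave molecule family, the almost-orthogonality estimates of Appendix~\ref{AppendixB} yield, by a Schur-test argument in the weighted space $\ell^2(\Gamma_0, 4^{2sj})$, the bound
\[
\Big\|\sum_{\gamma \in \Gamma_0} b_\gamma \Psi_\gamma^t\Big\|_{H^s}^2 \lesssim \sum_{\gamma \in \Gamma_0} 4^{2sj} |b_\gamma|^2,
\]
for $s \in [0,1]$, with constants depending only on the well-spread bounds and on $I$. The supremum over $t \in I$ is controlled because the bounds in Lemmas~\ref{lemma_unif_flow_1}--\ref{lemma_unif_flow_2} and the defining estimates of $\strongo^{m}(I,\Upsilon)$ are uniform on the bounded interval $I$. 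The main obstacle is the second step: turning the localization heuristics into quantitative wave molecule estimates with constants uniform in $\gamma \in \Gamma_0$ and $t \in I$, so that the almost-diagonal decay of the inner products $\langle \Psi_\gamma^t, \Psi_{\gamma'}^t\rangle_{H^s}$ survives both the multiplication by the factor $F_\gamma$ and the passage from $L^2$ to the Sobolev norm.
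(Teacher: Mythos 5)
Your proposal is correct and follows essentially the same route as the paper: reduce to $F=\strongo^m(I,\Upsilon)$, recognize $\{2^{jm}F_\gamma(t,\cdot)\Phi^\pm_\gamma(t,\cdot)\}$ as a uniform family of wave molecules with nodes $(x_\gamma(t),\tfrac{1}{2\pi}\omega_\gamma p_\gamma(t))$ (the paper packages your envelope/derivative estimates into Lemma~\ref{lemma_GB_mol} for the beams, Lemma~\ref{lemma_ops} for the factors $2^j(x-x_\gamma(t))^\eta$, and Lemma~\ref{lemma_pso_wm} for multiplication by $G_{\gamma,\eta}\in S^0_{1,0}$), and then invoke the almost-orthogonality Bessel bound of Lemma~\ref{lemma_bes_mol}. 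Your explicit reduction from $\strongop^m$ to $\strongo^m$ is a correct spelling-out of the paper's ``without loss of generality'' step.
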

\begin{proof}
Without loss of generality we may assume that $F = \strongo^{m}(I,\Upsilon)$.
According to Definition~\ref{def_order_wsp},
\[ F_\gamma(t,x)= \sum_{\abs{\eta}={m}} G_{\gamma,\eta}(t,x) (x-x_{\gamma}(t))^\eta \]
for some adequate functions $G_{\gamma,\eta}(t,\cdot) \in \CinfbRd$.

The estimates relevant for the Bessel bounds are developed in greater generality in Appendix
\ref{AppendixB}. Indeed, Lemma
\ref{lemma_GB_mol} implies that
the beams $\{\Phi^\pm_{\gamma}(t,\cdot): \gamma \in \Gamma_0\}$
are sets of \emph{wave-molecules} uniformly for $t \in I$
(see Appendix \ref{AppendixB} for the definitions). Second, the multiplication operators
have symbols
$G_{\gamma,\eta}(t,\cdot)$ that belong to the H\"ormander class
$S^{0}_{1,0}$ uniformly for $t \in I$ and $\gamma \in \Gamma_0$. Hence, using
Lemmas~\ref{lemma_ops} and \ref{lemma_pso_wm} we conclude that
$\{2^{jm} F_{\gamma}(t,\cdot)\Phi^\pm_{\gamma}: \gamma \in \Gamma_0\}$ is a set of wave-molecules
uniformly for $t \in
I$, and, by Lemma~\ref{lemma_bes_mol}, it satisfies the desired Bessel bounds.
\end{proof}
\begin{rem}
\label{rem_bess_zero}
The choice $I= \ptg{0}$ is allowed in Theorem \ref{theo_bessel_bounds} and corresponds to
time-independent
functions $F_\gamma(x) = F_\gamma(0,x)$. Since,
for the standard set of GB parameters,
the beams at time $t=0$ coincide with the higher-scale frame elements
- cf. Observation ~\ref{ob_beam_frame} -  we conclude that, given $ F_{\gamma} =
\strongop^{m}(\{0\},\Upsilon^{\textrm{st}})$,
\begin{align*}
\Big\|\sum_{\gamma \in \Gamma}2^{jm}b_{\gamma}\varphi_{\gamma} F_{\gamma} \Big\|^2_{H^s} \lesssim
\sum_{\gamma \in \Gamma}4^{2js}|b_{\gamma}|^2, \qquad s \in [0,1].
\end{align*}
\end{rem}
\subsection{Uniformity of errors for Taylor expansions}
Most of our arguments rely on Taylor expansions for the functions $x,p,A,M$ used in the definition
of
Gaussian
beams. The following lemma is used to justify that, in such arguments, the error terms can be
bounded uniformly
within a given well-spread family of Gaussian beams.
\begin{lemma}
\label{lemma_error_flow}
Let $\Upsilon=\sett{\SIC_\gamma: \gamma \in \Gamma_0}$ be a well-spread set of Gaussian beam
parameters,
let $T \geq 0$ and $k \geq 0$ be an integer. Then the following quantities:
\begin{align*}
\sup_{\gamma \in \Gamma_0} \sup_{t \in [-T,T]} \abs{\partial^{k+1}_t x_{\gamma}(t)},& \qquad
\sup_{\gamma \in \Gamma_0} \sup_{t \in [-T,T]} \abs{\partial^{k}_t p_{\gamma}(t)},
\\
\sup_{\gamma \in \Gamma_0} \sup_{t \in [-T,T]} \abs{\partial^k_t M_{\gamma}(t)},
\end{align*}
are bounded by a constant that depends on $T$, $k$ and $\Upsilon$.
In addition,
\begin{equation}\label{eq_ampl_der}
	 \partial_t A_{\gamma}(t) = A_{\gamma}(t) G_{\gamma}(t),
\end{equation}
\end{lemma}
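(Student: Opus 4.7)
The plan is to prove all three bounds by induction on $k$, using the fact that the Hamilton, Riccati, and transport ODEs \eqref{eq_ode1}, \eqref{eq_ode2}, \eqref{eq_ode3} have right-hand sides that, when evaluated along the bicharacteristics of a well-spread family, are smooth functions of $(x,p,M,A)$ with uniform bounds.

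I would first collect the base case ($k=0$) from Lemmas \ref{lemma_unif_flow_1} and \ref{lemma_unif_flow_2}: for every $\gamma \in \Gamma_0$ and $t \in [-T,T]$ one has $|p_\gamma(t)| \asymp 1$, $\norm{M_\gamma(t)} \lesssim 1$, and $\Im M_\gamma(t) \gtrsim I_d$, with constants depending only on $T$ and $\Upsilon$. Because $\speed \in \Cinfb$ and $H^\pm(x,p) = \pm \speed(x)|p|$, every partial derivative $\partial_x^\alpha \partial_p^\beta H$ is smooth off $\{p=0\}$ and, on the strip $|p|\asymp 1$, uniformly bounded by a constant depending on the $C^{|\alpha|+|\beta|}_b$-norm of $\speed$. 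In particular $\partial_p H$ and $\partial_x H$ are uniformly bounded along the flow, which settles the claim for $\dot x_\gamma$ and $\dot p_\gamma$. Note that the lemma asserts bounds on $\partial_t^{k+1}x_\gamma$ rather than $\partial_t^{k}x_\gamma$; this is essential because a bicharacteristic can drift to infinity, while its derivatives from order one on are controlled by the bounded Hamiltonian field.

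For the inductive step on $x_\gamma$ and $p_\gamma$, I would differentiate \eqref{eq_ode1} $k$ times in $t$ and apply the Fa\`a di Bruno formula: $\partial_t^{k+1} x_\gamma$ and $\partial_t^{k+1} p_\gamma$ become polynomials in partial derivatives of $H$ (of total order $\leq k+1$, evaluated along the flow) and in $\partial_t^{j} x_\gamma$, $\partial_t^{j} p_\gamma$ for $j\leq k$, all of which are uniformly bounded by the inductive hypothesis. The case of $M_\gamma$ proceeds analogously from \eqref{eq_ode2}, whose right-hand side is a polynomial of degree two in $M$ with coefficients $D,B,X$ given by second derivatives of $H$ along the flow; iteratively differentiating and invoking the already proven bounds on $\partial_t^j x_\gamma$, $\partial_t^j p_\gamma$ (and, inductively, on $\partial_t^j M_\gamma$ for $j\leq k$) yields the bound on $\partial_t^{k} M_\gamma$. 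Finally, the transport equation \eqref{eq_ode3} rearranges to $\dot A_\gamma = A_\gamma G_\gamma$ with
\[
G_\gamma(t) = -\frac{1}{2H}\Bigl(\speed^2\operatorname{Tr}(M_\gamma(t)) - \partial_p H\cdot\partial_x H - (\partial_p H)^T M_\gamma(t)\,\partial_p H\Bigr),
\]
all terms evaluated along $(x_\gamma(t),p_\gamma(t))$; since $\speed \geq \cspeed > 0$ and $|p_\gamma(t)|\asymp 1$, the factor $1/H$ is smooth and bounded, and the same inductive mechanism produces uniform bounds on every time derivative of $G_\gamma$.

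The only subtlety, and the point that deserves care, is the nonlinearity $MXM$ in the Riccati equation: a direct Gronwall argument does not close because the equation is quadratic in $M$. This is resolved at every step of the induction by the a priori bound $\norm{M_\gamma(t)}\lesssim 1$ from Lemma \ref{lemma_unif_flow_2}, which linearizes the right-hand side and reduces the estimation of $\partial_t^{k+1}M_\gamma$ to the standard form handled above. With that observation the induction closes uniformly in $\gamma \in \Gamma_0$, giving the claimed bounds.
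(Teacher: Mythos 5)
Your proposal is correct and follows essentially the same route as the paper: the key input in both is that $\abs{p_\gamma(t)}\asymp 1$ (Lemma \ref{lemma_unif_flow_1}) forces all derivatives of $H$ evaluated along the flow to be uniformly bounded, after which the bounds on $\partial_t^{k+1}x_\gamma$, $\partial_t^{k}p_\gamma$, $\partial_t^k M_\gamma$ and on $G_\gamma$ follow by repeatedly differentiating the defining ODEs, with the quadratic Riccati term tamed by the a priori bound $\norm{M_\gamma(t)}\lesssim 1$ from Lemma \ref{lemma_unif_flow_2}. Your write-up merely makes explicit the induction and the Fa\`a di Bruno bookkeeping that the paper compresses into ``the claim follows from \eqref{eq_knm}'' and a citation of a Gronwall-type argument for linear systems.
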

with \( \displaystyle
\sup_{\gamma \in \Gamma_0} \sup_{t \in [-T,T]}\abs{\partial^k_t G_{\gamma}}\) bounded by a constant
that depends on $T$, $k$ and $\Upsilon$.
\begin{proof}
Since the derivatives of the velocity $\speed$ are bounded, the derivatives of $H(x,p)$ are bounded
on any set where $\abs{p}$ is bounded above and below. By Lemma \ref{lemma_unif_flow_1},
$\abs{p_\gamma(t)} \asymp 1$ and, therefore,
we conclude that
\begin{align}
\label{eq_knm}
\sup_{t\in[-T,T]} \abs{\partial^n_x \partial^m_p H (x_\gamma(t),p_\gamma(t))} \lesssim
\cnumb{n,m,T} < \infty,
\end{align}
for all multi-indices $n,m$.
Inspecting the definition of the Hamiltonian field $(x_{\gamma},p_{\gamma})$ - cf.~\eqref{eq_ode1},
the claim on $x$ and $p$ follows from \eqref{eq_knm}.

For the matrix $\Mg$, we note that, due to \eqref{eq_knm}, it satisfies a Riccati-type ODE where
the coefficients are
bounded and have all the derivatives bounded. Moreover, the corresponding initial condition is
bounded, as part of
Definition \ref{def:wsp}. Hence, the claim on $M_{\gamma}$ follows from a Gronwall-type argument for
linear systems of ODEs -
see for example \cite{chda76} and \cite[Lemma 2.56]{Kat}.

Finally, inspecting \eqref{eq_ode3}, we see that the claim for the amplitude follows from
\eqref{eq_knm} and the
previous bounds.
\end{proof}

\subsection{Asymptotic solutions}
We now clarify how a linear combination of Gaussian beams with well-spread parameters
approximately solves the wave equation.
These results have been proved in~\cite{Lex3} for standard Gaussian beam parameters, and here are
extended to more general initial conditions.
We first analyze the action of the wave operator and time derivatives on a single beam.
The following lemmas are essentially contained in \cite[Lemmas 3.6 and 3.12]{Lex3}.
\begin{lemma}
\label{lemma_op_asympt}
Let $\Upsilon=\sett{\SIC_\gamma: \gamma \in \Gamma_0}$ be a well-spread set of Gaussian beam
parameters
and $T \geq 0$. Then
\begin{align*}
\left( \partial_{t}^{2} - \speed^{2}(x) \Delta_x\right) \Phi^\pm_{\gamma} =
\left(F_\gamma^{(0)} + 4^j F_\gamma^{(1)} + 4^{2j} F_\gamma^{(3)}\right) \Phi^\pm_{\gamma},
\qquad
\gamma=(j,k,\lambda) \in \Gamma_0,
\end{align*}
with $F^{(m)} = \strongop^m ([-T,T], \Upsilon)$.
\end{lemma}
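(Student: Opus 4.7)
\medskip

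\noindent\textbf{Proof plan.} The plan is to carry out the direct computation of the wave operator applied to $\Phi_\gamma = A_\gamma(t)\, e^{i\omega_\gamma \theta_\gamma(t,x)}$, regroup the result by powers of $\omega_\gamma$, and then show that each coefficient has the required vanishing order at $x=x_\gamma(t)$ uniformly in $\gamma$ and $t\in[-T,T]$. Concretely, a routine differentiation gives
\begin{align*}
(\partial_t^2-c^2\Delta_x)\Phi_\gamma
={}& \Phi_\gamma \Big[ \tfrac{\partial_t^2 A_\gamma}{A_\gamma}
   \;+\; i\omega_\gamma\Big(\partial_t^2\theta_\gamma - c^2\Delta_x\theta_\gamma
       + 2\tfrac{\partial_t A_\gamma}{A_\gamma}\,\partial_t\theta_\gamma\Big) \\
& \phantom{\Phi_\gamma \Big[}
   -\;\omega_\gamma^2\big((\partial_t\theta_\gamma)^2 - c^2 |\nabla_x\theta_\gamma|^2\big)\Big].
\end{align*}
Since $\omega_\gamma \asymp 4^j$ by condition (iv) of Definition~\ref{def:wsp}, the factors $\omega_\gamma/4^j$ and $(\omega_\gamma/4^j)^2$ are uniformly bounded above and below, so they can be absorbed into the amplitudes of each term; it then suffices to verify the vanishing orders of the three bracketed pieces.

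For the leading piece, the $\omega_\gamma^2$ coefficient, I would Taylor-expand $(\partial_t\theta_\gamma)^2 - c^2(x)|\nabla_x\theta_\gamma|^2$ around $x=x_\gamma(t)$ up to second order. The $0$-th order term vanishes by the eikonal identity $H(x_\gamma,p_\gamma)^2 = c^2|p_\gamma|^2$; the first-order term vanishes because $\dot x_\gamma = \partial_p H$ and $\dot p_\gamma = -\partial_x H$ (Hamilton's equations together with Euler's relation $p_\gamma\cdot\partial_pH = H$); the second-order term vanishes because $M_\gamma$ satisfies the matrix Riccati equation~\eqref{eq_ode2}. The remaining cubic remainder has the form $\sum_{|\eta|=3}G_{\gamma,\eta}(t,x)(x-x_\gamma(t))^\eta$ with $G_{\gamma,\eta}$ built from $p_\gamma(t)$, $M_\gamma(t)$, and the bounded derivatives of $c^2$. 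For the $\omega_\gamma$ coefficient I would expand $\partial_t^2\theta_\gamma - c^2\Delta_x\theta_\gamma + 2(\partial_t A_\gamma/A_\gamma)\partial_t\theta_\gamma$ to first order around $x=x_\gamma(t)$: the $0$-th order term vanishes identically, thanks to the transport equation~\eqref{eq_ode3} combined with $\partial_t\theta_\gamma(t,x_\gamma(t)) = -H$ and $\Delta_x\theta_\gamma = \operatorname{tr}(M_\gamma)$, which is exactly the algebraic content encoded in \eqref{eq_ode3}. The $\omega_\gamma^0$ coefficient $\partial_t^2 A_\gamma/A_\gamma$ is $x$-independent and bounded, hence in $\strongop^0([-T,T],\Upsilon)$.

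To obtain the uniform bounds demanded by Definitions~\ref{def_order_wsp} and~\ref{def_order_wsp_geq}, I would invoke Lemma~\ref{lemma_error_flow} (together with \eqref{eq_ampl_der}) to control all the $t$-derivatives of $x_\gamma,p_\gamma,M_\gamma$ and of $\log A_\gamma$ uniformly in $\gamma\in\Gamma_0$ and $t\in[-T,T]$, together with the assumption $c\in\Cinfb$ which provides uniform control of the Taylor remainders in $x$ of $c^2(x)$ and its derivatives. This ensures that each coefficient $G_{\gamma,\eta}(t,\cdot)$ appearing in the decomposition lies in $\CinfbRd$ with derivatives uniformly bounded in $\gamma$ and $t$, as required.

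The computation itself is essentially routine (it is the standard Gaussian beam construction read backward); the one point that needs care is the bookkeeping for the Taylor expansion of $(\partial_t\theta_\gamma)^2-c^2|\nabla_x\theta_\gamma|^2$, where one must use all three structural ODEs (eikonal, Hamilton, Riccati) at orders $0$, $1$, $2$ respectively to obtain the third-order vanishing that justifies the index $3$ on $F^{(3)}$. The real obstacle, compared with the analogous statement in~\cite{Lex3}, is the uniformity: there one works with the standard parameters where $M_\gamma(0)=iI_d$ and the bounds are automatic, whereas here the uniformity is precisely what Lemma~\ref{lemma_error_flow} was designed to supply, and the proof reduces to checking that every coefficient that appears depends only on quantities controlled by that lemma and by the $\Cinfb$ bounds on $c$.
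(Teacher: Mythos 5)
Your proposal is correct and follows essentially the same route as the paper: the same decomposition of $(\partial_t^2-c^2\Delta_x)\Phi_\gamma$ into eikonal, transport and amplitude residuals weighted by powers of $\omega_\gamma\asymp 4^j$, the same vanishing orders $3$, $1$, $0$ obtained from the eikonal/Hamilton/Riccati and transport ODEs, and the same appeal to Lemma \ref{lemma_error_flow} for uniformity over $\gamma$ and $t\in[-T,T]$. The only cosmetic difference is that the paper (via Lemma \ref{lemma_eik}, following \cite{Lex3}) gets the order-$3$ vanishing of the eikonal residual from the factorized form $\partial_t\theta^\pm_\gamma\pm c\,|\nabla_x\theta^\pm_\gamma|=\strongop^3$ times the bounded factor $\partial_t\theta^\pm_\gamma\mp c\,|\nabla_x\theta^\pm_\gamma|$, whereas you Taylor-expand the quadratic form $(\partial_t\theta_\gamma)^2-c^2|\nabla_x\theta_\gamma|^2$ directly.
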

\begin{proof}
See Section \ref{sec_proof_lemma_op_asympt}.
\end{proof}

\begin{lemma}
\label{lemma_tf}
Let $\Upsilon=\sett{\SIC_\gamma: \gamma \in \Gamma_0}$ be a well-spread set of Gaussian beam
parameters and $T \geq 0$. Then, for $(j,k,\lambda) \in \Gamma_0$,
\begin{align}
\partial_t \Phi^\pm_{\gamma}(t,x)=
\left(F_\gamma^{(0)}(t,x) + 4^j F^{(1)}(t,x) -
\mi \omega_\gamma H^\pm(x^\pm_\gamma(t),p^\pm_\gamma(t))\right) \Phi^\pm_{\gamma}(t,x),
\end{align}
with $F^{(m)} = \strongop^m ([-T,T], \Upsilon)$.
\end{lemma}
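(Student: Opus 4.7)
\medskip

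The plan is a direct computation of $\partial_t \Phi^\pm_\gamma$ by differentiating the product $A_\gamma(t) e^{i\omega_\gamma \theta_\gamma(t,x)}$, followed by identifying each summand as a bounded symbol, a term of strong vanishing order one weighted by $\omega_\gamma \asymp 4^j$, or the declared principal term $-i\omega_\gamma H^\pm(x_\gamma^\pm(t), p_\gamma^\pm(t))$. Differentiating and dividing by $\Phi_\gamma$ gives
\[
\frac{\partial_t \Phi_\gamma^\pm}{\Phi_\gamma^\pm} = \frac{\dot A_\gamma}{A_\gamma} + i\omega_\gamma\,\partial_t \theta_\gamma(t,x),
\]
so the task reduces to controlling the two summands separately.

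For the amplitude contribution, equation~\eqref{eq_ampl_der} of Lemma~\ref{lemma_error_flow} gives $\dot A_\gamma/A_\gamma = G_\gamma(t)$, where $G_\gamma$ and all of its $t$-derivatives are bounded uniformly in $\gamma\in\Gamma_0$ and $t\in[-T,T]$. Since $G_\gamma$ does not depend on $x$, it trivially lies in $\Cinfb(\R^d)$ with uniform bounds, hence defines a family $F^{(0)} = \strongo^0([-T,T],\Upsilon)$, which is the first summand in the claimed decomposition.

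For the phase contribution I would expand
\[
\partial_t \theta_\gamma(t,x)
= -p_\gamma(t)\cdot \dot x_\gamma(t)
+ \bigl(\dot p_\gamma(t) - M_\gamma(t)\dot x_\gamma(t)\bigr)\cdot (x - x_\gamma(t))
+ \tfrac{1}{2}(x-x_\gamma(t))^T \dot M_\gamma(t) (x-x_\gamma(t)),
\]
i.e.\ a constant-in-$x$ piece plus a piece vanishing to first order at $x_\gamma(t)$ plus a piece vanishing to second order. For the first piece I would use the defining Hamilton equation $\dot x_\gamma = \partial_p H^\pm(x_\gamma,p_\gamma)$ and the homogeneity of $H^\pm$ in $p$, namely Euler's identity $p \cdot \partial_p H^\pm(x,p) = H^\pm(x,p)$, which yields
\[
-i\omega_\gamma\, p_\gamma(t)\cdot \dot x_\gamma(t) = -i\omega_\gamma\, H^\pm(x_\gamma^\pm(t),p_\gamma^\pm(t)),
\]
matching the principal term exactly. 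For the remaining two pieces, multiplying by $i\omega_\gamma$ and using $\omega_\gamma \asymp 4^j$ allows me to factor out $4^j$ and write them as $4^j F^{(1)}_\gamma(t,x)$ for a single family $F^{(1)} \in \strongop^1([-T,T],\Upsilon)$; the coefficients $\dot p_\gamma - M_\gamma \dot x_\gamma$ and $\dot M_\gamma$ (suitably augmented to $\Cinfb$ symbols in $x$) have bounds on all $x$-derivatives that are uniform in $\gamma$ and $t$ by Lemma~\ref{lemma_error_flow} together with the well-spread bounds $\norm{M_\gamma(t)} \lesssim 1$ from Lemma~\ref{lemma_unif_flow_2} and $|\dot x_\gamma|,|\dot p_\gamma| \lesssim 1$ from Lemma~\ref{lemma_unif_flow_1}.

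The main obstacle, and the only nontrivial bookkeeping step, is verifying that the coefficients appearing in the first- and second-order vanishing pieces truly define $\strongo^1$ and $\strongo^2$ families in the sense of Definition~\ref{def_order_wsp}, i.e.\ that the symbol-type bounds are uniform in $\gamma \in \Gamma_0$. This reduces to the uniform estimates of Lemma~\ref{lemma_error_flow}; once those are in place, the decomposition follows. A $\strongo^2$ family is automatically in $\strongop^1$, so collecting both remainders into a single $F^{(1)} \in \strongop^1([-T,T],\Upsilon)$ completes the proof.
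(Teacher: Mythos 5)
Your proposal is correct and follows essentially the same route as the paper's proof: differentiate the product to get $\dot A_\gamma/A_\gamma + \mi\omega_\gamma\,\partial_t\theta_\gamma$, identify $-p_\gamma\cdot\dot x_\gamma$ with $-H^\pm(x_\gamma,p_\gamma)$ via Hamilton's equation and the degree-one homogeneity of $H^\pm$ in $p$, and absorb the first- and second-order vanishing pieces (with coefficients $\dot p_\gamma - M_\gamma\dot x_\gamma$ and $\tfrac12\dot M_\gamma$) into $4^j F^{(1)}$ using the uniform bounds of Lemma~\ref{lemma_error_flow}. The only difference is that you spell out the bookkeeping (including the observation that a $\strongo^2$ family lies in $\strongop^1$) which the paper leaves implicit.
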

\begin{proof}
See Section \ref{sec_proof_lemma_op_asympt}.
\end{proof}

\begin{theorem}\label{th_error_wave}
Let $\Upsilon=\sett{\SIC_\gamma: \gamma \in \Gamma_0}$ be a well-spread set of Gaussian beam
parameters. Then
\begin{align*}
\sup_{t \in [0,T]}
\Big\|
{\left( \partial_{t}^{2} - \speed^{2}(x) \Delta_x\right) \sum_{\gamma \in \Gamma_0} b_\gamma
\Phi^\pm_{\gamma}(t,\cdot)}
\Big\|^2_{L^2(\Rdst)} \leq \CTime
\sum_{\gamma \in \Gamma_0} 4^{j}\abs{b_\gamma}^2,
\end{align*}
with $b_\gamma \in \bC$ such that the sum on the right-hand side is finite.
\end{theorem}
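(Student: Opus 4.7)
The plan is to reduce the theorem to a direct application of the Bessel-type estimate in Theorem~\ref{theo_bessel_bounds} after using Lemma~\ref{lemma_op_asympt} to move the wave operator through each beam. Specifically, since $(\partial_t^2 - c^2 \Delta_x)$ commutes with the finite (or absolutely convergent) sum, I would write
\begin{align*}
\big(\partial_t^2 - c^2 \Delta_x\big)\sum_\gamma b_\gamma \Phi^\pm_\gamma
&= \sum_\gamma b_\gamma \big(F^{(0)}_\gamma + 4^j F^{(1)}_\gamma + 4^{2j} F^{(3)}_\gamma\big)\Phi^\pm_\gamma,
\end{align*}
with $F^{(m)} = \strongop^m([-T,T],\Upsilon)$ as given by Lemma~\ref{lemma_op_asympt}. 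Then I would apply the triangle inequality in $L^2$ and bound each of the three resulting sums separately.

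For each fixed $m \in \{0,1,3\}$, the task is to control $\|\sum_\gamma 4^{\alpha j} b_\gamma F^{(m)}_\gamma \Phi^\pm_\gamma\|_{L^2}$ with $\alpha \in \{0,1,2\}$ by $\sum_\gamma 4^j |b_\gamma|^2$. I would rewrite the prefactor in the form $2^{jm} \cdot c^{(m)}_\gamma$, where $c^{(m)}_\gamma$ absorbs the remaining power of $2^j$, and invoke Theorem~\ref{theo_bessel_bounds} with $s=0$ applied to the coefficient sequence $c^{(m)}_\gamma$. Concretely: for $m=0$ we take $c^{(0)}_\gamma = b_\gamma$ (the bound is then $\sum|b_\gamma|^2 \le \sum 4^j|b_\gamma|^2$); for $m=1$ we write $4^j b_\gamma = 2^j \cdot (2^j b_\gamma)$ and take $c^{(1)}_\gamma = 2^j b_\gamma$, yielding the bound $\sum 4^j|b_\gamma|^2$; for $m=3$ we write $4^{2j} b_\gamma = 2^{3j} \cdot (2^j b_\gamma)$ and take $c^{(3)}_\gamma = 2^j b_\gamma$, giving again $\sum 4^j|b_\gamma|^2$. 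The three estimates combine to yield the claimed bound, uniformly for $t \in [0,T]$.

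The substantive work has already been done upstream: Lemma~\ref{lemma_op_asympt} captures the cancellation from the eikonal and transport equations (which is what makes the would-be $4^{2j}$ term of highest order actually appear with a third-order vanishing factor, so that the $\strongop^3$ structure compensates the $4^{2j}$ prefactor), and Theorem~\ref{theo_bessel_bounds} packages the wave-molecule/almost-orthogonality argument. Consequently there is no genuine obstacle in the present proof; the only thing to verify carefully is the bookkeeping of the $2^j$-powers in the reduction above, together with the observation that the constant $C_T$ in Theorem~\ref{theo_bessel_bounds} depends only on the interval $[-T,T]$ and the families $F^{(0)}, F^{(1)}, F^{(3)}$, which themselves depend only on $\Upsilon$ and $T$, so the resulting constant $\CTime$ has the right qualitative dependence.
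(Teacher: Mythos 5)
Your proposal is correct and follows exactly the paper's own (very terse) proof: apply Lemma~\ref{lemma_op_asympt} to express the action of the wave operator on each beam and then invoke the weighted Bessel bounds of Theorem~\ref{theo_bessel_bounds} with $s=0$. Your bookkeeping of the $2^j$-powers (splitting $4^{j}=2^{j}\cdot 2^{j}$ and $4^{2j}=2^{3j}\cdot 2^{j}$ so that each term lands on the weight $\sum_\gamma 4^{j}|b_\gamma|^2$) is the detail the paper leaves implicit, and it checks out.
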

\begin{proof}
We apply Lemma~\ref{lemma_op_asympt}
and the weighted Bessel bounds from Theorem \ref{theo_bessel_bounds}.
\end{proof}

\subsection{Initial Value Problem}\label{Sec2.2}
We review the main result of \cite{Lex3}, that gives a parametrix for the initial value problem
for the wave equation in the whole space. In our formulation, we use a frame of pure Gaussian wave
packets that follows the wave-atom geometry. The estimate is similar to the one given in
\cite{dHHSU} using curvelets. We also refer to the related work of \cite{Walden}.

We consider the following initial value problem
\begin{equation}\label{eq_cauchy_ivp}
  \left\{
\begin{aligned}
   \p_{t}^{2} u(t,x) - c^{2}(x) \Delta_x u(t,x) &{}= 0, \qquad
   (t,x) \in [0,T] \times \R^{d},
\\
   u(0,x) &{}= f(x), \qquad x \in \R^{d},
\\
   \p_{t}u(0,x) &{}= g(x), \qquad x \in \R^{d}.
\end{aligned}\right.
\end{equation}
\subsubsection{Heuristic discussion on the parametrix}
We summarize the construction of a parametrix
using the half wave equations; see \cite{Andersson2015} for a complete treatment. For simplicity we
set $f = 0$. Let us define $\mathit{\Xi} = \sqrt{\speed^2(x) D_x^2}$
and consider a microlocal inverse $\mathit{\Xi}^{-1}$
(which operates on highly oscillatory data). Then
\begin{equation}\label{eq:decoupled_firstorder_u_f}
   u_{\pm} = \tfrac{1}{2} u \pm
         \tfrac{1}{2} \mi \mathit{\Xi}^{-1} \p_{t}u,
\end{equation}
approximately satisfy the two first-order half-wave equations
\begin{equation} \label{eq:decoupled_firstorder}
   P_{\pm}(x,D_x,D_t) u_{\pm} = 0,
\end{equation}
where
\begin{equation}
   P_{\pm}(x,D_x,D_t) = \partial_t \pm \mi \mathit{\Xi}(x,D_x) ,
\quad
   P_{+} P_{-} = \partial_t^2 + \speed^2(x) D_x^2,
\end{equation}
supplemented with the initial conditions
\begin{equation}
\label{eq:hpolIVs}
   u_{\pm} |_{t=0} = g_{\pm},\quad
   g_{\pm} = \pm \tfrac{1}{2} \mi \mathit{\Xi}^{-1} g.
\end{equation}
We let the operators $S_{\pm}(t)$ solve the initial value
problem \eqref{eq:hpolIVs}:
$u_{\pm}(t,x) = (S_{\pm}(t) g_{\pm})(x)$. Then,
an approximate solution of \eqref{eq_cauchy_ivp} is given by
\[
   u(t,x) = ([S_{+}(t) - S_{-}(t)]
          \tfrac{1}{2} \mi \mathit{\Xi}^{-1} g)(x).
\]
In what follows, we construct parametrices representing
$S_{\pm}(t)$ and quantify the approximation errors.
\subsubsection{The Gaussian beam parametrix}
We consider the Gaussian beams associated to the standard set of parameters. As noted in
Observation~\ref{ob_beam_frame}, these match the (higher-scale) frame elements at time $t=0$: i.e.,
$\func_\gamma(x)= \Phi_{\gamma}^{st,\pm}(0,x)$, $\gamma \in \Gamma$. To ease the notation, we drop
the superscript $\mbox{st}$.

We expand the initial conditions
$f \in H^1(\R^d), g\in L^2(\R^d)$ in the frame $\frameset$ as
\begin{equation}\label{eq:initial_data_dec}
   f = \sum_{\gamma \in \Gammafull} f_{\gamma} \vp_{\gamma},
       \qquad
   g = \sum_{\gamma \in \Gammafull} g_{\gamma} \vp_{\gamma}.
\end{equation}

We first discard the lower scale, which only introduces a smooth error (cf. Section
\ref{sec_cutoff}), and we approximate the solution~\eqref{eq:decoupled_firstorder_u_f} up to
principal symbols: \[\mathit{\Xi}^{\rm prin}(x,\xi) = H^+(x,\xi).\]
In analogy to Theorem \ref{th_frame_cutoff}, we
approximate the action of $\mathit{\Xi}^{-1}$ on each beam by calculating the value of its symbol
at the center of the packet. Let
\begin{equation}\label{eq:par}
   U(t,x) = \sum_{\gamma \in \Gamma} f_{\gamma} \tfrac{1}{2}
      \pt{\Phi^{+}_{\gamma}(t,x) + \Phi^{-}_{\gamma}(t,x)
    } + \sum_{\gamma \in \Gamma} g_{\gamma} \tfrac{1}{2}
      \pt{\Psi^{+}_{\gamma}(x,t) + \Psi^{-}_{\gamma}(x,t)},
\end{equation}
where
\[
\Psi^{\pm}_{\gamma}(t,x) =
\pm \mi \cdot
   \Phi^{\pm}_{\gamma}(t,x)
    \cdot     H^+(2^{-j}\lb,2\pi\pointjk)^{-1}
= \pm \mi \cdot \Phi^{\pm}_{\gamma}(t,x) \cdot
c(2^{-j}\lb)^{-1} \abs{2\pi\pointjk}^{-1}.
\]
\subsubsection{Bounds for the Gaussian beam parametrix}
We now state the resulting error estimate.
\begin{theorem}\label{thm:IVP}
Let $u = u(t,x)$ be the solution to the Cauchy initial value problem
\eqref{eq_cauchy_ivp}
with $f \in H^1(\R^d)$ and $g \in L^2(\R^d)$.
Let $U =U(t,x)$ denote the approximate solution given in \eqref{eq:par}. Then
$U \in C^{0}([0,T];H^1(\R^{d})) \cap
C^{1}([0,T];L^2(\R^{d}))$. Moreover $U$ satisfies the error estimate,
\begin{equation}\label{eq:43}
   \| u - U \|_{C^{0}([0,T];H^1(\R^{d})) \cap
        C^{1}([0,T];L^2(\R^{d}))} \leq \CTime \ptg{\norm{f}_{H^{1/2}}+\norm{g}_{H^{-1/2}}}.
\end{equation}
In particular, in the highly oscillatory regime:
$\hat{f}(\xi)=\hat{g}(\xi)=0$, for $\abs{\xi} \leq \xi_{\rm min}$, we obtain
\begin{equation}\label{eq:43b}
   \| u - U \|_{C^{0}([0,T];H^1(\R^{d})) \cap
        C^{1}([0,T];L^2(\R^{d}))} \leq
\CTime \cdot
\xi_{\rm min}^{-1/2} \cdot
        \ptg{\norm{f}_{H^{1}}+\norm{g}_{L^2}}.
\end{equation}
\end{theorem}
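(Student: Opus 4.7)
The plan is to apply the standard energy estimate for the inhomogeneous wave equation (cf.\ Appendix \ref{AppendixA}) to the difference $w := u - U$, which satisfies $(\partial_t^2 - c^2\Delta)w = -(\partial_t^2 - c^2\Delta)U$ with initial data $w(0,\cdot) = f - U(0,\cdot)$ and $\partial_t w(0,\cdot) = g - \partial_t U(0,\cdot)$. This reduces \eqref{eq:43} to establishing
\begin{align*}
\norm{f - U(0,\cdot)}_{H^1} + \norm{g - \partial_t U(0,\cdot)}_{L^2} + \int_0^T \norm{(\partial_t^2 - c^2\Delta)U(t,\cdot)}_{L^2}\, dt \;\lesssim\; \norm{f}_{H^{1/2}} + \norm{g}_{H^{-1/2}}.
\end{align*}
Once \eqref{eq:43} is in hand, \eqref{eq:43b} follows immediately, since the spectral hypothesis gives $\norm{f}_{H^{1/2}} \leq \xi_{\min}^{-1/2}\norm{f}_{H^1}$ and $\norm{g}_{H^{-1/2}} \leq \xi_{\min}^{-1/2}\norm{g}_{L^2}$.

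To control the residual, I would decompose $U = U^+ + U^-$ with $U^\pm := \sum_{\gamma \in \Gamma} b^\pm_\gamma \Phi^\pm_\gamma$ and $b^\pm_\gamma := \tfrac12 f_\gamma \pm \tfrac{i}{2} g_\gamma / H^+(2^{-j}\lambda, 2\pi\pointjk)$. The standard set of GB parameters is well-spread, so Theorem \ref{th_error_wave} applies to each $U^\pm$; using $|H^+(2^{-j}\lambda, 2\pi\pointjk)| \asymp 4^j$ together with the norm equivalence \eqref{eq_FrProp} yields
\begin{align*}
\sup_{t\in[0,T]}\norm{(\partial_t^2 - c^2\Delta)U^\pm(t,\cdot)}_{L^2}^2 \;\lesssim\; \sum_{\gamma \in \Gamma} 4^j|b^\pm_\gamma|^2 \;\lesssim\; \sum_{\gamma \in \Gamma}\bigl(4^j|f_\gamma|^2 + 4^{-j}|g_\gamma|^2\bigr) \;\lesssim\; \norm{f}_{H^{1/2}}^2 + \norm{g}_{H^{-1/2}}^2,
\end{align*}
which is integrable on $[0,T]$ with the required bound.

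For the initial profile, Observation \ref{ob_beam_frame} gives $\Phi^\pm_\gamma(0,\cdot) = \varphi_\gamma$, hence $\tfrac12(\Psi^+_\gamma + \Psi^-_\gamma)(0,\cdot) = 0$ and $U(0,\cdot) = \sum_{\gamma \in \Gamma} f_\gamma\varphi_\gamma$ is the high-scale part of $f$; the tail $f - U(0,\cdot)$ is controlled in $H^1$ by $\norm{f}_{H^{-1}} \leq \norm{f}_{H^{1/2}}$ via \eqref{eq_trunc_error}. For the velocity I would invoke Lemma \ref{lemma_tf}: writing $\partial_t\Phi^\pm_\gamma(0,\cdot) = (F^{(0)}_\gamma + 4^j F^{(1)}_\gamma - i\omega_\gamma H^\pm)\varphi_\gamma$ and observing via homogeneity that $\omega_\gamma H^\pm(x_\gamma(0), p_\gamma(0)) = \pm H^+(2^{-j}\lambda, 2\pi\pointjk)$, one sees that the principal term cancels in $\partial_t\Phi^+_\gamma + \partial_t\Phi^-_\gamma$, while the analogous principal terms of $\partial_t\Psi^\pm_\gamma$ reinforce to precisely $2\varphi_\gamma$. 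Consequently $\partial_t U(0,\cdot) = \sum_{\gamma \in \Gamma} g_\gamma\varphi_\gamma + R$, where $R$ is a sum of terms of the form $f_\gamma(F^{(0)}_\gamma + 4^jF^{(1)}_\gamma)\varphi_\gamma$ and $4^{-j}g_\gamma(F^{(0)}_\gamma + 4^jF^{(1)}_\gamma)\varphi_\gamma$. Applying Theorem \ref{theo_bessel_bounds} at $I=\{0\}$ (cf.\ Remark \ref{rem_bess_zero}) with $s=0$ and $m \in \{0,1\}$ bounds $\norm{R}_{L^2}$ by a constant multiple of $\bigl(\sum_\gamma 4^j|f_\gamma|^2 + 4^{-j}|g_\gamma|^2\bigr)^{1/2} \lesssim \norm{f}_{H^{1/2}} + \norm{g}_{H^{-1/2}}$; the low-scale truncation $g - \sum_{\gamma\in\Gamma}g_\gamma\varphi_\gamma$ contributes $\lesssim \norm{g}_{H^{-1}} \leq \norm{g}_{H^{-1/2}}$.

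The main obstacle will be the bookkeeping inside $\partial_t U(0,\cdot) - g$: isolating the principal cancellation between the $\Phi^\pm$ contributions and the principal reinforcement between the $\Psi^\pm$ contributions, and then verifying that the sub-principal remainders $4^j F^{(1)}_\gamma\Phi^\pm_\gamma$ furnished by Lemma \ref{lemma_tf} are absorbed by the weighted Bessel bound of Theorem \ref{theo_bessel_bounds} with exactly the weight $4^j$. This interplay between the vanishing-order formalism $\strongop^m$ and the scale-weighted frame norms is precisely what produces the order-$1/2$ loss in \eqref{eq:43}, namely $H^{1/2}$ for $f$ and $H^{-1/2}$ for $g$, rather than the full $H^1$ and $L^2$ norms.
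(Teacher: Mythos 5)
Your proposal is correct and follows essentially the same route as the paper: reduce to the energy estimate for the error function, bound the source term via Theorem \ref{th_error_wave} together with the weighted coefficient bound, and control the initial data mismatch via Observation \ref{ob_beam_frame}, the truncation estimate \eqref{eq_trunc_error}, Lemma \ref{lemma_tf} and the Bessel bounds. The only difference is presentational: the paper packages the three auxiliary bounds as Lemma \ref{L3.8} (deferring the $\partial_t U(0,\cdot)$ cancellation to \cite[Lemma 3.10]{Lex3}), whereas you carry out that cancellation explicitly.
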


Before proving Theorem \ref{thm:IVP} we present some auxiliary estimates.
The following lemma, which is a variant of \cite[Lemma 3.10]{Lex3},
is related to the so-called paraxial
approximation.
\begin{lemma}\label{L3.8}
Let $U(t,x)$ be the parametrix given in \eqref{eq:par}. Then following estimates hold.
\begin{enumerate}
\item[(i)] $\sum_{\gamma}4^{j}\ptg{|f_{\gamma}|^2+ |g_{\gamma}
\cdot H^+(2^{-j}\lb,2\pi\pointjk)^{-1}|^{2}} \lesssim \norm{f}^2_{H^{1/2}} + \norm{g}^2_{H^{-1/2}}$.
\item[(ii)] $\norm{U(0,\cdot) - u(0,\cdot)}_{H^1} \lesssim \norm{f}_{H^{-1}}$.
\item[(iii)] $\|\p_{t} U(0,.) - \p_{t} u(0,.) \|_{L^2}
\lesssim \norm{f}_{H^{1/2}}+\norm{g}_{H^{-1/2}}.$
\end{enumerate}
\end{lemma}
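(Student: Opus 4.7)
The plan is to handle the three assertions in turn, leveraging the frame norm equivalence of Theorem~\ref{T_FrProp}, the matching identity $\Phi^{st,\pm}_\gamma(0,\cdot)=\varphi_\gamma$ from Observation~\ref{ob_beam_frame}, the time-differentiation formula of Lemma~\ref{lemma_tf}, and the weighted Bessel bound of Theorem~\ref{theo_bessel_bounds} (specifically the time-independent version in Remark~\ref{rem_bess_zero}).

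For (i), since $\speed$ is bounded above and below and $|\pointjk|\asymp 4^j$, one has $|H^+(2^{-j}\lambda,2\pi\pointjk)|\asymp 4^j$. Therefore the target sum is comparable to $\sum_{\gamma\in\Gamma} 4^j|f_\gamma|^2 + \sum_{\gamma\in\Gamma} 4^{-j}|g_\gamma|^2$, and the two pieces are controlled by $\|f\|^2_{H^{1/2}}$ and $\|g\|^2_{H^{-1/2}}$ respectively by Theorem~\ref{T_FrProp} with $s=\tfrac12$ and $s=-\tfrac12$. For (ii), at $t=0$ we have $\Phi^{+}_\gamma(0,\cdot)=\Phi^{-}_\gamma(0,\cdot)=\varphi_\gamma$, so the $f$-contribution to $U(0,\cdot)$ reduces to $\tilde f=\sum_{\gamma\in\Gamma}f_\gamma\varphi_\gamma$; meanwhile $\Psi^{+}_\gamma(0,\cdot)+\Psi^{-}_\gamma(0,\cdot)=\mi\,H_\gamma^{-1}\varphi_\gamma-\mi\,H_\gamma^{-1}\varphi_\gamma=0$ where $H_\gamma:=H^+(2^{-j}\lambda,2\pi\pointjk)$, so the $g$-contribution vanishes. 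Consequently $U(0,\cdot)-u(0,\cdot)=\tilde f-f$ and the desired bound follows from the truncation estimate \eqref{eq_trunc_error}.

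The main work lies in (iii). Applying Lemma~\ref{lemma_tf} at $t=0$ and using $\omega_\gamma H^{\pm}(x_\gamma(0),p_\gamma(0))=\pm H_\gamma$, one obtains
\[
\partial_t \Phi^\pm_\gamma(0,x)=E^\pm_\gamma(x)\,\varphi_\gamma(x)\mp \mi\,H_\gamma\,\varphi_\gamma(x),
\qquad E^\pm_\gamma:=F^{(0),\pm}_\gamma(0,\cdot)+4^j F^{(1),\pm}_\gamma(0,\cdot),
\]
with $F^{(m),\pm}=\strongop^m(\{0\},\Upsilon^{st})$. Since $\Psi^\pm_\gamma=\pm \mi\,H_\gamma^{-1}\Phi^\pm_\gamma$, the principal-symbol terms in $\tfrac12(\partial_t\Psi^+ + \partial_t\Psi^-)(0,\cdot)$ telescope to $\varphi_\gamma$. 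Assembling and comparing with $\partial_t u(0,\cdot)=g$ gives
\[
\partial_t U(0,\cdot)-g=(\tilde g-g)+\tfrac12\sum_{\gamma\in\Gamma}f_\gamma(E^+_\gamma+E^-_\gamma)\varphi_\gamma+\tfrac{\mi}{2}\sum_{\gamma\in\Gamma}\frac{g_\gamma}{H_\gamma}(E^+_\gamma-E^-_\gamma)\varphi_\gamma.
\]
The first term obeys $\|\tilde g-g\|_{L^2}\lesssim\|g\|_{H^{-1/2}}$ by Theorem~\ref{T_FrProp} with $s=0$, absorbing the zeroth-scale coefficients into $\|g\|_{H^{-1/2}}$. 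For the remaining two terms, I would split each $E^\pm_\gamma$ into its $F^{(0)}$ and $4^jF^{(1)}$ parts and invoke Remark~\ref{rem_bess_zero} with $m=0$ and $m=1$ respectively; the $m=1$ case absorbs the factor $4^j$ as $\|\sum c_\gamma\cdot 4^j F^{(1)}_\gamma\varphi_\gamma\|_{L^2}^2\lesssim\sum 4^j|c_\gamma|^2$, so both contributions are dominated by $\sum 4^j|f_\gamma|^2$ and $\sum 4^j|g_\gamma/H_\gamma|^2\asymp\sum 4^{-j}|g_\gamma|^2$, precisely the quantities bounded in part~(i).

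The main obstacle I anticipate is bookkeeping rather than depth: one must track that the $\pm$ versions of $F^{(m),\pm}$ carry the right vanishing order $m$ at $x=x_\gamma(0)=2^{-j}\lambda$ so that multiplication against $\varphi_\gamma$ fits the wave-molecule framework underlying Theorem~\ref{theo_bessel_bounds}, and verify that the cancellations of the oscillatory principal-symbol terms take place \emph{before} any error bound is applied (otherwise the $\omega_\gamma\asymp 4^j$ prefactor would be unabsorbable). Once these two observations are in place, the argument reduces to the frame norm equivalence of Theorem~\ref{T_FrProp}.
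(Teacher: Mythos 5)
Your proposal is correct and follows essentially the same route as the paper: part (i) from the norm equivalence of Theorem~\ref{T_FrProp} together with $H^+(2^{-j}\lambda,2\pi\pointjk)\asymp 4^j$, part (ii) from Observation~\ref{ob_beam_frame} and the truncation bound \eqref{eq_trunc_error}, and part (iii) from Lemma~\ref{lemma_tf} combined with the Bessel bounds of Remark~\ref{rem_bess_zero}. The paper's own proof of (iii) is only a one-line reference to \cite[Lemma 3.10]{Lex3}; your write-up supplies exactly the cancellation of the principal oscillatory terms and the $m=0$/$m=1$ bookkeeping that that reference contains.
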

\begin{proof}
Part (i) follows from the norm equivalence in Theorem \ref{T_FrProp}
and the fact that $H^+(2^{-j}\lb,2\pi\pointjk) \asymp \abs{\pointjk} \asymp 4^j$, because the
velocity $\speed$ is bounded above and below.
For part (ii) we use Observation \ref{ob_beam_frame} and note that $U(0,\cdot)=\sum_{\gamma \in
\Gamma} f_\gamma \func_\gamma$ is the high-scale part of the frame expansion of $f$, while
$u(0,\cdot)=f$. Hence, the conclusion follows from \eqref{eq_trunc_error}. The proof of (iii) is
similar, this time using Lemma \ref{lemma_tf}. See \cite[Lemma 3.10]{Lex3} for more details.
\end{proof}

We can now prove the announced approximation bounds.
\begin{proof}[Proof of Theorem \ref{thm:IVP}]
The proof is as in \cite[Theorem 3.2]{Lex3}.
The error function $E(t,x) = U(t,x) - u(t,x)$
solves the problem
\begin{equation*}
\left\{
  \begin{aligned}
   &[\p_{t}^{2} - c^{2}(x) \Delta_x] E(t,x) =
          [\p_{t}^{2} - c^{2}(x) \Delta_x] U(t,x), &\qquad
   (t,x) \in [0,T] \times \R^{d},
\\
   &E(0,x) = U(0,\cdot) - u(0,\cdot), &\qquad x \in \R^d,
\\
   &\p_{t} E(0,x) = \p_{t} U(0,x) - \p_{t} u(0,x),
    &\qquad x \in \R^d.
\end{aligned}\right.
\end{equation*}
We use the energy estimate - cf.  Theorem~\ref{th_Las},
\begin{align*}
   &\| E \|_{C^{0}([0,T];H^{1}(\R^{d})) \cap C^{1}([0,T];\ld)}
   \leq C_T \big(
   \norm{U(0,\cdot) - u(0,\cdot)}_{H^1}+
   \\
   &\qquad
   \| \p_{t} U(0,.) - \p_{t} u(0,.) \|_{L^2}
       + \| [\p_{t}^{2} - c^{2} \Delta_x] U
                            \|_{L^{\infty}([0,T];\ld)} \big).
\end{align*}
The first two terms are suitably bounded by Lemma \ref{L3.8}.
The term involving $[\p_{t}^{2} - c^{2} \Delta_x] U$ is bounded,
due to Theorem~ \ref{th_error_wave}, in terms of the weighted coefficient norm in part (i) of
Lemma~\ref{L3.8}, which is in turn bounded by the desired quantity.
\end{proof}

\section{The Dirichlet problem on the half space}
\label{sec_dir}

\subsection{Setting and assumptions}
We are interested in the following problem. Suppose that
$u: [0,T] \times \Rdplus \to \bC$ is a (weak) solution to:
\begin{eqnarray}
\left\{
\begin{aligned}
&\partial^2_{t} u (t,x) - \speed(x)^2 \Delta_x u (t,x) = 0, &\qquad t\in[0,T], x \in \Rdplus,
\\
&u(0, x)= u_t(0,x) = 0, &\qquad x \in \Rdplus,
\\
&u(t,0,y) = h(t,y), &\qquad t\in[0,T], y \in \Rst^{d-1},
\end{aligned}
\right.
\end{eqnarray}
where $h \in H^1([0,T] \times \Rst^{d-1})$ is called \emph{boundary value}.
We assume that we are able to measure the boundary value $h$ and the goal is to approximate the
corresponding solution $u$.
We now introduce several assumptions.

\subsubsection{Assumptions on the boundary value}
\label{sec_ass_source}
In order for the Dirichlet problem to be well-posed
we need to assume that $h$ satisfies the standard compatibility condition
$h(0,\cdot) \equiv 0$.
In addition, the parametrix that we propose is ultimately based
on oscillatory integrals and the theory of elliptic boundary
value problems, and these techniques require that the bicharacteristic directions
be nowhere tangent to the boundary \cite{nirenberg73}. That is why
we exclude \emph{grazing rays} from the wavefront set of $h$.
Following \cite{MR2944376}, we formulate quantitative versions of these assumptions by replacing the
function $h$ with
a new function $\cuth$ that is the result of applying an adequate pseudodifferential cut-off to $h$.
Recall that $h$ is a function of $(t,x_*) \in \Rst \times \R^{d-1}$. We denote the conjugate
(Fourier) variables
by $(\tau, \xi_*)$, and let $\cuth := \sym(t,x_*,D_t, D_{x_*}) h$, where the symbol
$\sym(t,x_*,\tau,\xi_*) :=
a(t,x_*) b(t,x_*,\tau,\xi_*)$ satisfies the following.

\begin{itemize}
\item[(i)] $a$ is smooth with compact support and there exist $\cconcL, \cconcU \in (0,T)$ such that
$\supp(a) \subseteq [\cconcL, \cconcU] \times \R^{d-1}$.
\item[(ii)] $b$ is a smooth symbol of order $0$, and there is a constant $\consalone>0$ such that
$b$ vanishes on the set of all points $(t,x_*,\tau,\xi_*) \in \Rst \times \R^{d-1} \times \Rst
\times \R^{d-1}$ such that $\abs{(\tau,\xi_*)} \geq 1$ and
\begin{align}
\label{eq_graray_pure}
\frac{|\tau|}{\speed(0,x_*)} - |\xi_*| \leq \consalone \abs{(\tau,\xi_*)}.
\end{align}
(This is possible because the condition in
\eqref{eq_graray_pure} is homogeneous of degree zero on $(\tau,\xi_*)$.)
If such rays are not present in the wavefront set of the boundary value,
the action of the cut-off is not needed.
\end{itemize}
We note that, as a result of the cut-off operation, $\cuth \in H^1([0,T] \times \R^{d-1})$,
$\supp(\cuth) \subseteq [\cconcL,\cconcU]\times K$, for some compact set $K\subset\R^{d-1}$,
and $[\cconcL,\cconcU] \subseteq (0,T)$.

\begin{rem}
The assumptions on the boundary value are quantitative versions of the compatibility and no-grazing
ray conditions.
Indeed, we assume that the observation window $[0,T]$ properly contains the time support of $h$, and
that
there is an absolute lower bound on the grazing angles.
\end{rem}

\subsubsection{Assumptions on the velocity}
We recall that the \emph{velocity}~$\speed$ is assumed to be smooth, positive, bounded from below
and with bounded derivatives of all orders. This ensures that suitable energy estimates
are available for the Dirichlet problem.

\subsubsection{The cone condition}
We assume that for every $\cturnd \in (0,1]$, there exist $\cturn>0$
such that
if $(x(t),p(t))$ is a solution to the Hamiltonian flow
with initial conditions at $t_0 \in [\cconcL ,\cconcU]$ satisfying $x_1(t_0) = 0$
and
$|p_1(t_0)| \geq \cturnd {\abs{p(t_0)}}$ then:
\begin{align}
\label{eq_cone}
|x_1(t)| \geq \cturn |t-t_0|, \qquad t \in [-T,T].
\end{align}
\begin{rem}
The cone condition implies that for all take-off angles at the boundary the corresponding rays do
not return to the
boundary in the
time interval in question, and indeed it is a quantitative version of that statement. See Figure
\ref{fig:cone}.
\end{rem}

\begin{rem}
Since
\[
\abs{\dot{x}_1(t)} = \speed(x(t)) \frac{\abs{p_1(t)}}{\abs{p(t)}}
\asymp \frac{\abs{p_1(t)}}{\abs{p(t)}},
\]
the cone condition holds automatically for $t$ near $t_0$. The content of \eqref{eq_cone}
is the validity of the bound on the whole interval $[-T,T]$.
Moreover, since the Hamiltonian is time independent, this condition can be stated at $t_0=0$ and it
is only about the
size of the interval on which the cone condition holds.
Figure \ref{fig:caustic} shows an example of a velocity satisfying the hypothesis.
\end{rem}

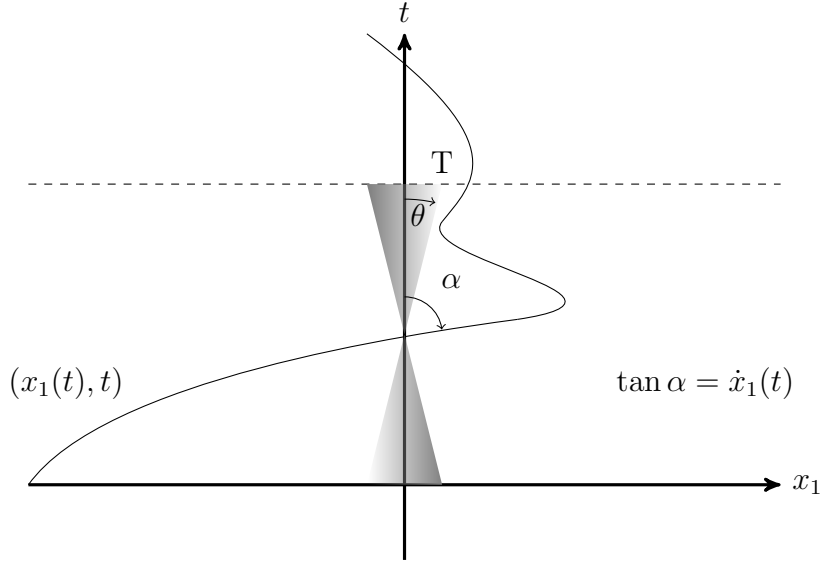
\begin{figure}
\begin{tikzpicture}[
    axis/.style={very thick, ->, >=stealth'},
    important line/.style={thick},
    dashed line/.style={dashed, thin},
    pile/.style={thick, ->, >=stealth', shorten <=2pt, shorten
    >=2pt},
    every node/.style={color=black}
    ]
    \draw[axis] (-5,2)  -- (5,2) node(xline)[right]
        {$x_1$};
    \draw[axis] (0,1) -- (0,8) node(yline)[above] {$t$};
        \draw[dashed] (-5,6) -- (5,6);
          \draw[black]
  (-5,2)
    .. controls (-4,3.4) and (0,4) ..
    (1.5,4.2);
    \draw[black]
  (1.5,4.2)
    .. controls (3.5,4.5) and (.1,5) ..
  (.5,5.5);
    \draw[black]
  (.5,5.5)
    .. controls (.9,6) and (1.5,6.5) ..
  (-.5,8);
  \draw arc[radius=.25cm,start angle=0,end angle=45];
\fill[gray,path fading=west] (-.5,2) -- (0,4) -- (.5,2);
\fill[gray,path fading=east] (-.5,6) -- (0,4) -- (.5,6);
\node at (0.5,6) [above] {$\textrm{T}$};
     \coordinate (A) at (0,4);
     \coordinate (B) at (1.4,10);
    \coordinate (C) at (0,10);

    \tikzAngleOfLine(A)(B){\AngleStart}
    \tikzAngleOfLine(A)(C){\AngleEnd}
    \draw[black,<-] (A)+(\AngleStart:1.8cm) arc (\AngleStart:\AngleEnd:1.8 cm);
 \node at ($(A)+({(\AngleStart+\AngleEnd)/2}:1.6 cm)$) {$\theta$};
 \coordinate (D) at (0,5);
    \coordinate (E) at (1.5,4.2);

    \tikzAngleOfLine(A)(E){\AngleStart}
    \tikzAngleOfLine(A)(D){\AngleEnd}

 \pic [draw, <-, "$\alpha$", angle eccentricity=1.9] {angle = E--A--D};
\node at (-4.5,3) [above] {$(x_1(t),t)$};
\node at (4,3) [above] {$\tan\alpha = \dot{x}_1(t)$};
      \end{tikzpicture}
\caption{The cone condition.}
\label{fig:cone}
\end{figure}

\subsection{Frame expansion of the boundary value}
\label{sec_frame_source}
The recovery method that we introduce in the next sections operates on the
frame expansion of the boundary value
\begin{align}
\label{eq_h}
h(t,x_*) = \sum_{\gamma \in \Gammafull} h_\gamma \varphi_\gamma(t,x_*).
\end{align}
Therefore, we need to show that the assumptions above are reflected by this expansion.
Recall that $\cuth= \eta(t,x_*,D_t,D_{x_*}) h$, where $\eta$ is a zero-order pseudodifferential
symbol. As shown in Section \ref{sec_cutoff}, this operator can be approximately implemented as a
cut-off on the frame coefficients.
More precisely, we first discard to zeroth-scale coefficients, leading to an error bound as in
\eqref{eq_trunc_error}.
Second, we let $\tilde h_\gamma := \eta(2^{-j}\lambda,\pointjk) h_\gamma$, $\Gammah := \{\gamma \in
\Gamma: \tilde h_\gamma \not=0\}$, and set
\begin{align}
\label{eq_tilh}
\tilh = \sum_{\gamma \in \Gammah} \tilde h_\gamma  \varphi_\gamma.
\end{align}
By Theorem \ref{th_frame_cutoff}, we have the following approximation estimate:
\begin{equation}\label{eq:wavefront_h}
\|\cuth - \tilh\|_{H^1} \lesssim \|h\|_{H^{1/2}}.
\end{equation}
We now note some properties of the truncated frame parameters.

\begin{prop}
The set $\Gammah$ satisfies the following.
\begin{itemize}
\item[(i)](\emph{Time concentration and approximate compatibility}). There exist $\cconcL,
\cconcU>0$ such that
for every $\gamma=(j,k,\lambda) \in \Gammah$,
\begin{align}
\label{eq_tcn}
0 < \cconcL \leq 2^{-j} \lambda_1 \leq \cconcU.
\end{align}
\item[(ii)](\emph{Quantitative grazing ray condition}). There exists $\cgraz \in (0,1)$ such that
for every
$\gamma=(j,k,\lambda) \in \Gammah$:
\begin{equation}
	\label{eq_graray}
	\frac{|(\pointjkt)_1|}{\speed(0,2^{-j} \lambda_*)} - |(\pointjkt)_*| \geq \cgraz,
\end{equation}
where the point $\pointjkt$ is defined by \eqref{eq_pjkt}.
\end{itemize}
\end{prop}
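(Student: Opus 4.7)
The plan is to open the definition of $\Gammah$ and exploit the product structure of the cut-off symbol $\eta(t,x_*,\tau,\xi_*) = a(t,x_*)\, b(t,x_*,\tau,\xi_*)$. Any $\gamma = (j,k,\lambda) \in \Gammah$ satisfies $\tilde h_\gamma \neq 0$, which forces $\eta(2^{-j}\lambda, \pointjk) \neq 0$ and hence \emph{both} $a(2^{-j}\lambda) \neq 0$ and $b(2^{-j}\lambda, \pointjk) \neq 0$. The two claims of the proposition will follow from these nonvanishing conditions, taken one at a time.

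For part (i), the condition $a(2^{-j}\lambda) \neq 0$ places $2^{-j}\lambda$ in $\supp(a)$. Since $\supp(a) \subseteq [\cconcL,\cconcU] \times \R^{d-1}$ by hypothesis, its first coordinate satisfies $\cconcL \leq 2^{-j}\lambda_1 \leq \cconcU$, which is exactly \eqref{eq_tcn} with the same constants as in the assumptions on $a$.

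For part (ii), I would first observe that for every $\gamma = (j,k,\lambda) \in \Gamma$ one has $j \geq 1$, and hence $\abs{\pointjk} \asymp 4^j \geq 4 > 1$. Thus $\pointjk$ lies in the regime where the vanishing hypothesis on $b$ applies, so $b(2^{-j}\lambda, \pointjk) \neq 0$ is incompatible with $\frac{|(\pointjk)_1|}{\speed(0,2^{-j}\lambda_*)} - |(\pointjk)_*| \leq \consalone \abs{\pointjk}$. The reverse strict inequality therefore holds, and rescaling through $\pointjkt = 2\pi\, \pointjk/4^j$ yields $\frac{|(\pointjkt)_1|}{\speed(0,2^{-j}\lambda_*)} - |(\pointjkt)_*| > \consalone \abs{\pointjkt}$. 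Because $\abs{\pointjkt}$ is bounded below by an absolute positive constant (as recorded just after \eqref{eq_pjkt}), one obtains a uniform lower bound of the form \eqref{eq_graray} by taking $\cgraz$ to be $\consalone$ times this absolute lower bound, shrinking slightly if necessary to land in $(0,1)$.

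No substantial obstacle is expected: the whole argument is a direct bookkeeping of the compact support of $a$ and the zero-order homogeneous vanishing of $b$, transported from the un-normalized frequency center $\pointjk$ to the normalized $\pointjkt$. The only point requiring attention is the uniform two-sided control on $\abs{\pointjkt}$, but this is immediate from the frequency cover of Section~\ref{sec_frame} together with the definition of $\pointjkt$.
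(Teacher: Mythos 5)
Your proof is correct and follows essentially the same route as the paper's: unpack $\tilde h_\gamma \neq 0$ into the nonvanishing of both factors of $\sym = a\cdot b$ at $(2^{-j}\lambda,\pointjk)$, read off (i) from $\supp(a)$, and read off (ii) from the vanishing set of $b$ (noting $\abs{\pointjk}\geq 4^j > 1$ so that hypothesis applies) followed by the rescaling to $\pointjkt$ and the uniform bound $\abs{\pointjkt}\asymp 1$. The paper's proof is just a terser version of this same bookkeeping, including the same remark that $\cgraz$ differs from $\consalone$ only because $\pointjkt$ is not exactly normalized.
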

\begin{proof}
This follows directly from the properties of the symbol $\eta$. The constants
$\cconcL, \cconcU>0$ are the same as in Section \ref{sec_ass_source}. The constant $\cgraz$ is
related to the constant $\consalone$ from \eqref{eq_graray_pure}. These two numbers are not
exactly the same because the points $\pointjkt$ are not exactly normalized - recall that, however,
$\pointjkt$ is a multiple of $\pointjk$ and $\abs{\pointjkt} \asymp 1$, so a suitable $\cgraz$ can
be found.
\end{proof}

\begin{rem}
The constants $\cconcL,\cconcU, \cgraz$ are given individual notation
for future reference. We remark that the estimates in the rest of the article depend on them,
as well as on the constants in the cone condition.
\end{rem}

\subsubsection{Non-tangential propagation}

Since the velocity $\speed$ is assumed to be bounded from below, the grazing ray condition
\eqref{eq_graray} implies
the following \emph{non-tangential propagation} estimate:
\begin{align}
\label{eq_fpr}
\abs{(\pointjkt)_1} \geq \cforw,  \qquad \gamma=(j,k,\lambda) \in \Gammah,
\end{align}
where $\cforw = \cgraz\cspeed >0 $, and $\cspeed$  -
cf.~\eqref{eq_speed_bounds} - is the minimum value of the velocity $\speed$. In particular
$(\pointjkt)_1 \not =0$. In
what follows, the sign of $(\pointjkt)_1$ plays an important role, and it is convenient to define:
\begin{align}
\label{eq_gammapm}
\Gammah^+ := \sett{\gamma \in \Gammah: (\pointjkt)_1 < 0},
\qquad
\Gammah^- := \sett{\gamma \in \Gammah: (\pointjkt)_1 > 0}.
\end{align}
(The motivation for this notation will be clear later.)

\section{Spatio-temporal analysis of the beams near the boundary}
\label{sec_spation-temporal}
We consider a well-spread family of Gaussian beams $\{\Phi^+_{\gamma}: \gamma \in \Gamma_0\}$
or $\{\Phi^-_{\gamma}: \gamma \in \Gamma_0\}$,
and times $t=t_\gamma$, $\gamma \in \Gamma_0$, at which
the centers of the corresponding beams intersect the boundary $x_1 = 0$,
i.e. $\xoneg (t_{\gamma}) = 0$ - for short, we say that the \emph{beams intersect the boundary} at
those times.
We focus on the case in which $t_\gamma$ belongs to the interval $[\cconcL,\cconcU]$,
where the boundary value is active. We assume that every beam in the family does intersect the
boundary at a suitable time; for
a more general family of beams, the analysis of this section applies by considering a subset
of $\Gamma_0$.

We analyze the restriction of the beams to $x_1=0$, treating the remaining variables $(t,x_*)$ as a
\emph{joint spatial
variable}. We aim to approximately describe the restricted beam $\Phi^\pm_{\gamma}(t,0,x_*)$
as a Gaussian beam with a fixed evolution time. We first identify the spatial center of
$\Phi^\pm_{\gamma}(t,0,x_*)$
and then describe the resulting functions in two different regimes: near the center and away from
it.
The assumption that the family of beams under study is well-spread allows us to obtain a uniform
control on the
approximation errors. This is essential for the applications in the following
sections.

To ease the notation we focus on one of the two modes ($+/-$) and
remove this choice from the notation.
Hence,
most of the symbols below should be supplemented with a $+/-$ superscript.
(In particular, $H$ stands for either $H^+$ of $H^-$.)

\subsection{Local analysis of a beam when it intersects the boundary}
Before stating the estimates, we introduce some auxiliary functions defined in terms of the
functions in \eqref{eq_ode1}, \eqref{eq_ode2} and \eqref{eq_ode3}.

Let $\gamma \in \Gamma_0$ and consider the matrix $\widetilde{M}_{\gamma} \in \bC^{d\times d}$
defined by
\begin{equation}\label{sys:M}
\left\{
\begin{array}{ll}
  \displaystyle \widetilde{M}_{\gamma,11}&{}=   \displaystyle\dot{x}_{\gamma}(t_{\gamma}) \cdot
\Mg(t_{\gamma}) \dot{x}_{\gamma}(t_{\gamma})
-\dot{p}_{\gamma}(t_{\gamma})\cdot\dot{x}_{\gamma}(t_{\gamma}),
\\
   \widetilde{M}_{\gamma,1k}  &{}= \displaystyle
\dot{p}_{\gamma,k}(t_{\gamma})
   -\sum_{n=1}^d \pt{\Mg(t_{\gamma})}_{kn}\dot{x}_{\gamma,n}(t_{\gamma}),
             \qquad k=2,\dots, d,
\\
  \displaystyle  \widetilde{M}_{\gamma,kl} &{}= \pt{\Mg(t_{\gamma})}_{kl},
  \qquad k,l=2,\ldots, d.
  \end{array}
\right.
\end{equation}
In more compact notation,
\begin{align}
\widetilde{M}_{\gamma}
=
\begin{bmatrix}
\widetilde{M}_{\gamma,11}& \widetilde{M}_{\gamma,1*}^t \\
\widetilde{M}_{\gamma,1*}& \pt{M_{\gamma}(t_\gamma)
}_{**}\end{bmatrix},
\end{align}
where
\begin{align}
\widetilde{M}_{\gamma,1*} =
\left(
\dot{p}_{\gamma}(t_{\gamma})
-M_\gamma(t_\gamma)
\dot{x}_{\gamma}(t_{\gamma})
\right)_* \in \bC^{(d-1)\times 1},
\end{align}
and $\pt{M(t_\gamma)}_{**} \in \bC^{(d-1)\times(d-1)}$ is the matrix obtained from $M(t_\gamma)$ by
eliminating the first row
and column.
Let us also consider the following constants and functions:
\begin{equation}
\label{eq_taua}
\taugam = -H(x_\gamma(t_{\gamma}),p_\gamma(t_{\gamma})) = -p_\gamma(t_\gamma) \cdot
\dot{x}_\gamma(t_\gamma),
\end{equation}
\begin{equation}\label{eq:Lin_Term}
	\begin{split}
	 L_{\gamma}(t,x_{*}) =\big(\taugam,\pastg (t_{\gamma})\big)\cdot ((t,x_*) -
(t_{\gamma},\xastg(t_{\gamma}))),
	\end{split}
\end{equation}
\begin{equation}\label{eq:Quad_Term}
\begin{split}
Q_{\gamma}(t,x_*)   &{}=
         \half \pt{(t,x_*) - (t_{\gamma},\xastg)}\cdot \widetilde{M}(t_{\gamma}) \pt{(t,x_*) -
(t_{\gamma},\xastg)}.
\end{split}
\end{equation}

We can now describe a Gaussian beam intersecting the boundary.
\begin{lemma}\label{L:vip}
Let $\Upsilon \equiv \sett{\SIC_\gamma: \gamma \in \Gamma_0}$ be
a well-spread set of GB parameters. For $\gamma \in \Gamma_0$, let $t_{\gamma} \in
[\cconcL,\cconcU]$ be such that $x_{\gamma,1}(t_{\gamma}) = 0$ (i.e. the center of the corresponding
beam
$\Phi_{\gamma}=\Phi^\pm_{\gamma}$ intersects the boundary $x_1 = 0$ at a time $t=t_{\gamma}$ when
the
boundary value is active). Let us write $x_\gamma(t_{\gamma})=(0,x_{\gamma,*}(t_{\gamma}) )$. Then
the restriction of $\Phi_{\gamma}$ to $x_1=0$
admits the following asymptotic expansion around $(t_{\gamma}, x_{\gamma,*}(t_{\gamma}))$:
\begin{equation}
\label{eq_taylor_beam}
\Phi_{\gamma}(t,0,x_*) = A_{\gamma}(t_{\gamma})\left(1+
R_{\gamma}(t)\right)e^{i\omega_{\gamma} \{
L_{\gamma}(t,x_{*}) + Q_{\gamma}(t,x_{*}) + \Theta_{\gamma}(t,x_*)\}},
\:
(t,x_*) \in\Rst^d_T,
\end{equation}
with
\[
	R_{\gamma}(t) = r_{\gamma}(t)(t-t_{\gamma}),\quad
	\Theta_{\gamma}(t,x_*) = \sum_{|\mu| =3}g_{\gamma, \mu}(t)\pt{(t,x_*)-(t_{\gamma},
x_{\gamma,*}(t_{\gamma}))}^\mu
\]
and
$r_\gamma, g_{\gamma, \mu} \in C^\infty_b([-T,T])$,
uniformly on $\gamma$. More precisely, for every $k \geq 0$, the error factors satisfy:
\begin{align}
\label{eq_bound_error_terms}
\sup_{\gamma \in \Gamma_0} \sup_{t \in [-T,T]}
\abs{\partial^k_t g_{\gamma,\mu}(t)},\quad
\sup_{\gamma \in \Gamma_0} \sup_{t \in [-T,T]}
\abs{\partial^k_t r_\gamma(t)} < +\infty.
\end{align}
\end{lemma}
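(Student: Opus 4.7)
The plan is to handle the amplitude $A_\gamma(t)$ and the phase $\theta_\gamma(t,0,x_*)$ separately, exploiting the polynomial structure of $\theta_\gamma$ in $x$ combined with Taylor expansions in time.

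For the amplitude, I would use the logarithmic derivative identity $\partial_t A_\gamma = A_\gamma\, G_\gamma$ from \eqref{eq_ampl_der} to write
\begin{equation*}
A_\gamma(t) = A_\gamma(t_\gamma)\,\exp\Bigl(\int_{t_\gamma}^t G_\gamma(s)\,ds\Bigr),
\end{equation*}
and then factor out $(t-t_\gamma)$ via the elementary identity $e^u - 1 = u\int_0^1 e^{wu}\,dw$. This produces $R_\gamma(t) = r_\gamma(t)(t-t_\gamma)$ with $r_\gamma$ given by a nested integral involving $G_\gamma$. Uniform bounds on $\partial_t^k r_\gamma$ follow from the uniform bounds on $G_\gamma$ in Lemma \ref{lemma_error_flow}, while the lower bound $|A_\gamma(t_\gamma)|>0$ from Lemma \ref{lemma_unif_flow_2} ensures that the factorization is well defined.

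For the phase, I would exploit that $\theta_\gamma(t,x)$ is a polynomial of degree at most two in $x$, so the restriction $F(t,x_*) := \theta_\gamma(t,0,x_*)$ admits the \emph{exact} decomposition
\begin{equation*}
F(t,x_*) = \phi_0(t) + \phi_1(t) \cdot (x_* - x_0) + \tfrac{1}{2}(x_* - x_0)\cdot M_\gamma(t)_{**}(x_* - x_0),
\end{equation*}
where $x_0 := x_{\gamma,*}(t_\gamma)$ and $\phi_0(t), \phi_1(t)$ depend only on $t$. Applying Taylor's theorem with integral remainder to $\phi_0$ (through order two), to $\phi_1$ (through order one), and to $M_\gamma(t)_{**}$ (through order zero) around $t=t_\gamma$, and substituting back, I obtain a polynomial of degree at most two in $((t-t_\gamma),(x_*-x_0))$ plus a cubic remainder whose coefficients $g_{\gamma,\mu}$ depend only on $t$ (each is the integral of a higher-order $t$-derivative of $\phi_0$, $\phi_1$, or $M_\gamma(t)_{**}$). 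The bounds \eqref{eq_bound_error_terms} then follow from the uniform control on $\partial_t^k x_\gamma$, $\partial_t^k p_\gamma$, $\partial_t^k M_\gamma$ given by Lemma \ref{lemma_error_flow}.

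It remains to match the polynomial part with $L_\gamma + Q_\gamma$. Since $\theta_\gamma(t,x_\gamma(t)) = 0$ by the definition of the beam phase, the constant term vanishes; the first-order coefficients evaluate to $p_{\gamma,*}(t_\gamma)$ in $x_*$ and to $-p_\gamma(t_\gamma)\cdot \dot{x}_\gamma(t_\gamma) = -H(x_\gamma(t_\gamma), p_\gamma(t_\gamma)) = \taugam$ in $t$, via Euler's homogeneity identity for the degree-one Hamiltonian; this reproduces $L_\gamma$. For the quadratic coefficients, the $**$ block is directly $M_\gamma(t_\gamma)_{**}$, and the mixed $1*$ entries arise from $\partial_t \partial_{x_j}\theta_\gamma$ at the beam center, yielding $(\dot{p}_\gamma - M_\gamma \dot{x}_\gamma)_j$ for $j\geq 2$. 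The delicate case is the $11$ entry: a direct computation of $\phi_0''(t_\gamma)$ produces $-2\dot{p}\cdot\dot{x} - p\cdot\ddot{x} + \dot{x}\cdot M\dot{x}$ at $t_\gamma$, which collapses to $\dot{x}\cdot M\dot{x} - \dot{p}\cdot\dot{x} = \widetilde{M}_{\gamma,11}$ only after invoking the conservation of $H$ along the Hamiltonian flow (so that $\dot{p}\cdot\dot{x} + p\cdot\ddot{x} = \tfrac{d}{dt}H = 0$). This cancellation is the main obstacle in the argument; without recognizing it, the computed $(t-t_\gamma)^2$ coefficient would carry a spurious $-p\cdot\ddot{x}$ term and would not match the $\widetilde{M}_{\gamma,11}$ appearing in $Q_\gamma$.
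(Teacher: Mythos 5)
Your proposal is correct and follows essentially the same route as the paper: both handle the amplitude by factoring $A_{\gamma}(t)/A_{\gamma}(t_{\gamma})-1=r_{\gamma}(t)(t-t_{\gamma})$ using \eqref{eq_ampl_der}, and both obtain the phase expansion by Taylor expansion about $t=t_{\gamma}$ with uniform remainders controlled by Lemma \ref{lemma_error_flow}, the decisive step in each case being the cancellation of the $p_{\gamma}\cdot\ddot{x}_{\gamma}$ contribution to the $(t-t_{\gamma})^{2}$ coefficient via $\partial_t H(x_{\gamma}(t),p_{\gamma}(t))=0$. The only difference is organizational: the paper Taylor-expands $x_{\gamma},p_{\gamma},M_{\gamma}$ first and then restricts to $x_1=0$, whereas you restrict first and then expand the $t$-dependent coefficients of the resulting exact quadratic polynomial in $x_*-x_{\gamma,*}(t_{\gamma})$; the computations and the required cancellation are identical.
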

\begin{proof}
We analyze the Gaussian beam
 \begin{align*}
\Phi_{\gamma}(t,x) = A_\gamma(t) e^{i \omega_{\gamma} \theta_\gamma(t,x)},
\end{align*}
by Taylor expanding the amplitude and phase.

\step{1}{The amplitude}. Using the bounds in Lemma \ref{lemma_error_flow} - specifically
\eqref{eq_ampl_der} -
and Lemma \ref{lemma_unif_flow_2} - which is applicable uniformly for $\gamma \in \Gamma_0$ - we see
that the function
$B_\gamma(t) := A_\gamma(t) / A_\gamma(t_\gamma)$ is bounded and has bounded derivatives on
$[-T,T]$,
uniformly for $\gamma \in
\Gamma_0$. Since $B_\gamma(t_\gamma) = 1$, we can write:
$B_\gamma(t)=1+r_\gamma(t)(t-t_\gamma)$, with $r_\gamma$ as in \eqref{eq_bound_error_terms}.
Therefore,
\begin{align}
\label{eq_Aalp}
A_\gamma(t) = A_\gamma(t_\gamma)(1+ r_{\gamma}(t))(t-t_\gamma).
\end{align}
In order to establish \eqref{eq_taylor_beam}, it remains to inspect the exponential factor.

\step{2}{Expansion of the characteristic flow}.
We first expand the characteristics as
\begin{align}
\label{eq_ch1}
    \xxg (t) &{}=\xxg(t_{\gamma})+\dot{x}_{\gamma}(t_{\gamma}) (t-t_{\gamma}) + \half
\ddot{x}_{\gamma}(t_{\gamma}) (t-t_{\gamma})^2
                 + R_{x,\gamma}(t)(t-t_{\gamma})^3,
\\
\label{eq_ch2}
   p_\gamma (t) &{} = \pg(t_{\gamma})
        + \dot{p}_{\gamma}(t_{\gamma})(t-t_{\gamma})
                 +R_{p,\gamma}(t)(t-t_{\gamma})^2,
\end{align}
where $R_{x,\gamma},R_{p,\gamma}\in C^\infty_b([-T,T])$, and the corresponding bounds are uniform
for $\gamma \in
\Gamma_0$, as shown in Lemma~\ref{lemma_error_flow}.

We now focus on the phase function
\begin{equation}
\label{eq_phase_pr}
\theta_\gamma (x,t) = p_\gamma(t)\cdot (x-x_\gamma(t))
+ \tfrac{1}{2} (x-x_\gamma(t))\cdot M_\gamma(t) (x-x_\gamma(t)).
\end{equation}

\step{3}{The linear part of the phase}.
The linear part of $\theta_\gamma$ is
\begin{align}\label{eq:P_tay}
   \pg(t)\cdot\pt{x -  \xxg(t)}
        &{}= \pg(t_{\gamma})\cdot \pt{x -\xxg(t_{\gamma})-\dot{x}_{\gamma}(t_{\gamma})(t-t_{\gamma})
- \half
\ddot{x}_{\gamma}(t_{\gamma}) (t-t_{\gamma})^2}
\\
     &{}\quad + \dot{p}_{\gamma}(t_{\gamma})(t-t_{\gamma})\cdot
\pt{x-\xxg(t_{\gamma})-\dot{x}_{\gamma}(t-t_{\gamma})} + \Theta_{\gamma}\nonumber
\\
    &{}= \pg(t_{\gamma})\cdot \pt{x -\xxg(t_{\gamma})} -\pg(t_{\gamma})\dot{x}_{\gamma}(t_{\gamma})
(t-t_{\gamma})\nonumber
    \\
    &{}\quad - \half(t - t_{\gamma})^2\pt{\pg(t_{\gamma})\cdot\ddot{x}_{\gamma}(t_{\gamma}) +
\dot{p}_{\gamma}(t_{\gamma})\cdot  \dot{x}_{\gamma}(t_{\gamma}) } \label{eq_der_ham}
     \\
      &{}\quad - \half(t - t_{\gamma})^2\pt{\dot{p}_{\gamma}(t_{\gamma})\cdot
\dot{x}_{\gamma}(t_{\gamma}) \nonumber
}
     \\
     &{}\quad + (t-t_{\gamma})\dot{p}_{\gamma}(t_{\gamma})\cdot \pt{x-\xxg(t_{\gamma})}
+\Theta_{\gamma},\nonumber
\end{align}
where $\Theta_\gamma$ denotes a function of the form:
\[
\Theta_{\gamma}= \sum_{|\mu|=3}R_{(x,p), \gamma, \mu}(t)\pt{(t,x)-(t_{\gamma},
x_{\gamma}(t_{\gamma}))}^\mu,\qquad R_{(x,p), \gamma,\mu}\in C^{\infty}_b(\ptq{-T,T}).
\]
Indeed, note that the error factors $R_{(x,p), \gamma, \mu}(t)$ involve the error factors
$R_{x,\gamma},
R_{p,\gamma}$
from \eqref{eq_ch1} and \eqref{eq_ch2} multiplied by $\dot{x}_\gamma(t_{\gamma})$, $\pg(t_\gamma)$
and
similar quantities involving higher order derivatives, which are uniformly bounded by Lemma
\ref{lemma_error_flow}.

Since
\[
H(\xxg(t),\pg(t)) = \pg(t)\cdot\dot{x}_{\gamma}(t)
\]
is constant on $t$, it follows that
\[
\pt{\pg(t_{\gamma})\cdot\ddot{x}_{\gamma}(t_{\gamma}) + \dot{p}_{\gamma}(t_{\gamma})\cdot
\dot{x}_{\gamma}(t_{\gamma})
}
= \partial_t {H}(\xxg(t),\pg(t))_{|t=t_\gamma} = 0,
\]
and the term in ~\eqref{eq_der_ham} vanishes.
Thus, \eqref{eq:P_tay} reads
\begin{align*}
   \pg(t)\cdot\pt{x -  \xxg(t)}
        &{}= \pg(t_{\gamma})\cdot \pt{x -\xxg(t_{\gamma})} -\pg(t_{\gamma})\cdot
\dot{x}_{\gamma}(t_{\gamma})
(t-t_{\gamma})\nonumber
     \\
        &{}\quad - \half(t -
t_{\gamma})^2\pt{\dot{p}_{\gamma}(t_{\gamma})\cdot\dot{x}_{\gamma}(t_{\gamma}) }
     + (t-t_{\gamma})\dot{p}_{\gamma}(t_{\gamma})\cdot \pt{x-\xxg(t_{\gamma})} +\Theta_{\gamma}.
\end{align*}
Specializing on the boundary we obtain
that for $x_1 = \xoneg(t_{\gamma})=0$,
\begin{equation}
\label{eq_L}
\begin{split}
&\pg(t)\cdot\pt{x -  \xxg(t)}
= L_{\gamma}(t,x_{*}) -
\half\pt{\dot{p}_{\gamma}(t_{\gamma})\cdot
\dot{x}_{\gamma}(t_{\gamma}) }(t - t_{\gamma})^2
     \\
     &{}\quad + (t-t_{\gamma})\dot{p}_{\gamma,*}(t_{\gamma})\cdot \pt{x_*-x_{\gamma,*}(t_{\gamma})}
+\Theta_{\gamma}\vert_{x_1=0},
\end{split}
\end{equation}
where $L(t,x_*)$ is defined by \eqref{eq:Lin_Term}.

\step{4}{The quadratic part of the phase}.
We linearize the Riccati matrix $\Mg$ as
\[
\Mg(t) = \Mg(t_{\gamma}) + N_\gamma(t)(t-t_\gamma),
\]
with $N_\gamma \in C^{\infty}_b(\ptq{-T,T})$ uniformly on $\gamma$, due to
Lemma~\ref{lemma_error_flow}.

Using \eqref{eq_ch1},
we can expand the quadratic part of $\theta_\gamma$ as
\begin{align*}
&\pt{x-\xxg(t)}\cdot \Mg(t) \pt{x-\xxg(t)}\hfill
\\
\qquad &= \pt{x-\xxg(t)}\cdot \Mg(t_\gamma) \pt{x-\xxg(t)}
+ {\Theta}_{\gamma}
\\
\qquad &=\pt{x-\xxg(t_{\gamma})-\dot{x}_{\gamma}(t_{\gamma})(t-t_{\gamma})}\cdot \Mg(t_{\gamma})
\pt{x-\xxg(t_{\gamma})-\dot{x}_{\gamma}(t_{\gamma})(t-t_{\gamma})}
+{\Theta}_{\gamma},
\end{align*}
where, in each line, $\Theta_{\gamma}$ denotes a function of the form:
\[
\Theta_{\gamma}= \sum_{|\mu|=3}R_{(x,M), \gamma,\mu}(t)\pt{(t,x)-(t_{\gamma},
x_{\gamma}(t_{\gamma}))}^\mu,\qquad
\mbox{with }
R_{(x,M), \gamma,\mu}\in C^{\infty}_b(\ptq{-T,T}),
\]
uniformly on $\gamma$.

Specializing on the boundary we obtain
that for $x_1 = \xoneg(t_{\gamma})=0$,
\begin{equation}
\label{eq_q}
\begin{split}
&\pt{x-x_{\gamma}(t)}\cdot \Mg(t) \pt{x-x_{\gamma}(t)}
=
\dot{x}_{\gamma}(t_{\gamma})\cdot
\Mg(t_{\gamma})\dot{x}_{\gamma}(t_{\gamma})
(t-t_{\gamma})^2
\\
&\qquad+
\pt{x_*-x_{\gamma,*}(t_{\gamma})}\cdot \Mg(t_{\gamma})_{**} \pt{x_*-x_{\gamma,*}(t_{\gamma})}
\\
&\qquad
-2 \left(\dot{x}_{\gamma}(t_{\gamma}) \cdot \Mg(t_\gamma) \right)_*
\pt{x_*-x_{\gamma,*}(t_{\gamma})}(t-t_\gamma)
+{\Theta}_{\gamma}\vert_{x_1=0}.
\end{split}
\end{equation}
\step{5}{Collecting terms}.
Finally, we combine \eqref{eq_L} and \eqref{eq_q}, noting that the quadratic terms
add up precisely to $Q_\gamma(t,x_*)$, as defined by \eqref{eq:Quad_Term}.
\end{proof}
\subsection{Global analysis of a beam when it intersects the boundary}
We now describe the global profile of the restriction of the beam to the boundary $\{x_1=0\}$.
We aim to show that the restricted beams $\Phi_{\gamma}(t,0,x_*)$ display a Gaussian profile
\emph{in the
$(t,x_*)$ variables}. To this end, we consider an additional assumption on the way in which the
original beams
intersect the boundary. We say that a family
of beams $\{\Phi_{\gamma}: \gamma \in \Gamma_0\}$ \emph{intersects the boundary in a uniformly
transversal
fashion} at times $\{t_\gamma: \gamma \in \Gamma_0\}$ if: (i) $x_{\gamma,1}(t_\gamma)=0$, for all
$\gamma \in \Gamma_0$,
and (ii) there exists a constant $\cnumb{1} \in (0,1)$ such that
\begin{align}
\label{eq_transv_meet}
\abs{p_{\gamma,1}(t_{\gamma})} \geq \cnumb{1} \abs{p_{\gamma}(t_{\gamma})}, \qquad \gamma \in
\Gamma_0.
\end{align}

The following lemma provides the desired description.
\begin{lemma}
\label{L:vip2}
Let $\Upsilon \equiv \sett{\SIC_\gamma: \gamma \in \Gamma_0}$ be
a well-spread set of GB parameters. For $\gamma \in \Gamma_0$, let $t_{\gamma} \in
[\cconcL,\cconcU]$ be such that $x_{\gamma,1}(t_{\gamma}) = 0$ (i.e. the center of the corresponding
beam
$\Phi_{\gamma}=\Phi^\pm_{\gamma}$ intersects the boundary $x_1 = 0$ at a time $t=t_{\gamma}$
when the boundary value is active). Assume also that the beams intersect the boundary in a uniformly
transversal fashion;
i.e., there exist a constant $\cnumb{1} \in (0,1)$ such that \eqref{eq_transv_meet} holds.

Let us write $x_{\gamma}(t_{\gamma})=(0,x_{\gamma,*}(t_{\gamma}))$.
Then the restriction of $\Phi_{\gamma}$ to $x_1=0$
admits the following description: for $\gamma=(j,k,\lambda) \in \Gamma_0$,
\begin{align*}
\Phi_{\gamma}(t,0,x_*) =
A_{\gamma}(t_{\gamma})
\exp \left(
i 4^j \left[ L_{\gamma}(t,x_*) +
i \consnew \left( (t-t_\gamma)^2 + \abs{x_*-x_{\gamma, *}(t_{\gamma})}^2\right)
\right]\right)
\cdot R_\gamma(t,x_*),
\end{align*}
where $L_{\gamma}$ is given by \eqref{eq:Lin_Term},
$\consnew>0$ is a constant - that depends only on the family $\Upsilon$ and the constant $\cnumb{1}$
-
and
$R_\gamma \in C^\infty_b([-T,T] \times \{x_*: \abs{x_*-x_{\gamma, *}} \geq 1\})$,
uniformly on $\gamma$. More precisely, for
all multi-indices $k, \alpha$, the error factor satisfies:
\begin{align*}
\sup_{\gamma \in \Gamma_0} \sup_{t \in [-T,T]} \sup_{\abs{x_*-x_{\gamma, *}} \geq 1}
\abs{\partial_t^k \partial^\alpha_{x_*} R_\gamma(t,x_*)} < +\infty.
\end{align*}
\end{lemma}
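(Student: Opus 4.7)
The plan is to define $R_\gamma$ by inverting the claimed identity,
\begin{equation*}
R_\gamma(t,x_*) = \frac{A_\gamma(t)}{A_\gamma(t_\gamma)}\exp\Bigl(i\omega_\gamma\theta_\gamma(t,0,x_*) - i\,4^j L_\gamma(t,x_*) + 4^j\consnew\bigl[(t-t_\gamma)^2 + \abs{x_*-x_{\gamma,*}(t_\gamma)}^2\bigr]\Bigr),
\end{equation*}
and then to verify that $R_\gamma$ and all its derivatives are bounded uniformly in $\gamma$ on the region $[-T,T]\times\{\abs{x_*-x_{\gamma,*}(t_\gamma)}\geq 1\}$. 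The prefactor $A_\gamma(t)/A_\gamma(t_\gamma)$ is uniformly $C^\infty_b$ in $t$ by Lemmas~\ref{lemma_unif_flow_2} and~\ref{lemma_error_flow}, so the entire analysis reduces to controlling the exponential.

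The critical step is the uniform global lower bound
\begin{equation*}
\Im\theta_\gamma(t,0,x_*) \;\gtrsim\; (t-t_\gamma)^2 + \abs{x_*-x_{\gamma,*}(t_\gamma)}^2.
\end{equation*}
Starting from $\Im\theta_\gamma(t,0,x_*) = \tfrac{1}{2}((0,x_*)-x_\gamma(t))^\top \Im M_\gamma(t)((0,x_*)-x_\gamma(t))$ and the uniform positivity $\Im M_\gamma(t)\gtrsim I_d$ from Lemma~\ref{lemma_unif_flow_2}, it suffices to prove $x_{\gamma,1}(t)^2+\abs{x_*-x_{\gamma,*}(t)}^2 \gtrsim (t-t_\gamma)^2+\abs{x_*-x_{\gamma,*}(t_\gamma)}^2$. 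The transversality hypothesis $\abs{p_{\gamma,1}(t_\gamma)}\geq\cnumb{1}\abs{p_\gamma(t_\gamma)}$, combined with $\abs{p_\gamma(t_\gamma)}\asymp 1$ (Lemma~\ref{lemma_unif_flow_1}), places the center at take-off angle bounded away from grazing, so the cone condition~\eqref{eq_cone} applies and delivers $x_{\gamma,1}(t)^2\geq\cturn^2(t-t_\gamma)^2$. The Lipschitz bound $\abs{x_{\gamma,*}(t)-x_{\gamma,*}(t_\gamma)}\lesssim\abs{t-t_\gamma}$ from Lemma~\ref{lemma_unif_flow_1}, combined with a dichotomy on whether $\abs{x_*-x_{\gamma,*}(t_\gamma)}$ exceeds a fixed multiple of $\abs{t-t_\gamma}$, then closes the comparison. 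Since $\omega_\gamma\asymp 4^j$, choosing $\consnew$ to be half the resulting constant yields the pointwise bound $\abs{R_\gamma(t,x_*)}\leq C\exp(-\consnew\,4^j[(t-t_\gamma)^2+\abs{x_*-x_{\gamma,*}(t_\gamma)}^2])$ valid on all of $[-T,T]\times\Rst^{d-1}$.

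For the $C^k$ bounds, writing $R_\gamma = (A_\gamma(t)/A_\gamma(t_\gamma))\,e^{E(t,x_*)}$ and differentiating repeatedly expresses $\partial^\alpha R_\gamma$ as $R_\gamma$ times a polynomial in derivatives of $E$ and of $A_\gamma(t)/A_\gamma(t_\gamma)$. Each derivative of $E$ produces factors of $\omega_\gamma\partial\theta_\gamma$, $4^j\partial L_\gamma$, or $4^j$ times a first-order expression in $(t-t_\gamma)$ and $x_*-x_{\gamma,*}(t_\gamma)$; using the explicit forms of $\theta_\gamma$ and $L_\gamma$ and the bounds of Lemma~\ref{lemma_error_flow}, each such factor is bounded by $4^j$ times a polynomial in $r:=\abs{x_*-x_{\gamma,*}(t_\gamma)}$, with constants uniform in $\gamma$. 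Hence $\abs{\partial^\alpha R_\gamma}\leq C_\alpha(4^j)^{N_1}r^{N_2}e^{-\consnew\,4^j r^2}$ for integers $N_1,N_2$ depending on $\alpha$. Substituting $s:=\sqrt{4^j}\,r\geq\sqrt{4^j}$ on the region $r\geq 1$ rewrites this as $C_\alpha(4^j)^{N_1-N_2/2}s^{N_2}e^{-\consnew s^2}$, and since $4^j\leq s^2$ absorbs any excess power of $4^j$ into a power of $s$, we obtain $C_\alpha s^{2N_1}e^{-\consnew s^2}$, which is bounded uniformly in $j$ and $r$. This yields the claimed uniform $C^\infty_b$ estimates.

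The main technical obstacle is this last accounting step: tracking exactly which polynomial factors in $4^j$ and $r$ arise after each differentiation and verifying that the Gaussian factor $e^{-\consnew\,4^j r^2}$ dominates all of them uniformly on $r\geq 1$. Well-spreadness of $\Upsilon$, via Lemmas~\ref{lemma_unif_flow_1}, \ref{lemma_unif_flow_2} and~\ref{lemma_error_flow}, is precisely what makes those polynomial coefficients uniform in $\gamma$ and hence closes the argument.
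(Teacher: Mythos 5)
Your proof is correct and follows essentially the same route as the paper's: define $R_\gamma$ as the quotient of the restricted beam by the model Gaussian, reduce everything to a uniform lower bound for $\Im\theta_\gamma(t,0,x_*)$ in terms of $(t-t_\gamma)^2+\abs{x_*-x_{\gamma,*}(t_\gamma)}^2$ coming from the cone condition \eqref{eq_cone} together with the transversality hypothesis \eqref{eq_transv_meet}, and then absorb the polynomial factors in $4^j$ and $\abs{x_*-x_{\gamma,*}(t_\gamma)}$ produced by differentiation into the Gaussian decay on the region $\abs{x_*-x_{\gamma,*}(t_\gamma)}\geq 1$. The one place where you genuinely diverge is the quadratic-form comparison: the paper linearizes $x_\gamma(t)=x_\gamma(t_\gamma)+(t-t_\gamma)y_\gamma(t)$, splits the quadratic part of the phase into three pieces and verifies positivity of the resulting matrix $\widetilde{N}_\gamma(t)$ via Lemma \ref{lemma_LA}, whose hypothesis is precisely the lower bound $\abs{y_{\gamma,1}(t)}^2\gtrsim 1$; you instead prove the equivalent scalar inequality $x_{\gamma,1}(t)^2+\abs{x_*-x_{\gamma,*}(t)}^2\gtrsim (t-t_\gamma)^2+\abs{x_*-x_{\gamma,*}(t_\gamma)}^2$ directly, by combining the cone condition with the Lipschitz bound on $x_{\gamma,*}$ and a dichotomy on $\abs{x_*-x_{\gamma,*}(t_\gamma)}$ versus $\abs{t-t_\gamma}$. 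Both arguments consume the same input (Lemmas \ref{lemma_unif_flow_1}, \ref{lemma_unif_flow_2} and \ref{lemma_error_flow}) and produce the same constant; your version is marginally more elementary, while the paper's explicit decomposition of the exponent has the side benefit of isolating the pieces it then reuses when bounding the derivatives of the error factor. Your final accounting step (each derivative of the exponent costs at most $4^j$ times a polynomial in $r=\abs{x_*-x_{\gamma,*}(t_\gamma)}$, and $r^{N}4^{jN}e^{-\consnew 4^j r^2}$ is uniformly bounded for $r\geq 1$) is exactly the paper's Step 4.
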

\begin{proof}

As before, all estimates in this proof are to be understood as being uniform for
$\gamma=(j,k,\lambda) \in \Gamma_0$,
and
to be dependent on $T$.

\step{1}{Linearization of the centers}. Using Lemma \ref{lemma_error_flow} we write
\begin{align*}
\xxg(t) = \xxg(t_\gamma)+ (t-t_\gamma)y_\gamma(t),
\end{align*}
where $y_{\gamma,i} \in C^\infty_b([-T,T])$. Since
$x_{\gamma,i}(t_\gamma)=0$, the transversality assumption and the
\emph{cone condition} in \eqref{eq_cone}, imply that
\begin{align}
\label{eq_bb}
\abs{y_{\gamma,1}(t)} \geq \cturn, \qquad t\in[-T,T],
\end{align}
for some constant $\cturn>0$.

\step{2}{The linear part of the phase}. We show that

\begin{align}
\label{eq_L1}
{p_\gamma(t)} \cdot \left( (0,x_*)-x_\gamma(t) \right) = L_\gamma(t,x_*) + E_\gamma^1(t,x_*)
\end{align}
where $E_\gamma^1$ satisfies the following: given multi-indices $k,\alpha$:
\begin{align}
\label{eq_E}
\sup_{t \in [-T,T]} \abs{\partial^k_t \partial^\alpha_{x_*} E^1(t,x_*)}
\leq C_{k,m} \left(1+\abs{x_*-x_{\gamma,*}(t_\gamma)}\right).
\end{align}
(Recall that this estimate is understood to be also uniform on $\gamma$, but dependent on $T$.)

We expand the left-hand side of \eqref{eq_L1}. We use $E$ to denote a function satisfying
a bound similar to \eqref{eq_E}. The meaning of $E$ changes from line to line, and the assertions
are verified
using Step 1 and Lemmas \ref{lemma_unif_flow_1}, \ref{lemma_unif_flow_2}
and \ref{lemma_error_flow}. With this understanding:
\begin{align*}
&{p_\gamma(t)}\cdot\left((0,x_*)-x_\gamma(t) \right)
=
{p_\gamma(t)} \cdot \left((0,x_*)-x_\gamma(t_\gamma) \right)+ E(t,x_*)
\\
&\qquad=
{p_\gamma(t_\gamma)}\cdot \left( (0,x_*)-x_\gamma(t_\gamma) \right)+ E(t,x_*)
\\
&\qquad=
{p_{\gamma,*}(t_\gamma)}\cdot \left(x_*-x_{\gamma,*}(t_\gamma) \right)+ E(t,x_*)
\\
&\qquad=
L_\gamma(t,x_*) -\tau_\gamma(t-t_\gamma) + E(t,x_*)
\\
&\qquad=
L_\gamma(t,x_*) + E(t,x_*),
\end{align*}
as desired.

\step{3}{The quadratic part of the phase}.
Consider the quadratic term:
\begin{align}
{\newC}(t,x_*) = \tfrac{1}{2} \left[ ((0,x_*) - x_\gamma(t)) \cdot M_\gamma(t)
((0,x_*) - x_\gamma(t)) \right].
\end{align}
Let us show that ${\newC}(t,x_*)={\newC}^1(t,x_*)+{\newC}^2(t,x_*)+{\newC}^3(t,x_*)$, with
\begin{align}
\label{eq_def_C1}
{\newC}^1(t,x_*) &=
\consnew \left( (t-t_\gamma)^2 + \abs{x_*-x_{\gamma, *}(t_{\gamma})}^2\right),
\\
\label{eq_def_C2}
{\newC}^2(t,x_*) &=
\left( t-t_\gamma , x_*-x_{\gamma,*}(t_\gamma) \right)\cdot
N^2_\gamma(t) \left( t-t_\gamma, x_*-x_{\gamma,*}(t_\gamma) \right),
\\
{\newC}^3(t,x_*) &= \left((0,x_*)-x_\gamma(t)\right) \cdot N^3_\gamma(t)
\left((0,x_*)-x_\gamma(t)\right),
\end{align}
where $\consnew>0$, $N_\gamma^2(t), N_\gamma^3(t) \in \bC^{d\times d}$
are symmetric, $\Im N_\gamma^2(t), \Im N_\gamma^3(t) \geq \consnew'I_d$, $\consnew'>0$, and
for each $k \geq 0$, there is a constant $C_k$ such that
\begin{align}
\label{eq_Ns}
\sup_{t \in [-T,T]} \abs{\partial^k_t N_\gamma^2(t)},
\sup_{t \in [-T,T]} \abs{\partial^k_t N_\gamma^3(t)}
\leq C_k <+\infty.
\end{align}

By definition of well-spread set of GB parameters and Lemma \ref{lemma_error_flow},
there exists a constant $\varepsilon>0$ (independent of $\gamma$ and $t$)
such that $\Im(M_\gamma(t)) \geq 2\varepsilon I_d$.
We let $N^3_\gamma(t) = \tfrac{1}{2}M_\gamma(t) - \tfrac{\varepsilon}{2}\mi I_d$.
This defines ${\newC}^3$. Note that $\Im(N^3_\gamma(t)) \geq \consnew' I_d$,
with $\consnew'=\tfrac{\varepsilon}{2}$ and that ${\newC}(t,x_*) - {\newC}^3(t,x_*) =
\tfrac{\varepsilon}{2}\mi
\abs{(0,x_*)-x_\gamma(t)}^2$. Expanding that expression and using $x_{\gamma,1}(t_\gamma)=0$,
 we see that
\begin{align*}
&\tfrac{2}{\varepsilon \mi}({\newC}(t,x_*) - {\newC}^3(t,x_*))=
\abs{(0,x_*)-x_\gamma(t_\gamma)-(t-t_\gamma)y_{\gamma}(t)}^2
\\
&\qquad
= \abs{y_\gamma(t)}^2(t-t_\gamma)^2 - 2 (t-t_{\gamma})
y_{\gamma,*}(t) \cdot (x_*-x_{\gamma,*}(t_\gamma))+\abs{x_*-x_{\gamma,*}(t_\gamma)}^2
\\
&\qquad
= (t-t_\gamma,x_*-x_{\gamma,*}(t_\gamma)) \cdot\widetilde{N}_\gamma(t)
(t-t_\gamma,x_*-x_{\gamma,*}(t_\gamma)),
\end{align*}
where:
\begin{align*}
\begin{bmatrix}
\widetilde{N}_{\gamma, 11}(t) & \widetilde{N}_{\gamma, 1*}(t)^t \\
\widetilde{N}_{\gamma, 1*}(t) & \widetilde{N}_{\gamma, **}(t)
\end{bmatrix}
=
\begin{bmatrix}
\ \abs{y_\gamma(t)}^2 & -y_{\gamma,*}(t)^t \\
-y_{\gamma,*}(t) & I_{d-1}
\end{bmatrix}.
\end{align*}
We now invoke Lemma \ref{lemma_LA} (in the appendix)
to see that $\widetilde{N_\gamma}(t) \gtrsim I_d$, for $t \in [-T,T]$. The quantity to estimate is:
$\abs{y_\gamma(t)}^2 - \abs{y_{\gamma,*}(t)}^2=\abs{y_{\gamma,1}(t)}^2$, which is bounded below, by \eqref{eq_bb}.

Hence, we can let $N^2(t) = \frac{\varepsilon}{2} \mi \widetilde{N_\gamma}(t) - \consnew \mi I_d$
with $\consnew>0$ such that $\Im N^2(t) \geq \consnew I_d$.
We now let ${\newC}^1(t,x_*)$ and ${\newC}^2(t,x_*)$ be defined by \eqref{eq_def_C1} and
\eqref{eq_def_C2},
respectively.
Hence, ${\newC}(t,x_*)={\newC}^1(t,x_*)+{\newC}^2(t,x_*)+{\newC}^3(t,x_*)$ as desired. Finally the
bounds in \eqref{eq_Ns}
follow from Lemma
\ref{lemma_error_flow}.

\step{4}{Bounds for the error factor}.
We write the error factor as $R(t,x_*)=R^1(t,x_*) \cdot R^2(t,x_*)$, with
\begin{align*}
R^1(t,x_*) &=  \frac{A(t)}{A(t_\gamma)},
\\
R^2(t,x_*) &= \exp(i 4^j(E^1(t,x_*) + {\newC}^2(t,x_*)+{\newC}^3(t,x_*)))
\end{align*}
By Lemmas \ref{lemma_unif_flow_2}, and \ref{lemma_error_flow}, it follows that
$R^1 \in C^\infty_b(\Rst^d_T)$ - cf. Step 1 in the proof of Lemma \ref{L:vip}.
We focus now on $R^2$. Let $k,m$ be multi-indices. Using the bounds in Steps 2 and 3
(and the fact that $E_\gamma^1$ is real) we conclude that
there exists a number $n=n(k,m)$ and a constant $C_n=C_{k,m}$ such that for $(t,x_*) \in \Rdst_T$:
\begin{align}
\label{eq_B1}
\begin{split}
\abs{\partial^k_t \partial^m_{x_*} R^2(t,x_*)}
&\leq C_n 4^{jn}\left(1+\abs{x_*-x_{\gamma, *}(t_\gamma)}^2\right)^n \cdot
\\
&\qquad \exp(-4^j \left[ \Im(\newC^2(t,x_*))+\Im(\newC^3(t,x_*))\right] ).
\end{split}
\end{align}
Using \eqref{eq_E} and
the fact that $\Im N_\gamma^2, \Im N_\gamma^3 \geq \consnew' I_d$ we obtain:
\begin{align*}
\Im({\newC}^2(t,x_*))+\Im({\newC}^3(t,x_*))
&\geq \consnew'\left(\abs{(t-t_\gamma)}^2 + \abs{x_*-x_{\gamma,*}(t_\gamma)}^2 +
\abs{(0,x_*)-x_\gamma(t)}^2\right)
\\
&\geq \consnew'\abs{x_*-x_{\gamma,*}(t_\gamma)}^2.
\end{align*}
Combining this with \eqref{eq_B1} we obtain:
\begin{align*}
\abs{\partial^k_t \partial^m_{x_*} R^2(t,x_*)} \lesssim
4^{jn}\left(1+\abs{x_*-x_{\gamma, *}(t_\gamma)}^2\right)^n \exp(-4^j \consnew'
\abs{x_*-x_{\gamma,*}(t_\gamma)}^2),
\end{align*}
where the implied constant depends on $k$ and $m$. Finally, for $\abs{x_*-x_{\gamma,*}(t_\gamma)}
\geq 1$ we can
estimate:
\begin{align*}
\abs{\partial^k_t \partial^m_{x_*} R^2(t,x_*)} &\lesssim
4^{jn}\abs{x_*-x_{\gamma, *}(t_\gamma)}^{2n} \exp(-4^j \consnew' \abs{x_*-x_{\gamma,*}(t_\gamma)}^2)
\\
&\lesssim \left(4^j \consnew'\abs{x_*-x_{\gamma, *}(t_\gamma)}^2 \right)^n
\exp(-4^j \consnew' \abs{x_*-x_{\gamma,*}(t_\gamma)}^2) \leq n!
\end{align*}
This completes the proof.
\end{proof}

\section{Packet-beam matching}
\label{sec_match}

The goal of this section is to select, for each index $\gamma=(j,k,\lambda) \in \Gammah$,
a corresponding tuple of initial conditions $\gammastar \in
\Rst_+ \times \Rdst \times (\Rdst \setminus \sett{0})\times (\Rst \setminus \{0\}) \times \bC^{d
\times d}$ and an adequate mode, $+$ \emph{or} $-$, giving initial conditions for a Gaussian beam,
in such a way that
\begin{align}
\label{eq_informal_match}
\Phih_{\gamma}(t,0,x_*) \approx \varphi_\gamma(t,x_*).
\end{align}
We use the analysis of Section \ref{sec_spation-temporal} as a guide.
We first judiciously select a time instant $t_\gamma$ and construct the beam $\Phih_{\gamma}$ in
such a way that it
intersects the boundary $\{x_1=0\}$ at that time. To this end, we design the beam $\Phih_{\gamma}$
by matching the
approximate description of $\Phih_{\gamma}(t,0,x_*)$, provided by Lemma \ref{L:vip}, to
the target frame element $\varphi_\gamma(t,x_*)$. This approximate description is
useful for $t$ near the boundary
meeting time $t_\gamma$. Hence, the construction involves \emph{back-propagating} the profile of the
beam under
construction by means of the ODEs in \eqref{eq_ode1}, \eqref{eq_ode2}, \eqref{eq_ode3}, from time
$t=t_\gamma$ to time
$t=0$. The matching procedure is depicted in Figure~\ref{F_match}.

Afterwards, we analyze the family of parameters that results from this procedure, and
prove that they are well-spread in the sense of Section \ref{sec:well_spread}, and that they
intersect the boundary at
the
prescribed times in a uniformly transversal fashion - cf. Section \ref{sec_spation-temporal}. With
this information,
the approximate description of Section \ref{sec_spation-temporal}, that was initially used as a
guide, is rigorously
justified and can be used to quantify \eqref{eq_informal_match}.

\begin{figure}
\begin{center}
  \begin{tikzpicture}
    [x={(0.866cm,-0.5cm)}, y={(0.866cm,0.5cm)}, z={(0cm,1cm)}, scale=1.0,
        >=stealth,
    inner sep=0pt, outer sep=2pt,
    axis/.style={thick,->},
    wave/.style={thick,color=#1,smooth},
    polaroid/.style={fill=black!60!white, opacity=0.3},]

    \colorlet{darkgreen}{green!50!black}
    \colorlet{lightgreen}{green!80!black}
    \colorlet{darkred}{red!50!black}
    \colorlet{lightred}{red!80!black}

    \coordinate (O) at (0, 0, 0);
    \draw[axis] (O) -- +(3, 0,   0) node [below] {$x_*$};
    \draw[axis] (O) -- +(0,  3, 0) node [below] {$t$};
    \draw[axis] (O) -- +(0,  0,   2) node [left] {$x_1$};

    \draw[thick] (0,-2,0) -- (O);

  \draw[wave=blue, variable=\y,samples at={-2.5,-2.25,...,2.5}]
    plot ({(\y/2)^2+1},\y,\y/4)node[anchor=north]{};

  \draw[wave=blue, variable=\y,samples at={-2.5,-2.25,...,2.5}]
    plot ({(\y/2)^2+1},\y,\y/4)node[anchor=north]{};
  \draw[wave=blue, variable=\y,samples at={-2.5,-2.25,...,2.5},->]
    plot ({(\y/2)^2+1},\y,\y/4)node[anchor=north]{};

    \node at (3,0,0.5) [above] {$\varphi_{\gamma}(t,x_*)$};
    \node at (-0.45,0,0) [left] {$t_{\gamma}$};
    \node at (3.5,-.35,-.7) [left] {$\Phi_{\gamma}(t,x_1,x_*)$};
    \draw [very thick,darkgreen,rotate around={0:(1,0,0)}] (1,0,0) circle ({.6});
          \foreach\i in {0,0.1,...,.5} {
        \fill[opacity=\i,blue,rotate around={90:(1.6,2.65)}] (1.6,2.65) ellipse ({0.5-0.5*\i} and
{1-1*\i});
      }
      \fill[black] (1.6,2.65) circle (.5mm);
          \foreach\i in {0,0.1,...,.5} {
        \fill[opacity=\i,blue,rotate around={90:(2.4,-2.4)}] (2.4,-2.4) ellipse ({0.5-0.5*\i} and
{1-1*\i});
      }
      \fill[black] (2.4,-2.4) circle (.5mm);

      \foreach\i in {0,0.1,...,0.5} {
        \fill[opacity=\i,darkgreen,rotate around={315:(1,0,0)}] (1,0,0) ellipse ({0.5-0.5*\i} and
{1-1*\i});  }
      \fill[black] (1,0,0) circle (.5mm);

    \end{tikzpicture}

    \end{center}
\caption{A beam intersects the boundary $x_1 = 0$ at $t = t_\gamma$ moving along the projected
bicharacteristic
$x_{\gamma}(t)$. The frame element - represented by a circle - is matched to a beam
- represented by a filled ellipse - which has an approximate Gaussian profile in $(t,x_*)$.}
\label{F_match}
\end{figure}

\subsection{Back-propagating beams}
\label{sec_back}

Given a frame element $\func_\gamma$, with $\gamma \in \Gammah$,
we look for a mode, $+$ or $-$, and a tuple of Gaussian beam parameters
\begin{align*}
\gammastar=(\omega_{\gamma}^{h},a_{\gamma}^{h}, \xi_{\gamma}^{h},\newA_{\gamma}^{h},
\newM_{\gamma}^{h}),
\end{align*} such that \eqref{eq_informal_match} holds.
We write explicitly
\begin{align*}
\func_\gamma(t,x_*)
=2^{\left(j+1/2\right)d/2} \cdot \exp
\Big[
\mi 4^j &\big(
\pointjkt \left(t-2^{-j}\lambda_1, x_*-2^{-j}\lambda_*\right)
\\
&\quad
+\mi \pi \left( (t-2^{-j}\lambda_1)^2+ \abs{x_*-2^{-j}\lambda_*}^2 \right)
\big)\Big]
\end{align*}
and compare this expression to \eqref{eq_taylor_beam}. We want
to construct a beam such that:
\begin{align}
\label{eq_des1}
&A_\gamma(t_\gamma)=2^{\left(j+1/2\right)d/2},
\\
\label{eq_des2}
&\omega_\gamma L_\gamma(t,x_*) =
4^j \pointjkt \left(t-2^{-j}\lambda_1, x_*-2^{-j}\lambda_*\right),
\\
\label{eq_des3}
&\omega_\gamma Q_\gamma(t,x_*)
=
\mi \pi 4^j \left( (t-2^{-j}\lambda_1)^2+ \abs{x_*-2^{-j}\lambda_*}^2\right).
\end{align}

\step{1}{Choice of mode and scale}. Recall that by the non-tangential propagation estimate - cf.
\eqref{eq_fpr} - $(\pointjk)_1 \not= 0$.
Let $\sign := \textrm{sign}((\pointjk)_1) \in \{-1,1\}$.
Note that in the asymptotic expansion in \eqref{eq_taylor_beam}, the first component of the linear
part of the
phase is given by \eqref{eq_taua}, which is negative for a $+$ beam and positive for a $-$ one.
Motivated by this fact,
if $\sign=-1$ we construct a $+$ mode, while if $\sign=1$ we construct a $-$ mode. Second, we
choose the scale parameter as $\omega_\gamma^{\pm,h} = 4^j$.
Having made these choices, we ease the notation dropping the superscripts $h, \pm$.

\step{2}{Definition of the boundary intersection time}. We first define the time instant
\begin{align}
\label{eq_def_tg}
\tgam = 2^{-j}\lambda_1.
\end{align}
The center of the Gaussian beam under construction is to intersect the boundary $\{x_1=0\}$ at time
$\tgam$. Note that, due to \eqref{eq_tcn},
\begin{align}
\label{eq_tgamma_in}
t_\gamma \in [\cconcL,\cconcU],
\end{align}
and the constants $\cconcL,\cconcU$ depend only on the boundary value $h$, but not on $\gamma$.

In the following steps, we define functions $(x(t),p(t),M(t),A(t))$ as solutions of the ODEs in
\eqref{eq_ode1}, \eqref{eq_ode2} and \eqref{eq_ode3} by specifying adequate initial conditions at
time $t=\tgam$.
Later we define $\gammastar$ by inspecting $(x(t),p(t),M(t),A(t))$ at time $t=0$.
To this end, we use the description of a beam given in Lemma \ref{L:vip}. We first aim to
match the function
$L_\gamma(t,x_*)$ in \eqref{eq:Lin_Term} to the linear part of the phase in \eqref{eq:GB_phase}.

\step{3}{Definition of $(x(t),p(t))$}. Let $(x,p): \Rst \to \Rst^{2d}$
be the solution of the Hamiltonian flow, cf.~\eqref{eq_ode1},
with initial condition at $t=\tgam$ described as follows. For $x$ we simply set:
\begin{align}\label{eq:initx}
x|_{t=\tgam}=(0,2^{-j}\lambda_*).
\end{align}
This agrees with our intention that the Gaussian beam under construction
$\Phi_{\gamma}$ intersect the boundary at time $\tgam$.

With these choices,
\begin{align}
  \label{eq_match_1}
  (t_\gamma,x_*(t_\gamma)) = 2^{-j}\lambda.
\end{align}

For $p$ we need to specify:
\begin{align*}
p|_{t=\tgam}=((p|_{t=\tgam})_1, (p|_{t=\tgam})_*).
\end{align*}
We first define $(p|_{t=\tgam})_*$ by
\begin{align}\label{eq:initpst}
(p|_{t=\tgam})_* = (\pointjkt)_*,
\end{align}
where $\pointjkt$ is given by \eqref{eq_pjkt}.
Second, we define $(p|_{t=\tgam})_1$ as
\begin{align}\label{eq:q}
(p|_{t=\tgam})_1 =
(-\sign) \cdot \sqrt{\frac{(\pointjkt)_1^2}
{\speed(x|_{t=\tgam})^2} - \abs{(\pointjkt)_*}^2}.
\end{align}
Note that $(p|_{t=\tgam})_1$ is well-defined because of
the grazing ray condition.
Indeed, by \eqref{eq_graray},
\begin{align}
\label{eq_p1big}
\frac{\abs{(\pointjkt)_1}^2}
{\speed(x(\tgam))^2}
\geq \left(\cgraz+\abs{(\pointjkt)_*}\right)^2
\geq \cgraz^2+\abs{(\pointjkt)_*}^2.
\end{align}
In addition,
\begin{align}
\label{eq:inad}
\abs{p|_{t=\tgam}}^2 = \abs{(p|_{t=\tgam})_1}^2 + \abs{(p|_{t=\tgam})_*}^2
= \frac{(\pointjkt)_1^2}
{\speed(x|_{t=\tgam})^2}.
\end{align}

With these choices, since $\sign$ has a sign opposite to the mode of the beam under construction,
\begin{align}
\label{eq_ptg}
\tau_\gamma=
-H(x|_{t=\tgam},p|_{t=\tgam})
=\sign\cdot\speed(x(t_\gamma)) \abs{p|_{t=\tgam}} = \sign\cdot\abs{(\pointjkt)_1} = (\pointjkt)_1.
\end{align}
Consequently
\begin{align}
  \label{eq_match_2}
(\tau_\gamma, (p|_{t=\tgam})_*) = \pointjkt,
\end{align}
and therefore the linear part of the phase of the boundary restriction of the beam under
construction - as a
function of $(t,x_*)$ and
according to the approximate description in Lemma \ref{L:vip} - coincides with the linear part of
the phase of
$\varphi_\gamma(t,x_*)$.
Moreover, we note the following.

\begin{claim}
\label{claim_deriv}
The flow $(x(t),p(t))=(x_\gamma(t),p_\gamma(t))$ defined in Step 3 satisfies:
\begin{align}
\label{eq_lowbdx1}
&\dot{x}_{\gamma,1}(\tgam) > 0, \mbox{ and }
\dot{x}_{\gamma,1}(\tgam) \gtrsim 1,
\\
\label{eq_lowp1}
&\abs{\poneg(\tgam)} \gtrsim \abs{p_{\gamma}(\tgam)},
\end{align}
where the implied constants are uniform for $\gamma \in \Gammah$.
\end{claim}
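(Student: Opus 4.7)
The plan is to prove both bounds by directly unpacking the initial conditions prescribed in Step~3 of the construction and substituting them into the Hamilton equations evaluated at $t=\tgam$. There is essentially one computation to do and then two uniform estimates to verify; no obstacle is particularly subtle, since the key quantitative input is already encoded in the grazing ray condition \eqref{eq_graray} together with the universal bounds on $\speed$.

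First I would write $\dot{x}_{\gamma,1}(\tgam) = (\partial_p H^{\pm})_1(x(\tgam),p(\tgam)) = \pm \speed(x(\tgam))(p|_{t=\tgam})_1/|p|_{t=\tgam}|$, where the sign $\pm$ is the mode of the beam, chosen in Step~1 to be $-\sign$. By \eqref{eq:q}, $(p|_{t=\tgam})_1$ also has sign $-\sign$, so the two minus signs cancel and $\dot{x}_{\gamma,1}(\tgam) > 0$. Substituting \eqref{eq:q} and \eqref{eq:inad} then gives the explicit formula
\[
\dot{x}_{\gamma,1}(\tgam) = \speed(x(\tgam))^2\,\frac{\sqrt{(\pointjkt)_1^2/\speed(x(\tgam))^2 - |(\pointjkt)_*|^2}}{|(\pointjkt)_1|}.
\]
The numerator is bounded below by $\cgraz\, \cspeed^2$ in view of \eqref{eq_p1big}, while $|(\pointjkt)_1| \leq |\pointjkt| \lesssim 1$ since $|\pointjkt| \asymp 1$; combined with the upper bound on $\speed$, this yields $\dot{x}_{\gamma,1}(\tgam) \gtrsim 1$ with a constant depending only on $\cgraz, \cspeed$ and $\|\speed\|_\infty$. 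This proves \eqref{eq_lowbdx1}.

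For \eqref{eq_lowp1}, I would again use \eqref{eq:q} together with \eqref{eq_p1big} to see that
\[
|(p|_{t=\tgam})_1| = \sqrt{(\pointjkt)_1^2/\speed(x(\tgam))^2 - |(\pointjkt)_*|^2} \geq \cgraz,
\]
and \eqref{eq:inad} gives $|p|_{t=\tgam}| = |(\pointjkt)_1|/\speed(x(\tgam)) \leq |\pointjkt|/\cspeed \lesssim 1$. Dividing one bound by the other produces $|(p|_{t=\tgam})_1| \gtrsim |p(\tgam)|$ with a constant that depends only on $\cgraz$, the upper bound for $|\pointjkt|$, and the bounds for $\speed$. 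Since all of these constants are independent of $\gamma \in \Gammah$, the estimates are uniform as required. The only place where $\gamma$-uniformity could fail is the use of the grazing ray condition, but this is precisely the reason \eqref{eq_graray} was imposed in Section~\ref{sec_frame_source}, so the implied constants are controlled by $\cgraz$ alone.
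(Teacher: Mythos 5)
Your proposal is correct and follows essentially the same route as the paper: both arguments rest on Hamilton's equation $\dot{x}_{\gamma,1}=(-\sign)\,\speed\,p_{\gamma,1}/|p_\gamma|$ for the sign, the grazing-ray bound \eqref{eq_p1big} for the lower bound on $|p_{\gamma,1}(\tgam)|$, and \eqref{eq:inad} together with $|\pointjkt|\asymp 1$ and $\speed\geq\cspeed$ for the upper bound on $|p_\gamma(\tgam)|$. The only cosmetic difference is that you substitute everything into one explicit formula for $\dot{x}_{\gamma,1}(\tgam)$, whereas the paper first establishes \eqref{eq_lowp1} and then feeds it into the Hamilton equation; the content is identical.
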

\begin{proof}
From \eqref{eq:inad} we see that $\abs{p_\gamma(t_\gamma)} \lesssim 1$.
In addition, \eqref{eq:q} and \eqref{eq_p1big}
imply that $\abs{\poneg(\tgam)} \gtrsim 1$, so the claim in \eqref{eq_lowp1} follows.
For \eqref{eq_lowbdx1}, we use one of Hamilton's equations
\begin{align*}
\dot{x}_{\gamma,1}(\tgam) =
\partial_{p_1}{H}(\tgam)=(-\sign)\cdot
\speed\pt{x_{\gamma}(\tgam)}
\frac{\poneg(\tgam)}{|p_{\gamma}(\tgam)|}.
\end{align*}
Inspecting the sign of \eqref{eq:q} we see that
$\dot{x}_{\gamma,1}(\tgam)>0$. In addition, by the assumptions on the velocity
\eqref{eq_speed_bounds} and \eqref{eq_lowp1},
\begin{align*}
\abs{\dot{x}_{\gamma,1}(\tgam)} &
\geq \cspeed \frac{\abs{\poneg(\tgam)}}{|p_{\gamma}(\tgam)|}
\gtrsim 1.
\end{align*}
\end{proof}

\step{4}{Definition of $M(\tgam)$}. Let
\begin{equation}\label{eq:mtilde}
   \widetilde{M}_{\gamma}= 2\pi \mi I_d,
\end{equation} and let $M(\tgam) \in \bC^{d\times d}$ be the unique symmetric matrix that solves the
following system of
equations:
\begin{equation}\label{sys:M_gamma}
\left\{
\begin{array}{ll}
\begin{array}{ll}
  \displaystyle \widetilde{M}_{\gamma,11} &{}=  \displaystyle \dot{x}_{\gamma}(t_{\gamma}) \cdot
M(t_{\gamma}) \dot{x}_{\gamma}(t_{\gamma})- \dot{p}_{\gamma}(t_{\gamma})\cdot
\dot{x}_{\gamma}(t_{\gamma}) ,\vspace{2mm}
\\
   \widetilde{M}_{\gamma,1k}  &{}= \dot{p}_{\gamma,k}(t_{\gamma})
   -\sum_{n=1}^d \pt{M(t_{\gamma})}_{kn}\dot{x}_{\gamma,n}(t_{\gamma}),
             \qquad k=2,\dots d,\vspace{2mm}
\\
  \displaystyle  \widetilde{M}_{\gamma,kl} &{}=\pt{M(t_{\gamma})}_{kl},
  \qquad k,l=2,\ldots d.
  \end{array}
  \end{array}
\right.
\end{equation}
We now check that $M(\tgam)$ is indeed well-defined.

\begin{claim}
\label{Cl_ric}
The system \eqref{sys:M_gamma} has a unique symmetric solution $M(\tgam)$. Moreover,
there exist constants $\cnumb{1}, \cnumb{2} >0$ - independent of $\gamma$ - such that
$\norm{M(\tgam)} \leq \cnumb{1}$ and $\Im M(\tgam) \geq \cnumb{2} \cdot I_d$.
\end{claim}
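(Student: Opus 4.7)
\textbf{Proof proposal for Claim \ref{Cl_ric}.}

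The plan is to exploit the fact that $\widetilde{M}_{\gamma} = 2\pi \ii I_d$ has a very special structure: the off-diagonal entries of $\widetilde{M}_\gamma$ vanish and the diagonal entries equal $2\pi \ii$. I will read off the entries of $M(t_\gamma)$ one block at a time, from the bottom-right $(d-1)\times(d-1)$ block outward, and then verify uniformity using Claim \ref{claim_deriv} together with Lemma \ref{lemma_unif_flow_1}.

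First, the third line of \eqref{sys:M_gamma} forces $(M(t_\gamma))_{kl} = 2\pi \ii \, \delta_{kl}$ for $k,l = 2,\ldots,d$. Next, writing $v = \dot{x}_\gamma(t_\gamma)$ and $w = \dot{p}_\gamma(t_\gamma)$ (both real), the second line of \eqref{sys:M_gamma} reads, for $k = 2,\ldots,d$,
\[
0 \;=\; w_k - (M(t_\gamma))_{k1} v_1 - \sum_{n=2}^d (M(t_\gamma))_{kn} v_n
\;=\; w_k - (M(t_\gamma))_{k1} v_1 - 2\pi \ii \, v_k.
\]
Since Claim \ref{claim_deriv} gives $v_1 = \dot{x}_{\gamma,1}(t_\gamma) \gtrsim 1$ uniformly on $\Gammah$, we may solve
\[
(M(t_\gamma))_{k1} = (M(t_\gamma))_{1k} = \frac{w_k - 2\pi \ii \, v_k}{v_1}, \qquad k = 2,\ldots,d.
\]
Finally, the first line of \eqref{sys:M_gamma} together with the entries already determined gives a single linear equation for $(M(t_\gamma))_{11}$, which is uniquely solvable since $v_1 \neq 0$. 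This yields existence, uniqueness and symmetry of $M(t_\gamma)$.

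For the bound $\norm{M(t_\gamma)} \leq \cnumb{1}$, I will combine the explicit formulas above with the uniform upper bounds $\abs{v}, \abs{w} \lesssim 1$ from Lemma \ref{lemma_unif_flow_1} and the uniform lower bound $\abs{v_1} \gtrsim 1$ from Claim \ref{claim_deriv}. The main work remaining is the lower bound $\Im M(t_\gamma) \geq \cnumb{2} \cdot I_d$. Taking imaginary parts of the formulas above (using that $v,w$ are real) gives
\[
\Im(M(t_\gamma))_{kl} = 2\pi \delta_{kl}, \quad k,l \geq 2; \qquad
\Im(M(t_\gamma))_{1k} = -\frac{2\pi v_k}{v_1}, \quad k \geq 2,
\]
and taking the imaginary part of the first equation in \eqref{sys:M_gamma} yields $\Im(M(t_\gamma))_{11} = 2\pi (1 + \abs{v_*}^2)/v_1^2$, since $w \cdot v$ is real. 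For any $u = (u_1, u_*) \in \Rst^d$, a direct computation then gives the identity
\[
u \cdot \Im M(t_\gamma) \, u
\;=\; 2\pi \left[ \Big(\tfrac{u_1}{v_1}\Big)^2 + \Big|\tfrac{u_1}{v_1}\, v_* - u_*\Big|^2 \right].
\]
The hard (but purely linear-algebra) step is to extract from this identity a lower bound of the form $\gtrsim \abs{u}^2$ uniform in $\gamma$. I would write the bracket as the quadratic form of the $2\times 2$-block matrix $\bigl(\begin{smallmatrix} 1+\abs{v_*}^2 & -v_*^T \\ -v_* & I_{d-1} \end{smallmatrix}\bigr)$ in the variables $(u_1/v_1, u_*)$; a Schur-complement computation shows this matrix has determinant $1$ and trace $O(1)$, hence its eigenvalues are bounded above and below uniformly (since $\abs{v_*}$ is bounded, by Lemma \ref{lemma_unif_flow_1}). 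Combined with $v_1 \asymp 1$ from Claim \ref{claim_deriv} this gives the uniform positivity $\Im M(t_\gamma) \gtrsim I_d$, completing the proof.
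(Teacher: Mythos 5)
Your proposal is correct and follows essentially the same route as the paper: solve \eqref{sys:M_gamma} block-by-block using $\dot{x}_{\gamma,1}(t_\gamma)\gtrsim 1$ from Claim \ref{claim_deriv}, read off uniform bounds from the explicit formulas, and observe that $\Im M(t_\gamma)$ is an arrowhead matrix whose Schur complement with respect to the identity block equals $\abs{\dot{x}_{\gamma,1}(t_\gamma)}^{-2}\gtrsim 1$. The only cosmetic difference is that the paper delegates the final positivity step to Lemma \ref{lemma_LA}, whereas you verify the same fact inline via your completing-the-square identity and the determinant/trace eigenvalue bound.
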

We postpone the proof of the claim to Section \ref{sec_proof_of_Cl_ric}, so as not to interrupt the
flow of the
construction.

\step{5}{Definition of $M(t)$}. We let $M(t)$ be the solution of \eqref{eq_ode2} with initial
condition at time $t=\tgam$ given by the matrix $M(t_\gamma)$ from Step 4.
Due to Claim \ref{Cl_ric}, this is a valid initial condition - cf. Section \ref{sec_sets}.

\step{6}{Definition of $A(t)$}. Let $A(t)$ be the solution to \eqref{eq_ode3} with initial
condition:
\begin{equation}
\label{eq_initA}
A(\tgam) =  2^{(j+1/2)\frac{d}{2}}.
\end{equation}

\step{7}{Definition of $\gammastarpl$ and $\gammastarmi$}.
We recall the decomposition $\Gammah = \Gammah^+ \cup \Gammah^-$ in \eqref{eq_gammapm} and
define two sets of GB parameters
\begin{align*}
\sett{\gammastarpl: \gamma \in \Gammah^+}, \qquad \sett{\gammastarmi: \gamma \in \Gammah^-},
\end{align*}
with $\gammastarplmi = (\omega_\gamma^{\pm,h},a_\gamma^{\pm,h},
\xi_\gamma^{\pm,h},\newA_\gamma^{\pm,h}, \newM_\gamma^{\pm,h})$
in
the following way:
\begin{equation}
\label{eq_starparameter}
\left\{
\begin{aligned}
&\omega_\gamma^{\pm,h} = 4^j,
\\
&a_\gamma^{\pm,h} = x^\pm(0),
\\
&\xi_\gamma^{\pm,h} = \frac{1}{2\pi}4^j p^\pm(0),
\\
&\newA_\gamma^{\pm,h} = 4^{-j\frac{d}{4}}A^\pm(0),
\\
&\newM_\gamma^{\pm,h} = \frac{1}{2\pi} M^\pm(0).
\end{aligned}
\right.
\end{equation}

The values for $\gammastarplmi$ are
chosen so that if we define the functions $(x^\pm(t),p^\pm(t),M^\pm(t), A^\pm(t))$
by imposing initial conditions at time $t=0$ as described in Section \ref{sec_sets}, they
will satisfy \eqref{eq:initx}, \eqref{eq_match_1}, \eqref{eq_match_2} and \eqref{eq_initA}
at time $t=t_\gamma$. As a result of the construction, \eqref{eq_des1}, \eqref{eq_des2},
\eqref{eq_des3} are satisfied.

\begin{rem}
The function $\gammastarpl$ is defined only on $\Gammah^+$. According to the conventions
in Section \ref{sec_sets}, it would be possible to associate with such map both a family of $+$
beams
and a family of $-$ beams. However, we are only interested in the corresponding family of $+$
beams, because, as we show below, these satisfy the approximation property in
\eqref{eq_informal_match}. A similar
remark applies to $\gammastarmi$.
\end{rem}

\begin{rem}
The choice of sign in \eqref{eq:q} is instrumental to construct a parametrix for the Dirichlet
problem on the right-half space (see Theorem \ref{th_centers_ws} below). For the left-half space,
the opposite sign should be used in \eqref{eq:q}, leading to a different sign in
Claim \ref{claim_deriv}.
\end{rem}

\subsection{Analysis of the back-propagated parameters}

We now analyze the properties of the previous construction. We first state the following fundamental
property.

\begin{theorem}[Well-spreadness]
\label{th_match_ws}
Each of the two families of back-propagated parameters constructed in Section \ref{sec_back},
$\Upsstarp = \sett{\gammastarpl : \gamma \in \Gammah^+}$,
$\Upsstarm = \sett{\gammastarmi : \gamma \in \Gammah^-}$
is a well-spread set of Gaussian beam parameters.
\end{theorem}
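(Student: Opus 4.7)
The plan is to verify, one by one, the five conditions in Definition~\ref{def:wsp} for $\Upsstarp$; the verification for $\Upsstarm$ is obtained by the same argument with the opposite sign convention in \eqref{eq:q}. Throughout, I will use the explicit formulas \eqref{eq_starparameter}, the matching identities \eqref{eq_match_1}--\eqref{eq_match_2}, and the uniformity estimates of Section~\ref{sec:GB}.

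\emph{Conditions (iii)--(v).} I expect these to follow by direct computation. By construction $\omega_\gamma^{+,h} = 4^j$; combining \eqref{eq:inad} with the grazing-ray bound \eqref{eq_p1big} and the bounds on $\speed$ yields $|p^+(t_\gamma)| \asymp 1$, and Remark~\ref{rem_pt} propagates this to $|p^+(0)| \asymp 1$, so that $|\xi_\gamma^{+,h}| = \tfrac{4^j}{2\pi}|p^+(0)| \asymp \omega_\gamma^{+,h}$, giving (iv). The amplitude initial value \eqref{eq_initA} gives $|A^+(t_\gamma)| \asymp \omega_\gamma^{d/4}$, and the Gronwall argument underlying \eqref{eq:hypFLW_1} (applied backward in time from $t_\gamma$ to $0$) delivers $|A^+(0)| \asymp |A^+(t_\gamma)|$, hence (v). For (iii), Claim~\ref{Cl_ric} furnishes $\|M^+(t_\gamma)\|\lesssim 1$ and $\Im M^+(t_\gamma) \gtrsim I_d$ uniformly in $\gamma \in \Gammah^+$, and back-propagation of the Riccati equation via Lemma~\ref{lemma_unif_flow_2} preserves both bounds on the bounded interval between $0$ and $t_\gamma$.

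\emph{Condition (i).} Lemma~\ref{lemma_unif_flow_1} gives $|\dot{x}^+(s)| \lesssim 1$ uniformly, and $|t_\gamma| \leq \cconcU$, so $|a_\gamma^{+,h} - x^+(t_\gamma)| = |x^+(0) - x^+(t_\gamma)| \lesssim 1$; since $x^+(t_\gamma) = (0, 2^{-j}\lambda_*)$ and $|2^{-j}\lambda_1| = |t_\gamma| \leq \cconcU$, one deduces $|a_\gamma^{+,h} - 2^{-j}\lambda| \lesssim 1$, and (i) follows by the triangle inequality.

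\emph{Condition (ii) --- the main obstacle.} Unwinding definitions, the estimate to prove reduces, up to the uniform equivalences already established, to
\[
4^{j+j'}\bigl|x^+(0) - x^{+\prime}(0)\bigr|^2 + \bigl|4^j p^+(0) - 4^{j'} p^{+\prime}(0)\bigr|^2 \gtrsim 4^{j+j'}\bigl|\aast_\gamma - \aast_{\gamma'}\bigr|^2 + \bigl|\xist_\gamma - \xist_{\gamma'}\bigr|^2.
\]
By \eqref{eq:hypFLW_2} of Lemma~\ref{lemma_unif_flow_1}, applied to the joint pair $\{\gamma, \gamma'\}$, the left-hand side is equivalent (up to uniform constants) to the same expression with the flows evaluated at any common time $t \in [-T,T]$. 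I would pick $t = t_\gamma$ and use the matching identities $x^+(t_\gamma) = (0, 2^{-j}\lambda_*)$ and $4^j(\tau_\gamma, p^+(t_\gamma)_*) = 2\pi\pointjk$ to recognize the $\gamma$-contribution as the standard parameter. The difficulty is that the $\gamma'$-flow at time $t_\gamma$ need not match any standard parameter, because $t_\gamma \neq t_{\gamma'}$ in general. To handle this, I would split into two cases according to the size of $4^{j+j'}|t_\gamma - t_{\gamma'}|^2$ relative to the right-hand side. If this quantity is a significant fraction of the right-hand side, the cone condition \eqref{eq_cone} applied to the $\gamma'$-flow (whose take-off angle is non-grazing thanks to Claim~\ref{claim_deriv}) yields $|x^{+\prime}_1(t_\gamma)| \gtrsim |t_\gamma - t_{\gamma'}|$, injecting the missing time-coordinate contribution into $|x^+(t_\gamma) - x^{+\prime}(t_\gamma)|$. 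Otherwise, $|t_\gamma - t_{\gamma'}|$ is relatively small, and a short Lipschitz transport of the $\gamma'$-flow from $t_{\gamma'}$ to $t_\gamma$ (using the uniform velocity bound from Lemma~\ref{lemma_unif_flow_1}) shows that the $\gamma'$-flow at time $t_\gamma$ is close to its own matching reference point. Combining the two cases and using $|p^+_1(t_\gamma)| \gtrsim 1$ from \eqref{eq_lowp1} to control the momentum difference yields the desired lower bound.
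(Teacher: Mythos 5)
Your treatment of conditions (iii)--(v) coincides with Steps 1, 4 and 5 of the paper's proof, and your argument for (i) --- a uniform single-index bound $\abs{a^{h}_{\gamma}-\aast_{\gamma}}\lesssim 1$ followed by the triangle inequality --- is correct and in fact shorter than the paper's, which instead proves the multiplicative estimate $\abs{2^{-j}\lambda-2^{-j'}\lambda'}^{2}\lesssim\abs{x_{\gamma}(\tgam)-x_{\gamma'}(\tgam)}^{2}$ (its \eqref{eq:Vect_bd}) because that stronger form, with no additive constant, is reused for (ii). Your plan for the \emph{position} part of (ii) --- evaluate both flows at the common time $\tgam$ and recover the time coordinate $2^{-j}\lambda_{1}-2^{-j'}\lambda_{1}'$ from $x_{\gamma',1}(\tgam)$ via transversality and the cone condition --- is the paper's mechanism too (it avoids your case split by using the single inequality $\abs{y_{\gamma',1}(t)}\geq\varepsilon(\abs{y_{\gamma',*}(t)}+1)$, which delivers both the time and the tangential contributions at once), and your two-case version could be made rigorous.

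The genuine gap is in the \emph{frequency} part of (ii). The standard node is $\xist_{\gamma}=\pointjk=\tfrac{4^{j}}{2\pi}(\taugam,p_{\gamma,*}(\tgam))$: its first component is the time-frequency $\taugam=-H(x_{\gamma}(\tgam),p_{\gamma}(\tgam))=\sign\,\speed(x_{\gamma}(\tgam))\abs{p_{\gamma}(\tgam)}$, \emph{not} the normal momentum $p_{\gamma,1}(\tgam)$; the two are linked only through the eikonal relation \eqref{eq:q}. After transporting to time $\tgam$ via \eqref{eq:hypFLW_2} and \eqref{eq_other}, what you control is $\abs{4^{j}p_{\gamma}(\tgam)-4^{j'}p_{\gamma'}(t_{\gamma'})}$, and it remains to prove
\[
\abs{4^{j}\taugam-4^{j'}\taugamp}^{2}\;\lesssim\;\abs{4^{j}p_{\gamma}(\tgam)-4^{j'}p_{\gamma'}(t_{\gamma'})}^{2}+4^{j+j'}\abs{2^{-j}\lambda_{*}-2^{-j'}\lambda_{*}'}^{2}.
\]
This is precisely the paper's Lemma \ref{lemma_flow_match}, whose proof occupies a page: it uses the grazing-ray condition, the Lipschitz dependence of $\speed$ on the boundary point (the source of the extra $4^{j+j'}\abs{2^{-j}\lambda_{*}-2^{-j'}\lambda_{*}'}^{2}$ term), a symmetrization in $j,j'$ to replace $4^{2\max(j,j')}$ by $4^{j+j'}$, and the fact that all $\taugam$ within one family $\Gammah^{\pm}$ have the same sign --- without the splitting into $\Gammah^{+}$ and $\Gammah^{-}$ the estimate is simply false, since opposite signs would make the left-hand side of order $4^{\max(j,j')}$ even when the momenta nearly agree. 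Your proposal never returns to $\taugam$ after invoking the matching identity, and the closing appeal to $\abs{p_{\gamma,1}(\tgam)}\gtrsim 1$ does not address this conversion between normal momentum and time-frequency.
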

The proof of Theorem \ref{th_match_ws} is quite technical and we postpone it to Section
\ref{sec_pr_th_match_ws}. We now analyze the fine properties of the matching procedure.

\begin{theorem}[Transversal boundary intersection]
\label{th_transveral_ws}
Each family of beams $\{\Phihpm_\gamma: \gamma \in \Gammah^\pm\}$ intersects the boundary
$\{x_1=0\}$ at times
$\{t_\gamma: \gamma \in \Gammah^\pm\}$ - given by \eqref{eq_def_tg} - in a uniformly transversal
fashion.
\end{theorem}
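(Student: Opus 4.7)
The plan is to verify the two defining properties of ``uniformly transversal intersection'' (as introduced right before \eqref{eq_transv_meet}) directly from the back-propagation construction of Section~\ref{sec_back}; both have essentially been recorded already in Claim~\ref{claim_deriv}. I would handle $\Gammah^+$ and $\Gammah^-$ simultaneously, since the sign choice made in Step~1 of the construction only flips the mode label and preserves the magnitude estimates that matter, and I suppress the superscripts $(\pm,h)$ to lighten notation.

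First, the boundary-intersection property $x_{\gamma,1}(t_\gamma) = 0$ is built into the construction: the initial condition \eqref{eq:initx} imposes $x_\gamma(t_\gamma) = (0, 2^{-j}\lambda_*)$, and the trajectory associated with $\gammastarplmi$ is precisely the Hamiltonian flow \eqref{eq_ode1} through this data (back-propagated to $t=0$ and then re-flowed forward). Thus property~(i) in the definition holds by fiat; moreover $t_\gamma = 2^{-j}\lambda_1$ lies in $[\cconcL,\cconcU] \subseteq (0,T)$ uniformly in $\gamma$, by \eqref{eq_tgamma_in}.

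Second, the ratio bound \eqref{eq_transv_meet} is exactly \eqref{eq_lowp1} of Claim~\ref{claim_deriv}. To make the uniformity explicit, I would unpack the chain of inequalities used there: the explicit formula \eqref{eq:q} and the grazing-ray inequality \eqref{eq_p1big} give $|p_{\gamma,1}(t_\gamma)|^2 \geq \cgraz^2$, while \eqref{eq:inad} combined with $|\pointjkt| \asymp 1$ and $\speed \geq \cspeed > 0$ yields $|p_\gamma(t_\gamma)|^2 = |(\pointjkt)_1|^2/\speed(x_\gamma(t_\gamma))^2 \lesssim 1$. Dividing produces $|p_{\gamma,1}(t_\gamma)|/|p_\gamma(t_\gamma)| \geq \cnumb{1}$ with $\cnumb{1} > 0$ depending only on $\cgraz$, $\cspeed$ and the $L^\infty$-bound on $\speed$, hence uniform over $\gamma \in \Gammah$; the bound $\cnumb{1} \leq 1$ is automatic since $|p_{\gamma,1}| \leq |p_\gamma|$. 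There is no genuine obstacle here: the theorem is a bookkeeping repackaging of Claim~\ref{claim_deriv} into the language of Section~\ref{sec_spation-temporal}, with all needed quantitative information already assembled in the course of the construction.
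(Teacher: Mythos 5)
Your proposal is correct and follows exactly the paper's route: the boundary intersection $x_{\gamma,1}(t_\gamma)=0$ is read off from the initial condition \eqref{eq:initx}, and the uniform transversality bound \eqref{eq_transv_meet} is precisely \eqref{eq_lowp1} of Claim~\ref{claim_deriv}. The additional unpacking of the constants via \eqref{eq:q}, \eqref{eq_p1big} and \eqref{eq:inad} merely makes explicit what the paper delegates to the proof of that Claim.
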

\begin{proof}
By \eqref{eq:initx}, $x^h_{\gamma,1}(t_\gamma)=0$. The uniform transversality at the boundary
intersection is proved
in Claim \ref{claim_deriv} - see \eqref{eq_lowp1}.
\end{proof}

\begin{theorem}[Rightwards propagation]
\label{th_centers_ws}
The spatial centers of the beams $\{\Phihpm_\gamma: \gamma \in \Gammah^\pm\}$
are uniformly away from the right-half plane $\Rdplus$
at time $t=0$. More precisely, there exists a constant $\epsilon>0$
such that, for all $\gamma \in \Gamma^\pm_h$,
\begin{align}
\label{eq:distance_from_bd}
x^{h,\pm}_{\gamma,1}(0) \leq -\epsilon.
\end{align}
\end{theorem}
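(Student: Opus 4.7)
The plan is to deduce the estimate directly from the \emph{cone condition} stated in Section~\ref{sec_dir}, exploiting the uniform transversality established during the back-propagation. The key observation is that the construction was tailor-made so that the hypotheses of the cone condition hold uniformly at the boundary intersection time $t_\gamma$.

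First, I would recall the three facts produced in Section~\ref{sec_back}: by \eqref{eq:initx}, $x^{h,\pm}_{\gamma,1}(t_\gamma) = 0$; by \eqref{eq_tgamma_in}, $t_\gamma \in [\cconcL,\cconcU]$; and by Claim~\ref{claim_deriv} (specifically \eqref{eq_lowp1}) there is a constant $\cturnd \in (0,1]$, independent of $\gamma$, such that $\abs{p^{h,\pm}_{\gamma,1}(t_\gamma)} \geq \cturnd \abs{p^{h,\pm}_\gamma(t_\gamma)}$. These are precisely the hypotheses of the cone condition with $t_0 = t_\gamma$. Thus, the cone condition produces a constant $\cturn>0$ (depending only on $\cturnd$, $T$ and $\speed$) such that
\begin{align*}
\abs{x^{h,\pm}_{\gamma,1}(t)} \geq \cturn \abs{t-t_\gamma}, \qquad t \in [-T,T].
\end{align*}
Evaluating at $t=0$ and using $t_\gamma \geq \cconcL$ gives the desired magnitude bound $\abs{x^{h,\pm}_{\gamma,1}(0)} \geq \cturn \cconcL$.

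It remains to fix the sign. By \eqref{eq_lowbdx1} in Claim~\ref{claim_deriv}, $\dot{x}^{h,\pm}_{\gamma,1}(t_\gamma)>0$, so in a right-neighbourhood of $t_\gamma$ we have $x^{h,\pm}_{\gamma,1}(t)>0$ while in a left-neighbourhood $x^{h,\pm}_{\gamma,1}(t)<0$. The cone estimate prevents $x^{h,\pm}_{\gamma,1}$ from returning to zero on $[-T,T]\setminus\{t_\gamma\}$, so by continuity $x^{h,\pm}_{\gamma,1}(t)<0$ for every $t \in [-T,t_\gamma)$. Since $0 < \cconcL \leq t_\gamma$, we conclude $x^{h,\pm}_{\gamma,1}(0) < 0$, and combining this with the previous magnitude bound yields
\begin{align*}
x^{h,\pm}_{\gamma,1}(0) \leq - \cturn \cconcL,
\end{align*}
so setting $\epsilon = \cturn \cconcL$ gives \eqref{eq:distance_from_bd}.

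There is no real obstacle in this argument beyond verifying that the constant $\cturnd$ in Claim~\ref{claim_deriv} is genuinely uniform over $\gamma \in \Gammah^\pm$ — but this was the whole point of the grazing-ray cut-off \eqref{eq_graray} (which bounds $\abs{(\pointjkt)_1}$ below away from $\abs{(\pointjkt)_*}$) together with the choice \eqref{eq:q} of $(p|_{t=\tgam})_1$. The sign-tracking step is essentially a continuity argument that hinges on the non-vanishing provided by the cone condition.
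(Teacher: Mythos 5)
Your proof is correct and follows essentially the same route as the paper: both invoke the cone condition at $t_0=t_\gamma$ (with the hypotheses supplied by \eqref{eq:initx}, \eqref{eq_tgamma_in} and Claim~\ref{claim_deriv}), use $\dot{x}^{h,\pm}_{\gamma,1}(t_\gamma)>0$ together with a continuity/non-vanishing argument to fix the sign, and conclude with $t_\gamma \geq \cconcL$. The only cosmetic difference is that the paper packages the sign-tracking via the factorization $x^{h,\pm}_{\gamma,1}(t)=(t-t_\gamma)b_1(t)$ with $b_1$ continuous and bounded away from zero, whereas you argue directly on $x^{h,\pm}_{\gamma,1}$; the content is identical.
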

\begin{proof}
By Theorem \ref{th_transveral_ws}, $x_{\gamma,1}(t_\gamma)=0$
and $\abs{p_{\gamma,1}(t_{\gamma})} \gtrsim \abs{p_{\gamma}(t_{\gamma})}$. In addition,
$t_\gamma=2^{-j} \lambda_1 \in
\suppsource \subseteq [0,T]$ by the \emph{approximate compatibility condition}~\eqref{eq_tcn}.
Let us write
\begin{align*}
x^{h,\pm}_{\gamma}(t)=x^{h,\pm}_{\gamma}(t_\gamma) +
(t-t_\gamma) b(t),
\end{align*}
with $b$ smooth. The \emph{cone
condition}~\eqref{eq_cone} implies that
$\abs{b_1(t)} \gtrsim 1$, for $t \in [0,T]$. In addition,
$b_1(t_\gamma)=\dot{x}^{h,\pm}_{\gamma,1}(t_\gamma) >0$ by
Claim \ref{claim_deriv}. Hence, $b_1>0$ on $[0,T]$
and, moreover, $b_1(t) \gtrsim 1$ for all $t \in [0,T]$. Second,
the \emph{approximate compatibility condition} \eqref{eq_tcn} implies that $t_\gamma=2^{-j}
\lambda_1 \gtrsim 1$. Therefore,
\begin{align*}
-{x}^{h,\pm}_{\gamma,1}(0)=t_\gamma \cdot b_1(t) \gtrsim 1,
\end{align*}
as claimed.
\end{proof}

\begin{theorem}[Beams match frame elements on the boundary]
\label{th_match_estimates}
When restricted to the boundary $\{x_1=0\}$, the beams $\{\Phihpm_\gamma: \gamma \in \Gammah^\pm\}$
match the frame elements in the following sense. Let $\eta^1 \in C^\infty(\Rst)$ be compactly
supported. Let $\eta^2 \in C^\infty(\Rst^{d-1})$
be a smooth function supported on $B_2(0)$ that is $\equiv 1$ on $B_1(0)$, and let
$\eta^2_\gamma(x_*) = \eta(x_*-x_{\gamma,*}(t_\gamma))$.

$\bullet$ \emph{Local description}:
\begin{align}
\left(\Phihpm_{\gamma}(t,0,x_*) - \varphi_\gamma(t,x_*)\right)
\cdot \eta^1(t)\cdot \eta^2_\gamma(x_*) =
\left( 4^j \cdot R^1_\gamma(t,x_*) + R^2_\gamma(t,x_*) \right)\cdot
\varphi_\gamma(t,x_*),
\end{align}
with $\gamma=(j,k,\lambda) \in \Gammah^\pm$, $R^1 = \strongop^3(\sett{0},\Upsstarpm)$ and
$R^2 = \strongop^1(\sett{0},\Upsstarpm)$.

$\bullet$ \emph{Global description}:
\begin{align}
\left(\Phihpm_{\gamma}(t,0,x_*) - \varphi_\gamma(t,x_*)\right)
\cdot \eta^1(t)\cdot (1-\eta^2_\gamma(x_*)) = \newPhi^\pm_{\gamma}(t,0,x_*) \cdot
R^3_{\gamma}(t,x_*),
\end{align}
with $\newUpsilon^{\pm} \equiv \{\gammaprime: \gamma \in \Gamma^\pm_h\}$ well-spread sets of GB
parameters,
$\newPhi^\pm_\gamma$ the corresponding beams and
$R^3 = \strongop^1(\sett{0},\newUpsilon^{\pm})$.
\end{theorem}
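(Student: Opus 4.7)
}

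\textbf{Local description.} The plan is to start from the local Taylor expansion of the beam given by Lemma \ref{L:vip}, and observe that every ingredient of that expansion except the cubic remainder has been arranged by the matching construction of Section \ref{sec_back} to reproduce the phase and amplitude of $\varphi_\gamma$ exactly. Specifically, by \eqref{eq_des1}--\eqref{eq_des3} and \eqref{eq:mtilde}, $\omega_\gamma L_\gamma(t,x_*)$ equals the linear phase of $\varphi_\gamma$, $\omega_\gamma Q_\gamma(t,x_*) = i\pi 4^j((t-2^{-j}\lambda_1)^2+|x_*-2^{-j}\lambda_*|^2)$ equals the quadratic phase of $\varphi_\gamma$, and $A_\gamma(t_\gamma)=2^{(j+1/2)d/2}$ matches the normalization. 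Consequently Lemma \ref{L:vip} yields the clean factorization
\[
\Phihpm_\gamma(t,0,x_*) \;=\; \varphi_\gamma(t,x_*)\bigl(1+R_\gamma(t)\bigr)e^{i\omega_\gamma\Theta_\gamma(t,x_*)},
\]
so that the difference equals $\varphi_\gamma \cdot \bigl[(1+R_\gamma)e^{i\omega_\gamma\Theta_\gamma}-1\bigr]$.

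Next I would split the bracket into its linear pieces using the elementary identity
\[
(1+R_\gamma)e^{i\omega_\gamma\Theta_\gamma}-1 \;=\; R_\gamma \;+\; (1+R_\gamma)\,i\omega_\gamma\Theta_\gamma\int_0^1 e^{is\omega_\gamma\Theta_\gamma}\,ds.
\]
The first summand $R_\gamma(t)=r_\gamma(t)(t-t_\gamma)$ vanishes to order one in $(t,x_*)-2^{-j}\lambda$ with coefficient uniformly in $C^\infty_b$ by \eqref{eq_bound_error_terms}, and thus contributes to $R^2\in \strongop^1$. For the second summand, the factor $i\omega_\gamma\Theta_\gamma = i 4^j \sum_{|\mu|=3} g_{\gamma,\mu}(t)((t,x_*)-2^{-j}\lambda)^\mu$ exhibits precisely the prefactor $4^j$ together with a cubic vanishing factor; the remaining smooth factor $(1+R_\gamma)\int_0^1 e^{is\omega_\gamma\Theta_\gamma}\,ds$ together with the cut-offs $\eta^1(t)\eta^2_\gamma(x_*)$ is to be absorbed into $G^1_{\gamma,\mu}$. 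The support of $\eta^1\cdot\eta^2_\gamma$ confines $(t,x_*)-(t_\gamma,x_{\gamma,*}(t_\gamma))$ to a compact set uniform in $\gamma$; within that set the boundedness of $\Phihpm_\gamma/\varphi_\gamma$ (a consequence of the Gaussian beam envelope bound applied jointly to the $Q_\gamma+\Theta_\gamma$ pair, cf.~\cite[Lemma 2.56]{Kat}) gives a uniform bound for the integral factor. Uniform control of all $(t,x_*)$-derivatives then follows by differentiating the defining ODEs as in Lemma \ref{lemma_error_flow} and Lemma \ref{lemma_unif_flow_2}. This yields $R^1\in \strongop^3$.

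\textbf{Global description.} For $|x_*-x_{\gamma,*}(t_\gamma)|\geq 1$, I would instead invoke Lemma \ref{L:vip2} to obtain the global Gaussian description of $\Phihpm_\gamma(t,0,x_*)$; note that the transversality hypothesis \eqref{eq_transv_meet} is supplied by Theorem \ref{th_transveral_ws}. Both $\Phihpm_\gamma(t,0,x_*)$ and $\varphi_\gamma(t,x_*)$ then share the common exponential phase $e^{i4^j L_\gamma(t,x_*)}$, while being damped by Gaussians of (possibly distinct) rates $\consnew$ and $\pi$ respectively in the joint variable $(t-t_\gamma, x_*-x_{\gamma,*}(t_\gamma))$. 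I would define $\newUpsilon^\pm$ by copying the parameter recipe of Section \ref{sec_back} but halving the imaginary part of $\widetilde{M}_\gamma$ (equivalently, enlarging the Gaussian footprint by a fixed factor). Theorem \ref{th_match_ws} (and its proof) applies verbatim to show that $\newUpsilon^\pm$ is again a well-spread family, and its boundary restriction, again by Lemma \ref{L:vip2}, produces a beam $\newPhi^\pm_\gamma(t,0,x_*)$ with a Gaussian envelope strictly wider than either $\Phihpm_\gamma|_{x_1=0}$ or $\varphi_\gamma$. Dividing gives a quotient which decays like $e^{-c 4^j|x_*-x_{\gamma,*}(t_\gamma)|^2}$ with all derivatives bounded, and is cut off by $(1-\eta^2_\gamma)$ outside the unit ball around $x_{\gamma,*}(t_\gamma)$; since the monomial $|x_*-x_{\gamma,*}(t_\gamma)|\geq 1$ there, we obtain $R^3\in \strongop^1$ as required.

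\textbf{Main obstacle.} The delicate step is the local decomposition: even within the compact support of $\eta^1\eta^2_\gamma$, the quantity $\omega_\gamma \Theta_\gamma = 4^j\cdot(\text{cubic})$ is \emph{not} small, and $e^{i\omega_\gamma\Theta_\gamma}$ is not pointwise bounded in $\gamma$ without additional structure. The argument therefore relies crucially on the observation that $\Im(Q_\gamma+\Theta_\gamma)\geq 0$ by the Gaussian beam property, so that the exponential remainder $\int_0^1 e^{is\omega_\gamma\Theta_\gamma}\,ds$ can be controlled \emph{relatively to the damping of $\varphi_\gamma$}. Verifying that the resulting coefficient functions $G_{\gamma,\eta}$ and their derivatives are uniformly bounded in $\gamma$ — so that the structural conclusion $R^1\in\strongop^3$, $R^2\in\strongop^1$ actually holds — is the technical heart of the argument, requiring the uniform ODE bounds of Lemma \ref{lemma_error_flow} applied jointly to $x_\gamma,p_\gamma,M_\gamma,A_\gamma$.
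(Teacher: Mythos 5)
Your architecture for both halves coincides with the paper's: the local part starts from Lemma \ref{L:vip} and the matching identities \eqref{eq_des1}--\eqref{eq_des3}, which give the factorization $\Phihpm_{\gamma}(t,0,x_*)=\varphi_\gamma(t,x_*)(1+R_\gamma(t))e^{i4^j\Theta_\gamma(t,x_*)}$, and the global part rests on Lemma \ref{L:vip2} together with a well-spread family with a wider Gaussian envelope. For the global half the paper proceeds more directly than you do: it takes $\gammaprime=(4^j,2^{-j}\lambda,\pointjk,2^{d/4}, i\consnew\pi^{-1}I_d)$, a perturbation of the \emph{standard} parameters (so well-spreadness is immediate, with no need to re-run Theorem \ref{th_match_ws}), and sets $R^3_\gamma=\eta^1(1-\eta^2_\gamma)R_\gamma$ with $R_\gamma$ the bounded error factor already supplied by Lemma \ref{L:vip2}, so no quotient of Gaussian envelopes is ever taken; note also that you still need to absorb the $-\varphi_\gamma$ summand into $\newPhi_\gamma R^3_\gamma$ on the region $\{|x_*-x_{\gamma,*}(t_\gamma)|\geq1\}$.

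The genuine problem is the step you yourself flag as the main obstacle. The inequality $\Im(Q_\gamma+\Theta_\gamma)=\Im\theta_\gamma(t,0,x_*)\geq0$ only bounds $|\Phihpm_\gamma(t,0,x_*)|$ by its peak value $|A_\gamma(t_\gamma)|$; it does \emph{not} bound the ratio $\Phihpm_\gamma(t,0,x_*)/\varphi_\gamma(t,x_*)=(1+R_\gamma)e^{i4^j\Theta_\gamma}$. Writing $w=(t,x_*)-2^{-j}\lambda$, one has $|e^{i4^j\Theta_\gamma}|=\exp\big(4^j(\pi|w|^2-\Im\theta_\gamma(t,0,x_*))\big)$; the matching forces the two envelopes to agree only to second order at $w=0$, and for $|w|\asymp1$ --- well inside $\supp(\eta^1\eta^2_\gamma)$ --- the available lower bound is $\Im\theta_\gamma(t,0,x_*)\geq c_0|w|^2$ with $c_0$ determined by $\inf_t\Im M_\gamma(t)$ and the cone condition, and nothing forces $c_0\geq\pi$. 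Hence $\int_0^1e^{is4^j\Theta_\gamma}\,ds$ can be of size $e^{(\pi-c_0)4^j}$ on the support of the cut-offs, and the coefficients $G_{\gamma,\mu}$ you extract are not uniformly in $C^\infty_b$ --- which is exactly what membership in $\strongop^3(\{0\},\cdot)$ demands, and what the wave-molecule and Bessel machinery (Lemmas \ref{lemma_ops}, \ref{lemma_pso_wm}, \ref{lemma_bes_mol}) consumes downstream; controlling the product against $\varphi_\gamma$ is not a substitute for the structural statement. Closing this requires an additional scale-dependent localization (roughly: on $|w|\lesssim2^{-j}$ one has $4^j|\Theta_\gamma|\lesssim2^{-j}$ and the exponential genuinely is $1+4^j\cdot\strongo^3$ with bounded coefficients, while on $|w|\gtrsim2^{-j}$ the term must be re-expressed against a slightly wider well-spread packet, as in the global half). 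To be fair, the paper's own proof asserts the identity $e^{i4^j\Theta_\gamma}\eta^1\eta^2_\gamma=1+4^jR'_\gamma$ with $R'=\strongop^3(\{0\},\Upsstarpm)$ in one line without supplying this argument; but the justification you propose is precisely the point at which the argument breaks.
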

\begin{rem}
We stress that here the time variable $t$ is not considered as an evolution variable; rather
$(t,x_*)$ functions as a
spatial variable. In accordance, $\sett{0}$ is the time-evolution set in the $\strongop$ notation.
\end{rem}

\begin{proof}[Proof of Theorem \ref{th_match_estimates}]
We invoke Lemmas ~\ref{L:vip} and \ref{L:vip2}. The corresponding hypothesis are satisfied, thanks
to
Theorems~\ref{th_match_ws} and \ref{th_transveral_ws}. We use the notation
$\gammastarplmi=(\omega_\gamma^{\pm,h},
a_\gamma^{\pm,h},
\xi_\gamma^{\pm,h},
\newA_\gamma^{\pm,h},
\newM_\gamma^{\pm,h})$.

For the \emph{local description}, due to Theorem \ref{th_match_ws}, we can invoke Lemma
\ref{L:vip}.
We substitute the values of the beam parameters defined in Section~\ref{sec_back}
into \eqref{eq_taylor_beam} - cf. \eqref{eq_des1}, \eqref{eq_des2}, \eqref{eq_des3}
and obtain:
\begin{equation*}
\Phihpm_{\gamma}(t,0,x_*) =
\varphi_\gamma(t,x_*)
\left(1+R_{\gamma}(t)\right)e^{i\cdot4^j \cdot
 \Theta_{\gamma}(t,x_*)},
\qquad
(t,x_*) \in\Rst^d_T,
\end{equation*}
with $R_\gamma$ and $\Theta_\gamma$ as in Lemma \ref{L:vip}. Second, we note that
\begin{align*}
  e^{i\cdot4^j \cdot   \Theta_{\gamma}(t,x_*)}
  \cdot \eta^1(t)\cdot \eta^2_\gamma(x_*)
  = 1 + 4^j \cdot R'_\gamma(t,x_*),
\end{align*}
with $R' = \strongop^3(\sett{0},\Upsstarpm)$, and the conclusion follows.

For the \emph{global description}, with the notation of Lemma \ref{L:vip2},
  \[
  \Phihpm_{\gamma}(t,0,x_*) =
  A_{\gamma}(t_{\gamma})
  \exp \left[
  i 4^j (L_{\gamma}(t,x_*) +
  i \consnew \left( (t-t_\gamma)^2 + \abs{x_*-x_{\gamma, *}(t_{\gamma})}^2\right))
  \right] \cdot R_\gamma(t,x_*).
  \]
  Substituting the values of the parameters defined in Section~\ref{sec_match} - cf.
\eqref{eq_match_1} and
\eqref{eq_match_2} - we obtain
  \[
  \Phihpm_{\gamma}(t,0,x_*) =
  2^{(j+1/2)\frac{d}{2}} e^{2\pi i \pt{(t,x_*) - 2^{-j}\lambda}\pointjk
  - \consnew 4^j\abs{(t,x_*) - 2^{-j}\lambda}^2}\cdot
R_\gamma(t,x_*).
  \]
We let $R^3_\gamma(t,x_*) := \eta^1(t) (1-\eta^2_\gamma(x_*)) R_\gamma(t,x_*)$ and
  \[
  \gammaprime^\pm = (4^j,2^{-j}\lambda, \pointjk, 2^{\frac{d}{4}}, \ii \tfrac{\consnew}{\pi} I_d),
\qquad
\gamma=(j,k,\lambda) \in
\Gammah^\pm.
  \]
It is straightforward to verify that this defines a well-spread set of GB parameters. Indeed,
for $\gamma \in \Gammah^\pm$, the tuple $\gammaprime^\pm$
is very similar to the standard one $\alphast_\gamma$, defined in \eqref{eq_standard}: the only
difference is that, in
the new set, the standard matrix element $\newM_\gamma=\ii I_d$ is replaced by $\ii
\tfrac{\consnew}{\pi}
I_d$, with $\consnew>0$ a constant.
\end{proof}

\section{Parametrix estimates for the Dirichlet problem}
\label{sec_main}

Finally, we derive the parametrix for the boundary Dirichlet problem and give suitable estimates.

\begin{theorem}\label{th:boundary}
With the assumptions and notation from Section \ref{sec_dir},
let $u: [0,T] \times \Rdplus \to \bC$ be the (weak) solution
to the problem:
\begin{equation}
\label{eq_problem}
\left\{
\begin{aligned}
&\partial^2_{t} u (t,x) - \speed(x)^2 \Delta_x u (t,x) = 0, &\qquad t\in[0,T], x \in \Rdplus,
\\
&u(0, x)= u_t(0,x) = 0, &\qquad x \in \Rdplus,
\\
&u(t,0,y) = \cuth(t,y), &\qquad t\in[0,T], y \in \Rst^{d-1}.
\end{aligned}
\right.
\end{equation}
Let $\tilh = \sum_{\gamma \in \Gammah} \tilde h_\gamma  \varphi_\gamma$
be the truncated frame expansion of $h$ defined in Section \ref{eq_tilh} and consider the GB
parameters
$\gammastarplmi$ constructed in Section \ref{sec_match}. Let $\tilde{u}$ be defined as
\begin{align}\label{eq_solution_bvp}
\tilde{u} = \sum_{\gamma \in \Gammah^+} \tilde h_\gamma \Phihp_{\gamma}
+ \sum_{\gamma \in \Gammah^-} \tilde h_\gamma \Phihm_{\gamma}.
\end{align}
Then
\begin{align*}
\norm{\tilde{u} - u}_{C^0([0,T],H^1(\Rdplus)) \cap C^1([0,T],L^2(\Rdplus))}
\leq \CTime  \norm{h}_{H^{1/2}(\Rdst)}.
\end{align*}
In particular, in the highly oscillatory regime:
$\hat{h}(\xi)=0$ for $\abs{\xi} \leq \xi_{\rm min}$, we obtain
\begin{align*}
\norm{\tilde{u} - u}_{C^0([0,T],H^1(\Rdplus)) \cap C^1([0,T],L^2(\Rdplus))}
\leq \CTime \cdot \xi_{\rm min}^{-1/2 } \cdot \norm{h}_{H^{1}(\Rdst)}.
\end{align*}
\end{theorem}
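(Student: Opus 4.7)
The plan is to follow the strategy of Theorem~\ref{thm:IVP}, with the Cauchy energy estimate replaced by its Dirichlet counterpart on $\Rdplus$ from \cite{MR0350177, Triggiani}. Setting $E := \tilde u - u$, the function $E$ solves a homogeneous Dirichlet wave problem on $[0,T]\times\Rdplus$ with forcing $(\partial_t^2 - c^2 \Delta_x) \tilde u$, initial data $\tilde u(0,\cdot), \tilde u_t(0,\cdot)$, and boundary value $\tilde u(t,0,y) - \cuth(t,y)$. The energy estimate bounds $\|E\|_{C^0([0,T];H^1(\Rdplus)) \cap C^1([0,T];L^2(\Rdplus))}$ by the sum of three quantities: (I) $\| (\partial_t^2 - c^2\Delta_x)\tilde u\|_{L^\infty([0,T];L^2(\Rdplus))}$; (II) $\|\tilde u(0,\cdot)\|_{H^1(\Rdplus)} + \|\tilde u_t(0,\cdot)\|_{L^2(\Rdplus)}$; and (III) the $H^1$-norm of the boundary mismatch. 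Each must be controlled by $\CTime \|h\|_{H^{1/2}}$.

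Terms (I) and (III) reduce to weighted $\ell^2$ estimates for the coefficients $\tilde h_\gamma$. By Theorem~\ref{th_match_ws} the sets $\Upsstarp$ and $\Upsstarm$ are well-spread, so Theorem~\ref{th_error_wave} applied separately to each mode yields $\mathrm{(I)}^2 \lesssim \sum_\gamma 4^j |\tilde h_\gamma|^2$. Since $\tilde h_\gamma = \eta(2^{-j}\lambda,\pointjk) h_\gamma$ with $\eta$ bounded, the norm equivalence of Theorem~\ref{T_FrProp} gives $\sum_\gamma 4^j|\tilde h_\gamma|^2 \lesssim \|h\|_{H^{1/2}}^2$. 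For (III), I would split the mismatch as $(\tilde u(\cdot,0,\cdot) - \tilh) + (\tilh - \cuth)$; the second bracket is bounded by \eqref{eq:wavefront_h}. For the first, I would apply the cut-offs $\eta^1(t)$ and $\eta^2_\gamma(x_*)$ of Theorem~\ref{th_match_estimates} with $\eta^1 \equiv 1$ on the time support of $\cuth$. The local description provides a factor $4^j \strongop^3 + \strongop^1$ multiplying $\varphi_\gamma$, and the global description rewrites the difference via another well-spread family $\newUpsilon^\pm$ multiplied by a $\strongop^1$ factor; in both cases Theorem~\ref{theo_bessel_bounds} (at $I=\{0\}$, cf. Remark~\ref{rem_bess_zero}) converts each $\strongop^1$ factor into a $4^{-j}$ saving in the $H^1$-Bessel bound, so the net contribution is again $\sum_\gamma 4^j|\tilde h_\gamma|^2 \lesssim \|h\|_{H^{1/2}}^2$.

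The main obstacle — with no analog in the IVP proof, where the initial data is matched on the nose — is term (II). Here I would exploit Theorem~\ref{th_centers_ws}: the spatial centers $a^{h,\pm}_\gamma = x^{h,\pm}_\gamma(0)$ lie uniformly in the left half-plane, $a^{h,\pm}_{\gamma,1} \leq -\epsilon$, so $|x - a^{h,\pm}_\gamma| \geq \epsilon$ for every $x \in \Rdplus$. Together with $\Im \newM^{h,\pm}_\gamma \gtrsim I_d$ from well-spreadness (Lemma~\ref{lemma_unif_flow_2}), this yields pointwise bounds
\begin{equation*}
|\partial^\alpha \Phihpm_\gamma(0,x)| \lesssim 4^{j N_\alpha}\, e^{-c\, 4^j \epsilon^2}, \qquad x \in \Rdplus,
\end{equation*}
for all multi-indices $\alpha$, and an analogous estimate for $\partial_t \Phihpm_\gamma(0,x)$ via Lemma~\ref{lemma_tf}. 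The super-polynomial factor $e^{-c\, 4^j \epsilon^2}$ overwhelms every polynomial loss in $4^j$; combined with the fact that, by \eqref{eq_nj} and the compact support of $\cuth$, at most $\lesssim 4^{jd}$ indices $\gamma \in \Gammah$ share a given scale $j$, a triangle inequality plus Cauchy--Schwarz yields $\mathrm{(II)} \leq C_\epsilon \|h\|_{H^{1/2}}$ with an exponentially small implicit constant in $\epsilon$.

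Combining the three estimates through the energy inequality gives the announced $\CTime \|h\|_{H^{1/2}(\Rdst)}$ bound. The highly oscillatory refinement is immediate: when $\hat h$ is supported in $|\xi| \geq \xi_{\min}$, the elementary inequality $\|h\|_{H^{1/2}} \leq \xi_{\min}^{-1/2} \|h\|_{H^1}$ furnishes the sharpened estimate.
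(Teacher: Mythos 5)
Your proof follows the paper's skeleton exactly: set $E=\tilde u-u$, apply the Dirichlet energy estimate (the paper's Theorem~\ref{th_in_llt}), and control the three error sources. Your treatments of the forcing term (I) (Theorems~\ref{th_match_ws} and \ref{th_error_wave} plus the coefficient bound $\sum_\gamma 4^j|\tilde h_\gamma|^2\lesssim\|h\|^2_{H^{1/2}}$) and of the boundary mismatch (III) (splitting off $\tilh-\cuth$ via \eqref{eq:wavefront_h}, then using the local/global descriptions of Theorem~\ref{th_match_estimates} together with the Bessel bounds at $I=\{0\}$) coincide with the paper's Theorem~\ref{T:bound}. Where you genuinely diverge is the initial-data term (II). The paper's Lemma~\ref{lem_prep1} handles it by introducing a smooth cut-off $\eta$ supported in $\{x_1\geq-\epsilon\}$, observing that $\{\eta_\gamma\}=\strongop^k(\Upsstarpm,\{0\})$ for every $k$ because $\eta$ vanishes near the back-propagated centers (Theorem~\ref{th_centers_ws}), and then reusing the weighted Bessel bounds of Theorem~\ref{theo_bessel_bounds}; this keeps the argument entirely inside the wave-molecule machinery and never leaves $\ell^2$. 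You instead use raw pointwise exponential decay of each beam on $\Rdplus$ plus a cardinality count at each scale and Cauchy--Schwarz. This is more elementary and self-contained, but note two points you should make explicit: first, the stated $L^\infty$ bound $|\partial^\alpha\Phihpm_\gamma(0,x)|\lesssim 4^{jN_\alpha}e^{-c4^j\epsilon^2}$ alone does not control an $L^2(\Rdplus)$ norm over an unbounded domain --- you must retain half of the Gaussian factor, writing $e^{-c4^j|x-a_\gamma|^2}\leq e^{-c4^j\epsilon^2/2}e^{-c4^j|x-a_\gamma|^2/2}$, so that the residual Gaussian supplies the $2^{-jd/2}$ needed to cancel the amplitude normalization; second, your count of $\lesssim 4^{jd}$ indices per scale relies on the compact support of the cut-off symbol $a$ in \emph{all} variables (which the paper does guarantee, since $\supp(\cuth)\subseteq[\cconcL,\cconcU]\times K$ with $K$ compact). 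With those two remarks supplied, your route through (II) is valid; the paper's version buys uniformity with respect to the abstract well-spread framework and avoids any counting, while yours trades that for transparency. The final reduction $\|h\|_{H^{1/2}}\leq\xi_{\rm min}^{-1/2}\|h\|_{H^1}$ in the oscillatory regime is the same in both.
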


\begin{rem}
The problem in \eqref{eq_problem} is well-posed because
$\cuth$ satisfies the compatibility condition $\cuth(0,\cdot) \equiv 0$,
cf. Section \ref{sec_dir}.
\end{rem}

The strategy to prove Theorem \ref{th:boundary} is similar to the one for Theorem \ref{thm:IVP}. We
show that
the proposed GB solution approximately solves the boundary-value problem and then conclude, by means
of energy
estimates, that it must suitably approximate the true solution. The results from Section
\ref{sec:well_spread},
together with the analysis of the back-propagated parameters in Section \ref{sec_match}, imply that
the wave operator
approximately annihilates the GB solution. In the next section, we show that the other conditions of
the
boundary-value problem are also approximately satisfied.

\subsection{Preliminary steps}
As a first step towards the proof of Theorem \ref{th:boundary}, we show that the approximate
solution in
\eqref{eq_solution_bvp} satisfies zero boundary conditions, up to the error of the parametrix. More
precisely, we have
the following lemma.

\begin{lemma}[Asymptotic vanishing of the initial conditions]
\label{lem_prep1}
Under the hypothesis of Theorem~\ref{th:boundary}, consider the approximate solution
defined in \eqref{eq_solution_bvp}. Then
\begin{align*}
\norm{\tilde{u}|_{t=0}}_{H^1(\Rdplus)},
\norm{\partial_t \tilde{u}|_{t=0}}_{L^2(\Rdplus)}
\lesssim \CTime \|h\|_{H^{1/2}(\Rdst)}.
\end{align*}
\end{lemma}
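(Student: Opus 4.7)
The plan is to exploit the \emph{rightwards propagation} property (Theorem~\ref{th_centers_ws}): at $t=0$, each beam center $x^{h,\pm}_\gamma(0)$ lies at distance at least $\epsilon>0$ inside the left half-space. Combined with the well-spreadness of the matched parameters (Theorem~\ref{th_match_ws}) and the bounds from Lemma~\ref{lemma_unif_flow_2} — namely $|A_\gamma(0)|\asymp 4^{jd/4}$ and $\Im M_\gamma(0)\gtrsim I_d$, together with $\omega_\gamma=4^j$ — this forces $\Phihpm_\gamma(0,\cdot)$ to be super-polynomially small when restricted to $\Rdplus$, which is much more than we need to control the $H^{1/2}$ norm on the right-hand side.

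The first step is the pointwise Gaussian bound
\[
\abs{\Phihpm_\gamma(0,x)} \lesssim 4^{jd/4}\exp\bigl(-c\,4^j \abs{x-x^{h,\pm}_\gamma(0)}^2\bigr).
\]
For $x\in\Rdplus$, the first coordinate satisfies $x_1-x^{h,\pm}_{\gamma,1}(0)\geq \epsilon+x_1\geq 0$, so $\abs{x-x^{h,\pm}_\gamma(0)}^2\geq \epsilon^2+x_1^2+\abs{x_*-x^{h,\pm}_{\gamma,*}(0)}^2$; integrating yields $\|\Phihpm_\gamma(0,\cdot)\|_{L^2(\Rdplus)}\lesssim e^{-c'4^j}$ after the Gaussian integral produces the compensating $4^{-jd/4}$. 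Spatial derivatives pick up at most $4^j(1+|x-x^{h,\pm}_\gamma(0)|)$ from the phase (the amplitude factors $p_\gamma(0),M_\gamma(0)$ being uniformly bounded by Lemma~\ref{lemma_error_flow}), and Lemma~\ref{lemma_tf} applied at $t=0$ shows that $\partial_t\Phihpm_\gamma(0,x)$ is bounded by $4^j$ times the same Gaussian. Absorbing the polynomial factor $4^j$ into the exponential, one obtains, for a constant $c''>0$ independent of $\gamma$,
\[
\|\Phihpm_\gamma(0,\cdot)\|_{H^1(\Rdplus)},\ \|\partial_t\Phihpm_\gamma(0,\cdot)\|_{L^2(\Rdplus)} \lesssim e^{-c''4^j}.
\]

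The second step is the summation. By the triangle inequality followed by Cauchy--Schwarz, and using the norm equivalence in Theorem~\ref{T_FrProp} together with the boundedness of the symbol $\eta$ (so that $\sum_\gamma 4^j|\tilde h_\gamma|^2\lesssim \|h\|_{H^{1/2}}^2$),
\[
\|\tilde u(0,\cdot)\|_{H^1(\Rdplus)} \leq \sum_{\gamma\in\Gammah}\abs{\tilde h_\gamma}\,e^{-c''4^j} \leq \Bigl(\sum_\gamma 4^j\abs{\tilde h_\gamma}^2\Bigr)^{1/2}\Bigl(\sum_\gamma 4^{-j}e^{-2c''4^j}\Bigr)^{1/2}.
\]
The second factor is an absolute constant: at scale $j$ there are $\sim n_j\asymp 2^{jd}$ choices of $k$, $\sim 2^j$ admissible values of $\lambda_1$ from the time concentration \eqref{eq_tcn}, and $\sim 2^{j(d-1)}$ values of $\lambda_*$ from the compact spatial support of $\tilh$, so the sum is dominated by $\sum_j 4^{jd-j}e^{-2c''4^j}<\infty$. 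The same argument with $\partial_t\Phihpm_\gamma$ in place of $\Phihpm_\gamma$ handles $\|\partial_t\tilde u(0,\cdot)\|_{L^2(\Rdplus)}$.

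The main bookkeeping obstacle is to make sure that the combined polynomial losses — the counting $\lesssim 4^{jd}$ at scale $j$, the amplitude factor $4^{jd/4}$, and the differentiation factor $4^j$ — are all swallowed by the Gaussian gain $e^{-c\,4^j\epsilon^2}$ coming from the uniform separation of beam centers from $\Rdplus$ guaranteed by Theorem~\ref{th_centers_ws}. Once that is verified, the exponential gain is so strong that the bound reduces to the (weaker) $H^{1/2}$ norm of the boundary value rather than requiring any higher Sobolev regularity.
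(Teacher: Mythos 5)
Your proof is correct, but it takes a genuinely different route from the paper's. The paper also starts from Theorem~\ref{th_centers_ws}, but instead of pointwise estimates it introduces a smooth cut-off $\eta$ supported in $\{x_1\geq-\epsilon\}$, observes that $\{\eta\}$ vanishes to \emph{every} order $k$ at the beam centers $x^{h,\pm}_\gamma(0)$ (i.e.\ is $\strongop^k$ for all $k$), and then applies the weighted Bessel bound of Theorem~\ref{theo_bessel_bounds} with one order of vanishing, which converts the $H^1$ weight $4^{2j}$ into $4^{j}$ and lands directly on $\norm{h}^2_{H^{1/2}}$; the time derivative is handled the same way via Lemma~\ref{lemma_tf}. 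Your argument replaces this with the explicit Gaussian decay of each beam on $\Rdplus$ (legitimate, since Theorem~\ref{th_match_ws} gives $\abs{A_\gamma(0)}\asymp 4^{jd/4}$, $\Im M_\gamma(0)\gtrsim I_d$ and $\norm{M_\gamma(0)},\abs{p_\gamma(0)}\lesssim1$ uniformly), followed by the triangle inequality, Cauchy--Schwarz and a count of active indices per scale. The trade-offs: the paper's route reuses the almost-orthogonality machinery already built for the parametrix, needs no counting, and extracts only the single factor $2^{-j}$ that is actually required; your route is more elementary and yields a far stronger, exponentially small bound $e^{-c4^j}$ per beam, but the lossy termwise summation forces you to verify that the number of indices in $\Gammah$ at scale $j$ is only polynomial in $2^j$ --- this does hold, because the cut-off symbol $a$ is compactly supported in both $t$ and $x_*$ (so \eqref{eq_tcn} and its spatial analogue confine $2^{-j}\lambda$ to a fixed compact set), but it is an extra use of the hypotheses that the paper's Bessel-bound argument does not need. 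With that counting step made explicit, your proof is complete.
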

\begin{proof}
We use the short notation $u = \sum_{\gamma\in\Gammah} \tilde h_\gamma \Phih_{\gamma}$, with the
understanding that
$\Phih_\gamma$ is a $+$ mode for $\gamma \in \Gammah^+$ and a $-$ mode for $\gamma \in \Gammah^-$.
We drop the $\pm$
superscripts on solutions to the defining ODEs, with a similar convention.
At time $t=0$ we have
\begin{align}
\label{eq_from_def}
\tilde{u}|_{t=0} &= \sum_{\gamma \in \Gammah} \tilde h_{\gamma}\Phih_{\gamma}(0,\cdot),
\\
\label{eq_from_def2}
\partial_t \tilde{u}|_{t=0} &= \sum_{\gamma \in \Gammah} \tilde h_{\gamma}
\p_t\Phih_{\gamma}(0,\cdot).
\end{align}

By Theorem \ref{th_centers_ws}, the centers of beams $\Phihpm_{\gamma}$ are away from the boundary
$\{x_1=0\}$ at initial time; we let $\epsilon>0$ be such that \eqref{eq:distance_from_bd} holds.

\step{1}{Localization}. Intuitively, \eqref{eq:distance_from_bd} means that, at time $t=0$, the
right-half space is away from the wave-front set of the solution, and the
parametrix is micro-locally of lower order. To formalize this reasoning, let us consider a smooth
cut-off function $\eta: \Rdst \to [0,1]$, such that
\[
\eta(x) =
\begin{cases}
0, \qquad x_1 \leq - \epsilon,
\\
1, \qquad x_1 \geq - \epsilon/2.\\
\end{cases}
\]
We also define $\eta_\gamma := \eta$ for all $\gamma \in \Gammah$.
By \eqref{eq:distance_from_bd}, $\eta$
vanishes near $x^{h}_{\gamma}(0)$ and, therefore,
\begin{align}
\label{eq_all}
\{\eta_\gamma:\gamma\in\Gammah^\pm\} = \strongop^k\pt{\Upsstarpm,\{0\}},
\qquad \mbox{for all } k \geq 0.
\end{align}

\step{2}{The $H^1$ norm of \eqref{eq_from_def}}.
We use the fact that the back-propagated parameters are well-spread
- Theorem \ref{th_match_ws}, the Bessel bounds
- Theorem \ref{theo_bessel_bounds}, and \eqref{eq_all} with $k=1$ to estimate
\begin{align*}
\Big\|\sum_{\gamma \in \Gammah}
\tilde h_{\gamma}\Phih_{\gamma}(0,\cdot) \Big\|^2_{H^1(\Rdplus)}
&\leq
\Big\|\sum_{\gamma\in \Gammah} \tilde h_{\gamma}
\Phih_{\gamma}(0,\cdot)\eta \Big\|^2_{H^1(\Rdst)}
\\
&\leq
\CTime \sum_{\gamma\in \Gammah}4^{j(2-1)}
|\tilde h_{\gamma}|^2
\leq \CTime \norm{h}^2_{H^{1/2}}.
\end{align*}

\step{3}{The $L^2$ norm of \eqref{eq_from_def2}}.
Using Lemma \ref{lemma_tf} we see that
\begin{align*}
\p_t \Phih_{\gamma}(0,x)
= \left(F^1_\gamma(x)+4^j F^2_\gamma(x)\right)
\Phihpm_{\gamma},
\end{align*}
with $F^1, F^2 = \strongop^0(\Upsstarpm,\{0\})$. Combining this with \eqref{eq_all}, we can proceed
as in Step 2 to deduce that
\begin{align*}
&\bignorm{\sum_{\gamma \in \Gammah} \tilde h_{\gamma} \p_t\Phih_{\gamma}(0,\cdot)}_{L^2(\Rdplus)}
\leq
\bignorm{\sum_{\gamma \in \Gammah} \tilde h_{\gamma} \p_t\Phih_{\gamma}(0,\cdot) \eta}_{L^2(\Rdst)}
\leq \CTime \norm{h}_{H^{1/2}}.
\end{align*}
This completes the proof.
\end{proof}

\begin{theorem}[Boundary conditions are asymptotically satisfied]\label{T:bound}
Under the hypothesis of Theorem~\ref{th:boundary}, the Gaussian beam solution $\tilde u$ satisfies:
\begin{align*}
\norm{\tilde u(\cdot,0,\cdot)-\cuth}_{H^1([0,T]\times\R^{d-1})} \leq \CTime
\norm{h}_{H^{1/2}(\Rdst)}.
\end{align*}
\end{theorem}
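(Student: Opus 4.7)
First I reduce to comparing $\tilde u(\cdot,0,\cdot)$ to the truncated expansion $\tilh=\sum_{\gamma\in\Gammah}\tilde h_\gamma\varphi_\gamma$ of $\cuth$. By the triangle inequality and \eqref{eq:wavefront_h}, it suffices to bound
\begin{align*}
\Big\|\sum_{\gamma\in\Gammah}\tilde h_\gamma\bigl(\Phihpm_\gamma(\cdot,0,\cdot)-\varphi_\gamma\bigr)\Big\|_{H^1([0,T]\times\R^{d-1})}
\lesssim \|h\|_{H^{1/2}}.
\end{align*}
I fix a smooth compactly supported cutoff $\eta^1$ equal to $1$ on $[0,T]$ and dominate the $H^1([0,T]\times\R^{d-1})$ norm by the $H^1(\R^d)$ norm of the product with $\eta^1$; then I insert $1=\eta^2_\gamma+(1-\eta^2_\gamma)$ with the spatial bumps $\eta^2_\gamma$ from Theorem~\ref{th_match_estimates} to split the sum into a local and a global contribution, each of which I handle by the matching clause of Theorem~\ref{th_match_estimates} combined with a weighted Bessel bound.

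For the local contribution, Theorem~\ref{th_match_estimates} rewrites each summand as
$\eta^1\eta^2_\gamma(\Phihpm_\gamma(\cdot,0,\cdot)-\varphi_\gamma)=\bigl(4^jR^1_\gamma+R^2_\gamma\bigr)\varphi_\gamma$,
with $R^1=\strongop^3(\{0\},\Upsstarpm)$ and $R^2=\strongop^1(\{0\},\Upsstarpm)$, the vanishing taking place at the joint spatial center $(t_\gamma,x_{\gamma,*}(t_\gamma))=2^{-j}\lambda$, which coincides with the center of $\varphi_\gamma$ in $\R^d$. Since $\varphi_\gamma$ is exactly the time-zero trace of the standard $d$-dimensional beam (Observation~\ref{ob_beam_frame}), Remark~\ref{rem_bess_zero} applies: taking $s=1$ with $m=3$ for the $R^1$ piece (absorbing the prefactor $4^j=2^{2j}$ into the weight $2^{-j}\tilde h_\gamma$ so that the Bessel power $2^{3j}$ matches) and $m=1$ for the $R^2$ piece produces
\begin{align*}
\Big\|\sum_\gamma \tilde h_\gamma\bigl(4^jR^1_\gamma+R^2_\gamma\bigr)\varphi_\gamma\Big\|_{H^1(\R^d)}^2
\lesssim \sum_{\gamma\in\Gammah}4^j|\tilde h_\gamma|^2
\lesssim \|h\|^2_{H^{1/2}},
\end{align*}
where the last step uses the frame norm equivalence from Theorem~\ref{T_FrProp}.

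For the global contribution, Theorem~\ref{th_match_estimates} rewrites each summand as $\newPhi^\pm_\gamma R^3_\gamma$ where $\newUpsilon^{\pm}$ is a well-spread family of GB parameters and $R^3\in\strongop^1(\{0\},\newUpsilon^{\pm})$. Theorem~\ref{theo_bessel_bounds} now applies directly to $\newUpsilon^{\pm}$ at $I=\{0\}$ with $s=1$, $m=1$, giving a bound of the same form $\lesssim\sum_\gamma 4^j|\tilde h_\gamma|^2\lesssim\|h\|^2_{H^{1/2}}$. Adding the two contributions closes the estimate. The principal subtlety, and the only point requiring care, is the bookkeeping: making sure the error factors from Theorem~\ref{th_match_estimates} are interpreted with vanishing order at the joint $(t,x_*)$-spatial center $2^{-j}\lambda$, so that the weighted Bessel bounds of Section~\ref{sec:well_spread} line up with the correct powers of $4^j$. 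Once this identification is in place, the rest is a direct application of the machinery already established.
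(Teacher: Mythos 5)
Your proposal is correct and follows essentially the same route as the paper: reduce to $\tilh$ via \eqref{eq:wavefront_h}, dominate the $H^1([0,T]\times\R^{d-1})$ norm by multiplying with $\eta^1$, split with $\eta^2_\gamma$ into local and global pieces, and apply Theorem~\ref{th_match_estimates} together with the weighted Bessel bounds in the $(t,x_*)$ variables. Your explicit bookkeeping of the powers of $4^j$ (absorbing $4^j$ into $2^{3j}\cdot 2^{-j}$ for the $m=3$ term, etc.) and your identification of the vanishing center with $2^{-j}\lambda$ correctly fill in the details the paper leaves implicit.
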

\begin{proof}
We use the same short-hand notation as in the proof of Lemma \ref{lem_prep1}.
According to the definitions,
\begin{align*}
&\tilh(t,x_*)=\sum_{\gamma \in \Gammah} \tilde h_\gamma \vp_{\g}(t,x_*),
\\
&\tilde u(t,0,x_*) = \sum_{\gamma \in \Gammah} \tilde h_\gamma \Phih_{\gamma}(t,0,x_*),
\qquad t \in \Rst, x_* \in \Rst^{d-1}.
\end{align*}
Therefore,
\begin{align*}
&\norm{\tilde u(\cdot,0,\cdot)-\cuth}_{H^1([0,T]\times\R^{d-1})}
\\
&\qquad \leq
\norm{\cuth-\tilh}_{H^1([0,T]\times\R^{d-1})} +
\Big\Vert
{\sum_{\gamma \in \Gammah} \tilde h_\gamma
\big( \vp_{\g}-\Phih_{\gamma}(\cdot,0,\cdot)
\big)} \Big\Vert
_{H^1([0,T]\times\R^{d-1})}.
\end{align*}
By \eqref{eq:wavefront_h}, the first term in the last equation is suitably bounded. Let us focus on
the second
term.

We invoke Theorem~\ref{th_match_estimates}. Let
$\eta^1 \in C^\infty(\Rst)$ be a smooth compactly-supported cut-off window
such that $\eta^1 \equiv 1$ on $[0,T]$ and
$\eta^2 \in C^\infty(\Rst^{d-1})$
a smooth function supported on $B_2(0)$ that is $\equiv 1$ on $B_1(0)$. We write
$\eta^2_{\gamma}(x_*) = \eta^2(x_*-x_{\gamma,*}(t_\gamma))$. With the notation of
Theorem~\ref{th_match_estimates},
\begin{align*}
&\Big\|\sum_{\g \in\Gammah} \tilde h_\gamma \big( \vp_{\g}-\Phih_{\gamma}(\cdot,0,\cdot)
\big) \Big\|_{H^1([0,T]\times \R^{d-1})}
\leq \Big\|\sum_{\g \in\Gammah} \tilde h_\gamma \big( \vp_{\g}-\Phih_{\gamma}(\cdot,0,\cdot)
\big)
\eta^1 \Big\|_{H^1(\R^{d})}
\\
&\qquad \leq  \Big\|\sum_{\g \in\Gammah} \tilde h_\gamma
\big(\func_\gamma-\Phih_{\gamma}(\cdot,0,\cdot)
\big)
\eta^1
\pt{1-\eta^2_{\gamma}} \Big\|_{H^1(\R^{d})}
\\
&\qquad \qquad + \Big\|\sum_{\g \in\Gammah} \tilde h_\gamma
\big( \func_\gamma -\Phih_{\gamma}(\cdot,0,\cdot)
\big)
\eta^1
\eta^2_\gamma \Big\|_{H^1(\R^{d})}
\\
&\qquad \leq \Big\|\sum_{\g \in\Gammah} \tilde h_\gamma\varphi_\gamma
\pt{4^j R^1_\gamma
+R^2_\gamma} \Big\|_{H^1(\R^{d})}
+ \Big\|\sum_{\g \in\Gammah} \tilde h_\gamma \newPhi_{\gamma}(\cdot,0,\cdot)
R^3_{\gamma}
\Big\|_{H^1(\R^{d})}.
\end{align*}
We use the information on the vanishing orders of $R^k$, $k=1,2,3$,
the fact that the beams $\{\newPhi_{\gamma}\}$ are well-spread,
and the Bessel bounds from Theorem~\ref{theo_bessel_bounds}
- with $(t,x_*)$ as integration variable instead of $x$ - to conclude
see that the remaining terms are dominated by $\|h\|_{H^{1/2}(\R^{d})}$.
This completes the proof.
\end{proof}

\subsection{Proof of the main result}
\begin{proof}[{Proof of Theorem~\ref{th:boundary}}]
The function $v:= \tilde{u} - u$ solves the problem:
\begin{eqnarray}
\left\{
\begin{aligned}
&\partial^2_{t} v (t,x) - \speed(x)^2 \Delta_x v (t,x) = f(t,x),  &\qquad t\in[0,T], x \in \Rdplus,
\\
&v(0, x)= \tilde{u}(0,x), &\qquad x \in \Rdplus,
\\
&v_t(0,x) = \tilde{u}_t(0,x),  &\qquad x \in \Rdplus,
\\
&v(t,0,x_*) = \tilde{u}(t,0,x_*)-\cuth(t,x_*),  &\qquad t\in[0,T], x_* \in \Rst^{d-1},
\end{aligned}
\right.
\end{eqnarray}
where $f(t,x)=\partial^2_{t} \tilde{u} (t,x) - \speed(x)^2 \Delta_x \tilde{u} (t,x)$.
By the energy estimates for the wave equation (see Theorem~\ref{th_in_llt})
we have
\begin{align*}
&\sup_{t\in[0,T]} \norm{v(t,\cdot)}_{H^1(\Rdplus)}
+
\sup_{t\in[0,T]} \norm{\partial_t v(t,\cdot)}_{L^2(\Rdplus)}
\\
&\qquad
\leq C_T \Big(
\norm{\tilde{u}(0,\cdot)}_{H^1(\Rdplus)}+\norm{\tilde{u}_t(0,\cdot)}_{L^2(\Rdplus)}+\sup_{t\in[0,T]}
\norm{f(t,\cdot)}_{
L^2(\Rdplus)}
\\
&\qquad\qquad
+\norm{\tilde{u}(\cdot,0,\cdot)-\cuth}_{H^1([0,T]\times\Rst^{d-1})}
\Big).
\end{align*}
The term involving $f$ can be estimated by Theorems \ref{th_error_wave} and \ref{th_match_ws}
as
\begin{align*}
\sup_{t\in[0,T]}\norm{f(t,\cdot)}_{L^2}^2
\leq C_T \sum_{\gamma \in \Gammah}
4^j | \tilde h_\gamma |^2 \leq C_T
\norm{h}^2_{H^{1/2}(\Rdst)},
\end{align*}
while the other three terms are similarly bounded, by Lemma~\ref{lem_prep1} and
Theorem~\ref{T:bound}. This completes the proof.
\end{proof}

\section{Proofs related to the back-propagated parameters}
\label{sec_pr_th_match_ws}
This section is devoted to pending proofs related to Section \ref{sec_match}.

\subsection{Proof of Claim \ref{Cl_ric}}
\label{sec_proof_of_Cl_ric}
\begin{proof}
We use the notation of Section \ref{sec_back}.

\step{1}{Existence and uniqueness}.
In compact notation, we look for a symmetric matrix $M(t_\gamma) \in \bC^{d\times d}$ such that
\begin{align}
\begin{bmatrix}
\label{eq_ma}
\widetilde{M}_{\gamma,11}& \widetilde{M}_{\gamma,1*}^t \\
\widetilde{M}_{\gamma,1*} & \pt{M_{\gamma}(t_\gamma)}_{**}
\end{bmatrix}
=
\begin{bmatrix}
2\pi \mi & 0 \\
0 & 2 \pi \mi I_{d-1}
\end{bmatrix}
\end{align}
where
\begin{align}
\label{eq_mb}
&\widetilde{M}_{\gamma,11}= \dot{x}_{\gamma}(t_{\gamma})
\cdot M(t_{\gamma}) \dot{x}_{\gamma}(t_{\gamma}) -\dot{p}_{\gamma}(t_{\gamma}) \cdot
\dot{x}_{\gamma}(t_{\gamma}),
\\
\label{eq_mc}
&\widetilde{M}_{\gamma,1*}=
\left(
\dot{p}_{\gamma}(t_\gamma)
-M(t_\gamma)
\dot{x}_{\gamma}(t_\gamma)
\right)_*.
\end{align}
We first assume that we have such a matrix $M(t_\gamma)$ and deduce the values of its entries.
From \eqref{eq_ma} we see that
\begin{align}
\label{eq_defm1}
(M(t_\gamma))_{**}=2\pi \mi I_{d-1}.
\end{align}
Using this together with \eqref{eq_mc} and \eqref{eq_ma} we see that
\[
\pt{M(t_{\gamma})}_{k1}\dot{x}_{\gamma,1}(t_\gamma)
= \dot{p}_{\gamma,k}(t_\gamma)-\ksx \dot{x}_{\gamma,k}(t_\gamma),
\qquad k=2,\dots d.
\]
Since, by \eqref{eq_lowbdx1}, $\dot{x}_{\gamma,1}(t_{\gamma}) \not= 0$ and
$M(t_\gamma)$ is symmetric, we can solve
\begin{align}
\label{eq_defm2}
\pt{M(t_{\gamma})}_{1*}  =
\left(\dot{x}_{\gamma,1}(t_\gamma)\right)^{-1}
\left(\dot{p}_{\gamma,*}(t_\gamma)
-\ksx
\dot{x}_{\gamma,*}(t_\gamma) \right).
\end{align}
We now compare the $(1,1)$ entries in \eqref{eq_ma} and use \eqref{eq_defm1} and \eqref{eq_defm2}
together with
\eqref{eq_mb} to obtain
\begin{align*}
2 \pi \mi &= \dot{x}_{\gamma}(t_{\gamma})
\cdot M(t_{\gamma})\dot{x}_{\gamma}(t_{\gamma}) -\dot{p}_{\gamma}
(t_{\gamma})\cdot \dot{x}_{\gamma}(t_{\gamma})\vspace{3mm}
\\
&= \abs{\dot{x}_{\gamma,1}(t_{\gamma})}^2 \pt{M(t_\gamma)}_{11}
+ 2\dot{x}_{\gamma,1}(t_{\gamma})\cdot
\pt{M(t_\gamma)}_{1*} \dot{x}_{\gamma,*}(t_{\gamma})\vspace{3mm}
\\
&\qquad
+
\dot{x}_{\gamma,*}(t_{\gamma})
\cdot \pt{M(t_\gamma)}_{**}
\dot{x}_{\gamma,*}(t_{\gamma})
-
\dot{p}_{\gamma}(t_{\gamma})\cdot
\dot{x}_{\gamma}(t_{\gamma})\vspace{3mm}
\\
&=\abs{\dot{x}_{\gamma,1}(t_{\gamma})}^2 \pt{M(t_\gamma)}_{11}
+ 2 \left(
\dot{p}_{\gamma,*}(t_{\gamma}) - 2\pi \mi
\dot{x}_{\gamma,*}(t_{\gamma})
\right) \cdot
\dot{x}_{\gamma,*}(t_{\gamma})
\vspace{2mm} \\
&\qquad
+
2\pi \mi \abs{\dot{x}_{\gamma,*}(t_{\gamma})}^2
-
\dot{p}_{\gamma}(t_{\gamma})\cdot
\dot{x}_{\gamma}(t_{\gamma})
\vspace{3mm}\\
&=\abs{\dot{x}_{\gamma,1}(t_{\gamma})}^2 \pt{M(t_\gamma)}_{11}
+ 2\dot{p}_{\gamma,*}(t_{\gamma})\cdot
\dot{x}_{\gamma,*}(t_{\gamma})
\vspace{3mm} \\
&\qquad
-
2\pi \mi \abs{\dot{x}_{\gamma,*}(t_{\gamma})}^2
-
\dot{p}_{\gamma}(t_{\gamma})\cdot
\dot{x}_{\gamma}(t_{\gamma}).
\end{align*}
Using again that, by \eqref{eq_lowbdx1}, $\dot{x}_{\gamma,*}(t_{\gamma}) \not= 0$ and we conclude
that
\begin{equation}
\label{eq_defm3}
\pt{M(t_\gamma)}_{11}
=
\abs{\dot{x}_{\gamma,1}(t_{\gamma})}^{-2}
\left(
2\pi \mi
\pt{
1\!+\!\abs{\dot{x}_{\gamma,*}(t_{\gamma})}^2
}
\!-\!2 \dot{p}_{\gamma,*}(t_{\gamma})\cdot
 \dot{x}_{\gamma,*}(t_{\gamma})
+\dot{p}_{\gamma}(t_{\gamma})\cdot
\dot{x}_{\gamma}(t_{\gamma})
\right).
\end{equation}
Hence, the matrix $M(t_\gamma)$ is completely determined by the desired conditions.
Let us define $M(t_\gamma)$ by \eqref{eq_defm1}, \eqref{eq_defm2}
and \eqref{eq_defm3} and the requirement of symmetry. We see that such matrix
solves \eqref{sys:M_gamma}.

\step{2}{Positivity and bounds}.
Inspecting \eqref{eq_defm1}, \eqref{eq_defm2} and \eqref{eq_defm3} and using
Claim \ref{claim_deriv} we see that $\norm{M(t_\gamma)}$ is bounded uniformly for $\gamma \in
\Gammah$. According to the
definitions, the imaginary part of $M(t_\gamma)$
is of the form
\begin{align*}
\tfrac{1}{2\pi}\Im{M(t_\gamma)}=
\begin{bmatrix}
a_{11}
&
a_{12} & a_{13} & \ldots & a_{1d}
\\
a_{12} & 1 & 0 & \ldots & 0
\\
a_{13} & 0 & 1 & \ldots & 0
 \\
\ldots & \ldots & \ldots & \ldots & 0
\\
a_{1d} & 0 & 0 & \ldots & 1
\end{bmatrix},
\end{align*}
where
\begin{align}
a_{11} &= |\dot{x}_{\gamma,1}(t_{\gamma})|^{-2} (1+\abs{x_*(t_\gamma)}^2),
\\
a_{1k} &= -\dot{x}_{\gamma,1}(t_{\gamma}))^{-1} \dot{x}_{\gamma,k}(t_{\gamma}),
\qquad k=2,\dots d.
\end{align}
Note that
\begin{align*}
a_{11}-a_{12}^2-\ldots -a_{1d}^2=|\dot{x}_{\gamma,1}(t_{\gamma})|^{-2}
\gtrsim 1,
\end{align*}
by \eqref{eq_lowbdx1}.
Hence, by Lemma \ref{lemma_LA} in the Appendix, we conclude that
$\Im(M(t_\gamma))$ is a positive matrix and $\Im(M(t_\gamma)) \gtrsim I_d$, as desired.
\end{proof}

\subsection{Proof of Theorem \ref{th_match_ws}}
The goal of this section is to show that both families of back-propagated GB parameters constructed
in Section \ref{sec_match}
are well-spread. This involves comparing the constructed maps
\begin{align*}
\gammastarplmi=(\omega_\gamma^{\pm,h},
a_\gamma^{\pm,h},
\xi_\gamma^{\pm,h},
\newA_\gamma^{\pm,h},
\newM_\gamma^{\pm,h}), \qquad \gamma \in \Gammah^\pm,
\end{align*}
to the standard one
\begin{align}
\alphast_\gamma = (4^j,2^{-j}\lambda, \pointjk, 2^{\frac{d}{4}}, \ii I_d),
\qquad \gamma = (j,k,\lambda) \in \Gamma.
\end{align}
We follow the notation of Section \ref{sec_match}:
when convenient, we drop the superscripts for the functions $x^{h,\pm}_\gamma(t)$,
$p^{h,\pm}_\gamma(t), \ldots$,
writing instead $x_\gamma(t)$, $p_\gamma(t), \ldots$ We keep however the superscripts in the tuple
of parameters
$\gammastarplmi$ to avoid confusion with the standard one.

Recall from Section \ref{sec_match} that for $\gamma \in \Gammah$,
$t_\gamma = 2^{-j}\lambda_1$ and that $x_{\gamma}(t_\gamma) = (0, 2^{j} \lambda_*)$.

As a preparation for the proof of Theorem \ref{th_match_ws}, we show the following.
\begin{lemma}
\label{lemma_flow_match}
For $\gamma=(j,k,\lambda),
\gamma'=(j',k',\lambda') \in \Gammah^+$:
\begin{align}
\label{eq_lemma_flow_match}
&\left|\pointjk - \pointjkp \right|^2
\lesssim \left|4^{j} p^{h,+}_\gamma(t_\gamma) - 4^{j'}p^{h,+}_{\gamma'}(t_{\gamma'})\right|^2 +
4^{j+j'}
|2^{-j}\lambda_* -
2^{-j'}\lambda_*'|^{2}.
\end{align}
An analogous statement holds for $\Gammah^-$.
\end{lemma}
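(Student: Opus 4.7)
The plan is to unravel the formulas from Section~\ref{sec_back} that express $p^{h,+}_\gamma(t_\gamma)$ in terms of $\pointjk$, and then invert them. Setting $u_\gamma := 4^j p^{h,+}_\gamma(t_\gamma)$ and $c_\gamma := \speed(0, 2^{-j}\lambda_*)$, the initial conditions \eqref{eq:initpst} and \eqref{eq:q}, together with the identity $4^j(\pointjkt)_* = 2\pi(\pointjk)_*$, give
\[
(u_\gamma)_* = 2\pi(\pointjk)_*, \qquad |(\pointjk)_1| = c_\gamma \sqrt{(u_\gamma)_1^2/(4\pi^2) + |(\pointjk)_*|^2}.
\]
Since $\gamma \in \Gammah^+$ forces $(\pointjk)_1 < 0$, this recovers $\pointjk$ from $u_\gamma$ and $c_\gamma$. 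The transverse components are then handled at once: $|(\pointjk)_* - (\pointjk')_*| \lesssim |u_\gamma - u_{\gamma'}|$.

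For the first component I would write
\[
(\pointjk)_1 - (\pointjk')_1 = -c_\gamma \bigl(\sqrt{A_\gamma} - \sqrt{A_{\gamma'}}\bigr) - (c_\gamma - c_{\gamma'})\sqrt{A_{\gamma'}},
\]
with $A_\gamma := (u_\gamma)_1^2/(4\pi^2) + |(\pointjk)_*|^2$, and bound each summand separately. For the first, the elementary identity $\sqrt{A}-\sqrt{B} = (A-B)/(\sqrt{A}+\sqrt{B})$ reduces matters to comparing $|A_\gamma - A_{\gamma'}|$ with $\sqrt{A_\gamma}+\sqrt{A_{\gamma'}}$. The upper bounds $|u_\gamma|, |\pointjk| \asymp 4^j$ (from $|\pointjk|\asymp 4^j$ and the fact that $|p^{h,+}_\gamma(t_\gamma)| \asymp 1$ by \eqref{eq:inad} and the boundedness of $\speed$) yield $|A_\gamma - A_{\gamma'}| \lesssim |u_\gamma - u_{\gamma'}|(4^j + 4^{j'})$. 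The \emph{non-tangential propagation estimate}~\eqref{eq_fpr} supplies the matching lower bound $\sqrt{A_\gamma} = |(\pointjk)_1|/c_\gamma \gtrsim 4^j$, so $\sqrt{A_\gamma}+\sqrt{A_{\gamma'}} \gtrsim 4^j + 4^{j'}$ and the ratio is $\lesssim |u_\gamma - u_{\gamma'}|$. For the second summand, $\speed \in \Cinfb$ is Lipschitz so $|c_\gamma - c_{\gamma'}| \lesssim |2^{-j}\lambda_* - 2^{-j'}\lambda'_*|$, while $\sqrt{A_{\gamma'}} \asymp 4^{j'}$. Interchanging $\gamma$ and $\gamma'$ in the decomposition yields the analogous bound with $4^j$ in place of $4^{j'}$.

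Choosing whichever of the two bounds is smaller and squaring, the velocity contribution is at most $4^{2\min(j,j')}|2^{-j}\lambda_* - 2^{-j'}\lambda'_*|^2 \leq 4^{j+j'}|2^{-j}\lambda_* - 2^{-j'}\lambda'_*|^2$, and combining with the transverse estimate produces the claimed inequality. The $\Gammah^-$ case is identical after flipping the sign in~\eqref{eq:q}. The one genuinely delicate point is establishing that the denominator $\sqrt{A_\gamma}+\sqrt{A_{\gamma'}}$ is actually of size $4^j+4^{j'}$; this is precisely the content of the non-tangential propagation estimate~\eqref{eq_fpr}, so without the quantitative grazing-ray exclusion built into $\Gammah$ the estimate would degenerate for near-grazing indices.
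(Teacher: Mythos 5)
Your proof is correct, and it takes a genuinely different route from the paper's. You invert the construction explicitly, writing $(\pointjk)_1=-\speed_\gamma\sqrt{A_\gamma}$ with $A_\gamma=(u_\gamma)_1^2/(4\pi^2)+\abs{(\pointjk)_*}^2$, and control the difference of square roots by $\abs{A_\gamma-A_{\gamma'}}/(\sqrt{A_\gamma}+\sqrt{A_{\gamma'}})$, with the denominator bounded below via the non-tangential propagation estimate \eqref{eq_fpr}. The paper instead isolates the first component, peels off the velocity dependence in a first step (replacing $\taugam$ by $\taugam/\speed_\gamma$ at the cost of the $4^{j+j'}\abs{2^{-j}\lambda_*-2^{-j'}\lambda'_*}^2$ term, exactly as in your second summand), and then establishes $\abs{4^j\taugam/\speed_\gamma-4^{j'}\taugamp/\speed_{\gamma'}}^2\le\abs{4^jp_\gamma-4^{j'}p_{\gamma'}}^2$ by a purely algebraic computation: it expands the square, bounds the cross term with the elementary inequality $0\le\sqrt{1-x^2}\sqrt{1-y^2}+xy\le 1$ for $x,y\in[0,1]$, applies the arithmetic--geometric mean inequality, and recognizes the result as $(4^jp_{\gamma,1}-4^{j'}p_{\gamma',1})^2$. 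The practical difference is the one you flag yourself: the paper's key step holds with constant $1$ and uses the grazing-ray condition \eqref{eq_graray} only qualitatively (to ensure $\epsilon_\gamma<1$, i.e., that the square root in \eqref{eq:q} is real), so it does not degenerate as directions approach grazing; your argument genuinely needs the quantitative lower bound $\abs{(\pointjkt)_1}\ge\cforw$, and your implied constant blows up as $\cgraz\to 0$. Since the lemma is only asserted for $\gamma\in\Gammah^\pm$, where \eqref{eq_fpr} is in force, both proofs are valid; yours is arguably more transparent about where the formula for $\pointjk$ in terms of $p_\gamma(t_\gamma)$ comes from, while the paper's buys a sharper, grazing-uniform constant in the momentum comparison. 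One small point to make explicit if you write this up: the reduction of the transverse components uses $(\pointjk)_*=(2\pi)^{-1}(u_\gamma)_*$, which follows from \eqref{eq:initpst} and the normalization \eqref{eq_pjkt}, and the upper bound $\sqrt{A_{\gamma'}}\lesssim 4^{j'}$ in your second summand needs both $\abs{p_{\gamma'}(t_{\gamma'})}\lesssim 1$ from \eqref{eq:inad} and $\abs{\pointjkp}\asymp 4^{j'}$; you state these, so nothing is missing.
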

\begin{proof}
We treat the family $\Gammah^+$.
To further simplify the notation, throughout this proof
we write $p_\gamma = p_\gamma(t_\gamma)$,
$\poneg =p_{\gamma,1}(t_\gamma),
\pastg = p_{\gamma,*}(t_\gamma)$, and $\speed_{\gamma} = \speed(0,2^{-j}\lambda_*)$.
Recall also that $\taugam = (\pointjkt)_1 <0$ - cf. \eqref{eq_ptg} and \eqref{eq_match_2}.
Hence, by \eqref{eq:initpst} and \eqref{eq:q},
\begin{align}
\label{eq_aux}
p_{\gamma,1}=
\sqrt{\tfrac{\taugam^2}
{\speed_{\gamma}^2} - \pastg^2} = \tfrac{|\taugam|}{\speed_{\gamma}}\sqrt{1 -
\tfrac{\speed_{\gamma}^2}{\taugam^2}\pastg^2},
\end{align}
and, by \eqref{eq_pjkt},
$(\pointjk)_1 = \tfrac{4^j}{2\pi} \taugam$,
$(\pointjk)_* = \tfrac{4^j}{2\pi} \pastg$.
With this notation,
the estimate we want to prove is:
\begin{equation*}
\left |4^{j} \taugam-4^{j'} \taugamp\right|^2
+
| 4^{j}\pastg -  4^{j'} \pastgp |^2
\lesssim
| 4^j p_\gamma - 4^{j'}p_{\gamma'} |^2
+ 4^{j+j'}|2^{-j}\lambda_{*} -2^{-j'}\lambda'_{*}|^2.
\end{equation*}
Clearly, it suffices to show that
\begin{equation}
\label{eq_est_new}
\left |4^{j} \taugam-4^{j'} \taugamp\right|^2
\lesssim
| 4^j p_\gamma - 4^{j'}p_{\gamma'} |^2
+ 4^{j+j'}|2^{-j}\lambda_{*} -2^{-j'}\lambda'_{*}|^2.
\end{equation}

\step{1}{We show that}
\begin{equation}
\left |4^{j}
\taugam-4^{j'}
\taugamp\right|^2
\lesssim
\left |4^{j} \frac{\taugam}{\speed_{\gamma}}
	 	-4^{j'}\frac{\taugamp}{\speed_{\gamma'}}\right|^2
+4^{j+j'}|2^{-j}\lambda_{*} -2^{-j'}\lambda'_{*}|^2.
	 	\label{eq:last_tbp}
\end{equation}
Using that $\abs{\taugam} \leq \cnumb{1}$ for some constant $\cnumb{1}$ - independent of $\gamma$
- cf. \eqref{eq_speed_bounds} and \eqref{eq_pjkt}, we estimate
\begin{align*}
\left |4^{j} \frac{\taugam}{\speed_{\gamma}}
-4^{j'}\frac{\taugamp}{\speed_{\gamma'}}\right|
& =
\left |4^{j} \frac{\taugam}{\speed_{\gamma}}
-4^{j'}\frac{\taugamp}{\speed_{\gamma}} + 4^{j'}\frac{\taugamp}{\speed_{\gamma}}
-4^{j'}\frac{\taugamp}{\speed_{\gamma'}} \right|
\\
&\geq
\left |4^{j} \frac{\taugam}{\speed_{\gamma}}
-4^{j'}\frac{\taugamp}{\speed_{\gamma}}
\right|
- \left |4^{j'}\frac{\taugamp}{\speed_{\gamma}}
-4^{j'}\frac{\taugamp}{\speed_{\gamma'}} \right|
\\
&\geq
\cspeed^{-1} \left| 4^{j}\taugam-4^{j'} \taugamp \right|
- 4^{j'}\cnumb{1}\left|\frac{1}{\speed_{\gamma}}-
\frac{1}{\speed_{\gamma'}}\right|
\\
&= \cspeed^{-1}
\left |4^{j} \taugam-4^{j'} \taugamp\right|
-4^{j'}\cnumb{1}\left|\frac{1}{\speed(0,2^{-j}\lambda_{*})}-\frac{1}{\speed(0,2^{-j'}\lambda'_{*})}
\right|.
\end{align*}
Since the velocity $\speed$ has (uniformly) bounded derivatives and is bounded below - cf.
\eqref{eq_speed_bounds} -
we conclude that
$\left|\frac{1}{\speed(0,2^{-j}\lambda_{*})}-\frac{1}{\speed(0,2^{-j'}\lambda'_{*})}
\right| \lesssim \left|2^{-j}\lambda_{*} -2^{-j'}\lambda'_{*}\right|$.
Consequently,
\begin{align*}
\left |4^{j} \taugam-4^{j'} \taugamp\right|^2
&\lesssim
\left |4^{j} \frac{\taugam}{\speed_{\gamma}}
-4^{j'}\frac{\taugamp}{\speed_{\gamma'}}\right|^2
+4^{2j'} \left|2^{-j}\lambda_{*} -2^{-j'}\lambda'_{*}\right|^2.
\end{align*}
Similarly,
$
\left |4^{j} \taugam-4^{j'} \taugamp\right|^2
\lesssim
\left |4^{j} \frac{\taugam}{\speed_{\gamma}}
-4^{j'}\frac{\taugamp}{\speed_{\gamma'}}\right|^2
+4^{2j}\left|2^{-j}\lambda_{*} -2^{-j'}\lambda'_{*}\right|^2
$,
and therefore
\begin{align*}
\left |4^{j} \taugam-4^{j'} \taugamp\right|^2
&\lesssim
\left |4^{j} \frac{\taugam}{\speed_{\gamma}}
-4^{j'}\frac{\taugamp}{\speed_{\gamma'}}\right|^2
+\min\{4^{2j},4^{2j'}\}
\left|2^{-j}\lambda_{*} -2^{-j'}\lambda'_{*}\right|^2,
\\
&\lesssim
\left |4^{j} \frac{\taugam}{\speed_{\gamma}}
-4^{j'}\frac{\taugamp}{\speed_{\gamma'}}\right|^2
+4^{j+j'}\left|2^{-j}\lambda_{*} -2^{-j'}\lambda'_{*}\right|^2,
\end{align*}
showing that \eqref{eq:last_tbp} indeed holds.

\step{2}{We show that}
\begin{align}
\label{eq_lf1}
\left |4^{j} \frac{\taugam}{\speed_{\gamma}}
-4^{j'}\frac{\taugamp}{\speed_{\gamma'}}\right|^2
\lesssim | 4^j p_\gamma - 4^{j'}p_{\gamma'} |^2.
\end{align}

We denote $\epsilon_{\gamma} = \tfrac{\speed_{\gamma}\abs{p_{\gamma,*}}}{\abs{\taugam}}$. Hence,
by \eqref{eq_aux},
$p_{\gamma,1}=
\tfrac{|\taugam|}{\speed_{\gamma}}\sqrt{1 -
\epsilon_\gamma^2}$.
By the grazing ray condition
\eqref{eq_graray},  $0\leq \epsilon_{\gamma} <1$.
We note that for $\gamma, \gamma' \in \Gammah^+$,
$\taugam<0$ and $\taugamp <0$, while $p_{\gamma,1}>0$ and $p_{\gamma',1}>0$.
Keeping these facts in mind and using inequality:
$0 \leq \left( 1 - x^2\right)^{\half}\left( 1 - y^2\right)^{\half} + xy \leq 1$, for $x,y \in
[0,1]\times[0,1]$,
we estimate
\begin{align*}
&\left |4^{j} \frac{\taugam}{\speed_{\gamma}}
-4^{j'}\frac{\taugamp}{\speed_{\gamma'}}\right|^2
=
4^{2j} \frac{\taugam^2}{\speed^2_{\gamma}}
	 	+4^{2j'}\frac{\taugamp^2}{\speed^2_{\gamma'}}
	 	- 2 \cdot
4^{j+j'}\frac{\abs{\taugam}\abs{\taugamp}}{\speed_{\gamma}\speed_{\gamma'}}
\\
&\quad\leq
4^{2j} \frac{\taugam^2}{\speed^2_{\gamma}}
	 	+4^{2j'}\frac{\taugamp^2}{\speed^2_{\gamma'}}
	 	- 2 \cdot
4^{j+j'}\frac{\abs{\taugam}\abs{\taugamp}}{\speed_{\gamma}\speed_{\gamma'}}\left[
	 		   \left( 1 - \epsilon^2_{\gamma}\right)^{\half}
	 		   \left( 1 - \epsilon^2_{\gamma'}\right)^{\half}
	 		   + \epsilon_{\gamma}\epsilon_{\gamma'}\right]
\\
&\quad=
4^{2j} \frac{\taugam^2}{\speed^2_{\gamma}}
	 	+4^{2j'}\frac{\taugamp^2}{\speed^2_{\gamma'}}
	 	- 2 \cdot
4^{j+j'}\frac{\abs{\taugam}\abs{\taugamp}}{\speed_{\gamma}\speed_{\gamma'}}
	 		   \left( 1 - \epsilon^2_{\gamma}\right)^{\half}
	 		   \left( 1 - \epsilon^2_{\gamma'}\right)^{\half}
-2 \cdot 4^{j+j'} \abs{\pastg}\abs{\pastgp}
\\
&\quad=
4^{2j} \frac{\taugam^2}{\speed^2_{\gamma}}
	 	+4^{2j'}\frac{\taugamp^2}{\speed^2_{\gamma'}}
	 	- 2 \cdot 4^{j+j'}
	 		   \big( \tfrac{\taugam^2}{\speed_{\gamma}^2} - \abs{\pastg}^2\big)^{\half}
	 		   \big( \tfrac{\taugamp^2}{\speed_{\gamma'}^2} -
\abs{\pastgp}^2\big)^{\half}
-2 \cdot 4^{j+j'} \abs{\pastg}\abs{\pastgp}.
\end{align*}
Using the arithmetic-geometric means inequality: $
4^{2j} \abs{\pastg}^2 + 4^{2j'} \abs{\pastgp}^2
\leq 2 \cdot 4^{j+j'} \abs{\pastg}\abs{\pastgp}$ we conclude that
\begin{align*}
\left |4^{j} \frac{\taugam}{\speed_{\gamma}}
-4^{j'}\frac{\taugamp}{\speed_{\gamma'}}\right|^2
&\leq
4^{2j} \big(\tfrac{\taugam^2}{\speed^2_{\gamma}}-\abs{\pastg}^2\big)
+4^{2j'}\big(\tfrac{\taugamp^2}{\speed^2_{\gamma'}}-\abs{\pastgp}^2\big)
- 2 \cdot 4^{j+j'}
\big( \tfrac{\taugam^2}{\speed_{\gamma}^2} - \abs{\pastg}^2\big)^{\half}
  \big( \tfrac{\taugamp^2}{\speed_{\gamma'}^2} - \abs{\pastgp}^2\big)^{\half}
\\
&=
4^{2j} \poneg^2
+4^{2j'} \ponegp^2
- 2 \cdot 4^{j+j'} \cdot \poneg \cdot \ponegp
= \left( 4^j \poneg - 4^{j'} \ponegp \right)^2
\\
&\leq | 4^j p_\gamma - 4^{j'}p_{\gamma'} |^2,
\end{align*}
as claimed.

\step{3}{Finally, we combine \eqref{eq:last_tbp} and \eqref{eq_lf1}
to deduce \eqref{eq_est_new}.} The proof for $\Gammah^-$ is similar, with the difference
that a minus signs is present in \eqref{eq_aux}.
\end{proof}

We may now prove the announced result.

\begin{proof}[{Proof of Theorem \ref{th_match_ws}}]
We consider one of the families, $\Upsstarp$ or $\Upsstarm$, and drop the superscript $+$.
We verify the conditions in Definition \ref{def:wsp}.

\step{1}{Estimates for $\omega^{h}_{\gamma}$ and $\xi^{h}_{\gamma}$}.\\
By definition, $\omega^{h}_{\gamma} = 4^j$ - cf. \eqref{eq_starparameter}. Moreover,
using \eqref{eq:inad}, the fact that $c$ is bounded below, and the non-tangential propagation
estimate in
\eqref{eq_fpr}
we conclude that $\abs{p_\gamma(t_\gamma)} \asymp 1$.
Using the fact that the Hamiltonian is constant on its flow, we can propagate this estimate to
$t=0$:
\begin{align*}
1 \asymp \abs{p_\gamma(t_\gamma)} \asymp \abs{c(x(t_\gamma)} \abs{p_\gamma(t_\gamma)}
= \abs{H(x(t_\gamma), p(t_\gamma))}= \abs{H(x(0), p(0))}
\asymp \abs{p_\gamma(0)}.
\end{align*}
Hence $\abs{\xi^{h}_{\gamma}} = \frac{4^j}{2\pi} \abs{p_\gamma(0)}
\asymp 4^j = \omega^{h}_{\gamma}$. This establishes one of the properties that we need in order to
check the well-spreadness of $\Gammah^\pm$, and, additionally, it allows us to invoke Lemma
\ref{lemma_unif_flow_1} for this family of parameters.

Since $\omega^{h}_{\gamma} = \omega^{st}_{\gamma} = 4^j$, in what follows we write unambiguously
$\omega_\gamma$.

\step{2}{Some constants}. Recall the assumption in \eqref{eq_tcn}.  Let us Taylor expand:
\begin{align}
\label{eq_expan}
x_{\gamma}(t) = x_{\gamma}(\tgam)+ (t-\tgam) y_\gamma(t),
\end{align}
where $y_{\gamma,i} \in C([-T,T])$ uniformly on $\gamma$
by Lemma \ref{lemma_unif_flow_1}. Since $x_{\gamma,1}(t_\gamma)=0$,
Claim \ref{claim_deriv} allows us to invoke
the \emph{cone condition} in \eqref{eq_cone} and deduce that
\begin{align*}
\abs{y_{\gamma,1}(t)} \geq \cturn  >0, \qquad t\in[-T,T],
\end{align*}
for some constant $\cturn>0$. In addition, by Lemma \ref{lemma_unif_flow_1},
\begin{align}
\label{eq_acons}
\cnumb{1} := \sup_{\gamma \in \Gammah} \sup_{t \in [0,T]}
\max\sett{\abs{y_{\gamma}(t)},\abs{\dot{p}_{\gamma}(t)}}
\end{align}
is finite. We let $\varepsilon := \half
\min\{\half \tfrac{\cturn}{\cnumb{1}},\cturn\}$ and note that
\begin{align}
\label{eq_important}
\abs{y_{\gamma,1}(t)} \geq \varepsilon \left(
\abs{y_{\gamma,*}(t)}
+1\right), \qquad t\in[-T,T].
\end{align}

\step{3}{We show that}
$|\aast_{\gamma}-\aast_{\gamma'}| \lesssim |a^h_{\gamma}-a^h_{\gamma'}| +1$.

According to the definitions,
\begin{align*}
&|a^h_{\gamma}-a^h_{\gamma'}| = |x_{\gamma}(0)- x_{\gamma'}(0)|,
\\
&|\aast_{\gamma}-\aast_{\gamma'}| = |2^{-j}\lambda - 2^{-j'}\lambda'|.
\end{align*}
By Lemma~\ref{lemma_unif_flow_1}
\begin{align*}
|x_{\gamma}(\tgam)- x_{\gamma'}(\tgam)|^2 \lesssim  |x_{\gamma}(0)- x_{\gamma'}(0)|^2 + 1.
\end{align*}
Hence, it suffices to show that
\begin{equation}\label{eq:Vect_bd}
|2^{-j}\lambda - 2^{-j'}\lambda'|^2
\lesssim
|x_{\gamma}(\tgam)- x_{\gamma'}(\tgam)|^2.
\end{equation}

To this end, we use the linearization in \eqref{eq_expan},
\begin{align*}
x_{\gamma'}(\tgam) = x_{\gamma'}(\tgamp)+ (\tgam-t_{\gamma'}) y_{\gamma'}(t),
\end{align*}
and write
\begin{equation}\label{eq:mvt_comp}
|x_{\gamma}(\tgam)- x_{\gamma'}(\tgam)|
=
|x_{\gamma}(\tgam) - x_{\gamma'}(\tgamp) - ( \tgam- \tgamp) y_{\gamma'}(\tgam)|.
\end{equation}
Recall that $\tgam = 2^{-j}\lambda_1$ and $x_\gamma(t_\gamma)=(0,2^{-j}\lambda_*)$.
We use \eqref{eq_important} to estimate
\begin{align*}
&|x_{\gamma}(\tgam)- x_{\gamma'}(\tgam)|
=
\abs{(-( \tgam- \tgamp) y_{\gamma',1}(\tgam), x_{\gamma,*}(\tgam) - x_{\gamma',*}(\tgamp) - (
\tgam- \tgamp) y_{\gamma',*}(\tgam)}
\\
&\qquad\asymp
\abs{y_{\gamma',1}(\tgam)} \abs{2^{-j}\lambda_1- 2^{-j'}\lambda_1'}
+ \abs{2^{-j}\lambda_* - 2^{-j'}\lambda_*'+ (2^{-j}\lambda_1- 2^{-j'}\lambda_1')
y_{\gamma',*}(\tgam)}
\\
&\qquad \asymp \abs{y_{\gamma',1}(\tgam)}\abs{2^{-j}\lambda_1- 2^{-j'}\lambda_1'}
 + \varepsilon\abs{2^{-j}\lambda_* - 2^{-j'}\lambda_*'+ y_{\gamma',*}(\tgam)
 (2^{-j}\lambda_1- 2^{-j'}\lambda_1')}
\\
&\qquad\geq
\varepsilon |2^{-j}\lambda_* - 2^{-j'}\lambda_*'|
+\big(\abs{y_{\gamma',1}(\tgam)}-\varepsilon \abs{y_{\gamma',*}(\tgam)}\big)
|2^{-j}\lambda_1 - 2^{-j'}\lambda_1'|
\\
&\qquad\geq
\varepsilon |2^{-j}\lambda_* - 2^{-j'}\lambda_*'| +
\varepsilon |2^{-j}\lambda_1 - 2^{-j'}\lambda_1'|
\asymp |2^{-j}\lambda - 2^{-j'}\lambda'|.
\end{align*}
Hence,
\eqref{eq:Vect_bd} follows.

\step{4}{Estimates for $\newM^{h}_{\gamma}$}.\\
By Claim \ref{Cl_ric}, we know that $M_\gamma(t_\gamma)$ is symmetric,
$\norm{M_\gamma(t_\gamma)} \lesssim 1$ and
$\Im(M_\gamma(t_\gamma)) \gtrsim I_d$.
Those conclusions extend to $M_\gamma(0)$, since propagation preserves these
conditions with different time dependent constants. This is stated in
Lemma \ref{lemma_unif_flow_2} for forward propagation $0 \mapsto t$, but the same conclusion is
valid with an arbitrary initial time. (The general reference for this fact is \cite[Lemma
2.56]{Kat}.)
Since $\newM^{h}_{\gamma} = (2\pi)^{-1} M_\gamma(0)$, the conclusion follows.

\step{5}{Estimates for $\newA^{h}_{\gamma}$}.\\
By definition, $A_\gamma(t_\gamma) = 2^{\frac{d}{4}} 4^{j\frac{d}{4}}$ - cf. \eqref{eq_initA}.
Since, by Step 4, $\norm{M_\gamma(0)} \lesssim 1$ and $\abs{t_\gamma} \leq \cconcU$, by
Lemma \ref{lemma_unif_flow_2} we conclude that
$\abs{A_\gamma(0)}\asymp\abs{A_\gamma(t_\gamma)} \asymp  4^{j\frac{d}{4}}$. Hence, the choice made
in
\eqref{eq_starparameter}
yields
\[
\abs{\newA^{h}_{\gamma}} = 4^{-j\frac{d}{4}}A_\gamma(0) \asymp 1
\] as desired.

\step{6}{We show that}
\begin{equation}
\label{eq_other}
\abs{\omega_{\gamma} p_\gamma(t_\gamma) - \omega_{\gamma'}p_{\gamma'}(t_{\gamma'})}^2
\lesssim
\abs{\omega_{\gamma} p_\gamma(t_\gamma) - \omega_{\gamma'}p_{\gamma'}(t_{\gamma})}^2
+ \omega_{\gamma}\omega_{\gamma'} |2^{-j}\lambda_1 - 2^{-j'}\lambda_1'|^2.
\end{equation}
To see this, we assume without loss of generality that $\omega_{\gamma} \geq\omega_{\gamma'}$ and
use the mean value
theorem to find points $\togi \in [0,\cconcU ]$ such that
\begin{align*}
&\abs{\omega_{\gamma}p_\gamma(t_\gamma) - \omega_{\gamma'}p_{\gamma'}(t_{\gamma})}
\\
&\qquad\geq
\abs{\omega_{\gamma} p_\gamma(t_\gamma) - \omega_{\gamma'}p_{\gamma'}(t_{\gamma'})}
- \omega_{\gamma'}\left(\sum_{i=1}^d\abs{\dot{p}_{\gamma',i}(\togi)}\right)\abs{2^{-j}\lambda_1 -
2^{-j'}\lambda_1'}
\\
&\qquad{}\geq
 \abs{\omega_{\gamma} p_\gamma(t_\gamma) - \omega_{\gamma'}p_{\gamma'}(t_{\gamma'})}
 - \cnumb{1} \cdot \sqrt{\omega_{\gamma}} \sqrt{\omega_{\gamma'}} |2^{-j}\lambda_1 -
2^{-j'}\lambda_1'|,
\end{align*}
where $\cnumb{1}$ is given by \eqref{eq_acons}. Therefore, \eqref{eq_other} follows.

\step{7}{We show that}
\begin{align}
\label{eq:VectQ}
\left|\pointjk - \pointjkp \right|^2
\lesssim
\left|\omega_{\gamma}p_\gamma(t_\gamma) -
\omega_{\gamma'}p_{\gamma'}(\tgamp)\right|^2
+ \omega_{\gamma}\omega_{\gamma'}|2^{-j}\lambda_* - 2^{-j'}\lambda_*'|^{2}.
\end{align}
Since $\omega_{\gamma} = 4^j$ - cf. \eqref{eq_starparameter}, this is just
the content of Lemma~\ref{lemma_flow_match}.

\step{8}{We show that}
$d((a^{h}_\gamma,\xi^{h}_\gamma),(a^{h}_{\gamma'},\xi^{h}_{\gamma'})) \gtrsim
d((\aast_{\gamma},\xist_{\gamma}),(\aast_{\gamma'},\xist_{\gamma'})),
\: \gamma, \gamma' \in \Gammah^\pm$.

We combine the previous steps and Lemma \ref{lemma_unif_flow_1}
to obtain:
\begin{align*}
&d((\aast_{\gamma},\xist_{\gamma}),(\aast_{\gamma'},\xist_{\gamma'}))
\asymp
\omega_\gamma \omega_{\gamma'} |2^{-j}\lambda - 2^{-j'}\lambda'|^2 +
\left|\pointjk - \pointjkp \right|^2
&\mbox{}
\\
&\qquad\lesssim
\left|\omega_{\gamma}p_\gamma(t_\gamma) -
\omega_{\gamma'}p_{\gamma'}(\tgamp)\right|^2
+
\omega_\gamma \omega_{\gamma'} |2^{-j}\lambda - 2^{-j'}\lambda'|^2
&\qquad\mbox{by Step 7}
\\
&\qquad\lesssim
\left|\omega_{\gamma}p_\gamma(t_\gamma) -
\omega_{\gamma'}p_{\gamma'}(t_\gamma)\right|^2
+
\omega_\gamma \omega_{\gamma'} |2^{-j}\lambda - 2^{-j'}\lambda'|^2
&\qquad\mbox{by Step 6}
\\
&\qquad\lesssim
\left|\omega_{\gamma}p_\gamma(t_\gamma) -
\omega_{\gamma'}p_{\gamma'}(t_\gamma)\right|^2
+
\omega_\gamma \omega_{\gamma'} |x_{\gamma}(\tgam)- x_{\gamma'}(\tgam)|^2
&\qquad\mbox{by \eqref{eq:Vect_bd}}
\\
&\qquad\asymp
d\big(
\big(x_\gamma(t_\gamma),\omega_{\gamma}p_\gamma(t_\gamma)\big),\big(x_{\gamma'}(t_\gamma),\omega_{
\gamma'} p_{ \gamma' }
(t_\gamma)\big)\big)
& \mbox{ by } \eqref{eq:hypFLW_3c}
\\
&\qquad\asymp
d((a^{h}_\gamma,\xi^{h}_\gamma),(a^{h}_{\gamma'},\xi^{h}_{\gamma'})).
&\mbox{by }\eqref{eq:hypFLW_2}
\end{align*}
This concludes the proof.
\end{proof}

\appendix\section{Auxiliary Results}\label{AppendixA}
\subsection{A linear algebra lemma}
\begin{lemma}
\label{lemma_LA}
Let $A \in \Rst^{d\times d}$ be a matrix of the form:
\begin{align*}
A=
\begin{bmatrix}
a_{11}
&
a_{12} & a_{13} & \ldots & a_{1d}
\\
a_{12} & 1 & 0 & \ldots & 0
\\
a_{13} & 0 & 1 & \ldots & 0
 \\
\ldots & \ldots & \ldots & \ldots & 0
\\
a_{1d} & 0 & 0 & \ldots & 1
\end{bmatrix}.
\end{align*}
Suppose that $\cons,\Cons>0$ are constants, such that $\abs{a_{i,j}} \leq \Cons$
and $a_{11} - a_{12}^2 - \ldots - a_{1d}^2 \geq \cons$. Then there exist
constants $\cons', \Cons'>0$, that only depend on $\cons$ and $\Cons$, such that
$ \cons' I_d \leq A \leq \Cons' I_d$. (In particular, $A$ is positive definite.)
\end{lemma}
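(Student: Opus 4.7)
The plan is to exploit the block structure
\[
A = \begin{bmatrix} a_{11} & v^T \\ v & I_{d-1} \end{bmatrix},
\qquad v = (a_{12},\ldots,a_{1d})^T \in \Rst^{d-1},
\]
and prove the two inequalities $A \leq \Cons' I_d$ and $A \geq \cons' I_d$ separately by elementary quadratic form manipulations. The upper bound is routine: since every entry is bounded by $\Cons$, one has $\|A\| \leq \Cons'$ for some $\Cons'$ depending only on $\Cons$ and $d$ (e.g., by bounding the Frobenius norm), which yields $A \leq \Cons' I_d$.

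For the lower bound, I would write $x = (x_1,x_*) \in \Rst \times \Rst^{d-1}$ and expand
\[
x^T A x = a_{11} x_1^2 + 2 x_1 v^T x_* + |x_*|^2.
\]
Applying Cauchy--Schwarz to $v^T x_*$ and then Young's inequality $2ab \leq t a^2 + t^{-1} b^2$ (for a parameter $t>1$ to be chosen) to the cross term gives
\[
x^T A x \geq (a_{11} - t |v|^2) x_1^2 + (1 - t^{-1}) |x_*|^2.
\]
Here the trick is to pick $t$ slightly larger than $1$ so that $(t-1)|v|^2$ eats only half of the gap $a_{11} - |v|^2 \geq \cons$. Since $|v|^2 \leq (d-1)\Cons^2$ uniformly, choosing $t = 1 + \cons/(2(d-1)\Cons^2 + 1)$ forces $(t-1)|v|^2 \leq \cons/2$, so that
\[
a_{11} - t|v|^2 = (a_{11}-|v|^2) - (t-1)|v|^2 \geq \cons/2,
\qquad 1 - t^{-1} = \tfrac{t-1}{t} > 0,
\]
and both constants depend only on $\cons$, $\Cons$, $d$. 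Setting $\cons' := \min\{\cons/2,\, 1-t^{-1}\}$ yields $x^T A x \geq \cons' |x|^2$, which is the desired bound.

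There is no serious obstacle; the only subtlety is that a naive application of Young's inequality with $t=1$ gives $x^T A x \geq \cons x_1^2$, which is useless since it degenerates on the hyperplane $\{x_1 = 0\}$. Perturbing $t$ slightly above $1$ trades a small loss in the $x_1^2$ coefficient for a positive $|x_*|^2$ coefficient, and the uniform upper bound on the entries of $A$ is exactly what allows this perturbation to be controlled independently of the particular matrix. Equivalently, one could invoke the Schur complement criterion (the Schur complement of the identity block is precisely $a_{11} - |v|^2 \geq \cons$), but the direct quadratic form computation yields the quantitative constant $\cons'$ more transparently.
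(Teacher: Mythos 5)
Your proof is correct, and it takes a genuinely different route from the one in the paper. The paper premultiplies $A$ by a unit upper-triangular matrix so that the $(1,1)$ entry of the product becomes the Schur complement $a_{11}-a_{12}^2-\cdots-a_{1d}^2$; since the product is then lower triangular, this exhibits $\det A \geq \cons$, and the same elimination applied to each leading principal minor shows all of them are bounded below by $\cons$ while the entries are bounded above. The conclusion then rests on a quantitative form of Sylvester's criterion (a symmetric matrix with uniformly bounded entries and uniformly positive leading principal minors is uniformly positive definite), which the paper leaves implicit. You instead bound the quadratic form directly: the perturbed Young inequality $2ab \leq t a^2 + t^{-1}b^2$ with $t-1$ small but quantified by $\cons$ and $\Cons$ converts the gap $a_{11}-\abs{v}^2 \geq \cons$ into a strictly positive coefficient in front of \emph{both} $x_1^2$ and $\abs{x_*}^2$, yielding an explicit $\cons'$. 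Your observation that $t=1$ is useless and that the uniform entry bound is precisely what controls the perturbation is the right one. The trade-off: the paper's argument is shorter and generalizes mechanically to other bordered structures via minors, while yours is fully self-contained and produces the constant $\cons'$ explicitly. One cosmetic point: your constants (like the paper's) also depend on the dimension $d$, which is harmless since $d$ is fixed throughout, but strictly speaking the statement's phrase ``only depend on $\cons$ and $\Cons$'' should be read with that understanding.
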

\begin{proof}
We premultiply $A$ by an adequate upper triangular matrix with ones in the diagonal
\begin{align}
\label{eq_im_ma}
\begin{bmatrix}
1 & -a_{12} & -a_{13} & \ldots & -a_{1d}
\\
0 & 1 & 0 & \ldots &0
\\
0 & 0 & 1 & \ldots &0
\\
\ldots & \ldots & \ldots & \ldots &0
\\
0 & 0 & 0 & \ldots &1
\end{bmatrix}
\cdot
\begin{bmatrix}
a_{11}
&
a_{12} & a_{13} & \ldots & a_{1d}
\\
a_{12} & 1 & 0 & \ldots & 0
\\
a_{13} & 0 & 1 & \ldots & 0
 \\
\ldots & \ldots & \ldots & \ldots & 0
\\
a_{1d} & 0 & 0 & \ldots & 1
\end{bmatrix},
\end{align}
to obtain
\begin{align*}
\begin{bmatrix}
a_{11}-a_{12}^2-\ldots -a_{1d}^2
&
0 & 0 & \ldots & 0
\\
a_{12} & 1 & 0 & \ldots & 0
\\
a_{13} & 0 & 1 & \ldots & 0
 \\
\ldots & \ldots & \ldots & \ldots & 0
\\
a_{1d} & 0 & 0 & \ldots & 1
\end{bmatrix}.
\end{align*}
Hence, the entries of $A$ are bounded and its determinant is bounded below by a positive constant.
The same argument applies to each principal minor of $A$. Hence, the conclusion follows.
\end{proof}

\subsection{Approximation errors}
\label{sec_proof_lemma_op_asympt}
Here, we give error bounds related to the approximate eikonal and transport equation.
These are proved in \cite{Lex3} in a slightly different form, and we only sketch
the modifications relevant to our setting.

\begin{lemma}
  \label{lemma_eik}
  Let $\Upsilon=\sett{\SIC_\gamma: \gamma \in \Gamma_0}$ be a well-spread set of Gaussian beam
parameters
  and $T \geq 0$.
  Then the following estimates hold
\begin{align}
\label{eq:Eik-Tr-1}
&\partial_{x_j}\theta_\gamma(t,x)=\strongop^0([-T,T],\Upsilon),\qquad
j=1,\ldots,d,
\\
\label{eq:Eik-Tr-2}
&\left(\partial_{t}\theta_\gamma(t,x)\right)^{2}
- \speed(x)^{2}\abs{\nabla_{x}\theta_\gamma(t,x)}^{2} = \strongop^3([-T,T],\Upsilon),
\\
\label{eq:Eik-Tr-3}
&2\partial_{t}\theta_\gamma(t,x)\frac{\partial_{t}\Ag(t)}{\Ag(t)}
+ \p^{2}_{t}\theta_\gamma(t,x)
- \speed(x)^{2}\Tr\left(\p^{2}_{x}\theta_\gamma(t,x)\right)= \strongop^1([-T,T],\Upsilon).
\end{align}
\end{lemma}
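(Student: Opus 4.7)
The plan is to prove each assertion by direct computation, exploiting the fact that $\theta_\gamma(t,\cdot)$ is quadratic in $x$, and then performing Taylor expansion at $x = x_\gamma(t)$, with orders chosen so that the standard Gaussian beam construction forces the vanishing of low-order terms. Uniformity in $\gamma$ will throughout be provided by Lemmas \ref{lemma_unif_flow_1}, \ref{lemma_unif_flow_2}, and \ref{lemma_error_flow}, combined with $\speed \in \Cinfb$.

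For \eqref{eq:Eik-Tr-1}, I compute directly from \eqref{eq:GB_phase} that
\[
\partial_{x_j}\theta_\gamma(t,x) = p_{\gamma,j}(t) + \bigl(M_\gamma(t)(x-x_\gamma(t))\bigr)_j.
\]
The first summand is independent of $x$ and bounded uniformly on $[-T,T]\times\Gamma_0$ by Lemma \ref{lemma_unif_flow_1}, so it is an $\strongo^0$ contribution. The second summand has the form $\sum_k G_{\gamma,k}(t)(x-x_\gamma(t))_k$, with $G_{\gamma,k}$ constant in $x$ and uniformly bounded by Lemma \ref{lemma_unif_flow_2}, so it is $\strongo^1\subseteq\strongop^0$.

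For \eqref{eq:Eik-Tr-2}, set $E_\gamma(t,x) := (\partial_t\theta_\gamma)^2 - \speed(x)^2\abs{\nabla_x\theta_\gamma}^2$. Since $\nabla_x\theta_\gamma$ is affine in $x-x_\gamma(t)$ and $\partial_t\theta_\gamma$ is quadratic in $x-x_\gamma(t)$, the only non-polynomial dependence in $E_\gamma$ comes from $\speed(x)^2$. I will Taylor expand $\speed(x)^2$ at $x = x_\gamma(t)$ to second order and check that the resulting polynomial in $x-x_\gamma(t)$ of order $\leq 2$ vanishes identically. Three verifications are required: (i) at $x=x_\gamma(t)$, using $\dot x_\gamma = \partial_p H$ and the Euler homogeneity $p\cdot\partial_p H = H$, one has $\partial_t\theta_\gamma = -H(x_\gamma,p_\gamma)$ and $\nabla_x\theta_\gamma = p_\gamma$, so $E_\gamma(t,x_\gamma(t)) = H^2 - \speed(x_\gamma)^2\abs{p_\gamma}^2 = 0$; (ii) the linear term vanishes using Hamilton's equation $\dot p_\gamma = -\partial_x H$; (iii) the quadratic term cancels exactly by the Riccati equation \eqref{eq_ode2} for $M_\gamma$. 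Writing the remaining contribution in Taylor integral-remainder form yields a representation $E_\gamma(t,x) = \sum_{\abs\eta=3} G_{\gamma,\eta}(t,x)(x-x_\gamma(t))^\eta$, whose coefficients involve at most third derivatives of $\speed^2$ (hence bounded in $\CinfbRd$) multiplied by $p_\gamma, M_\gamma, \dot x_\gamma, \dot p_\gamma, \dot M_\gamma$, uniformly controlled by Lemmas \ref{lemma_unif_flow_1}, \ref{lemma_unif_flow_2}, and \ref{lemma_error_flow}. Hence $E_\gamma = \strongo^3\subseteq\strongop^3$.

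For \eqref{eq:Eik-Tr-3}, set $T_\gamma(t,x) := 2\partial_t\theta_\gamma\,\partial_tA_\gamma/A_\gamma + \partial_t^2\theta_\gamma - \speed(x)^2\Tr(\partial_x^2\theta_\gamma)$. Since $\partial_x^2\theta_\gamma = M_\gamma(t)$ is independent of $x$, the only $x$-dependence of $T_\gamma$ sits in $\partial_t\theta_\gamma$, $\partial_t^2\theta_\gamma$, and $\speed(x)^2$; the first two are polynomials of order at most 2 in $x-x_\gamma(t)$. Taylor expanding $\speed(x)^2$ to zeroth order at $x_\gamma(t)$ and evaluating at $x=x_\gamma(t)$, one reads off exactly the transport equation \eqref{eq_ode3} (using that $\partial_t\theta_\gamma(t,x_\gamma(t)) = -H$ and the identity $\partial_pH\cdot \partial_xH + (\partial_pH)^TM(\partial_pH) - c^2\Tr(M)$ arising from $\partial_t^2\theta_\gamma$), so $T_\gamma(t,x_\gamma(t)) = 0$. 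The Taylor remainder then yields $T_\gamma = \strongo^1\subseteq\strongop^1$, with coefficients uniformly bounded by the same arguments as before together with the amplitude control \eqref{eq_ampl_der}.

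The main obstacle is the second-order verification (iii) in \eqref{eq:Eik-Tr-2}: differentiating $E_\gamma$ twice in $x$ and matching to the Riccati equation requires tracking contributions from $2p_\gamma^T M_\gamma$, $M_\gamma^T M_\gamma$, $\partial_t((x-x_\gamma)M_\gamma(x-x_\gamma))$, and the Hessian of $\speed^2$. The calculation mirrors \cite[Lemmas 3.6, 3.12]{Lex3}; the new content here is the uniformity in $\gamma$, which I ensure at every step by invoking the well-spread assumption through the cited lemmas.
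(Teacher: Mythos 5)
Your argument is correct, and it is a genuinely more self-contained route than the one the paper takes. The paper's proof of \eqref{eq:Eik-Tr-2} does not expand the difference of squares directly: it quotes \cite[Lemma 3.5]{Lex3} for the factored estimate $\partial_t\theta^\pm_\gamma \pm \speed(x)\abs{\nabla_x\theta^\pm_\gamma} = \strongop^3$ and then multiplies by the complementary factor, which is $\strongop^0$ by \eqref{eq:Eik-Tr-1}; similarly \eqref{eq:Eik-Tr-3} is delegated to \cite[Lemma 3.12]{Lex3}, with only the uniformity in $\gamma$ (via Lemma \ref{lemma_error_flow}) argued in the paper. You instead reconstruct the underlying computation: Taylor expansion of $\speed^2$ at $x_\gamma(t)$ and order-by-order cancellation via Euler homogeneity, Hamilton's equations, the Riccati equation, and the transport equation. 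I checked the three verifications you list for \eqref{eq:Eik-Tr-2} and the evaluation of $\partial_t^2\theta_\gamma$ at $x=x_\gamma(t)$ for \eqref{eq:Eik-Tr-3}; they do close as claimed. What your route buys is independence from the complex square root $\abs{\nabla_x\theta_\gamma}$ implicit in the factorization (since $M_\gamma$ is complex, that factor requires an analytic continuation of the Euclidean norm), at the cost of the heavier second-order bookkeeping you flag at the end. One presentational correction: the residual $E_\gamma$ is not literally $\strongo^3$ with coefficients in $\CinfbRd$, because the Taylor remainder of $\speed^2$ gets multiplied by the degree-$2$ and degree-$4$ polynomial factors coming from $\abs{\nabla_x\theta_\gamma}^2$ and $(\partial_t\theta_\gamma)^2$; the correct conclusion is $E_\gamma = \strongop^3$ as a finite sum of $\strongo^m$ terms with $m=3,4,5$ (this is precisely what Definition \ref{def_order_wsp_geq} is for), and likewise for the $\strongop^1$ claim in \eqref{eq:Eik-Tr-3}. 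Since the lemma only asserts membership in $\strongop^3$ and $\strongop^1$, this is a slip of notation rather than a gap.
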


\begin{proof}
We first compute:
$\partial_{x_j}\theta_\gamma(t,x)=p_{\gamma,j}(t) +
2 M_{\gamma,j,*}(t) \cdot (x-x_\gamma(t))$, and use Lemma \ref{lemma_error_flow} to show
\eqref{eq:Eik-Tr-1}.
Second, as shown in \cite[Lemma 3.5]{Lex3}:
\[
\partial_{t}\theta^\pm_\gamma(t,x)
\pm \speed(x)\abs{\nabla_{x}\theta^\pm_\gamma(t,x)}  =
\strongop^3([-T,T],\Upsilon).
\]
This estimate, combined with \eqref{eq:Eik-Tr-1}, gives \eqref{eq:Eik-Tr-2}. Finally,
\eqref{eq:Eik-Tr-3} is proved in ~\cite[Lemma 3.12]{Lex3}. (The cited references treat the case of
the standard set of GB parameters, but the same proof applies to a general well-spread set;
the relevant estimates are in Lemma \ref{lemma_error_flow}.)
\end{proof}

\begin{proof}[Proof of Lemma \ref{lemma_op_asympt}]
Recall that - cf.~\eqref{eq:GB_elem}, \eqref{eq:GB_phase} -
\[
\Phi_{\gamma}(t,x) = \Ag (t) e^{i \omega_{\gamma} \theta_\gamma (t,x)}, \quad \gamma \in \Gamma_0,
\]
with
\[
\theta_{\gamma} (x,t) = \pg (t) \cdot (x-\xxg(t))
+ \tfrac{1}{2} (x-\xxg(t))\cdot \Mg(t)(x-\xxg(t)).
\]
A direct computation shows that:
\begin{align*}
\left( \partial_{t}^{2} - \speed(x)^{2} \Delta_x\right) \Phi_{\gamma}(t,x) =
\Phi_{\gamma}(t,x)\sum_{j=0}^2(i \omega_{\gamma})^{2-j} \nu_{\gamma,j}(t,x),
\end{align*}
where
\begin{align}
\nu_{\gamma,0}(t,x) &{} = \left(\partial_{t}\theta_{\gamma}(t,x)\right)^{2}
                        -
\speed(x)^{2}\abs{\nabla_{x}\theta_{\gamma}(t,x)}^{2},\label{eq:eik}
\\
\nu_{\gamma,1}(t,x)&{} =
2\partial_{t}\theta_{\gamma}(t,x)\frac{\partial_{t}\Ag(t)}{\Ag(t)}
							+ \p^{2}_{t}\theta_{\gamma}(t,x)
							-
\speed(x)^{2}\Tr\left(\p^{2}_{x}\theta_{\gamma}(t,x)\right),
\label{eq:trsp}
\\
\nu_{\gamma,2}(t,x) &{}=
\frac{\p_t^2\Ag(t)}{\Ag(t)}.
\end{align}
As a reference, similar results are obtained in \cite[2.117-2.119]{Kat}. By Lemma \ref{lemma_eik},
$\nu_{\gamma,0}(t,x) = \strongop^3([-T,T],\Upsilon)$ and
$\nu_{\gamma,1}(t,x) = \strongop^1([-T,T],\Upsilon)$, while
$\nu_{\gamma,2}(t,x) = \strongop^0([-T,T],\Upsilon)$
by Lemma \ref{lemma_error_flow}.
\end{proof}

\begin{proof}[Proof of Lemma \ref{lemma_tf}]
We drop the $\pm$ superscripts and
calculate:
\begin{align*}
\p_t \Phi_{\gamma}(t,x)
    &= {\p_t} \left[ A_{\gamma}(t)
    \exp(\mi  \omega_\gamma \theta_{\gamma}(t,x)) \right]
\\
    &= \left[ D_{\gamma}(t)
         + \mi \omega_\gamma \p_t\theta_{\gamma}(t,x) \right]
      \Phi_{\gamma}(t,x),
\end{align*}
with
$D_{\gamma}(t) =
      \frac{\p_t A_{\gamma}(t)}{
                        A_{\gamma}(t)}$.
We inspect
\begin{align*}
	 \p_t \theta_{\gamma}(t,x)
   &{} = \p_t p_{\gamma}(t)^T(x-x_{\gamma}(t))
               - p_{\gamma}(t) \cdot \p_t x_{\gamma}(t)
\\
   &\qquad{} + \half(x-x_{\gamma}(t))\cdot \p_t M_{\gamma}(t)(x-x_{\gamma}(t))
                   - \p_t x_{\gamma}(t)M_{\gamma}(t)(x-x_{\gamma}(t)),
\end{align*}
note that $H^\pm(x_\gamma(t),p_\gamma(t))=p_\gamma(t) \cdot \dot{x}_\gamma(t)$,
and use Lemma \ref{lemma_error_flow} to reach the desired conclusion.
\end{proof}

\subsection{Energy estimates}
We recall classical energy estimates for
the wave equation. The fundamental work \cite{MR0350177,Triggiani} treats explicitly only the case
of
bounded domains, but under our assumptions on the velocity the same proofs apply to the whole
space and the half-space. (Alternatively, an argument based on finite speed of propagation permits
the extension to these domains; see also \cite{stolk2000modeling}.)

\begin{theorem}
\label{th_Las}
Let $T \in (0,\infty)$, $f \in L^2([0,T]\times\Rdst)$,
$g_1 \in H^1(\Rdst)$ and $g_2 \in L^2(\Rdst)$. Then
there exists a unique
$v \in C^1\pt{[0,T],H^1(\Rdplus)}\cap C^0\pt{[0,T],L^2(\Rdplus)}$
weak solution to the problem
\begin{equation*}
\left\{
\begin{aligned}
&\partial^2_{t} v (t,x) - \speed(x)^2 \Delta_x v (t,x) = f(t,x), &\qquad t>0, x \in \R^d,
\\
&v(0, x)_{\phantom{t}}= g_1(x), &\qquad x \in {\R^d},
\\
&v_t(0,x) = g_2(x), &\qquad x \in {\R^d}.
\end{aligned}
\right.
\end{equation*}
In addition, $v$ satisfies
\begin{align*}
&\sup_{t\in[0,T]} \norm{v(t,\cdot)}_{H^1(\R^d)}
+
\sup_{t\in[0,T]} \norm{\partial_t v(t,\cdot)}_{L^2(\R^d)}
\\
&\qquad
\leq C_T \left(
\norm{g_1}_{H^1(\R^d)}+\norm{g_2}_{L^2(\R^d)}+\sup_{t\in[0,T]}\norm{f(t,\cdot)}_{L^2(\R^d)}
\right).
\end{align*}
\end{theorem}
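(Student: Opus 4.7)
The plan is to adapt the classical energy method from the bounded-domain treatments in \cite{MR0350177, Triggiani} via the following three-step scheme: first establish a weighted a priori energy estimate; then invoke that estimate to get both uniqueness and the announced bound; and finally construct the solution by Galerkin approximation (or, equivalently, by a Hille--Yosida argument applied to the first-order reformulation) using the same a priori estimate as the key compactness ingredient.

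For the a priori estimate, I introduce the natural velocity-weighted energy
\begin{align*}
E(t) = \tfrac{1}{2}\int_{\R^d}\Big(|\partial_t v(t,x)|^2 + \speed(x)^2\,|\nabla_x v(t,x)|^2\Big)\,dx,
\end{align*}
which is equivalent, up to constants depending on $\inf \speed$ and $\sup \speed$, to $\|\partial_t v(t,\cdot)\|_{L^2}^2 + \|\nabla_x v(t,\cdot)\|_{L^2}^2$ thanks to the assumption \eqref{eq_speed_bounds}. Differentiating formally in $t$, using the equation $\partial_t^2 v = \speed^2 \Delta_x v + f$, and integrating by parts in the spatial gradient term yields
\begin{align*}
\frac{d}{dt}E(t) = \int_{\R^d} \partial_t v\cdot f\,dx \;-\; \int_{\R^d} \partial_t v\,\nabla_x(\speed^2)\cdot\nabla_x v\,dx.
\end{align*}
Since $\speed$ is smooth with globally bounded derivatives, $\nabla_x(\speed^2)\in L^\infty$, and both terms on the right are controlled by $\|\partial_t v\|_{L^2}\bigl(\|f(t,\cdot)\|_{L^2}+\|\nabla_x v\|_{L^2}\bigr)\lesssim E(t) + \|f(t,\cdot)\|_{L^2}^2$. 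Gronwall's lemma then gives $\sup_{t\in[0,T]} E(t)\le C_T\bigl(E(0) + \int_0^T \|f(s,\cdot)\|_{L^2}^2\,ds\bigr)$, and the $L^2$-norm of $v(t,\cdot)$ itself is recovered from $v(t,\cdot)=g_1 + \int_0^t \partial_s v(s,\cdot)\,ds$; combining these controls produces the stated bound on $\|v\|_{C^0([0,T];H^1)\cap C^1([0,T];L^2)}$.

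Uniqueness of weak solutions is immediate from the a priori estimate applied to the difference of two solutions (with $f\equiv 0$, $g_1\equiv g_2\equiv 0$). For existence, I would construct Galerkin approximants $v_n$ using a countable orthonormal basis of $H^1(\R^d)$, say eigenfunctions of $-\Delta +1$ truncated by compactly supported cut-offs; each $v_n$ solves a finite-dimensional ODE system whose coefficients are bounded thanks to the hypotheses on $\speed$. The a priori estimate applies uniformly to the $v_n$, yielding weak-$*$ compactness in $L^\infty([0,T];H^1)$ and in $W^{1,\infty}([0,T];L^2)$; passing to the limit in the weak formulation gives a solution in the required regularity class.

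The main subtlety — and the only reason the paper emphasizes that this is an adaptation rather than a direct citation — is the unboundedness of the spatial domain. There are two ways to handle it rigorously: either work entirely in $\R^d$ using that $\speed$ has globally bounded derivatives (so the bilinear form $\int \speed^2 \nabla u\cdot\nabla w\,dx$ is coercive and bounded on $H^1(\R^d)$, allowing the Galerkin machinery to go through unchanged), or exploit the finite speed of propagation $\sup \speed$: truncate the initial data and source to large balls, solve on those bounded domains with the cited results, and pass to the limit, using that for any compact $K\subseteq \R^d$ and $t\le T$ the value of $v(t,\cdot)|_K$ depends only on data in a fixed enlarged ball. The truncation route is the cleaner path and is the one I would follow.
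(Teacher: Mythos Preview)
Your proposal is correct and is precisely the standard energy-method argument that the cited references \cite{MR0350177,Triggiani} carry out. Note, however, that the paper does not actually give a proof of this theorem: it merely recalls it as classical, citing those references for bounded domains and remarking that the extension to $\R^d$ follows either because the same proofs go through under the global bounds on $\speed$ or by a finite-speed-of-propagation argument --- exactly the two routes you describe at the end.
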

\medskip

\begin{theorem}
\label{th_in_llt}
Let $T \in (0,\infty)$, $f \in L^2([0,T]\times\Rdplus)$,
$g_1 \in H^1(\Rdplus)$, $g_2 \in L^2(\Rdplus)$
and $h \in H^1([0,T]\times\R^{d-1})$. Assume that
\[
h(0,y) = g_1(0,y), \qquad y \in \R^{d-1} \qquad \textrm{(Compatibility).}
\]
Then there exists a unique
$v \in C^1\pt{[0,T],H^1(\Rdplus)}\cap C^0\pt{[0,T],L^2(\Rdplus)}$
that is a weak solution to the problem
\begin{equation*}
\left\{
\begin{aligned}
&\partial^2_{t} v (t,x) - \speed(x)^2 \Delta_x v (t,x) = f(t,x), &\qquad t\in[0,T], x \in \Rdplus,
\\
&v(0, x)= g_1(x), &\qquad x \in \Rdplus,
\\
&v_t(0,x) = g_2(x), &\qquad x \in \Rdplus,
\\
&v(t,0,y) = h(t,y), &\qquad t\in[0,T], y \in \Rst^{d-1}.
\end{aligned}
\right.
\end{equation*}
In addition, $v$ satisfies
\begin{align*}
&\sup_{t\in[0,T]} \norm{v(t,\cdot)}_{H^1(\Rdplus)}
+
\sup_{t\in[0,T]} \norm{\partial_t v(t,\cdot)}_{L^2(\Rdplus)}
\\
&\qquad
\leq C_T \left(
\norm{g_1}_{H^1(\Rdplus)}+\norm{g_2}_{L^2(\Rdplus)}+\sup_{t\in[0,T]}\norm{f(t,\cdot)}_{L^2(\Rdplus)}
+\norm{h}_{H^1([0,T]\times\Rst^{d-1})}
\right).
\end{align*}
\end{theorem}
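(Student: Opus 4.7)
The claim is the classical well-posedness and energy estimate for the second-order Dirichlet initial-boundary value problem for the wave equation on $\Rdplus$. My plan is to reduce the inhomogeneous Dirichlet problem to one with zero trace by subtracting a lift of $h$, and then to apply the cosine/sine-family functional calculus of the Dirichlet Laplacian $A := -\speed^2 \Delta$ on $L^2(\Rdplus,\speed^{-2}dx)$ with domain $H^2\cap H^1_0(\Rdplus)$. Our hypotheses on $\speed$ make $A$ positive self-adjoint, so $C(t):=\cos(t\sqrt A)$ and $S(t):=A^{-1/2}\sin(t\sqrt A)$ are well-defined and satisfy $C(t):H^1_0 \to H^1_0$ and $S(t):L^2 \to H^1_0$ uniformly for $t\in[0,T]$.

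First I would construct the lift $w$ of $h$. Take $\hat w(t,x_1,\xi_*) := e^{-x_1\langle\xi_*\rangle}\hat h(t,\xi_*)$, with partial Fourier transform in $x_*$. A direct calculation gives $\|w(t,\cdot)\|_{H^1(\Rdplus)} \lesssim \|h(t,\cdot)\|_{H^{1/2}(\R^{d-1})}$ and $\|\partial_t w(t,\cdot)\|_{L^2(\Rdplus)} \lesssim \|\partial_t h(t,\cdot)\|_{H^{-1/2}(\R^{d-1})}$, so standard trace/interpolation on $\R\times\R^{d-1}$ yields
\begin{equation*}
\|w\|_{L^\infty_t H^1_x} + \|\partial_t w\|_{L^\infty_t L^2_x} + \|\Delta w\|_{L^\infty_t L^2_x} \lesssim \|h\|_{H^1([0,T]\times\R^{d-1})}.
\end{equation*}
By construction $w|_{x_1=0}=h$ and, using the compatibility condition, $g_1 - w(0,\cdot) \in H^1_0(\Rdplus)$.

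Second, set $\tilde v := v - w$ and look for $\tilde v$ solving
\begin{equation*}
\partial_t^2 \tilde v - \speed^2 \Delta \tilde v = \tilde f := f - \partial_t^2 w + \speed^2 \Delta w, \qquad \tilde v|_{x_1=0}=0,
\end{equation*}
with initial data $\tilde g_1 := g_1 - w(0,\cdot) \in H^1_0$ and $\tilde g_2 := g_2 - \partial_t w(0,\cdot) \in L^2$. Define $\tilde v$ by the Duhamel formula
\begin{equation*}
\tilde v(t) = C(t)\tilde g_1 + S(t)\tilde g_2 + \int_0^t S(t-s)\tilde f(s)\,ds.
\end{equation*}
The piece $-\partial_s^2 w(s)$ inside $\tilde f$ is only a time distribution, but the identity $\partial_s S(t-s)=-C(t-s)$ gives
\begin{equation*}
-\int_0^t S(t-s)\partial_s^2 w\,ds = S(t)\partial_t w(0,\cdot) - \int_0^t C(t-s)\partial_s w(s,\cdot)\,ds,
\end{equation*}
and both terms on the right lie in $C^0([0,T], H^1_0)$ thanks to the mapping properties of $C(t),S(t)$ and the bounds of Step 1. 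The analogous manipulation for $\partial_t\tilde v$ gives the $L^2$ control on the time derivative. Taking norms and adding back $w$ yields the asserted energy estimate for $v$. Existence follows from the explicit formula (with a routine approximation by smooth data to justify the manipulations), and uniqueness follows from applying the energy estimate to the difference of two putative solutions, which has zero data.

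The main obstacle is that the naive multiplier $\speed^{-2}\partial_t v$ produces an uncontrolled boundary term $\int \partial_{x_1} v \cdot \partial_t h\,dx_*$, because the Neumann trace of $v$ is not provided by the data. The lift trades this difficulty for the problem that $\partial_t^2 w$ is only distributional when $h$ lies merely in $H^1$; the integration-by-parts inside the Duhamel formula — which genuinely uses the cosine-family structure, not just a $C_0$-semigroup — is the key to absorbing that term at the cost of one time derivative of $w$, which \emph{is} controlled. The compatibility hypothesis is the other delicate point: it is precisely what guarantees that $\tilde g_1$ sits in the form domain $H^1_0(\Rdplus)$, so that the abstract framework applies.
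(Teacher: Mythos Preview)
The paper itself does not prove this theorem: it is recorded in Appendix~A as a classical energy estimate, attributed to Lions--Magenes and Lasiecka--Lions--Triggiani, with the remark that their bounded-domain arguments carry over to $\Rdplus$ under the standing hypotheses on $\speed$ (or alternatively via finite speed of propagation). So the relevant comparison is between your argument and the cited literature.

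Your lift-plus-cosine-family strategy is natural, but the step asserting that
\[
\int_0^t C(t-s)\,\partial_s w(s,\cdot)\,ds \ \in\ C^0\big([0,T],H^1_0(\Rdplus)\big)
\]
is unjustified, and this is exactly where the real difficulty hides. The cosine operator $C(\tau)=\cos(\tau\sqrt{A})$ is bounded $L^2\to L^2$ and $H^1_0\to H^1_0$, but it does \emph{not} map $L^2$ into $H^1_0$; and $\partial_s w(s,\cdot)$ is only in $L^2(\Rdplus)$ --- its trace on $\{x_1=0\}$ equals $\partial_s h(s,\cdot)$, generically nonzero, so it is not in $H^1_0$. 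Hence that integral is a priori only in $C^0([0,T],L^2)$, and your formula delivers $\tilde v\in C^0 L^2$ rather than the required $C^0 H^1_0$. The approximation-by-smooth-data remark does not close the gap: for smooth $h$ the formula is valid, but the estimate you would then need, namely $\bigl\|\int_0^t C(t-s)\partial_s w\,ds\bigr\|_{H^1_0}\lesssim \|h\|_{H^1}$, is precisely what remains unproved, so you cannot pass to the limit in the $H^1$ topology. A second integration by parts in $s$ only relocates the defect to $C(t)w(0)$ and $\int_0^t AS(t-s)w(s)\,ds$, which suffer from the same problem since $w(0),w(s)\notin H^1_0$.

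This is not a technicality but the very obstruction you identified in the multiplier method --- the uncontrolled Neumann trace --- reappearing in operator-theoretic dress. The references the paper invokes resolve it by genuinely different mechanisms: Lasiecka--Lions--Triggiani establish a \emph{hidden regularity} bound for $\partial_{x_1} v|_{x_1=0}$, after which the multiplier argument closes; Lions--Magenes proceed by transposition. Either of those routes would complete your argument, but the cosine-family calculus alone does not supply the missing half-derivative.
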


\section{Wave molecules}\label{AppendixB}
Here, we introduce the notion of wave molecule, which is a technical variant of the notion
of wave-atom in \cite{demphd, MR2362408, deyi08}. We also present several basic properties that
parallel those
derived for curvelet molecules in \cite[Appendix A]{dHHSU}.

For simplicity we use the
notation of Section \ref{sec_frame}. While throughout the main part of the article $\Lambda$ denotes
a fixed lattice that provides the
frame expansion granted by Theorem \ref{T_FrProp}, for the results in the Appendices \ref{AppendixB}
and \ref{AppendixC} any lattice $\Lambda$ is adequate and a different choice would yield
equivalent notions and results.

A family of functions $\sett{\phi_\gamma: \gamma \in \Gammam}$ together with a set
$\sett{(a_\gamma, \xi_\gamma): \gamma \in \Gammam}$,
$\Gammam \subseteq \Gamma$, is called
a set of \emph{wave molecules} (WM) if the following conditions hold:
\begin{itemize}
\item[(i)] $|\aast_{\gamma}-\aast_{\gamma'}| \lesssim
|a_{\gamma}-a_{\gamma'}| +1,
\qquad \gamma, \gamma' \in \Gammam$.

\item[(ii)] $d((a_{\gamma},\xi_{\gamma}),(a_{\gamma'},\xi_{\gamma'})) \gtrsim
d((\aast_{\gamma},\xist_{\gamma}),(\aast_{\gamma'},\xist_{\gamma'})),
\qquad \gamma, \gamma' \in \Gammam$.

\item[(iii)] $\abs{\xi_\gamma} \asymp 4^j,
\qquad \gamma=(j,k,\lambda) \in \Gammam$.
\item[(iv)] For all multi-indices
$\alpha$, and $N >0$, there exists a constant $C_{\alpha,N}$ such that
for all $\gamma\in\Gammam$,
\begin{align}
\abs{\partial_\xi^\alpha
\left[ \widehat\phi_\gamma(\xi) e^{-2 \pi i a_\gamma \xi}
\right]}
\leq C_{\alpha,N} \cdot 2^{-jd/2} \cdot 2^{-j\abs{\alpha}} \cdot
\left(1+2^{-j}\abs{\xi-\xi_\gamma}\right)^{-N}, \qquad \xi \in \Rdst,
\end{align}
\end{itemize}
where, $\aast_{\gamma} = 2^{-j}\lambda$, $\xist_{\gamma} = \pointjk$,
as defined in Section~\ref{sec:stset}.

The set $\sett{(a_\gamma, \xi_\gamma):\gamma \in \Gammam}$ is called \emph{the set of
time-frequency nodes} associated with the molecules. Sometimes we refer simply to a set of WM
$\sett{\phi_\gamma: \gamma \in \Gammam}$, understanding implicitly the existence of an adequate set
of time-frequency nodes. We stress that the role of the TF nodes is non-trivial: a set of WM may
cease to satisfy the definitions if the set of TF nodes is replaced by the standard one.

A collection of sets of wave molecules is said to be uniform, if the constants implied in the
definitions
above can be chosen uniformly. All the estimates in the following sections hold uniformly for
uniform families
of wave molecules.

The high-scale part of the frame $\{\varphi_\gamma : \gamma \in \Gammam\}$ is a set of wave
molecules
with the standard choice of nodes $\sett{\pt{\aast_\gamma,\xist_\gamma} : \gamma \in \Gammam}$.
More generally,
we have the
following lemma.

\begin{lemma}
\label{lemma_GB_mol}
Let $\Upsilon=\sett{\SIC_\gamma: \gamma \in \Gamma_0}$ be a well-spread set of GB parameters. Then
the corresponding families of beams $\sett{\Phi_{\gamma}(t,\cdot): \gamma \in \Gamma_0}$,
are families of wave molecules uniformly for $t \in [-T,T]$,
with TF nodes given by $(a_\gamma(t),\xi_\gamma(t)) = (x_\gamma(t),(2\pi)^{-1} \omega_\gamma
p_\gamma(t))$.
\end{lemma}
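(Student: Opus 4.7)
The strategy is to verify the four defining conditions of a set of wave molecules in turn, with the main work concentrated in the Fourier decay condition (iv). Conditions (i), (ii), (iii) follow by combining the hypotheses of Definition \ref{def:wsp} with the uniform evolution estimates of Lemma \ref{lemma_unif_flow_1}. Specifically, the first bound of \eqref{eq:hypFLW_3} chained with well-spread property (i) gives $|\aast_{\gamma} - \aast_{\gamma'}| \lesssim |a_\gamma - a_{\gamma'}| + 1 \lesssim |x_\gamma(t) - x_{\gamma'}(t)| + 1$; the metric equivalence \eqref{eq:hypFLW_2} chained with well-spread property (ii) gives WM condition (ii); and \eqref{eq:hypFLW_3c} combined with well-spread property (iv) yields $|\xi_\gamma(t)| = (2\pi)^{-1} \omega_\gamma |p_\gamma(t)| \asymp 4^j$, which is WM condition (iii).

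For condition (iv), the plan is to compute $\widehat{\Phi_\gamma(t,\cdot)}$ explicitly. Writing $u = x - x_\gamma(t)$, the beam takes the form
\begin{equation*}
\Phi_\gamma(t,x) = A_\gamma(t)\, e^{i\omega_\gamma p_\gamma(t)\cdot u}\, \exp\bigl(-\pi u^T \Lambda_\gamma(t) u\bigr), \qquad \Lambda_\gamma(t) := -\tfrac{i\omega_\gamma}{2\pi} M_\gamma(t).
\end{equation*}
Since $\Lambda_\gamma(t)$ is symmetric with $\Re \Lambda_\gamma(t) = \tfrac{\omega_\gamma}{2\pi} \Im M_\gamma(t) > 0$, the translation and modulation rules together with the Gaussian Fourier identity $\mathcal{F}[e^{-\pi u^T \Lambda u}](\xi) = (\det \Lambda)^{-1/2} e^{-\pi \xi^T \Lambda^{-1} \xi}$ produce the closed form
\begin{equation*}
\widehat{\Phi_\gamma}(t,\xi) = A_\gamma(t)\,(\det \Lambda_\gamma(t))^{-1/2}\, e^{-2\pi i x_\gamma(t)\cdot\xi}\, \exp\Bigl(-\tfrac{2\pi^2 i}{\omega_\gamma}\,(\xi - \xi_\gamma(t))^T M_\gamma(t)^{-1}(\xi - \xi_\gamma(t))\Bigr).
\end{equation*}
After factoring the linear phase prescribed by the WM condition, what remains is a pure complex Gaussian in $\xi - \xi_\gamma(t)$.

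The main obstacle will be the uniform control of $M_\gamma(t)^{-1}$. Lemma \ref{lemma_unif_flow_2} provides $\norm{M_\gamma(t)} \lesssim 1$ and $\Im M_\gamma(t) \gtrsim I_d$, uniformly in $\gamma \in \Gamma_0$ and $t \in [-T,T]$. From $\Im M \geq c I_d$ one obtains $\norm{Mv} \geq c\norm{v}$ for every $v$ (since $|v^* M v| \geq |\Im v^* M v| \geq c \norm{v}^2$), hence $\norm{M_\gamma(t)^{-1}} \lesssim 1$. Writing $M = X + iY$ with $X, Y$ real symmetric and $M^{-1} = P - iQ$, separating real and imaginary parts of $(X+iY)(P-iQ) = I$ yields $Q = (X Y^{-1} X + Y)^{-1}$, which is bounded below by $(\norm{X}^2 \norm{Y^{-1}} + \norm{Y})^{-1} I_d$; hence $-\Im(M_\gamma(t)^{-1}) \gtrsim I_d$ uniformly. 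These bounds give $|\det \Lambda_\gamma(t)| \asymp \omega_\gamma^d$ and a Gaussian magnitude decay bounded by $\exp(-c' |\xi - \xi_\gamma(t)|^2/\omega_\gamma)$; combined with $|A_\gamma(t)| \asymp \omega_\gamma^{d/4}$ from \eqref{eq:hypFLW_1} this yields the $\alpha = 0$ case of WM condition (iv) with the correct normalization $2^{-jd/2}$. For $|\alpha| \geq 1$, each $\partial_\xi$-derivative of the exponent brings down a factor of order $|\xi - \xi_\gamma(t)|/\omega_\gamma$; setting $r = \omega_\gamma^{-1/2}|\xi - \xi_\gamma(t)|$, the elementary bound $r^k e^{-c' r^2} \lesssim_N (1+r)^{-N}$ absorbs the polynomial growth into an extra factor $\omega_\gamma^{-1/2} \asymp 2^{-j}$ per derivative, producing the full WM bound with the required $2^{-j|\alpha|}$.
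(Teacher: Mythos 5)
Your proof is correct, and for the decisive condition (iv) it takes a genuinely different route from the paper. You verify WM conditions (i)--(iii) exactly as the paper does, by chaining the well-spreadness hypotheses with the flow estimates \eqref{eq:hypFLW_3}, \eqref{eq:hypFLW_3c}, \eqref{eq:hypFLW_2}. For (iv), however, you work on the frequency side: you compute $\widehat{\Phi_\gamma(t,\cdot)}$ in closed form via the complex-Gaussian Fourier identity and then control $M_\gamma(t)^{-1}$, including the non-obvious lower bound $-\Im\big(M_\gamma(t)^{-1}\big) \gtrsim I_d$, which you obtain correctly from the identity $\Im(M^{-1}) = -(X Y^{-1} X + Y)^{-1}$ for $M = X + iY$ with $Y \gtrsim I_d$; together with $\abs{\det M_\gamma(t)} \asymp 1$ and $\abs{A_\gamma(t)} \asymp \omega_\gamma^{d/4}$ this gives the normalization $2^{-jd/2}$, the Gaussian frequency decay at scale $2^j$, and the factor $2^{-j}$ per derivative. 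The paper instead stays entirely on the spatial side: it rescales the beam into a normalized envelope $u_\gamma(t,x) = 2^{-jd/2} A_\gamma(t) \exp\big(\tfrac{i}{2}\tfrac{\omega_\gamma}{4^j} M_\gamma(t) x \cdot x\big)$, checks that $u_\gamma$ is a uniform Schwartz-class bump using only $\norm{M_\gamma(t)} \lesssim 1$, $\Im M_\gamma(t) \gtrsim I_d$ and $2^{-jd/2}\abs{A_\gamma(t)} \asymp 1$, and then invokes the fact that such spatial bump estimates transfer to the Fourier domain. The paper's argument is shorter and never needs to invert $M_\gamma(t)$; yours is more explicit and produces a usable closed formula for the Fourier transform of a beam, at the price of the extra linear-algebra step on $\Im(M^{-1})$, which you supply. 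One cosmetic point: your computation yields the phase $e^{-2\pi i x_\gamma(t)\cdot\xi}$, whereas condition (iv) as printed multiplies by $e^{-2\pi i a_\gamma \xi}$; with the paper's Fourier normalization the factor that removes the phase is $e^{+2\pi i a_\gamma \xi}$ (as used in the proof of Lemma \ref{lemma_pso_wm}), so this is a sign-convention issue in the statement of (iv) rather than a gap in your argument.
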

\begin{proof}
From \eqref{eq:GB_elem} and \eqref{eq:GB_phase} we see that
\begin{align*}
\Phi_\gamma(t,x)= 2^{jd/2} e^{2\pi i \xi(t) \left(x-a_\gamma(t)\right) }
u_\gamma \left(t,
2^{j}\left(x-a_\gamma(t)\right)
\right),
\end{align*}
where
\begin{align*}
u_\gamma(t,x) = 2^{-jd/2} A_\gamma(t) \cdot
\exp \left[-\frac{1}{2}\frac{\omega_\gamma}{4^j}
\left(
\Im M_\gamma(t) x \cdot x \right)
\right]
\cdot
\exp\left[
\frac{\mi}{2} \frac{\omega_\gamma}{4^j} (\Re M_\gamma(t) x \cdot x)
\right].
\end{align*}
Combining Lemmas ~\ref{lemma_unif_flow_1} and \ref{lemma_error_flow}
and Definition \ref{def_order_wsp},
we see that parts (i), (ii) and (iii) of the definition of WM are satisfied.
To verify part (iv), it suffices to show that for all multi-indices
$\alpha$, and $N >0$, there exists a constant $C_{\alpha,N}$ such that
for all $\gamma\in\Gammam$,
\begin{equation}
\abs{\partial_x^\alpha
u_{\gamma}(t,x)}
\leq C_{\alpha,N} \cdot \left(1+\abs{x}\right)^{-N},
\qquad x \in \Rdst,
\end{equation}
since this would imply a similar estimate in the Fourier domain. These conditions follow again from
Lemmas \ref{lemma_unif_flow_1} and \ref{lemma_error_flow} and Definition \ref{def_order_wsp}.
Specifically, we use the facts that
$2^{-jd/2}\abs{A_\gamma(t)} \asymp {\frac{4^j}{\omega_\gamma}} \asymp 1$ and
$\norm{M(t)} \lesssim 1$, with bounds uniform on $[-T,T]$.
\end{proof}
\begin{rem}
As a consequence of Lemma \ref{lemma_GB_mol}, the estimates in the following sections apply to
well-spread Gaussian beams \emph{uniformly} for evolution parameters within a given bounded time
interval.
\end{rem}

\subsection{Operations on wave molecules}
\begin{lemma}
\label{lemma_ops}
Let $\sett{\phi_\gamma: \gamma \in \Gammam}$ be
a set of wave molecules. Then
for multi-indices $\alpha, \beta$ with $\abs{\alpha}=\abs{\beta}=1$,
each of the families
\begin{align*}
\sett{2^{j}(x-a_\gamma)^\beta \phi_\gamma: \gamma \in \Gammam},
\sett{2^{-j}(D-\xi_\gamma)^\alpha \phi_\gamma: \gamma \in \Gammam},
\mbox{ and }
\sett{4^{-j}\partial^\beta_x \phi_\gamma: \gamma \in \Gammam},
\end{align*}
are sets of wave molecules.
\end{lemma}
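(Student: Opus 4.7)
The plan is to verify each of the four defining conditions (i)--(iv) for the three new families. Since the associated time-frequency nodes $(a_\gamma,\xi_\gamma)$ are unchanged, conditions (i), (ii), (iii), which only constrain the nodes, are inherited immediately from the hypothesis on $\{\phi_\gamma\}$. All the work is in verifying the Fourier-side decay estimate (iv).

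The unifying device is to pass to the Fourier domain and work with the \emph{modulated} transform $g_\gamma(\xi) := \hat\phi_\gamma(\xi) e^{-2\pi i a_\gamma \xi}$, which by assumption satisfies $|\partial^\alpha_\xi g_\gamma(\xi)| \lesssim 2^{-jd/2}\, 2^{-j|\alpha|}\, \left(1 + 2^{-j}|\xi - \xi_\gamma|\right)^{-N}$ for every $\alpha,N$. Using the paper's Fourier conventions, a direct computation shows
\begin{align*}
\widehat{(x_j - a_{\gamma,j})\phi_\gamma}(\xi)\, e^{-2\pi i a_\gamma\xi}
  &= -D_{\xi_j} g_\gamma(\xi),\\
\widehat{(D_{x_j} - \xi_{\gamma,j})\phi_\gamma}(\xi)\, e^{-2\pi i a_\gamma\xi}
  &= (\xi_j - \xi_{\gamma,j})\, g_\gamma(\xi),\\
\widehat{\partial_{x_j}\phi_\gamma}(\xi)\, e^{-2\pi i a_\gamma\xi}
  &= 2\pi i\, \xi_j\, g_\gamma(\xi).
\end{align*}
The first identity is immediate: multiplication by $(x_j-a_{\gamma,j})$ on the physical side corresponds to $-D_{\xi_j}$ on the modulated transform, which moves the required decay index by one. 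Multiplying the resulting bound by $2^j$ matches the claimed normalization $2^{-jd/2} 2^{-j|\alpha|}$, so the first family is WM.

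For the second family, I would apply Leibniz to $\partial^\alpha_\xi[(\xi_j - \xi_{\gamma,j})\, g_\gamma]$, yielding a sum of two types of terms. The term $(\xi_j - \xi_{\gamma,j}) \partial^\alpha_\xi g_\gamma$ is handled by writing $|\xi_j - \xi_{\gamma,j}| \leq 2^j \cdot 2^{-j}|\xi - \xi_\gamma|$ and absorbing one power of the weight (using $N+1$ instead of $N$); this gains a factor of $2^j$ relative to the wave-molecule bound. The other term $\alpha_j\, \partial^{\alpha-e_j}_\xi g_\gamma$ trades one $\xi$-derivative for a factor $2^j$ directly. Both contributions are then balanced by the prefactor $2^{-j}$. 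For the third family, decompose $\xi_j = (\xi_j - \xi_{\gamma,j}) + \xi_{\gamma,j}$: the first piece is the just-handled case, while $|\xi_{\gamma,j}| \lesssim 4^j$ by (iii), so multiplying by $4^{-j}$ again recovers the wave-molecule normalization.

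There is no real obstacle here: the result is a routine bookkeeping exercise in the Fourier domain. The only thing to double-check is that the factor $(1+2^{-j}|\xi-\xi_\gamma|)^{-N}$ is preserved through the Leibniz expansions, which is arranged by selecting $N$ suitably large at the start of each estimate.
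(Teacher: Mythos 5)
Your proof is correct and follows essentially the same route as the paper's: the paper dismisses the first two families as immediate from the definitions and handles the third via the identity $\partial_x^\beta\phi_\gamma = 2\pi i[(D-\xi_\gamma)^\beta\phi_\gamma + \xi_\gamma^\beta\phi_\gamma]$ together with $|\xi_\gamma|\asymp 4^j$, which is exactly your decomposition $\xi_j=(\xi_j-\xi_{\gamma,j})+\xi_{\gamma,j}$; you merely spell out the Fourier-side bookkeeping that the paper leaves implicit. (Your displayed identities implicitly use the modulation $e^{+2\pi i a_\gamma\xi}$ rather than the $e^{-2\pi i a_\gamma\xi}$ literally written in condition (iv), but that sign is an inconsistency in the paper's own statement and does not affect the argument.)
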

\begin{proof}
The first two assertions follow easily from the definitions. For the third one
we note that
\begin{align*}
\partial_x^\alpha \phi_\gamma
= (2\pi i) \cdot \left[
(D - \xi_\gamma)^\alpha \phi_\gamma + (\xi_\gamma)^\alpha \phi_\gamma
\right].
\end{align*}
Since $\abs{\xi_\gamma} \asymp 4^j$, the conclusion follows.
\end{proof}

\subsection{The action of pseudodifferential operators}
A symbol $\psym :\Rdst \times \Rdst \to \bC$ belongs to the class $S^{m}_{1,0}$,
$m \in \Rst$, if
\begin{align}
\label{eq_sm}
\abs{\partial_x^\beta \partial_\xi^\alpha \psym(x,\xi)}
\leq C_{\alpha,\beta} (1+\abs{\xi})^{-\abs{\alpha}+m},
\end{align}
for all multi-indices $\alpha,\beta$.

We say that a family of symbols $\sett{\psym_\gamma:\gamma \in \Gammam}$ \emph{belongs to
$S^m_{1,0}$ uniformly on
$\gamma$}
if each $\psym_\gamma$ satisfies \eqref{eq_sm}, with constants independent of $\gamma$.

\begin{lemma}
\label{lemma_pso_wm}
Let $\sett{\phi_\gamma: \gamma \in \Gammam}$ be
a set of wave molecules,
$m \in \Rst$, and let $\sett{\psym_\gamma:\gamma \in \Gammam}$ belong to $S^m_{1,0}$
uniformly on $\gamma$.
Then
\begin{align*}
\sett{4^{-mj} \psym_\gamma(x,D) \phi_\gamma: \gamma \in \Gammam}
\end{align*}
is a set of wave molecules, with the same set of time-frequency nodes.
\end{lemma}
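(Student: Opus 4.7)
The plan is to verify the four defining conditions of a wave molecule for the family $\{4^{-mj}\psym_\gamma(x,D)\phi_\gamma\}$ using the same time-frequency nodes $\{(a_\gamma,\xi_\gamma)\}$. Conditions (i), (ii), (iii) depend only on the TF nodes, which are unchanged, so they transfer immediately from the hypothesis on $\{\phi_\gamma\}$. The entire work is concentrated in the Fourier-side estimate (iv). As a preliminary reduction, replacing $\psym_\gamma(x,\xi)$ by $\tilde\psym_\gamma(x,\xi):=\psym_\gamma(x+a_\gamma,\xi)$---still in $S^m_{1,0}$ uniformly, since the class is translation-invariant in the space variable---absorbs the modulation factor in (iv) into a spatial translation of $v_\gamma:=\psym_\gamma(x,D)\phi_\gamma$, reducing the problem to the case $a_\gamma=0$.

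With $a_\gamma=0$, the Kohn--Nirenberg formula gives
\begin{equation*}
\hat v_\gamma(\eta)=\int\hat\psym_\gamma^{(1)}(\eta-\xi,\xi)\,\hat\phi_\gamma(\xi)\,d\xi,
\end{equation*}
where $\hat\psym_\gamma^{(1)}$ denotes the partial Fourier transform of $\psym_\gamma$ in the first variable. The hypothesis $\psym_\gamma\in S^m_{1,0}$ uniformly yields $|\partial_\xi^\beta\hat\psym_\gamma^{(1)}(\zeta,\xi)|\lesssim_{\beta,N}(1+|\zeta|)^{-N}(1+|\xi|)^{m-|\beta|}$ for all $N\geq 0$. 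To estimate $\partial_\eta^\alpha\hat v_\gamma$, I will transfer the $\eta$-derivatives via integration by parts in $\xi$: for fixed $\eta$, applying $\partial_\xi$ to $\hat\psym_\gamma^{(1)}(\eta-\xi,\xi)$ produces $-\partial_1\hat\psym_\gamma^{(1)}+\partial_2\hat\psym_\gamma^{(1)}$, so each $\partial_\eta=\partial_1$ can be traded for $-\partial_\xi$ (absorbed by integration by parts onto $\hat\phi_\gamma$) plus $\partial_2$ (applied only to the symbol factor). Iterating this produces the identity
\begin{equation*}
\partial_\eta^\alpha\hat v_\gamma(\eta)=\sum_{\beta\leq\alpha}c_{\alpha,\beta}\int\partial_\xi^\beta\hat\phi_\gamma(\xi)\,\partial_2^{\alpha-\beta}\hat\psym_\gamma^{(1)}(\eta-\xi,\xi)\,d\xi.
\end{equation*}
Plugging in the WM bound on $\partial_\xi^\beta\hat\phi_\gamma$ (contributing $2^{-jd/2}\cdot 2^{-j|\beta|}(1+2^{-j}|\xi-\xi_\gamma|)^{-N}$) together with the symbol bound, and using condition (iii) to estimate $(1+|\xi|)^{m-|\alpha-\beta|}\lesssim 4^{jm}\cdot 2^{-2j|\alpha-\beta|}$ on the effective support $|\xi-\xi_\gamma|\lesssim 2^j$, one arrives at
\begin{equation*}
4^{-mj}|\partial_\eta^\alpha\hat v_\gamma(\eta)|\lesssim 2^{-jd/2}\sum_{\beta\leq\alpha}2^{-j(|\beta|+2|\alpha-\beta|)}\int(1+|\eta-\xi|)^{-N}(1+2^{-j}|\xi-\xi_\gamma|)^{-N}\,d\xi.
\end{equation*}

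The remaining integral is handled by a direct splitting: when $|\eta-\xi_\gamma|\leq 2^j$, the factor $(1+2^{-j}|\eta-\xi_\gamma|)^{-N}$ is of order one and the integral is bounded by $\|(1+|\cdot|)^{-N}\|_1$; when $|\eta-\xi_\gamma|>2^j$, splitting the $\xi$-domain at $|\eta-\xi|=|\eta-\xi_\gamma|/2$ and using the two decay factors separately on each piece produces the bound $(1+2^{-j}|\eta-\xi_\gamma|)^{-N}$, provided $N$ is taken sufficiently large (permissible because $N$ is arbitrary in the WM definition). The sum $\sum_{\beta\leq\alpha}2^{-j(|\beta|+2|\alpha-\beta|)}$ is dominated by the term $\beta=\alpha$, contributing $2^{-j|\alpha|}$, which combines with the integral bound to yield exactly the required WM estimate.

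The main obstacle lies in the bookkeeping of the integration-by-parts step: each derivative falling on $\hat\phi_\gamma$ costs a factor $2^{-j}$, while each falling on the $\xi$-slot of $\hat\psym_\gamma^{(1)}$ costs $2^{-2j}$ (from the $(1+|\xi|)^{-1}\asymp 4^{-j}$ gain). Although the latter appears individually more favorable, after summing over $\beta\leq\alpha$ the worst case---attained at $\beta=\alpha$---dominates and yields the tight scaling $2^{-j|\alpha|}$. The normalization $4^{-mj}$ in the statement is precisely what cancels the $4^{jm}$ factor coming from $(1+|\xi|)^m\asymp 4^{jm}$ on the effective support of $\hat\phi_\gamma$.
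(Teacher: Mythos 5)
Your overall strategy is sound in outline, and it is genuinely different from the paper's: you work entirely on the frequency side, using the convolution-type representation of the Kohn--Nirenberg quantization, whereas the paper rescales the symbol to unit scale via $\psymp_\gamma(x,\xi)=\psym(2^{-j}x+a_\gamma,2^j\xi+\xi_\gamma)$, shows that $4^{-jm}\psymp_\gamma$ satisfies derivative bounds with only a polynomial loss $(1+\abs{\xi})^{\abs{\alpha}+2\abs{m}}$ (via a three-case analysis of the factor $2^{j\abs{\alpha}}(1+\abs{2^j\xi+\xi_\gamma})^{m-\abs{\alpha}}$), and then invokes the standard fact that operators with such symbols preserve Schwartz-type bump conditions. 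Your reduction to condition (iv), the translation trick normalizing $a_\gamma=0$, and the final scaling bookkeeping (the $4^{jm}$ factor from $(1+\abs{\xi})^m\asymp 4^{jm}$ on the essential support, the worst case $\beta=\alpha$ giving $2^{-j\abs{\alpha}}$) all match what is needed.

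There is, however, a genuine gap at the step that carries all the analytic weight. The claimed estimate $\abs{\partial_\xi^\beta\hat\psym_\gamma^{(1)}(\zeta,\xi)}\lesssim_{\beta,N}(1+\abs{\zeta})^{-N}(1+\abs{\xi})^{m-\abs{\beta}}$ is false for general symbols in $S^m_{1,0}$: these are merely \emph{bounded} in $x$, with no decay, so the partial Fourier transform in $x$ is in general only a tempered distribution in $\zeta$. Already for the symbol $\psym\equiv 1\in S^0_{1,0}$ one has $\hat\psym^{(1)}(\zeta,\xi)=\delta_0(\zeta)$, which admits no pointwise bound; and for a general element of $C^\infty_b$ the transform need not even be a finite measure, so no weak reformulation of your estimate survives either. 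The usual route to $\zeta$-decay --- integrating by parts in $x$ against $e^{-2\pi i x\zeta}$ --- only trades $\zeta^\kappa$ for $\partial_x^\kappa\psym$, which is again merely bounded, never integrable, in $x$. Consequently the convolution formula for $\hat v_\gamma$ holds only distributionally, and the subsequent integration by parts in $\xi$ and the splitting of the $\xi$-integral are not available as quantitative estimates. This is precisely the classical obstruction that forces the standard treatments (and the paper, which defers to Folland, Chapter 2, after rescaling) to argue on the spatial side: one uses the absolute convergence of $\int\psym(x,\xi)\hat\phi_\gamma(\xi)e^{2\pi i x\xi}\,d\xi$ granted by the rapid decay of $\hat\phi_\gamma$, integrates by parts in $\xi$ to obtain spatial decay of $\psym(x,D)\phi_\gamma$ at scale $2^{-j}$ around $a_\gamma$, and only then returns to the Fourier side. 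To repair your argument you would need either to follow that route, or to first localize the symbol in $x$ at scale $2^{-j}$ near $a_\gamma$ (the localized piece does have an integrable, rapidly decaying partial transform) and control the far part through kernel estimates --- at which point you have essentially reconstructed the standard proof.
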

\begin{proof}
Let us write

\begin{align*}
\psi_\gamma &= \psym_\gamma(x,D) \phi_\gamma,\\
\widehat\phi_\gamma(\xi) &= 2^{-jd/2} \widehat\newphi_\gamma(2^{-j}(\xi-\xi_\gamma)) e^{2 \pi i
a_\gamma
\xi},\\
\widehat\psi_\gamma(\xi) &= 2^{-jd/2} \widehat\newpsi_\gamma(2^{-j}(\xi-\xi_\gamma)) e^{2 \pi i
a_\gamma
\xi}.
\end{align*}
With this notation, we know that
\begin{align}
\label{eq_Phi}
\abs{\partial_\xi^\alpha \widehat\newphi_\gamma(\xi)}
\leq C_{\alpha,N} \cdot (1+\abs{\xi})^{-N},
\qquad \mbox{ for all }\alpha, N,
\end{align}
and we want to show that
\begin{align}
\label{eq_Psi}
\abs{\partial_\xi^\alpha \widehat\newpsi_\gamma(\xi)}
\leq C_{\alpha,N} \cdot 4^{jm} \cdot (1+\abs{\xi})^{-N},
\qquad \mbox{ for all }\alpha, N.
\end{align}
To this end, let
\begin{align*}
\psymp_\gamma(x,\xi) = \psym(2^{-j}x+a_\gamma,2^j\xi+\xi_\gamma).
\end{align*}
A simple calculation shows that $\psymp_\gamma(x,D) \newphi_\gamma = \newpsi_\gamma$.
Since $\psym_\gamma \in S^{m}_{1,0}$ uniformly on $\gamma$,
\begin{align*}
\abs{\partial_\xi^\alpha \partial_x^\beta \psymp_\gamma(x,\xi)}
\leq C_{\alpha,\beta} \cdot 2^{-j\abs{\beta}} \cdot 2^{j \abs{\alpha}}
\cdot \left(1+\abs{2^j\xi+\xi_\gamma}\right)^{m-\abs{\alpha}}.
\end{align*}
Let us analyze the expression $L = 2^{j \abs{\alpha}}
\left(1+\abs{2^j\xi+\xi_\gamma}\right)^{m-\abs{\alpha}}$.
Let $A,B>0$ be constants such that $A 4^j \leq \abs{\xi_\gamma} \leq B 4^j$.\\

\noindent \emph{Case I}. If $\abs{\xi} \leq \tfrac{A}{2} 2^j$,
then $\abs{2^j\xi + \xi_\gamma} \asymp 4^j$ and $L \lesssim 2^{j \abs{\alpha}} 4^{j(m-\abs{\alpha})}
\lesssim 4^{jm}$.\\

\noindent \emph{Case II}. If $\abs{\xi} \geq 2^{j+1} B$, then $\abs{2^j\xi + \xi_\gamma} \asymp 2^j
\abs{\xi}$ and
$L \lesssim 2^{j\abs{\alpha}} 2^{j(m-\abs{\alpha})} \abs{\xi}^{m-\abs{\alpha}}
= 2^{jm} \abs{\xi}^m \abs{\xi}^{-\alpha} \lesssim 2^{jm} \abs{\xi}^m$. Distinguishing the cases $m
\geq 0$
and $m < 0$, we see that $L \lesssim 4^{jm} (1+\abs{\xi})^{\abs{m}}$.\\

\noindent \emph{Case III}. If $\tfrac{A}{2} 2^j \leq \abs{\xi} \leq 2^{j+1} B$, then
$L \lesssim 2^{j \abs{\alpha}} \left(1+\abs{2^j\xi+\xi_\gamma}\right)^{m}
\asymp (1+\abs{\xi})^{\abs{\alpha}} \left(1+\abs{2^j\xi+\xi_\gamma}\right)^{m}$.
If $m<0$ we obtain $L \lesssim (1+\abs{\xi})^{\abs{\alpha}}
 \asymp 4^{jm} (1+\abs{\xi})^{\abs{\alpha}-2m}$. If $m>0$,
 we estimate $\abs{2^j\xi+\xi_\gamma} \lesssim 4^j$, giving
 $L \lesssim 4^{jm} (1+\abs{\xi})^{\abs{\alpha}}$. In both cases,
 $L \lesssim 4^{jm} (1+\abs{\xi})^{\abs{\alpha}+2\abs{m}}$.

Considering the three cases, it follows that
\begin{align}
\abs{\partial_\xi^\alpha \partial_x^\beta 4^{-jm} \psymp_\gamma(x,\xi)}
\leq C_{\alpha,\beta} (1+\abs{\xi})^{\abs{\alpha} + 2 \abs{m}}.
\end{align}

A standard argument shows that the operator associated with the symbol
$4^{-jm} \psymp_\gamma$ preserves the ``bump function'' conditions in \eqref{eq_Phi}, see e.g. 
\cite[Chapter 2]{folland1989harmonic}. Hence, \eqref{eq_Psi} follows.
\end{proof}

\subsection{Bessel bounds}
\begin{lemma}
\label{lemma_bes_mol}
Let $\sett{\phi_\gamma: \gamma \in \Gammam}$ be
a set of wave molecules. Then, for $s \in \Rst$,
\begin{align}
\label{eq_bes1}
&\norm{\sum_\gamma c_\gamma \phi_\gamma}^2_{H^s}
\lesssim \sum_\gamma \abs{c_\gamma}^2 4^{2js},
\\
\label{eq_bes2}
&\sum_\gamma 4^{2js} \abs{\ip{f}{\phi_\gamma}}^2
\lesssim \norms{f}^2, \qquad f \in H^s(\Rdst).
\end{align}
\end{lemma}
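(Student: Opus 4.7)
The plan is to establish both bounds by reducing to the case $s=0$, proving an almost-orthogonality estimate for the Gram matrix, and concluding via Schur's test. The bounds \eqref{eq_bes1} and \eqref{eq_bes2} are dual to each other under the $L^2$ pairing: \eqref{eq_bes1} at parameter $s$ is equivalent to \eqref{eq_bes2} at parameter $-s$, so it suffices to prove \eqref{eq_bes1} for every $s \in \Rst$.

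For the reduction to $s = 0$, I would apply Lemma~\ref{lemma_pso_wm} with the symbol $\psym(x,\xi) = (1+\abs{\xi}^2)^{s/2} \in S^s_{1,0}$ to conclude that $\{\tilde\phi_\gamma := 4^{-js}(1-\Delta_x)^{s/2}\phi_\gamma : \gamma \in \Gammam\}$ is a set of wave molecules with the same time-frequency nodes. Since $\norm{\sum_\gamma c_\gamma \phi_\gamma}_{H^s} = \norm{\sum_\gamma (4^{js} c_\gamma)\, \tilde\phi_\gamma}_{L^2}$, the $H^s$-bound for $\{\phi_\gamma\}$ follows from the $L^2$-bound for $\{\tilde\phi_\gamma\}$; thus I am reduced to proving the Bessel bound $\norm{\sum_\gamma c_\gamma \phi_\gamma}_{L^2}^2 \lesssim \sum_\gamma \abs{c_\gamma}^2$ for an arbitrary wave-molecule family.

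The core estimate is the off-diagonal decay
\begin{equation*}
\abs{\ip{\phi_\gamma}{\phi_{\gamma'}}} \lesssim \left(\frac{\min(2^j,2^{j'})}{\max(2^j,2^{j'})}\right)^{N} \left(1 + d((a_\gamma,\xi_\gamma),(a_{\gamma'},\xi_{\gamma'}))\right)^{-N},
\end{equation*}
valid for every $N$, uniformly in $\gamma, \gamma' \in \Gammam$. This I would prove by writing the inner product as an oscillatory integral over $\xi$ via Plancherel and the bump estimates of condition~(iv), rescaling both Fourier-side factors to unit scale, and integrating by parts repeatedly. The phase $e^{2\pi i (a_\gamma - a_{\gamma'})\xi}$ supplies decay in $\min(2^j,2^{j'})\abs{a_\gamma - a_{\gamma'}}$, and the disjointness of the rescaled bumps---controlled by $\abs{\xi_\gamma - \xi_{\gamma'}}$ together with the scale gap, using $\abs{\xi_\gamma} \asymp 4^j$ from~(iii)---supplies the remaining decay. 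These ingredients combine to dominate the phase-space distance $d$.

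With the off-diagonal decay in hand, the $L^2$ Bessel bound follows from Schur's test applied to the Gram matrix: $\sup_\gamma \sum_{\gamma'} \abs{\ip{\phi_\gamma}{\phi_{\gamma'}}}$ is estimated by invoking condition~(ii) to replace $d((a_\gamma,\xi_\gamma),(a_{\gamma'},\xi_{\gamma'}))$ with the distance between the standard nodes $(\aast_\gamma,\xist_\gamma) = (2^{-j}\lambda,\pointjk)$, which lie on the wave-atom tiling of Section~\ref{sec_frame}. Counting standard nodes in each dyadic annulus around a fixed $\gamma$ yields a polynomial volume that, paired with $(1+d)^{-N}$ for $N$ large, sums to a constant within each scale $j'$; the scale-mismatch factor then secures uniform convergence of the outer sum over $j' \geq 1$. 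I expect the main obstacle to be carrying out the off-diagonal estimate with simultaneous sharp control over the three sources of decay---spatial separation, frequency separation, and scale gap---while the Schur summation itself is the standard counting argument for the wave-atom tiling.
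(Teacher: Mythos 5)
Your proposal is correct and follows essentially the same route as the paper: duality to reduce \eqref{eq_bes2} to \eqref{eq_bes1}, reduction to $s=0$ via Lemma~\ref{lemma_pso_wm}, an almost-orthogonality estimate for the Gram matrix converted to the standard nodes via the wave-molecule axioms, and Schur's test. The only (cosmetic) differences are that the paper keeps the spatial and frequency off-diagonal factors separate, using axiom (i) for the spatial part and axiom (ii) via a geometric average for the combined part, and handles non-integer $s$ by interpolation from $s\in\Zst$ rather than by applying Lemma~\ref{lemma_pso_wm} directly with the symbol $(1+\abs{\xi}^2)^{s/2}\in S^s_{1,0}$.
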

\begin{proof}
We only prove \eqref{eq_bes1}; then \eqref{eq_bes2} follows by duality.
A standard computation shows that the family $\{\phi_\gamma: \gamma \in \Gammam\}$ satisfies the
following \emph{almost orthogonality estimate}:
\begin{equation*}
\begin{split}
\abs{\ip{\phi_\gamma}{\phi_{\gamma'}}} &\leq C_{N} \cdot 2^{(j+j')d/2} \cdot
\left(1+2^{\jjmin} \abs{a_\gamma-a_{\gamma'}}\right)^{-N}
\\
 &{}\qquad \cdot2^{-\jjmax d} \cdot \left(1+2^{-\jjmax}\abs{\xi_\gamma-\xi_{\gamma'}}\right)^{-N},
\end{split}
\end{equation*}
for all $N>0$. We now show that a similar bound holds with the standard set of time-frequency nodes.
Let
$L := \left(1+2^{\jjmin}\abs{a_\gamma-a_{\gamma'}}\right)^{-N}
\left(1+2^{-\jjmax}\abs{\xi_\gamma-\xi_{\gamma'}}\right)^{-N}$.
First, using condition (i) in the definition of set of WM, we see that
\begin{equation}
\label{eq_aaaa}
L \leq \left(1+2^{\jjmin}\abs{a_\gamma-a_{\gamma'}}\right)^{-N}
\lesssim \left(1+2^{\jjmin}\abs{a^{st}_\gamma-a^{st}_{\gamma'}}\right)^{-N}.
\end{equation}
Second, using condition (ii) and the triangle inequality, we estimate
\begin{align*}
L &\leq \left(1+2^{\jjmin}\abs{a_\gamma-a_{\gamma'}}
+ 2^{-\jjmax}\abs{\xi_\gamma-\xi_{\gamma'}}\right)^{-N}
\\
&\asymp \left(1 + 2^{-\jjmax} \cdot
\left[
d\left((a_{\gamma},\xi_{\gamma}),(a_{\gamma'},\xi_{\gamma'})\right)
\right]^{1/2}
\right)^{-N}
\\
&\asymp \left(1 + 2^{-\jjmax} \cdot
\left[
d\left((\aast_{\gamma},\xist_{\gamma}),(\aast_{\gamma'},\xist_{\gamma'})\right)
\right]^{1/2}
\right)^{-N}
\\
&\lesssim
\left(1+ 2^{-\jjmax}\abs{\xi^{st}_\gamma-\xi^{st}_{\gamma'}}\right)^{-N}.
\end{align*}
Taking the geometric average of this bound and \eqref{eq_aaaa}, we conclude that
\begin{equation*}
\begin{split}
\abs{\ip{\phi_\gamma}{\phi_{\gamma'}}} &\leq C_{N} \cdot 2^{(j+j')d/2} \cdot
\left(1+2^{\jjmin} \abs{a^{st}_\gamma-a^{st}_{\gamma'}}\right)^{-N/2}
\\
 &{}\qquad \cdot2^{-\jjmax d} \cdot
\left(1+2^{-\jjmax}\abs{\xi^{st}_\gamma-\xi^{st}_{\gamma'}}\right)^{-N/2},
\end{split}
\end{equation*}
for all $N>0$. This implies the Schur bound:
\begin{align*}
\sup_{\gamma} \sum_{\gamma'} \abs{\ip{\phi_\gamma}{\phi_{\gamma'}}}, \,
\sup_{\gamma'} \sum_{\gamma} \abs{\ip{\phi_\gamma}{\phi_{\gamma'}}} < +\infty,
\end{align*}
which gives the Bessel bounds for $s=0$.

For $s \in \Zst$, by Lemma \ref{lemma_pso_wm},
$\{4^{-js} (1-\Delta)^{s/2} \phi_\gamma: \gamma \in \Gammam\}$ is a set of wave molecules, so the
conclusion follows from the ``$s=0$'' case. For non-integer $s$, the conclusion follows by
interpolation.
\end{proof}

\section{Almost diagonalization of pseudodifferential operators}
\label{AppendixC}

\subsection{Main result}
In this appendix we prove:

\begin{theorem}
\label{th_diag}
Let $\sett{\phi_\gamma: \gamma \in \Gammam}$ be
a set of wave molecules and let $\psym_\gamma \in S^0_{1,0}$, uniformly for $\gamma \in \Gammam$.
Then
\begin{align}
\psym_\gamma(x,D) \phi_\gamma = \psym_\gamma(a_\gamma, \xi_\gamma) \phi_\gamma
+ 2^{-j} \phi^*_\gamma, \qquad \gamma=(j,k,\lambda) \in \Gammam,
\end{align}
for some set of wave molecules $\sett{\phi^*_\gamma: \gamma \in \Gammam}$, with the same set of
TF nodes
as $\sett{\phi_\gamma: \gamma \in \Gamma_0}$.
\end{theorem}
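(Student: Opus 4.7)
The plan is to Taylor-expand $\psym_\gamma$ around the time--frequency node $(a_\gamma,\xi_\gamma)$ and then iterate the operational Lemmas~\ref{lemma_ops} and~\ref{lemma_pso_wm} to identify each remainder as a wave molecule rescaled by $2^{-j}$. Concretely, by the fundamental theorem of calculus I will write
\[
\psym_\gamma(x,\xi) - \psym_\gamma(a_\gamma,\xi_\gamma) = (x-a_\gamma)\cdot A_\gamma(x) + (\xi-\xi_\gamma)\cdot B_\gamma(x,\xi),
\]
with $A_\gamma(x) = \int_0^1 \nabla_x\psym_\gamma(a_\gamma + s(x-a_\gamma),\xi_\gamma)\,ds$ and $B_\gamma(x,\xi) = \int_0^1 \nabla_\xi\psym_\gamma(x,\xi_\gamma + t(\xi-\xi_\gamma))\,dt$. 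Because Kohn--Nirenberg quantization factors through products of $x$-only symbols and general $(x,\xi)$-symbols, this decomposition yields
\[
(\psym_\gamma(x,D) - \psym_\gamma(a_\gamma,\xi_\gamma))\phi_\gamma = (x-a_\gamma)\cdot[A_\gamma(x)\phi_\gamma] + B_\gamma(x,D)[(D-\xi_\gamma)\phi_\gamma].
\]

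The first term is straightforward. Since $\nabla_x\psym_\gamma\in S^0_{1,0}$ uniformly in $\gamma$, its line integral $A_\gamma$ (which has no $\xi$-dependence) lies in $S^0_{1,0}$ uniformly, so Lemma~\ref{lemma_pso_wm} with $m=0$ shows that $A_\gamma(x)\phi_\gamma$ is a wave molecule. Multiplication by $(x-a_\gamma)_k$ and rescaling by $2^j$ then yields another wave molecule by Lemma~\ref{lemma_ops}, giving an overall $2^{-j}$ times a wave molecule.

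The second term is the main obstacle. To close the argument by applying Lemma~\ref{lemma_pso_wm} with $m=-1$ to $(D-\xi_\gamma)\phi_\gamma$ (which equals $2^j$ times a wave molecule by Lemma~\ref{lemma_ops}), I would need $B_\gamma\in S^{-1}_{1,0}$ uniformly. This fails in general: the segment $\xi_\gamma + t(\xi-\xi_\gamma)$ in $B_\gamma$ can pass near the origin when $\xi$ lies on the opposite side of $0$ from $\xi_\gamma$, and $\nabla_\xi\psym_\gamma$ is not of size $(1+|\xi|)^{-1}$ there. To circumvent this I introduce a smooth radial cutoff $\chi_\gamma$ supported in $\{|\xi-\xi_\gamma|\leq |\xi_\gamma|/2\}$ and equal to $1$ on $\{|\xi-\xi_\gamma|\leq |\xi_\gamma|/4\}$, and split $B_\gamma = \chi_\gamma B_\gamma + (1-\chi_\gamma)B_\gamma$. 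On the support of $\chi_\gamma$ the whole line segment stays in an annulus of radius $\asymp|\xi_\gamma|\asymp 4^j$, so $\chi_\gamma B_\gamma\in S^{-1}_{1,0}$ uniformly, and Lemmas~\ref{lemma_pso_wm} and~\ref{lemma_ops} together produce the desired $2^{-j}$ times a wave molecule. For the complementary part, the condition $|\xi-\xi_\gamma|\geq|\xi_\gamma|/4\asymp 4^j$ lies far beyond the essential frequency scale $2^j$ of $\hat\phi_\gamma$, so condition~(iv) of the wave molecule definition forces $(1-\chi_\gamma)\hat\phi_\gamma$ to be $O(2^{-jN})$ in every Schwartz seminorm; the resulting contribution is smaller than any power of $2^{-j}$ and is absorbed into the wave molecule bound.

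The hard part is this cutoff-and-tail argument for $B_\gamma$, together with the bookkeeping verifying that the three contributions assemble into a single family $\{\phi^*_\gamma\}$ satisfying the wave molecule estimates uniformly in $\gamma$ with the \emph{original} set of time--frequency nodes --- the rest is a direct application of the operational calculus for wave molecules developed in Appendix~\ref{AppendixB}.
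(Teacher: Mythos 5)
Your overall strategy is the same as the paper's: expand the symbol to first order at the node $(a_\gamma,\xi_\gamma)$, use Lemmas~\ref{lemma_ops} and~\ref{lemma_pso_wm} to turn the linear remainders into $2^{-j}$ times wave molecules, and insert a frequency cutoff at scale $\asymp 4^j$ around $\xi_\gamma$ so that the line segment in the $\xi$-remainder stays in an annulus $|\xi_t|\asymp 4^j$ and the corresponding symbol lands in $S^{-1}_{1,0}$ uniformly. You correctly identify the one real obstruction (the segment passing near the origin), which is exactly what the paper's cutoff $\eta_\gamma$ is designed to kill. The treatment of the $x$-part is fine, and is in fact slightly simpler than the paper's since your $A_\gamma$ is frozen at $\xi=\xi_\gamma$ and is trivially an $S^0_{1,0}$ symbol.

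The step that is not justified by the tools you invoke is the tail term $\left[(1-\chi_\gamma)B_\gamma\right](x,D)\,(D-\xi_\gamma)^\alpha\phi_\gamma$. The symbol $(1-\chi_\gamma)B_\gamma$ is bounded with all derivatives bounded, but it does \emph{not} lie in $S^{-1}_{1,0}$, nor even in $S^{0}_{1,0}$, uniformly in $\gamma$ (its $\xi$-derivatives need not decay in $|\xi|$, precisely because the segment can linger near $\xi_\gamma$), so Lemma~\ref{lemma_pso_wm} cannot be applied to it. The super-polynomial smallness of $\widehat{\phi_\gamma}$ on $\{|\xi-\xi_\gamma|\geq|\xi_\gamma|/4\}$ does give enough room to absorb the resulting polynomial losses, but checking that the \emph{output} still satisfies condition (iv) of the wave molecule definition (frequency concentration at scale $2^j$ around $\xi_\gamma$ and the $2^{-j|\alpha|}$ gain per derivative) requires an actual estimate for such ``exotic'' symbols, which is not contained in Lemmas~\ref{lemma_loc}--\ref{lemma_pso_wm} and which you assert rather than prove. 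The paper avoids this entirely by reversing the order of operations: it first replaces $\phi_\gamma$ by $\widetilde\phi_\gamma=\eta_\gamma(D)\phi_\gamma$ (the discarded tail is handled once by Lemma~\ref{lemma_loc} together with Lemma~\ref{lemma_pso_wm} applied to the \emph{original} symbol $\psym\in S^0_{1,0}$), and only then linearizes, so that every remainder symbol carries the factor $\eta_\gamma(\xi)$ and genuinely belongs to $S^0_{1,0}$ resp.\ $S^{-1}_{1,0}$ uniformly. Reorganizing your argument in that order closes the gap; as written, the phrase ``absorbed into the wave molecule bound'' papers over the one genuinely delicate point of the proof.
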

Later in Section \ref{sec_ppp}, we show how to use Theorem \ref{th_diag} to deduce Theorem
\ref{th_frame_cutoff}.

The proof of Theorem \ref{th_diag} is inspired by \cite[Lemma 3.1]{dsuv09}. Our result,
however, does not require homogeneity of the symbols, and involves a more refined analysis.
We first introduce a preparatory lemma, that is analogous to \cite[Lemma 17]{dHHSU}, this time
in the wave-atom context.

\subsection{Frequency cut-offs}
Let $\sett{\phi_\gamma: \gamma \in \Gammam}$ be
a set of wave molecules with TF nodes $\sett{(a_\gamma,\xi_\gamma): \gamma \in \Gammam}$,
recall that $\abs{\xi_\gamma} \asymp 4^j$ and fix $\varepsilon>0$ such that
\begin{align}
\label{eq_est_a}
B_{\varepsilon 4^j}(\xi_\gamma) \cap B_{\varepsilon 4^j}(0) = \emptyset,
\qquad \mbox{for all }j \geq 1.
\end{align}
Note that this is possible because, if
$B_{\varepsilon 4^j}(\xi_\gamma) \cap B_{\varepsilon 4^{j}}(0) \not= \emptyset$, then
$\abs{\xi_\gamma} \leq 2\varepsilon 4^j$.
Since $\abs{\xi_\gamma} \asymp 4^j$, $\varepsilon$ can be suitably chosen.

Let $\eta \in C^\infty(\Rdst)$ be supported on $B_1(0)$ and
$\eta \equiv 1$ on $B_{1/2}(0)$. Let
\begin{align}
\label{eq_eta}
\eta_\gamma(\xi) =
\eta(\varepsilon^{-1} 4^{-j} (\xi-\xi_\gamma)), \qquad \xi \in \Rdst.
\end{align}

\begin{lemma}
\label{lemma_loc}
Let $\sett{\phi_\gamma: \gamma \in \Gammam}$ be
a set of wave molecules, and let
$\widetilde \phi_\gamma = \eta_\gamma(D) \phi_\gamma$,
where $\eta_\gamma$ is given by \eqref{eq_eta}.
Then, for all $m>0$,
$\sett{2^{jm} (\phi_\gamma-\widetilde \phi_\gamma): \gamma \in \Gammam}$ is a set of wave molecules.
\end{lemma}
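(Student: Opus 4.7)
The plan is to verify the four wave-molecule conditions for the family $\{2^{jm}(\phi_\gamma - \widetilde\phi_\gamma): \gamma \in \Gammam\}$ with the same set of time-frequency nodes $\{(a_\gamma, \xi_\gamma): \gamma\in\Gammam\}$. Conditions (i), (ii), and (iii) concern only these nodes and are inherited from the hypothesis that $\{\phi_\gamma\}$ is already a set of WM. So all the work lies in condition (iv). To this end, writing $f_\gamma(\xi) := \widehat{\phi}_\gamma(\xi) e^{-2\pi i a_\gamma \xi}$, I have the bound
\[
|\partial_\xi^\alpha f_\gamma(\xi)| \leq C_{\alpha,N'} \, 2^{-jd/2}\, 2^{-j|\alpha|}\, (1+2^{-j}|\xi-\xi_\gamma|)^{-N'}
\]
for every $\alpha$ and every $N'>0$, and I need the same bound (up to constants) for $\partial_\xi^\alpha\bigl[2^{jm}(1-\eta_\gamma(\xi)) f_\gamma(\xi)\bigr]$ with $N$ on the right-hand side.

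The key observation is the support of $1-\eta_\gamma$: by the choice of $\eta$ and the definition \eqref{eq_eta}, $1 - \eta_\gamma$ is supported in $\{\xi : |\xi - \xi_\gamma| \geq \tfrac{\varepsilon}{2} 4^j\}$, while each derivative $\partial_\xi^\beta \eta_\gamma$ with $|\beta|\geq 1$ is supported in $\{\xi : \tfrac{\varepsilon}{2} 4^j \leq |\xi - \xi_\gamma| \leq \varepsilon 4^j\}$ and satisfies $|\partial_\xi^\beta \eta_\gamma(\xi)| \lesssim 4^{-j|\beta|}$ by chain rule. On these support sets one has $(1 + 2^{-j}|\xi - \xi_\gamma|) \gtrsim 2^j$, and therefore for every $K > 0$
\[
(1+2^{-j}|\xi-\xi_\gamma|)^{-N-K} \leq 2^{-jK}(1+2^{-j}|\xi-\xi_\gamma|)^{-N}
\]
on the support of $1-\eta_\gamma$ and its derivatives. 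This is the gain that will absorb the $2^{jm}$ factor for any $m>0$.

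I would then apply Leibniz to $\partial_\xi^\alpha\bigl[(1-\eta_\gamma) f_\gamma\bigr] = \sum_{\beta \leq \alpha}\binom{\alpha}{\beta}\partial_\xi^\beta(1-\eta_\gamma)\, \partial_\xi^{\alpha-\beta} f_\gamma$. For the term $\beta = 0$, I use the WM bound for $\partial_\xi^\alpha f_\gamma$ with exponent $N' = N + m + 1$ and invoke the gain above to extract an extra factor $2^{-j(m+1)}$. For each $\beta \neq 0$, the factor $|\partial_\xi^\beta \eta_\gamma| \lesssim 4^{-j|\beta|} = 2^{-j|\beta|} \cdot 2^{-j|\beta|}$ combines with $|\partial_\xi^{\alpha-\beta} f_\gamma| \lesssim 2^{-jd/2}\, 2^{-j|\alpha-\beta|}(1+2^{-j}|\xi-\xi_\gamma|)^{-N'}$ to give, after using the support gain once again with $N' = N+m+1$, an overall bound of the form $2^{-j(m+1)}\cdot 2^{-jd/2} \cdot 2^{-j|\alpha|}(1+2^{-j}|\xi-\xi_\gamma|)^{-N}$. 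Multiplying by $2^{jm}$ yields precisely condition (iv), uniformly over $\gamma$.

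There is no substantial obstacle: the entire argument is a localization-in-frequency computation where the only point worth recording is that the cut-off is supported at distance $\asymp 4^j$ from $\xi_\gamma$, so that the nominal weight $(1+2^{-j}|\xi-\xi_\gamma|)^{-1}$ contributes a fixed factor $\asymp 2^{-j}$ each time it is invoked, and arbitrarily many such factors are available because the WM estimate for $\phi_\gamma$ holds for every $N'>0$.
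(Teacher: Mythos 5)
Your proof is correct and follows essentially the same route as the paper: both arguments reduce to condition (iv), apply Leibniz to $(1-\eta_\gamma)\,\widehat\phi_\gamma(\xi)e^{-2\pi i a_\gamma\xi}$, and exploit that the cut-off error is supported where $|\xi-\xi_\gamma|\gtrsim \varepsilon 4^j$, so each spare power of the weight $(1+2^{-j}|\xi-\xi_\gamma|)^{-1}$ yields a factor $2^{-j}$ that absorbs $2^{jm}$. Your observation that conditions (i)--(iii) are inherited because the time-frequency nodes are unchanged is exactly the (implicit) reduction in the paper, and your accounting of the support of $1-\eta_\gamma$ (namely $|\xi-\xi_\gamma|\geq \tfrac{\varepsilon}{2}4^j$) is in fact slightly more careful than the paper's.
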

\begin{proof}
Let $\alpha,\alpha'$ be multi-indices. Taking into account
the support of $\eta_\gamma$, we estimate for $N,m>0$
\begin{align*}
&\abs{\partial^{\alpha'}_\xi \left[1-\eta_\gamma(\xi)\right] \cdot
\partial^{\alpha}_\xi
\left[\widehat\phi_\gamma(\xi)e^{2\pi i a_\gamma \xi}\right]}
\\
&\qquad \lesssim
2^{-jd/2} \cdot 2^{-j\abs{\alpha}} \cdot
\left(1+2^{-j}\abs{\xi-\xi_\gamma}\right)^{-(N+m)} \cdot
\varepsilon^{-\abs{\alpha'}} \cdot 4^{-j\abs{\alpha'}} \cdot
1_{\abs{\xi-\xi_\gamma} \geq \varepsilon 4^j}
\\
&\qquad \lesssim
2^{-jd/2} \cdot 2^{-j(\abs{\alpha}+\abs{\alpha'})} \cdot
\left(1+2^{-j}\abs{\xi-\xi_\gamma}\right)^{-N} \cdot
\left[\left(1+2^{-j}\abs{\xi-\xi_\gamma}\right)^{-m} \cdot
1_{\abs{\xi-\xi_\gamma} \geq \varepsilon 4^j}\right]
\\
&\qquad \lesssim
2^{-jd/2} \cdot 2^{-j(\abs{\alpha}+\abs{\alpha'})} \cdot 2^{-jm}
\cdot
\left(1+2^{-j}\abs{\xi-\xi_\gamma}\right)^{-N}.
\end{align*}

Since $\widehat{\phi_\gamma(\xi)}-\widehat{\widetilde \phi_\gamma(\xi)}
=\left[1-\eta_\gamma(\xi) \right] \widehat{\phi_\gamma(\xi)}$,
the conclusion follows from Leibniz's rule.
\end{proof}

\subsection{Proof of Theorem \ref{th_diag}}
\begin{proof}
For two families of functions $\sett{f_\gamma: \gamma \in \Gammam}$,
$\sett{g_\gamma: \gamma \in \Gammam}$, we write
\begin{align*}
f_\gamma \sim g_\gamma
\end{align*}
if $\sett{2^j(f_\gamma-g_\gamma): \gamma\in \Gammam}$ is a set of wave molecules.
We want to show that $\psym(x,D) \phi_\gamma \sim \psym(a_\gamma,\xi_\gamma)\phi_\gamma$.

For two families of operators $\sett{T^1_\gamma: \gamma \in \Gammam}$,
$\sett{T^2_\gamma: \gamma \in \Gammam}$, we write
\begin{align*}
T^1_\gamma \sim_m T^2_\gamma
\end{align*}
if $T^1_\gamma - T^2_\gamma = \psymp_\gamma(x,D)$, for some family of symbols
$\{\psymp_\gamma: \gamma \in \Gammam\}$ that belongs to $S^m_{1,0}$ uniformly on $\gamma$.

\step{1}{Frequency localization}. Let $\widetilde \phi_\gamma$ be the functions from Lemma
\ref{lemma_loc}. Then
$\phi_\gamma \sim \widetilde \phi_\gamma$.
Let also $\psym_\gamma(x,\xi) = \psym(x,\xi) \eta_\gamma(\xi)$,
where $\eta_\gamma$ is given by \eqref{eq_eta}. Then, by Lemma \ref{lemma_pso_wm},
\begin{align}
\label{eq_ee}
&\psym(x,D) \phi_\gamma \sim \psym(x,D)\widetilde \phi_\gamma
= \psym_\gamma(x,D) \phi_\gamma,
\\
\label{eq_gg}
&\psym(a_\gamma,\xi_\gamma) \phi_\gamma \sim \psym(a_\gamma,\xi_\gamma)\widetilde \phi_\gamma =
\psym(a_\gamma,\xi_\gamma) \eta(D)
\phi_\gamma.
\end{align}

\step{2}{Linearization}. We linearize the symbol $\psym$ near $(a_\gamma, \xi_\gamma)$ and multiply
that
expansion by
$\eta_\gamma$ to obtain:
\begin{equation}
\label{eq_lin}
\psym_\gamma(x,\xi) = \psym(a_\gamma,\xi_\gamma) \eta_\gamma(\xi)+
\sum_{\abs{\beta}=1} \psymp_\gamma^{1,\beta}(x,\xi) (x-a_\gamma)^\beta
+
\sum_{\abs{\alpha}=1} \psymp_\gamma^{2,\alpha}(x,\xi) (\xi-\xi_\gamma)^\alpha,
\end{equation}
where
\begin{align}
\label{eq_aa1}
\psymp_\gamma^{1,\beta}(x,\xi) &= \eta_\gamma(\xi) \cdot \int_0^1 \partial^\beta_x
\psym(tx + (1-t)a_\gamma, t\xi + (1-t)\xi_\gamma) \,dt,
\\
\label{eq_ba1}
\psymp_\gamma^{2,\alpha}(x,\xi) &= \eta_\gamma(\xi) \cdot \int_0^1 \partial^\alpha_\xi
\psym(tx + (1-t)a_\gamma, t\xi + (1-t)\xi_\gamma) \,dt.
\end{align}
By \eqref{eq_ee} and \eqref{eq_gg}, it suffices to show that the quantization of the linear terms
in \eqref{eq_lin} map $\{\phi_\gamma:\gamma\in \gamma\}$ into $2^{-j}$ multiples of a family of wave
molecules. More
precisely, we want to show that
\begin{align}
\label{eq_b1}
&[\psymp_\gamma^{1,\beta} \cdot (x-a_\gamma)^\beta](x,D) \phi_\gamma \sim 0,
\qquad \mbox{ for } \abs{\beta}=1,
\\
\label{eq_b2}
&[\psymp_\gamma^{2,\alpha} \cdot (\xi-\xi_\gamma)^\alpha](x,D) \phi_\gamma \sim 0,
\qquad \mbox{ for } \abs{\alpha}=1.
\end{align}

\step{3}{Analysis of the linear terms}. In the following claim, the choice of the cut-off functions
in \eqref{eq_eta}
plays a crucial
role.

\noindent\emph{Claim}: $\psymp_\gamma^{1,\beta} \in S^{0}_{1,0}$
and $\psymp_\gamma^{2,\alpha} \in S^{-1}_{1,0}$, uniformly on $\gamma$.
\begin{proof}[Proof of the claim]
Let $\alpha',\beta'$ be multi-indices and let $\xi \in \supp(\eta_\gamma)$.
Then $\xi \in B_{\varepsilon 4^j}(\xi_\gamma)$ and, by \eqref{eq_est_a}
and the fact that $\abs{\xi_\gamma} \asymp 4^j$,
we conclude that $\abs{\xi} \asymp 4^j$. Therefore,
\begin{align}
\label{eq_aaa}
\abs{\partial_\xi^{\alpha'} \eta_\gamma(\xi)}
\leq 4^{-j \abs{\alpha'}} \norm{\partial_\xi^{\alpha'} \eta}_\infty
\lesssim (1+\abs{\xi})^{-\abs{\alpha'}}.
\end{align}
Second, for $t \in [0,1]$, since $\xi,\xi_\gamma \in B_{\varepsilon 4^j}(\xi_\gamma)$,
$\xi_t=t\xi+(1-t)\xi_\gamma \in
B_{\varepsilon 4^j}(\xi_\gamma)$ and
by \eqref{eq_est_a}, $\abs{\xi_t} \asymp 4^j \asymp \abs{\xi}$. Therefore,
\begin{align}
\label{eq_bbb}
\abs{\partial_\xi^{\alpha'} \partial_x^{\beta'}
\psym(tx+(1-t)a_\gamma, \xi_t)}
\lesssim (1+\abs{\xi_t})^{-\abs{\alpha'}}
\asymp (1+\abs{\xi})^{-\abs{\alpha'}}.
\end{align}
We now inspect the expressions in \eqref{eq_aa1} and \eqref{eq_ba1} and use \eqref{eq_aaa} and
\eqref{eq_bbb}, together with Leibniz's rule to conclude that $\psymp_\gamma^{1,\beta} \in
S^{0}_{1,0}$ and $\psymp_\gamma^{2,\alpha} \in S^{-1}_{1,0}$, uniformly on $\gamma$.
\end{proof}

\step{4}{Final estimates}.
Since $\psymp_\gamma^{1,\beta} \in S^{0}_{1,0}$ uniformly on $\gamma$,
for $\abs{\beta}=1$, by Lemma \ref{lemma_pso_wm},
$\{\psymp_\gamma^{1,\beta} \phi_\gamma: \gamma \in \Gamma_0\}$ is a set of WM.
Hence, by Lemma \ref{lemma_ops}
\begin{align*}
[\psymp_\gamma^{1,\beta} \cdot (x-a_\gamma)^\beta](x,D) \phi_\gamma
= (x-a_\gamma)^\beta
\psymp_\gamma^{1,\beta} (x,D) \phi_\gamma \sim 0.
\end{align*}
This gives \eqref{eq_b1}. In addition, for $\abs{\alpha}=1$,
\begin{align*}
[\psymp_\gamma^{2,\alpha} \cdot (\xi-\xi_\gamma)^\alpha](x,D) \phi_\gamma
= \psymp_\gamma^{2,\alpha}(x,D) [(D-\xi_\gamma)^\alpha] \phi_\gamma.
\end{align*}
By Lemma \ref{lemma_ops}, $\{2^{-j}(D-\xi_\gamma)^\alpha \phi_\gamma: \gamma \in \Gammam\}$ is a set
of wave molecules,
while
$\psymp_\gamma^{2,\alpha} \in S^{-1}_{1,0}$ uniformly on $\gamma$. Therefore, \eqref{eq_b2} follows
from Lemma
\ref{lemma_pso_wm}.
This completes the proof.
\end{proof}

\subsection{Proof of Theorem \ref{th_frame_cutoff}}
\label{sec_ppp}

Theorem \ref{th_frame_cutoff} follows immediately
from Theorem \ref{th_diag}, in combination with the bound for the truncation of the coarse scale
in \eqref{eq_trunc_error}, and the Bessel bounds
in Lemma \ref{lemma_bes_mol}.

\section{Sketch of the construction of the frame}
\label{sec_frame_proofs}
This appendix summarizes a proof of Proposition \ref{T_FrProp}.
Since the construction is a variant of Daubechies' criterion for wavelets \cite{da90},
we shall only sketch the main components. See
for example \cite{acm04, kikuwa12, MDH} for variants of Daubechies' criterion in other anisotropic
contexts.

We consider the auxiliary (more concentrated, slimmer) Gaussian function
$\funcsl(x)= 2^{d/2}\func(2x)$, where $\func$ is given by \eqref{eq_norm_Gauss}.
We use the notation
\begin{equation}\label{eq_window_Freq}
\Funcjk(\xi) = \normdjm \Func\pt{2^{-j}\pt{\xi-\pointjk}},
\end{equation}
and define $\funcsljk$ similarly. We also let
$\func_{0,0} := \func$ and $n_0:=1$. It is easy to verify that
for $s \in \Rst$, $j \geq 1$, and $0 \leq k \leq n_j$:
\begin{align}
\label{eq_ineq_plus}
(1+|\xi|)^{s}\abs{\Funcjk(\xi)} &\lesssim (1+|\pointjk|)^{s}\abs{\widehat{\funcsljk}(\xi)},
\qquad \xi \in \Rdst,
\end{align}
where the implied constants depend on $s$ but not on $j$ or $k$.

\subsection{Frequency covering}
The next elementary lemma (whose proof we omit) says that the Gaussian windows adapted to the
frequency cover from
Section \ref{sec_frame} have bounded overlaps.
\begin{lemma}\label{L_boundPhi}
The scale-overlaps control function
\[
\overf(\xi)= \sum_{j\geq 0}\sum_{k=1}^{n_j} 2^{jd}\abs{\Funcjk(\xi)}^2
\]
satisfies $\overf(\xi) \asymp 1$, $\xi \in \Rdst$.
A similar claim holds for the scale-overlaps control function associated with $\funcsl$.
\end{lemma}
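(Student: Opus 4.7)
The plan is to reduce the claim to an explicit estimate on sums of shifted rescaled Gaussians. Using $\Func(\xi) = 2^{d/4}e^{-\pi|\xi|^2}$ and the definition \eqref{eq_window_Freq} of $\Funcjk$, one computes
\[
2^{jd}\abs{\Funcjk(\xi)}^2 = 2^{d/2}\,e^{-2\pi\cdot 4^{-j}|\xi-\pointjk|^2},
\]
so the lemma is equivalent to showing that
\[
S(\xi) := \sum_{j\geq 0}\sum_{k=1}^{n_j} e^{-2\pi\cdot 4^{-j}|\xi-\pointjk|^2} \asymp 1
\]
uniformly in $\xi\in\Rdst$.

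For the lower bound I would split on $|\xi|$. If $|\xi|\leq 4$ the zeroth-scale term $e^{-2\pi|\xi|^2}$ is already bounded below. If $|\xi|>4$, then by the covering hypothesis $\corona_j\subseteq\bigcup_k B_{2^{j+1}}(\pointjk)$ applied to the unique $j\geq 1$ with $\xi\in\corona_j$, there exists $k$ with $|\xi-\pointjk|\leq 2^{j+1}$, hence $4^{-j}|\xi-\pointjk|^2\leq 4$ and the corresponding term is $\geq e^{-8\pi}$.

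For the upper bound I would establish a per-scale estimate and then combine it with a geometric factor coming from scale separation. The per-scale estimate uses only that the balls $\{B_{2^j}(\pointjk)\}_k$ are disjoint: a volume comparison shows
\[
\#\{k: 2^{j+m}\leq |\xi-\pointjk|<2^{j+m+1}\} \lesssim 2^{md},\qquad m\geq 0,
\]
and a dyadic sum yields $\sum_k e^{-2\pi\cdot 4^{-j}|\xi-\pointjk|^2} \lesssim 1$ uniformly in $j$ and $\xi$. To make the sum in $j$ converge, I would use the corona constraint $|\pointjk|\in[4^j,4^{j+1}]$. Let $j_*$ be the (essentially unique) scale with $4^{j_*}\asymp |\xi|$; for $j\geq j_*+C$ one has $|\xi-\pointjk|\gtrsim 4^j$, while for $j\leq j_*-C$ one has $|\xi-\pointjk|\gtrsim |\xi|\asymp 4^{j_*}$. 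In both cases a single term is bounded by $e^{-c\cdot 4^{|j-j_*|}}$, which beats the cardinality $n_j\asymp 2^{jd}$ and gives geometric decay in $|j-j_*|$. Summing the per-scale bound over the $O(1)$ scales near $j_*$ and the decaying contributions from the remaining scales yields $S(\xi)\lesssim 1$.

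The only real obstacle is the bookkeeping in the last step: balancing the polynomial growth $n_j\asymp 2^{jd}$ against the Gaussian decay forced by the corona condition when $j$ is far from $j_*$. Once the disjointness-based per-scale bound and the corona-based scale separation are in place, everything else is an exercise in summing geometric series, and the same argument applies verbatim to $\funcsl$ in view of \eqref{eq_ineq_plus} and the identical form of its frequency profile.
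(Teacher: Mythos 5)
The paper gives no proof of this lemma (it is explicitly stated as ``whose proof we omit''), so there is nothing to compare against; judged on its own, your argument is correct and is the standard one for such overlap estimates. The reduction to $S(\xi)=\sum_{j,k}e^{-2\pi 4^{-j}|\xi-\pointjk|^2}$, the lower bound via the covering property of the balls $B_{2^{j+1}}(\pointjk)$, the per-scale upper bound via disjointness of the $B_{2^j}(\pointjk)$ and a dyadic volume count, and the scale-separation coming from $|\pointjk|\asymp 4^j$ are all sound. One sentence in your final step is stated too weakly, though you already have what is needed to repair it: the bound ``single term $\leq e^{-c4^{|j-j_*|}}$ beats the cardinality $n_j\asymp 2^{jd}$'' is false as written when $j$ is only slightly larger than a large $j_*$ (then $e^{-c4^{|j-j_*|}}$ is a fixed constant while $n_j$ is huge). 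You must use the full strength of the per-term bounds you derived, namely $4^{-j}|\xi-\pointjk|^2\gtrsim 4^{j}$ for $j\geq j_*+C$ and $\gtrsim 4^{2j_*-j}$ for $j\leq j_*-C$; then $n_j e^{-c4^{j}}$ and $n_j e^{-c4^{2j_*-j}}$ are both $\lesssim e^{-c'4^{|j-j_*|}}$ uniformly in $j_*$, and the sum over scales converges. Equivalently, and more cleanly, split the Gaussian as $e^{-2\pi 4^{-j}|\cdot|^2}=e^{-\pi 4^{-j}|\cdot|^2}\cdot e^{-\pi 4^{-j}|\cdot|^2}$, use half of the exponent for the disjointness-based per-scale bound and the supremum of the other half as a decay factor in $|j-j_*|$; this avoids multiplying by $n_j$ altogether. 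With that adjustment the proof is complete, and the transfer to $\funcsl$ is immediate as you say.
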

\begin{rem}
As the proof of Theorem \ref{T_FrProp} below shows, the ratio between $B:= \sup_\xi \overf(\xi)$ and
$A:= \inf_\xi
\overf(\xi)$ determines the numerical stability of the inversion of the frame operator. In practice
good ratios are
possible; see Figure \ref{fig:control} for a numerical simulation and \cite{acm04, kikuwa12, MDH,
ni14} for more on overlap estimates for frequency covers.
\end{rem}

\begin{figure}[!tbp]
  \centering
  \begin{minipage}[c]{0.5\textwidth}
    \includegraphics[width=\textwidth]{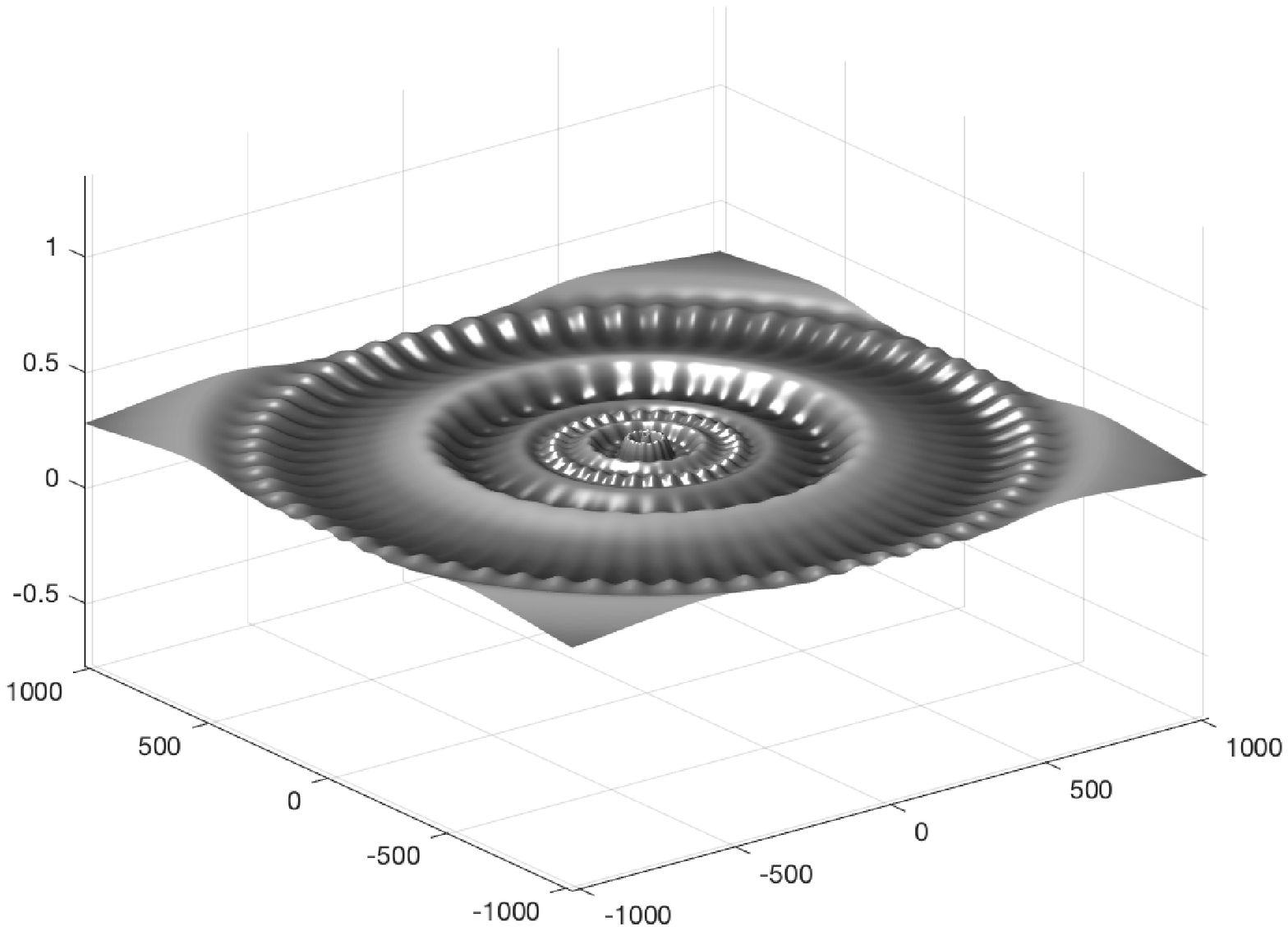}
  \end{minipage}
  \hfill
  \begin{minipage}[c]{0.4\textwidth}
    \includegraphics[width=\textwidth]{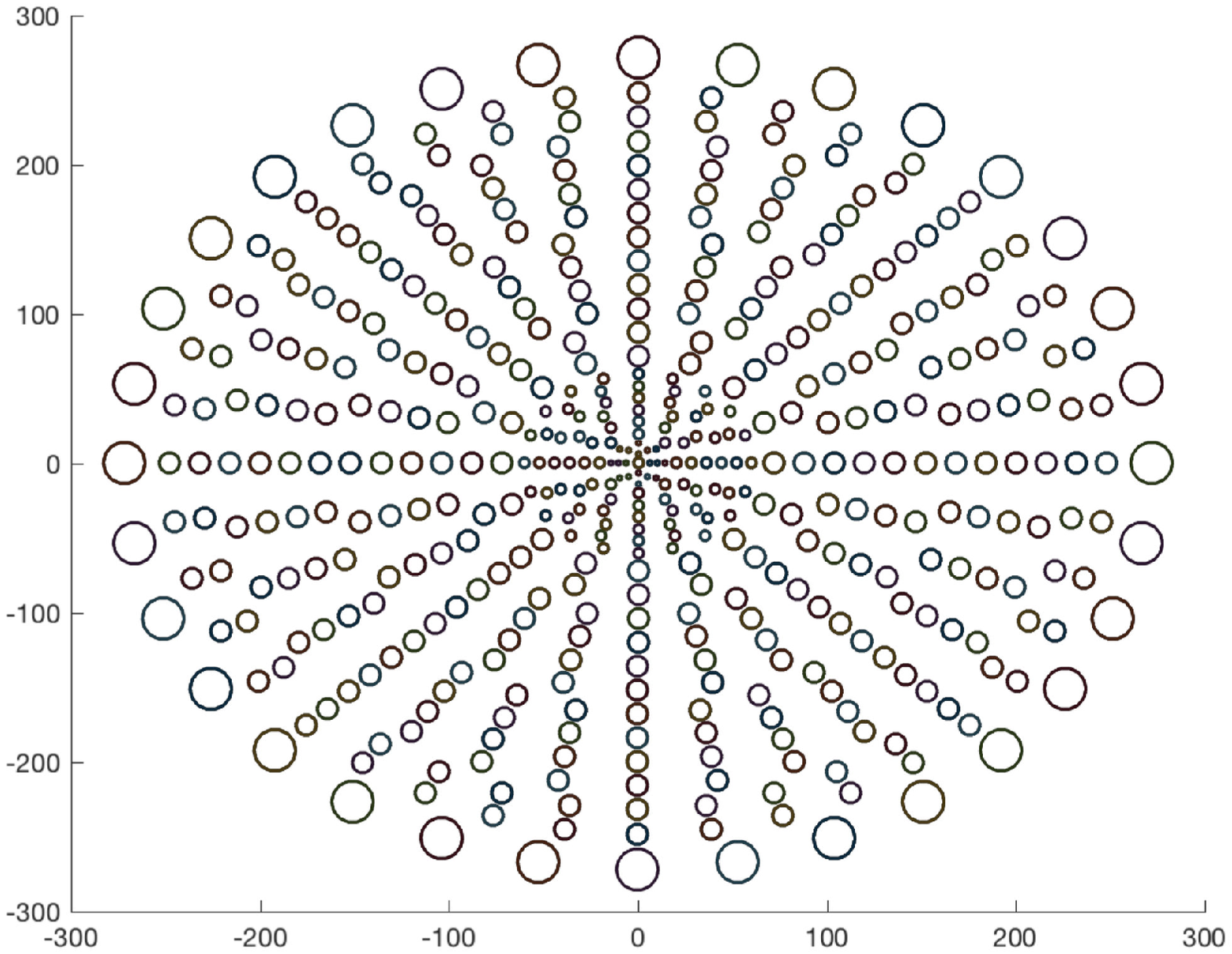}
  \end{minipage}
\caption{A plot of the scale-overlaps control function with $B/A \approx 1.54$,
and the inner balls in the discretization of the corresponding frequency cover.}
\label{fig:control}
\end{figure}

\subsection{Representation of the frame operator}
In the following, we denote by
$\DL$ the dual lattice of $\Lambda$; if $\Lambda = n^{-1} \mathbb{Z}^d$, with $n\in \Nst$, then
$\DL=n \mathbb{Z}^d$. We also use the notation:
$\widehat{T} := \mathcal{F} T \mathcal{F}^{-1}$, for the conjugation of an operator $T$ with the
Fourier transform.
\begin{lemma}[Daubechies-like formula]
\label{lemma_dau}
For $f \in \Hs$:
\begin{equation}
\label{eq_Daubrep}
\frameop f(\xi) = |\Lambda|^{-1}\sum_{\dualpar \in \DL} S_\dualpar f(\xi),
\end{equation}
where
\begin{equation}
\label{eq_Sgamma}
\DFS(\xi) = \sum_{j \geq 0}
\sum_{k=1}^{n_j}
2^{jd}\widehat{f}(\xi - 2^j\dualpar) \overline{\Funcjk(\xi -2^j\dualpar )}\Funcjk(\xi).
\end{equation}
\end{lemma}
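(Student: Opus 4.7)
The plan is to pass to the Fourier side and apply Poisson summation to the lattice sum in $\lambda$. For brevity write $f_\gamma := \langle f, \varphi_\gamma\rangle$ and $\varphi_{j,k}$ for the zero-translate window, so that $\varphi_{j,k,\lambda}(x) = \varphi_{j,k}(x - 2^{-j}\lambda)$ and
\begin{equation}
\widehat{\varphi_{j,k,\lambda}}(\xi) = e^{-2\pi \mi 2^{-j}\lambda\cdot \xi}\,\Funcjk(\xi),
\end{equation}
by the defining formula \eqref{eq_def_funcs} and \eqref{eq_window_Freq}. First I would prove the identity for $f$ in the Schwartz class (where all manipulations are trivially justified) and then extend to $H^s$ by density, using that $\frameop$ is bounded on $H^s$ thanks to Lemma \ref{lemma_bes_mol} applied to $\frameset$ (which is a set of wave molecules with the standard TF nodes).

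For Schwartz $f$, Parseval gives
\begin{equation}
f_\gamma = \int_{\Rdst}\widehat f(\eta)\,\overline{\Funcjk(\eta)}\,e^{2\pi \mi 2^{-j}\lambda\cdot\eta}\,d\eta,
\end{equation}
and by the rapid decay of $\widehat f$ and $\Funcjk$ the double sum for $\frameop f$ converges absolutely in $\mathcal{S}'$. Taking the Fourier transform and interchanging the sum in $\lambda$ with the integral I get
\begin{equation}
\widehat{\frameop f}(\xi) = \sum_{j\geq 0}\sum_{k=1}^{n_j}\Funcjk(\xi)\int_{\Rdst}\widehat f(\eta)\,\overline{\Funcjk(\eta)}\,\Big(\sum_{\lambda\in\Lambda}e^{2\pi \mi 2^{-j}\lambda\cdot(\eta-\xi)}\Big)d\eta.
\end{equation}

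The key step is to evaluate the inner lattice sum via Poisson summation applied to the dilated lattice $2^{-j}\Lambda$, whose dual is $2^j\DL$ and whose covolume is $2^{-jd}|\Lambda|$:
\begin{equation}
\sum_{\lambda\in\Lambda}e^{2\pi \mi 2^{-j}\lambda\cdot \nu} = \frac{2^{jd}}{|\Lambda|}\sum_{\dualpar\in\DL}\delta_{2^j\dualpar}(\nu),\qquad \nu\in\Rdst.
\end{equation}
Substituting $\nu = \eta-\xi$, evaluating the resulting $\delta$ in $\eta$ at $\eta = \xi + 2^j\dualpar$, and finally replacing $\dualpar$ by $-\dualpar$ in the summation over $\DL$, I arrive at
\begin{equation}
\widehat{\frameop f}(\xi) = |\Lambda|^{-1}\sum_{\dualpar\in\DL}\sum_{j\geq 0}\sum_{k=1}^{n_j}2^{jd}\,\widehat f(\xi-2^j\dualpar)\,\overline{\Funcjk(\xi-2^j\dualpar)}\,\Funcjk(\xi),
\end{equation}
which is exactly \eqref{eq_Daubrep}--\eqref{eq_Sgamma}, read on the Fourier side.

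The main obstacle is making the identity rigorous for a general $f\in H^s$, since individual terms $S_\dualpar f$ are only distributions a priori. I would handle this by approximating $f$ by Schwartz functions in $H^s$-norm, observing that both sides are continuous in $f$ in the appropriate sense: the left-hand side by the $H^s$-boundedness of $\frameop$ (via Lemma \ref{lemma_bes_mol}), and the right-hand side because, for each fixed $\dualpar\in\DL$, the operator $f\mapsto \sum_{j,k}2^{jd}\Funcjk(\cdot)\overline{\Funcjk(\cdot - 2^j\dualpar)}\widehat f(\cdot - 2^j\dualpar)$ is bounded (with a $\dualpar$-dependent constant controlled by the rapid decay of the Gaussian products; this is the standard ``off-diagonal'' estimate behind Daubechies' criterion, and is where Lemma \ref{L_boundPhi} enters to control the diagonal $\dualpar=0$ piece and produce summable bounds in $\dualpar$). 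Passing to the limit then yields the formula in $H^s$.
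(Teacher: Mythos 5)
Your argument is correct and follows essentially the same route as the paper, which proves the lemma by Poisson summation (citing the analogous computation in \cite[Lemma 3.1]{MDH}); your Fourier-side calculation, the rescaled-lattice Poisson formula, and the density argument via the $H^s$-bounds on $\frameop$ and on the individual $S_\dualpar$ fill in exactly the details the paper leaves implicit.
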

\begin{proof}
The representation follows from Poisson's summation formula. See \cite[Lemma 3.1]{MDH} for related
computations.
\end{proof}

Motivated by Lemma \ref{lemma_dau}, we introduced the following quantity. Let
\begin{equation}
\label{eq_Theta}
	\Theta\pt{\zeta}= \ess_{\xi\in\Rdst}
\sum_{j \geq 0} \sum_{k=1}^{n_j}
2^{jd}\abs{\Funcjk^{sl}\pt{\xi-2^j\zeta}}\abs{\Funcjk^{sl}\pt{\xi}},
\qquad \zeta \in \Rdst.
\end{equation}
\begin{lemma}
Let $s \in [0,1]$ and $S_\dualpar$, $\dualpar\in\DL$ be given by \eqref{eq_Sgamma}, then
	\begin{align}
		\norms{S_\dualpar f} & \lesssim
\max\ptg{\Theta\pt{\dualpar},\Theta\pt{-\dualpar}}\norms{f}
\label{eq_core_estimates_1}.
	\end{align}
\end{lemma}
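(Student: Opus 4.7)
The plan is to bound $\norms{S_\dualpar f}^2 = \int (1+\abs{\xi})^{2s} \abs{S_\dualpar f(\xi)}^2\,d\xi$ by converting the two windows $\Funcjk$ appearing in \eqref{eq_Sgamma} into slimmer windows $\widehat{\funcsljk}$ whose overlap is controlled by the function $\Theta$ defined in \eqref{eq_Theta}, then carrying out a Schur/Cauchy--Schwarz argument over the scale-position sum.

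First, I would push the weight $(1+\abs{\xi})^s$ into one of the window factors by invoking \eqref{eq_ineq_plus}:
\[
(1+\abs{\xi})^{s}\abs{\Funcjk(\xi)} \lesssim (1+\abs{\pointjk})^{s}\abs{\widehat{\funcsljk}(\xi)}.
\]
Applied with the exponent $-s$, this same inequality gives the reverse relation
\[
(1+\abs{\pointjk})^s \abs{\Funcjk(\xi - 2^j \dualpar)} \lesssim (1+\abs{\xi-2^j\dualpar})^s \abs{\widehat{\funcsljk}(\xi - 2^j \dualpar)},
\]
which I apply to the second window factor in \eqref{eq_Sgamma}. After multiplying and dividing by $(1+\abs{\pointjk})^s$, these two estimates combine to give
\[
(1+\abs{\xi})^s \abs{\DFS(\xi)} \lesssim \sum_{j,k} 2^{jd} (1+\abs{\xi - 2^j \dualpar})^{s} \abs{\widehat{f}(\xi - 2^j \dualpar)} \abs{\widehat{\funcsljk}(\xi)} \abs{\widehat{\funcsljk}(\xi - 2^j \dualpar)}.
\]

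Next, I would split the product $\abs{\widehat{\funcsljk}(\xi)} \abs{\widehat{\funcsljk}(\xi - 2^j \dualpar)}$ symmetrically between two factors and apply Cauchy--Schwarz in $(j,k)$. The factor
\[
\sum_{j,k} 2^{jd} \abs{\widehat{\funcsljk}(\xi)} \abs{\widehat{\funcsljk}(\xi - 2^j \dualpar)}
\]
is bounded pointwise by $\Theta(\dualpar)$ by definition. What remains is
\[
\int \sum_{j,k} 2^{jd} (1+\abs{\xi - 2^j \dualpar})^{2s} \abs{\widehat{f}(\xi - 2^j \dualpar)}^2 \abs{\widehat{\funcsljk}(\xi)} \abs{\widehat{\funcsljk}(\xi - 2^j \dualpar)}\,d\xi,
\]
to which I apply the change of variables $\eta = \xi - 2^j\dualpar$ inside each summand, recognize that $\widehat{\funcsljk}(\eta) \widehat{\funcsljk}(\eta + 2^j \dualpar)$ is the integrand defining $\Theta(-\dualpar)$, and pull this out. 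Combining both factors yields $\norms{S_\dualpar f} \lesssim \sqrt{\Theta(\dualpar)\Theta(-\dualpar)}\norms{f}$, which is bounded by $\max\{\Theta(\dualpar),\Theta(-\dualpar)\}\norms{f}$ via the AM--GM inequality.

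The routine pieces are Cauchy--Schwarz and the change of variables; the only subtle step is the passage from $\Funcjk$ to $\funcsljk$ in both directions, which is what makes the slimmer Gaussian window indispensable (since absorbing the polynomial weight $(1+\abs{\xi})^{\pm s}/(1+\abs{\pointjk})^{\pm s}$ into the window costs Gaussian decay). Aside from that, the argument is a straightforward Schur-type estimate tailored to the anisotropic wave-atom geometry.
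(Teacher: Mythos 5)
Your proposal is correct and follows essentially the same route as the paper: both transfer the weight $(1+\abs{\xi})^{s}$ between the two window factors via \eqref{eq_ineq_plus} (with exponents $s$ and $-s$, so that the $(1+\abs{\pointjk})^{\pm s}$ factors cancel), reduce to a kernel built from the slimmer windows $\widehat{\funcsljk}$, and conclude by a Schur-type bound with row and column sums controlled by $\Theta(\dualpar)$ and $\Theta(-\dualpar)$. The only difference is cosmetic: the paper simply cites Schur's Lemma at the end, whereas you write out the underlying symmetric Cauchy--Schwarz argument explicitly.
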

\begin{proof}
Using \eqref{eq_ineq_plus} we estimate
\begin{align}
(1+|\xi|)^{\alpha}\abs{\DFS\pt{\xi}} &\leq
\sum_{j,k}2^{jd}(1+|\xi-2^j\dualpar|)^{\alpha}\abs{\widehat{f}\pt{\xi-2^j\dualpar}}\nonumber
\\&\qquad(1+|\xi|)^{\alpha}\abs{\Funcjk\pt{\xi}}(1+|\xi-2^j\dualpar|)^{-\alpha}\abs{\Funcjk\pt{
\xi-2^j\dualpar}}
\nonumber
\\
&\lesssim
\sum_{j,k}2^{jd}(1+|\xi-2^j\dualpar|)^{\alpha}\abs{\widehat{f}\pt{\xi-2^j\dualpar}}\label{
eq_useLemma2}
\\
&\qquad(1+|\pointjk|)^{\alpha}\abs{\Funcsljk\pt{\xi}}(1+|\pointjk|)^{-\alpha}\abs{\Funcsljk\pt{
\xi-2^j\dualpar}}
\nonumber
\\
&=\sum_{j,k}2^{jd}(1+|\xi-2^j\dualpar|)^{\alpha}\abs{\widehat{f}\pt{\xi-2^j\dualpar}}\abs{
\Funcsljk\pt{\xi}}\abs{
\Funcsljk\pt{\xi-2^j\dualpar}}.
\end{align}
The conclusion now follows from Schur's Lemma.
\end{proof}
\begin{lemma}
For a lattice $\Lambda$ with dual lattice $\DL$
let
\[
\DeL = \sum_{\dualpar \in \DL\backslash\ptg{0}}\max\ptg{\Theta\pt{\dualpar},\Theta\pt{-\dualpar}}.
\]
Then there is constant $c>0$ such that, for
$\Lambda=n^{-1}\mathbb{Z}^d$, $\Delta(\Lambda) \lesssim e^{-c \abs{n}^2}$.
\end{lemma}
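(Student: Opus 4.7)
The plan is to first show that there exists $c>0$ such that $\Theta(\zeta) \lesssim e^{-c|\zeta|^2}$ uniformly in $\zeta \in \R^d$, and then sum this pointwise estimate over the dual lattice $\Lambda^* = n\Z^d$.

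The key input is that $\varphi^{sl}$ is a Gaussian, so $\widehat{\varphi^{sl}}(\xi) = c_0\, e^{-\beta|\xi|^2}$ for explicit constants $c_0, \beta>0$, which gives
\[
|\widehat{\varphi^{sl}_{j,k}}(\xi)| \;=\; c_0\, 2^{-jd/2}\, \exp\!\bigl(-\beta\, 4^{-j}\,|\xi-\xi_{j,k}|^2\bigr),
\qquad j\geq 0,\ 1\leq k\leq n_j.
\]
I would then apply the parallelogram identity
\[
|\xi-\xi_{j,k}|^2 + |\xi-2^j\zeta-\xi_{j,k}|^2 \;=\; 2\,\bigl|\xi-2^{j-1}\zeta-\xi_{j,k}\bigr|^2 + \tfrac{1}{2}\,4^j\,|\zeta|^2
\]
to the two factors appearing in the definition \eqref{eq_Theta} of $\Theta(\zeta)$. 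The second term on the right is independent of $\xi$, $j$ and $k$; when multiplied by $4^{-j}$ in the exponent it produces a global Gaussian factor $e^{-\beta|\zeta|^2/2}$ that I can pull out of the sum, leaving
\[
\Theta(\zeta) \;\lesssim\; e^{-\beta|\zeta|^2/2}\cdot \operatorname*{ess\,sup}_{\xi\in\R^d}\ \sum_{j\geq 0}\sum_{k=1}^{n_j} \exp\!\bigl(-2\beta\,4^{-j}\,|\xi-2^{j-1}\zeta-\xi_{j,k}|^2\bigr).
\]

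The main obstacle is to bound the residual double sum uniformly in $\xi$ and $\zeta$. Lemma~\ref{L_boundPhi} does not apply directly, because the argument of each Gaussian now depends on the scale index $j$ through the shift $2^{j-1}\zeta$. To handle this I would proceed in two steps: first, for each fixed $j$, use that the centers $\{\xi_{j,k}\}_{k}$ form a roughly $2^j$-separated net in the annulus $\corona_j$, so that a standard lattice-packing argument gives
\[
\sum_{k=1}^{n_j}\exp\!\bigl(-2\beta\,4^{-j}\,|\eta-\xi_{j,k}|^2\bigr)\;\lesssim\;\min\!\bigl\{1,\ \exp\!\bigl(-c'\,\operatorname{dist}(\eta,\corona_j)^2/4^j\bigr)\bigr\},\qquad \eta\in\R^d,
\]
uniformly in $\eta$ and $j$; second, observe that for fixed $\xi,\zeta$ the sequence $\eta_j:=\xi-2^{j-1}\zeta$ satisfies $|\eta_j|\asymp 4^j$ only for $O(1)$ values of $j$, while the remaining scales contribute a geometrically summable series through the $\operatorname{dist}(\cdot,\corona_j)^2/4^j$ penalty. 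Together these two observations yield the announced uniform bound $\Theta(\zeta)\lesssim e^{-\beta|\zeta|^2/2}$, and by the same argument applied to $-\zeta$ the same bound holds for $\Theta(-\zeta)$.

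Finally, for $\Lambda=n^{-1}\Z^d$ the dual lattice is $\Lambda^* = n\Z^d$, so
\[
\DeL \;=\; \sum_{k\in\Z^d\setminus\{0\}}\max\{\Theta(nk),\Theta(-nk)\} \;\lesssim\; \sum_{k\in\Z^d\setminus\{0\}} e^{-c n^2 |k|^2},
\]
and the elementary estimate $\sum_{k\neq 0}e^{-cn^2|k|^2}\lesssim e^{-c'n^2}$ (valid for $n\geq 1$, with any $c'<c$, since the number of lattice points with $|k|\asymp r$ grows only polynomially) delivers the claim.
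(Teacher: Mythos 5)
Your proposal is correct and follows essentially the same route as the paper: extract a global Gaussian factor $e^{-c|\zeta|^2}$ from the product of the two scale-$j$ windows and then sum the resulting pointwise bound over the dual lattice $n\mathbb{Z}^d$; the paper compresses all of this into a citation of Lemma~\ref{L_boundPhi} and of \cite[Lemma 3.5]{MDH}. Your parallelogram-identity computation and the two-step control of the residual sum (per-scale packing of the $2^j$-separated centers, plus the observation that $|\xi-2^{j-1}\zeta|\asymp 4^j$ for only $O(1)$ scales while the remaining scales carry a $\operatorname{dist}(\cdot,\corona_j)^2/4^j\gtrsim 4^j$ penalty) correctly supply the details that the paper leaves implicit.
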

\begin{proof}
Using Lemma \ref{L_boundPhi}, we see that
$\abs{\Theta\pt{\dualpar}} \lesssim e^{-c\abs{\dualpar}^2}$, for some constant $c>0$, and the
conclusion follows. See \cite[Lemma 3.5]{MDH} for a related argument.
\end{proof}

\subsection{ Proof of Theorem~\ref{T_FrProp}}
We use the representation in Lemma \ref{lemma_dau}.

\step{1}{Invertibility of the frame operator}. Since $\frameop$ is self-adjoint on $L^2(\Rdst)$, we
only need to
consider $s \in [0,1]$.
Since
\[
\widehat{S}_0 \hat{f}(\xi) = \sum_{j \geq 0} \sum_{k=1}^{n_j} 2^{jd}\abs{\Funcjk(\xi)}^2
\hat{f}(\xi),
\]
it follows from Lemma \ref{L_boundPhi} that $S_0$ is invertible on $H^s$. We let $\Lambda = n^{-1}
\mathbb{Z}^d$ with $n \in \Nst$.
By Lemma \ref{lemma_dau},
\[
\norm{\frameop -  S_0}_{\Hs \rightarrow\Hs} \lesssim |\Lambda|^{-1} \DeL \lesssim n^d e^{-c n^2}.
\]
Therefore, we can choose $n$ such that $\frameop$ is invertible on $H^s$.

\step{2}{Norm equivalence}. For $s \in [-1,1]$, we use the Bessel bounds in
Lemma \ref{lemma_bes_mol}.
These are stated only for the higher scales, but the extension the the zeroth-scale is
straightforward. We conclude that
\begin{align*}
\norms{f}^2 \lesssim \norms{\frameop f}^2
\lesssim \sum_{\gamma \in \Gammafull} 4^{2sj} \abs{\ip{f}{\varphi_\gamma}}^2
\lesssim \norms{f}^2,
\end{align*}
as claimed.

\section{Details on Figures \ref{fig:caustic} and \ref{fig:caustic2}}
\label{sec_num}
We considered the velocity $\speed(x_1,x_2) = 2 - 0.4*\exp{(-(x_1^2 +(x_2-5)^2)/3)}$, $(x_1,x_2)
\in \mathbb{R}^2$, and created a point source by summing $50$ frame elements
with scale $j=4$, centered at the origin, and with frequency directions varying withing a 40-degree
cone around the normal $(0,1)$. For convenience, the frame elements were constructed using as basic
waveform
the dilated Gaussian: $\exp(-2\pi\ln(16)(x_1^2+x_2^2))$ - cf. Figure \ref{fig:packet}.
The GB evolution of the wavefront was computed from $t=0$ to $t=8.40$, following the
construction described in Section~\ref{sec_sets}. The ODEs have been solved with Matlab's ODE45
routine. The
wavefront goes through a caustic at time $t \approx 5.60$. A related example can be found in
\cite{MR2902601}.

For longer times, the numerical solution to the Riccati equation becomes unstable. In the
simulation, a tolerance of $\Im(M(t)) > 0.005$ was set, suppressing the beam if the condition
was not satisfied. In order to improve the precision of the solution, one can reinitialize the
algorithm by re-expanding the solution into wavepackets \cite{MR2558781, MR2684020, Lex1}. We
expect that these techniques will lead to a full numerical implementation of the parametrix.

\addcontentsline{toc}{chapter}{References}

\section{Table of notation}
\label{sec_table_not}

\begin{table}[htbp]
\begin{center}
     \begin{tabular}{l p{10cm} l}
\toprule
Symbol & Description & Ref.\\
\bottomrule
\vspace{0.05cm}
\\
$\Rdplus$ & $\Rdplus=(0,+\infty)\times\R^{d-1}$. & Sec. \ref{sec:not}\\
$\Rst^d_T$ & $\Rst^d_T= [-T,T] \times \R^{d-1}$. & Sec. \ref{sec:not}\\
$x_*$ & $x=(x_1,x_*) \in \Rdst$, with $x_1 \in \R$ and $x_* \in \R^{d-1}$ & Sec. \ref{sec:not}\\
$\speed=\speed(x)$ & Velocity function. & \\
$h=h(t,x_*)$ & Boundary data for the Dirichlet problem. & Sec.
\ref{sec_ass_source}\\
$[\cconcL, \cconcU]$ & Temporal support of the boundary data. &  Sec. \ref{sec_ass_source}\\
$\cgraz$ & Constant related to the no-grazing ray assumption. &  Sec. \ref{sec_ass_source}\\
$H^\pm=H^\pm(x,p)$ & Signed Hamiltonian functions. & \eqref{eq:hams}\\
$H=H(x,p)$ & Denotes generically either $H^+$ or $H^-$. & \\
$\psym(x,D)$ & Kohn-Nirenberg quantization of a symbol $\psym$ & Sec. \ref{sec:not}\\
$\pointjk$  & Center for the frequency cover. & Sec. \ref{sec_frame} \\
$\pointjkt$  & Approximately normalized version of $\pointjk$. & \eqref{eq_pjkt} \\
$\Lambda$ & A lattice within $\Rdst$. Throughout most of the text, the choice of
$\Lambda$ is fixed by Theorem
\ref{T_FrProp} & \\
$\Gamma$ & Basic scale-angle-position index set. &  \eqref{eq:def_gamma} \\
$\Gammafull$ & Superset of $\Gamma$ augmented with zero-scale. &  \eqref{eq:def_gammafull} \\
$\Gamma_0$ & A generic subset of $\Gamma$. & \\
$\Gammah$ & A subset of $\Gamma$ related to the frame expansion of $h$. & Sec.
\ref{sec_frame_source}\\
$\Gammah^\pm$ & Two subsets $\Gammah^+$ and $\Gammah^-$
that partition $\Gammah$. & \eqref{eq_gammapm}\\
$\gamma$ & Generic element of (a subset of) $\Gammafull$. We refer implicitly to the notation
$\gamma=(j,k,\lambda)$. & Sec. \ref{sec_frame} \\
$\SIC=\SIC_\gamma$ & A function that maps an index $\gamma$ into
a tuple of initial conditions for a GB. & Sec. \ref{sec_sets}\\
$\SIC^{st}=\SIC^{st}_\gamma$ & The standard choice for such a map. & Sec. \ref{sec:stset}\\
$\SIC^{h,\pm}=\SIC^{h,\pm}_\gamma$ & Two particular maps
defined on $\Gammah^\pm$ respectively, constructed in terms of the boundary value $h$.
& Sec. \ref{sec_match} \\
$\Phi^\pm_\gamma$ & GB associated with $\gamma$ by means of an implicit map $\SIC_\gamma$. & Sec.
\ref{sec_sets} \\
$\Phi_\gamma$ & Denotes generically either $\Phi^+_\gamma$
or $\Phi^+_\gamma$. & \\
$\Phi^{st,\pm}_\gamma$ & The beams associated with $\SIC^{st}_\gamma$. & Sec. \ref{sec:stset} \\
$\Phi^{h,\pm}_\gamma$ & The beam associated with $\SIC^{h,\pm}_\gamma$. Here the mode is
\emph{determined} by whether $\gamma \in \Gammah^+$ or $\Gammah^-$. & Sec. \ref{sec_back} \\
$\func$ & Normalized Gaussian function. & \eqref{eq_norm_Gauss} \\
$\func_\gamma$ & Frame element associated with $\gamma \in \Gammafull$. & Sec. \ref{sec_frame}\\
$\phi_\gamma$ & Generic wave molecule. & App. \ref{AppendixB} \\
$\Upsilon$ & Generic set of GB parameters, indexed by a corresponding function $\SIC$.& Rem.
\ref{rem:upsilon}\\
$\Upsilon^{st}$ & The standard choice for such a set. & Sec. \ref{sec:stset}\\
$\Upsilon^{h,\pm}$ & Two particular such sets associated with $h$. & Sec. \ref{sec_back} \\
$F=\strongo^m(I,\Upsilon)$ & A family of functions $F_\gamma$ that vanishes to oder $m$ on the
centers of the beams defined by $\Upsilon$, uniformly on the time interval $I$. & Def.
\ref{def_order_wsp}\\
$F=\strongop^m(I,\Upsilon)$ & Functions with vanishing order at least $m$. & Def.
\ref{def_order_wsp_geq}\\
\smallskip\\
\bottomrule
    \end{tabular}
\end{center}
    \label{tab:notation}
\end{table}
\end{document}